\declaretheoremstyle[
	spaceabove=\topsep, spacebelow=\topsep,
	headfont=\normalfont\itshape,
	notefont=\normalfont, notebraces={(}{)},
	bodyfont=\normalfont,
	postheadspace=.5em,
	qed=
]{remarque}
\declaretheorem[numbered=no,name=Remark,style=remarque]{rema}
\declaretheoremstyle[
	spaceabove=\topsep, spacebelow=\topsep,
	headfont=\normalfont\bfseries,
	notefont=\normalfont, notebraces={(}{)},
	bodyfont=\normalfont,
	postheadspace=.5em,
]{defn}
\declaretheorem[numberwithin=section,name=Definition,style=defn]{defi}
\declaretheorem[sibling=defi,name=Example,style=defn]{exe}
\declaretheoremstyle[
	spaceabove=\topsep, spacebelow=\topsep,
	headfont=\normalfont\bfseries,
	notefont=\normalfont, notebraces={(}{)},
	bodyfont=\itshape,
	postheadspace=.5em,
	qed=
]{proposition}
\declaretheorem[sibling=defi,name=Proposition,style=proposition]{prop}
\declaretheorem[sibling=defi,name=Theorem,style=proposition]{theo}
\declaretheorem[numbered=no,name=Theorem,style=proposition]{theor}
\declaretheorem[sibling=defi,name=Lemma,style=proposition]{lem}
\declaretheorem[sibling=defi,name=Corollary,style=proposition]{cor}
\newcommand{\N}{\mathbb{N}}
\newcommand{\Z}{\mathbb{Z}}
\newcommand{\R}{\mathbb{R}}
\newcommand{\Q}{\mathbb{Q}}
\newcommand{\A}{\mathbb{A}}
\newcommand{\Er}{\mathrm{E}}
\newcommand{\I}{\mathrm{I}}
\newcommand{\m}{\mathfrak{m}}
\newcommand{\p}{\mathfrak{p}}
\newcommand{\rf}{\mathfrak{r}}
\newcommand{\Kr}{\mathrm{K}}
\newcommand{\MW}{\mathrm{MW}}
\newcommand{\Um}{\mathrm{Um}}
\newcommand{\WMS}{\mathrm{WMS}}
\newcommand{\wms}{\mathrm{wms}}
\newcommand{\Nis}{\mathrm{Nis}}
\newcommand{\Sr}{\mathrm{S}}
\newcommand{\CH}{\mathrm{CH}}
\newcommand{\Hr}{\mathrm{H}}
\newcommand{\RS}{\mathrm{RS}}
\newcommand{\cd}{\mathrm{cd}}
\newcommand{\Cr}{\mathrm{C}}
\newcommand{\Nr}{\mathrm{N}}
\newcommand{\Kbf}{\mathbf{K}}
\DeclareMathOperator{\Id}{Id}
\DeclareMathOperator{\diag}{diag}
\DeclareMathOperator{\GL}{GL}
\DeclareMathOperator{\Hom}{Hom}
\DeclareMathOperator{\Spec}{Spec}
\DeclareMathOperator{\colim}{colim}
\DeclareMathOperator{\Proj}{Proj}
\DeclareMathOperator{\Ob}{Ob}
\DeclareMathOperator{\Map}{Map}
\DeclareMathOperator{\codim}{codim}
\DeclareMathOperator{\Tor}{Tor}
\DeclareMathOperator{\Kos}{Kos}
\DeclareMathOperator{\sdim}{sdim}
\DeclareMathOperator{\sr}{sr}
\DeclareMathOperator{\rk}{rk}
\DeclareMathOperator{\hocolim}{hocolim}
\renewcommand{\Im}{\operatorname{Im}}
\newcommand*\Neginternal[3]{\mathpalette\Neg@{{#1}{#2}{#3}}}
\newcommand*\Neg@[2]{\Neg@@{#1}#2}
\newcommand*\Neg@@[4]{%
  \mathrel{\ooalign{%
    $\m@th#1#4$\cr
    \hidewidth$\m@th#3{#1}\mkern\muexpr#2*2$\hidewidth\cr
  }}%
}
\newcommand*\negslash[1]{\m@th#1\not\mathrel{\phantom{=}}}
\newcommand*\snegslash[1]{\rotatebox[origin=c]{60}{$\m@th#1-$}}
\newcommand*\ssnegslash[1]{\rotatebox[origin=c]{60}{$\m@th#1{\dabar@}\mkern-7mu{\dabar@}$}}
\newcommand*\sssnegslash[1]{\rotatebox[origin=c]{60}{$\m@th#1\dabar@$}}
\renewcommand*\thesection{\arabic{section}}
\titleformat{\section}[block]{\Large\scshape\filcenter}{\thesection.}{12pt}{}[]
\begin{document}

\pagestyle{main}

\title{Motivic stable cohomotopy and unimodular rows}
\author{Samuel Lerbet}
\affil{Univ. Grenoble Alpes, CNRS, IF, 38000, Grenoble, France}
\date{\today} 

\maketitle

\tableofcontents

\begin{abstract}
We relate the group structure of van der Kallen on orbit sets of unimodular rows (\cite{vdK}) with values in a smooth algebra $A$ over a field $k$ with the motivic cohomotopy groups of $X=\Spec A$ with coefficients in $\A^n\setminus 0$ in the sense of \cite{AF}. In the last section, we compare the motivic cohomotopy theory studied in this paper and defined by $\A^{n+1}\setminus 0$ or, equivalently, by an $\A^1$-weakly equivalent quadric $Q_{2n+1}$ to that considered in \cite{AF}, defined by a quadric $Q_{2n}$, by means of explicit morphisms $Q_{2n+1}\rightarrow Q_{2n}$, $Q_{2n}\times\mathbb{G}_m\rightarrow Q_{2n+1}$ of quadrics.
\end{abstract}

\section{Introduction}

\subsection*{Some history}

\paragraph*{Unimodular rows.} Let $n$ be a non-negative integer. If $A$ is a ring, \emph{unimodular rows} of length $n$ with values in $A$ control surjective $A$-module homomorphisms $A^n\rightarrow A$ and are thus of interest in algebraic $\Kr$-theory. They form a set $\Um_n(A)$ which is endowed with a natural action of $\GL_n(A)$ on the right so that we may consider the orbit set of the action of the subgroup $\Er_n(A)$ of elementary matrices. Building on earlier work of Vaserstein and Suslin in the case $n=2$ (\cite[Section 5]{SuVa}), van der Kallen introduced in \cite{vandK} a group structure on this orbit set under finiteness assumptions on the ring $A$. He uses the notion of \emph{stable dimension} $\sdim$ of a ring $A$ which is deduced from Bass's \emph{stable range} condition. This is in fact a set $(\mathrm{DS})_r$ of conditions indexed by integers $r\geqslant 1$, defined on \cite[p. 231]{BBproj}, where $\mathrm{DS}$ stand for the French \emph{domaine stable}; $\mathrm{DS}(A)$ is then the infinimum of the set of integers $r$ such that $A$ satisfies $(\mathrm{DS})_r$ if this set is not empty, and $\mathrm{DS}(A)=\infty$ if this set is empty. Finally, by definition, $\sdim(A)=\mathrm{DS}(A)-1$ (\cite[Introduction, 1.2]{vandK}). Then van der Kallen assumes that $\sdim(A)$ satisfies the inequality $\sdim(A)\leqslant 2n-4$ and he shows that one may then endow the orbit set $\Um_n(A)/\Er_n(A)$ with the structure of an abelian group functorially in $A$. This was accomplished in the early eighties and is stated in \cite[Theorem 3.3]{vdK}: thanks to this theorem, the group structure may be easily understood in terms of the elementary Mennicke--Newman lemma. Definitions and results on this subject are recalled in Section \ref{uni}.

\paragraph*{Stable cohomotopy.} The group law introduced by van der Kallen is an algebraic analogue of Borsuk's construction of \emph{cohomotopy groups}. Let $X$ be a CW-complex. If $n$ is an integer and if $X$ is “small” relative to $n$—technically, we assume that the dimension $d$ of $X$ satisfies $d\leqslant 2n-2$—, then Borsuk equips the set $[X,\Sr^{n}]$ of homotopy classes of continuous maps from $X$ to the $n$-sphere $\Sr^n$ with the structure of an abelian group denoted by $\pi^n(X)$, see \cite{Bor}. Roughly speaking, given continuous maps $f,g:X\rightarrow\Sr^n$, Borsuk lifts them into a map to the wedge sum $\Sr^n\vee\Sr^n$ and composes with the fold map $\Sr^n\vee\Sr^n\rightarrow\Sr^n$ to obtain a map $X\rightarrow\Sr^n$ which is by definition the sum of $f$ and $g$: in particular, it is easy to see that this group structure is functorial in $X$.

\begin{rema}
Here, $2n-2$ should be thought of as $2(n-1)$ and $n-1$ as the connectedness of $\Sr^n$: more generally, if $Y$ is a pointed space that is $m$-connected, then Borsuk's construction yields a group structure on $[X,Y]$ for any CW-complex $X$ of dimension $d\leqslant 2m$ which is functorial in $X$ \emph{and} $Y$.
\end{rema}

\paragraph*{The topological comparison theorem.} If $A$ is an algebra over a field $k$, then unimodular rows of length $n$ with values in $A$ may be interpreted as $k$-morphisms from $\Spec A$ to $\A_k^n\setminus 0=\A^n\setminus 0$. Similarly, if $X$ is a finite CW-complex of dimension $d$, then unimodular rows of length $n$ with values in the ring $\Cr(X)$ of real valued continuous functions on $X$ may be viewed as continuous maps $X\rightarrow\R^n\setminus\{0\}$. Composing with the standard homotopy equivalence $\R^n\setminus\{0\}\rightarrow\Sr^{n-1}$ allows one to think about unimodular rows with values in $\Cr(X)$ in terms of the cohomotopy group $\pi^{n-1}(X)$. It is easy to see that elements of the same orbit under the action of $\Er_n(\Cr(X))$ induce continuous maps $X\rightarrow\Sr^{n-1}$ which are homotopic, thus producing a map \[\Phi:\Um_n(\Cr(X))/\Er_n(\Cr(X))\rightarrow\pi^{n-1}(X).\] It is then known that the ring $\Cr(X)$ is of stable dimension $d$ by work of Vaserstein \cite{LNV}, so that the source and the target of the map $\Phi$ both have a natural group structure if $d\leqslant 2n-4$ following van der Kallen and Borsuk respectively and in fact:

\begin{theor}[\protect{\cite[Theorem 7.7]{vandK}}]
If $X$ is a finite CW-complex of dimension $d\leqslant 2n-4$, then the map $\Phi:\Um_n(\mathrm{C}(X))/\Er_n(\mathrm{C}(X))\rightarrow\pi^{n-1}(X)$ is a group isomorphism.
\end{theor}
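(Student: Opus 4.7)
The plan is to construct $\Phi$, verify well-definedness on orbits, establish bijectivity, then prove that it is a group homomorphism.

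First, a unimodular row $v = (v_1, \ldots, v_n)\in\Um_n(\Cr(X))$ corresponds to a continuous map $v\colon X\to\R^n\setminus\{0\}$; composing with the deformation retraction $x\mapsto x/\|x\|$ onto $\Sr^{n-1}$ yields $\Phi(v)\in\pi^{n-1}(X)$. Well-definedness on orbits comes from the observation that every elementary matrix $\I_n+\lambda e_{ij}$ is joined to $\I_n$ inside $\GL_n(\Cr(X))$ by the path $t\mapsto\I_n+t\lambda e_{ij}$, so right-multiplication by any $E\in\Er_n(\Cr(X))$ produces a unimodular row homotopic to $v$ as a map to $\R^n\setminus\{0\}$. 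Surjectivity of $\Phi$ is immediate: any continuous map $X\to\Sr^{n-1}$ lifts to a unimodular row via the inclusion $\Sr^{n-1}\hookrightarrow\R^n\setminus\{0\}$.

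For injectivity, given $v,w$ with $\Phi(v)=\Phi(w)$, a homotopy between them provides a unimodular row $H$ over $\Cr(X\times[0,1])$. One invokes Vaserstein's bound $\sdim(\Cr(X))=d$ together with the Suslin-style patching/concordance results underlying van der Kallen's framework, exploiting the hypothesis $d\leqslant 2n-4$, to conclude that $v$ and $w$ lie in the same $\Er_n(\Cr(X))$-orbit. The group-homomorphism property is proved by matching the two constructions of the sum: van der Kallen's law uses Mennicke--Newman to bring $v$ and $w$ to the common form $(a,c_2,\ldots,c_n)$ and $(b,c_2,\ldots,c_n)$, with $a$ and $b$ comaximal modulo $(c_2,\ldots,c_n)$, and produces a representative of the sum whose last $n-1$ entries are $c_2,\ldots,c_n$; geometrically, this elementary-equivalent common form is precisely the algebraic avatar of Borsuk's general-position step deforming the pair $(f,g)\colon X\to\Sr^{n-1}\times\Sr^{n-1}$ into the wedge $\Sr^{n-1}\vee\Sr^{n-1}$, while the algebraic combination of first coordinates corresponds to postcomposing with the fold map.

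The main obstacle is this last alignment step: one must explicitly identify the output of the Mennicke--Newman construction, after postcomposition with $\R^n\setminus\{0\}\to\Sr^{n-1}$, with the result of Borsuk's fold-map construction. This requires producing an explicit homotopy between the two sums as maps $X\to\Sr^{n-1}$, which is where the dimension hypothesis $d\leqslant 2n-4=2(n-2)$ again enters, now via general position against the $(n-2)$-connectedness of $\Sr^{n-1}$. Once this matching is secured, $\Phi$ is a bijective homomorphism and hence a group isomorphism.
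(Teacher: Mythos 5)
The paper does not prove this theorem: it is cited from \cite[Theorem 7.7]{vandK} and serves only as motivation for the motivic analogue (Theorem~\ref{groupiso}), so there is no in-paper proof to compare against. Your outline stands on its own, and its overall shape---bijectivity via ``homotopic implies $\Er_n$-equivalent,'' and the homomorphism property via aligning Mennicke--Newman with Borsuk's fold map---parallels the strategy the paper uses for the motivic version.

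That said, there are two genuine gaps. For injectivity, your concordance $H$ lives in $\Um_n(\Cr(X\times[0,1]))$, a ring of stable dimension $d+1$ rather than $d$ as you write; and, more importantly, the implication ``restrictions of a unimodular row over $\Cr(X\times[0,1])$ to the two ends are $\Er_n(\Cr(X))$-equivalent'' does not follow from a stable range bound alone. This is a genuine theorem---the topological analogue of the naive-homotopy-invariance result \cite[Theorem 2.1]{JF} quoted as Theorem~\ref{naiv}, or of Vaserstein/Vorst-type homotopy invariance---and it must be located or proved, not merely gestured at. For the homomorphism property, you yourself flag the alignment of the Mennicke--Newman normal form with Borsuk's fold map as the main obstacle; in the motivic version this alignment is precisely the content of the auxiliary scheme $V_{2n}$ and Lemma~\ref{modelfold} of Subsection~\ref{grouphom}, and nothing in your proposal supplies the required explicit homotopy in the topological setting.
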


This theorem gave van der Kallen the intuition that one should be able to think about the group law on $\Um_n(A)/\Er_n(A)$ in terms of yet-to-be-defined cohomotopy groups for schemes which would shed light on the somewhat difficult-to-follow computations of \cite{vandK}. However, the technology to do homotopy theory in the category of schemes was not available at the time of van der Kallen's work so that this remained a heuristic for some time.

\paragraph*{Motivic homotopy theory.} The situation has changed in the last thirty years and we now have a well-established homotopy theory for schemes: the \emph{$\A^1$-homotopy theory} initiated by Morel and Voevodsky in \cite{MoVo}. In particular, assuming one is working over a field $k$, one has (several) models of the topological sphere in $\A^1$-homotopy theory, one of those being the $k$-scheme $\A^n\setminus 0$. This allows one to define cohomotopy groups $[X,\A^n\setminus 0]_{\A^1}$ if $X$ is of suitable ($\A^1$-cohomological) dimension: this is done by Asok and Fasel in \cite{AF} and we state the results that will be useful to us in Section \ref{coh}. 

\paragraph*{The motivic comparison theorem.} Now let $A$ be a smooth affine $k$-algebra, that is, we assume that $A$ is of finite type over $k$, and let $X$ denote its spectrum. It is easy to see that unimodular rows in the same orbit under the action of $\Er_n(A)$ induce $\A^1$-homotopic morphisms $X\rightarrow\A^n\setminus 0$ so that we have a well-defined map \[\Psi:\Um_n(A)/\Er_n(A)\rightarrow[X,\A^n\setminus 0]_{\A^1}.\] Assume further that the Krull dimension $d$ of $A$ satisfies $d\leqslant 2n-4$. Then since $A$ is Noetherian, we have the inequality $\sdim(A)\leqslant d$, see for instance \cite[Th\'eor\`eme 1]{BBproj}, hence the orbit set $\Um_n(A)/\Er_n(A)$ has a group structure following van der Kallen, while its definition and results of Nisnevich on the homonymous cohomological dimension show that the $\A^1$-cohomological dimension of a scheme is bounded above by its Krull dimension, hence $[X,\A^n\setminus 0]_{\A^1}$ also has a group structure following Asok and Fasel. The main result of the first part of this paper confirms van der Kallen's intuition and may be stated as follows.

\begin{theor}[Theorem \ref{groupiso}]
Assume that the ring $A$ is of Krull dimension $d\leqslant 2n-4$. Then the map $\Psi$ is a group isomorphism.
\end{theor}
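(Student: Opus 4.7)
The plan is to factor $\Psi$ as an $\A^1$-representability bijection composed with a comparison between two equivalence relations on $\Um_n(A)$, and to verify the group-law compatibility by matching the motivic co-H-space structure of $\A^n\setminus 0$ with the Mennicke--Newman recipe.

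\smallskip

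\textbf{Step 1 (Set-theoretic presentation of the target).} I would first appeal to an Asok--Hoyois--Wendt style representability result---to be recorded in Section~\ref{coh}---giving, for $X$ smooth affine,
\[
[X,\A^n\setminus 0]_{\A^1} \;\simeq\; \Hom_k(X,\A^n\setminus 0)/\!\sim_{\mathrm{n}},
\]
where $\sim_{\mathrm{n}}$ is the equivalence relation generated by naive $\A^1$-homotopies $H\colon X\times\A^1\to\A^n\setminus 0$. Since a $k$-morphism $X\to\A^n\setminus 0$ is precisely a unimodular row in $A^n$, this presents the right-hand side as $\Um_n(A)/\!\sim_{\mathrm{n}}$, and $\Psi$ is induced by the identity on $\Um_n(A)$. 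Surjectivity of $\Psi$ is then automatic, and its bijectivity reduces to showing that the $\Er_n(A)$-orbit relation coincides with $\sim_{\mathrm{n}}$.

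\smallskip

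\textbf{Step 2 (Elementary $\Leftrightarrow$ naively $\A^1$-homotopic).} That elementary equivalence implies naive $\A^1$-homotopy is immediate: each elementary matrix provides an explicit $\A^1$-homotopy by scaling its off-diagonal entry. For the converse, it is enough to prove that any $H(t)=(h_1(t),\dots,h_n(t))\in\Um_n(A[t])$ satisfies $[H(0)]=[H(1)]$ in $\Um_n(A)/\Er_n(A)$, i.e.\ the $\A^1$-homotopy invariance of the functor $A\mapsto\Um_n(A)/\Er_n(A)$ on smooth $k$-algebras of Krull dimension $\leqslant 2n-4$. This is not a formal consequence of van der Kallen's theorem applied to $A[t]$ (whose stable dimension may exceed $2n-4$), but should be handled by a parametric Mennicke--Newman argument in the spirit of \cite{vdK}: one resolves $H(t)$ by elementary transformations depending polynomially on $t$ and then specialises at $t=0,1$.

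\smallskip

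\textbf{Step 3 (Additivity).} To check that $\Psi$ is a group homomorphism, I would compare the two constructions of the sum. On the motivic side, the Asok--Fasel law is a Borsuk-type sum coming from a co-H-space structure on $\A^n\setminus 0$: a pair $(f,g)\colon X\to(\A^n\setminus 0)\times(\A^n\setminus 0)$ is lifted $\A^1$-homotopically to a map $X\to(\A^n\setminus 0)\vee(\A^n\setminus 0)$---possible thanks to the $\A^1$-$(n-2)$-connectivity of $\A^n\setminus 0$ and the cohomological-dimension bound $d\leqslant 2(n-2)$---and one composes with the fold map. On the algebraic side, van der Kallen produces, from a pair $(v,w)$, representatives $v',w'$ in the respective $\Er_n$-orbits with ``disjoint support'' (the locus where $v'\neq e_1$ and the locus where $w'\neq e_1$ together cover $\Spec A$), so that $(v',w')$ factors \emph{on the nose} through the wedge. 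I would then check that composing this factorisation with the fold reproduces the Mennicke--Newman sum, so that the two group laws agree after passing through $\Psi$.

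\smallskip

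\textbf{Main obstacle.} Step 1 follows from available representability inputs. The real work is in Step 2, where the parametric Mennicke--Newman lifting has to be carried out carefully because $A[t]$ does not satisfy the hypotheses under which van der Kallen's group structure is directly available, and in Step 3, where one needs a concrete geometric model of the motivic co-H-space structure of $\A^n\setminus 0$ in which the Mennicke--Newman normal form is manifestly the desired wedge lift. Getting these two ingredients to match is where the technical bookkeeping will concentrate.
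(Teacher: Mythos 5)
Your Steps 1 and 2 have the right shape and broadly match the paper's first subsection: the identification $[X,\A^n\setminus 0]_{\A^1}\cong\Hom_{\A^1}(X,\A^n\setminus 0)$ is proved via Asok--Hoyois--Wendt applied to the affine model $Q_{2n-1}\to\A^n\setminus 0$, and the identification of naive homotopy classes with $\Er_n$-orbits is precisely Fasel's theorem \cite[Theorem 2.1]{JF}. You are right that the latter is the hard part and is not a formal consequence of van der Kallen applied to $A[t]$, but a ``parametric Mennicke--Newman'' argument will not get you there: what is actually used is Suslin's local--global principle and Vorst-type extendedness for elementary groups over smooth $k$-algebras, which is a genuinely different kind of input.

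The real gap is in Step 3. The Mennicke--Newman normalisation puts the pair into the form $v'=(x,a_2,\ldots,a_n)$, $w'=(1-x,a_2,\ldots,a_n)$, and such a pair does \emph{not} factor through the wedge on the nose: for a pointwise factorisation through $(\A^n\setminus 0)\vee(\A^n\setminus 0)\subseteq(\A^n\setminus 0)\times(\A^n\setminus 0)$ one would need, at every point of $X$, one of the two components to equal the base point $(0,\ldots,0,1)$, which forces $a_2=\cdots=a_{n-1}=0$ and $a_n=1$ identically. The sharing of the tail $(a_2,\ldots,a_n)$ is exactly the obstruction. The paper instead introduces the strictly larger open $V_{2n}=U_n\times(\A^n\setminus 0)\cup(\A^n\setminus 0)\times U_n$ through which the Mennicke--Newman step (1) output \emph{does} factor on the nose, and then proves (Lemma \ref{conninc}, using the purity cofibre sequence and the motivic Blakers--Massey theorem, so requiring $n\geqslant 4$) that the inclusion $V_{2n}\hookrightarrow(\A^n\setminus 0)\times(\A^n\setminus 0)$ is $\A^1$-$(2n-4)$-connected, so that $[X,V_{2n}]_{\A^1}\to[X,(\A^n\setminus 0)^{\times 2}]_{\A^1}$ is bijective for $X$ of dimension $d\leqslant 2n-4$; the explicit scheme morphism $\widetilde{q_n}$ on the Jouanolou device of $V_{2n}$, built from Mennicke--Newman step (2), is then shown to restrict to the fold on the wedge (Lemma \ref{modelfold}). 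This is the piece your proposal is missing. You also silently exclude $n=3$ (allowed by the hypothesis $d\leqslant 2n-4$ when $d\leqslant 2$), where the Blakers--Massey estimate is too weak and the paper instead compares both sides with $\Hr^2(X,\mathbf{K}_3^\MW)$ via Fasel's explicit generator and the Hurewicz map.
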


\subsection*{Structure of the proof}

The proof of Theorem \ref{groupiso} occupies Section \ref{proof}. It is divided in two subsections.
\begin{itemize}
	\item In the first subsection, we show that $\Psi$ is a bijection. The proof is split in two parts. First, given a smooth $k$-algebra $A$, we compare the orbit set $\Um_n(A)/\Er_n(A)$ to the set of \emph{naive} $\A^1$-homotopy classes of morphisms $X\rightarrow\A^n\setminus 0$: this is a particular case of \cite[Theorem 2.1]{JF}. Second, we appeal to a result of Asok, Hoyois and Wendt \cite[Theorem 2.2.4]{AHW2}, more precisely its consequence \cite[Corollary 4.2.6]{AHW2}, that states that \emph{with source an affine scheme} and \emph{with certain targets}, for example $\A^n\setminus 0$, naive $\A^1$-homotopy classes and ``true'' $\A^1$-homotopy classes actually coincide. This phenomenon was already observed by Morel in \cite[Remark 8.10]{Morel} for $k$ perfect and $n\neq 2$ in the particular case of target $\A^n\setminus 0$.
	\item In the second subsection, we prove that $\Psi$ is a group homomorphism. We introduce an open subset $V_{2n}$ of $(\A^n\setminus 0)\times(\A^n\setminus 0)$: this $k$-scheme allows us to produce an explicit, scheme-theoretic model of the fold map $(\A^n\setminus 0)\vee(\A^n\setminus 0)\rightarrow\A^n\setminus 0$ using the procedure of the Mennicke--Newman lemma. This model of the fold map is the link between cohomotopy groups and the group structure on the orbit set. To see that the datum of maps $u,v:X\rightarrow\A^n\setminus 0$ for $X$ small is equivalent (up to homotopy) to the datum of a map $X\rightarrow V_{2n}$, that is, that we can lift maps $X\rightarrow(\A^n\setminus 0)\times(\A^n\setminus 0)$ to maps $X\rightarrow V_{2n}$ (up to homotopy), we need to study the $\A^1$-connectedness of the map $V_{2n}\rightarrow(\A^n\setminus 0)\times(\A^n\setminus 0)$. This is done by using the motivic Blakers--Massey theorem to compare the fibre and the cofibre of this inclusion, the $\A^1$-connectedness of the cofibre being controlled by the purity theorem.
\end{itemize}

\subsection*{Comparing cohomotopy theories}

After this investigation of the cohomotopy theory defined by unimodular rows or, equivalently, by $\A^n\setminus 0$, we compare it to the cohomotopy theory studied in \cite{AF}. The latter is given by another model of sphere, the quadric \[Q_{2n}=k[x_1,\ldots,x_n,y_1,\ldots,y_n,z]/\langle x_1y_1+\cdots+x_ny_n-z(1-z)\rangle.\] This comparison is made more convenient by replacing $\A^n\setminus 0$ by the quadric \[Q_{2n-1}=k[x_1,\ldots,x_n,y_1,\ldots,y_n]/\langle x_1y_1+\cdots+x_ny_n-1\rangle\] whose projection on the first $n$ factors yields a morphism to $\A^n\setminus 0$ which is in fact an isomorphism in the $\A^1$-homotopy category (slightly more can be said about this map, see our discussion of Jouanolou devices below): thus, this replacement is harmless from the point of view of cohomotopy theory. 

With this comparison in view, we produce morphisms of $k$-schemes $\eta_n:Q_{2n+1}\rightarrow Q_{2n}$ and $\mu_n:\mathbb{G}_m\times Q_{2n}\rightarrow Q_{2n+1}$ for all $n>0$, where $\mathbb{G}_m$ is the multiplicative group over $k$. 

First, Morel's computation of $\A^1$-homotopy sheaves of motivic spheres gives an isomorphism $[Q_{2n+1},Q_{2n}]_{\A^1}\simeq\Kr_{-1}^\MW(k)$ where the left-hand side denotes the hom-set between $Q_{2n+1}$ and $Q_{2n}$ in the $\A^1$-homotopy category (see below for more details on notations) and $\Kr_*^\MW(k)$ denotes the Milnor--Witt $\Kr$-theory of $k$ (\cite[Definition 3.1]{Morel}): $\eta_n$ corresponds to the generator $\eta$ of Milnor--Witt $\Kr$-theory under this isomorphism, see Theorem \ref{genofkmin1}, which explains our interest in this map. Furthermore, in \cite[Section 7]{bhatwadekar_sridharan_2000}, given a Noetherian $\Q$-algebra $A$ of Krull dimension $d$ where $d$ an \emph{even} integer, the authors construct a group homomorphism $\Psi$ from $\Um_{d+1}(A)/\Er_{d+1}(A)$ to the \emph{Euler class group} $\Er^d(A)$ of $A$, the theory of these groups being the object of \cite{bhatwadekar_sridharan_2000}. The map $\Psi$ is constructed using the \emph{Euler class} of the rank $d$ projective module induced by an element of $\Um_{d+1}(A)$, namely the kernel of the corresponding surjective homomorphism of $A$-modules. This class is defined in \cite[Section 4]{bhatwadekar_sridharan_2000} and the authors observe in the course of the proof of \cite[Proposition 7.5]{bhatwadekar_sridharan_2000} that it is trivial for modules of odd rank: thus, it was not clear how to define such a map for $n$ odd. This was accomplished in \cite{DAS2015185} where the authors construct a map $\phi:\Um_{d+1}(A)/\Er_{d+1}(A)\rightarrow\Er^d(A)$ for \emph{any} $d$ and for any Noetherian ring $A$ of Krull dimension $d$ by introducing a \emph{strong} Euler class following a suggestion of Fasel; the same morphism was considered in \cite{VANDERKALLEN201565}, although a different strategy was used to prove that it is well-defined. The map $\eta_n$ then provides a geometric interpretation of $\phi$, see Proposition \ref{linkbetweenetamor}: this interpretation is made possible by the description of the cohomotopy theory defined by $Q_{2n}$ in terms of Euler class groups obtained in \cite[Section 3]{AF}, see in particular \cite[Theorem 3.1.13]{AF}. 

On the other hand, the morphism $\mu_n:\mathbb{G}_m\times Q_{2n}\rightarrow Q_{2n+1}$ also gives geometric meaning to the set-theoretic map $\delta$ from Euler class groups to orbit sets of unimodular rows constructed in \cite[Subsection 2.5]{DTZ1} and allows us to produce a new proof of the improvement of \cite[Corollary 2.11]{DTZ1} stated in \cite[Theorem 6.3]{das2018euler}, namely that $\delta_A$ is a group homomorphism. In addition, slightly modifying $\mu_n$ yields an $\A^1$-weak equivalence $\mathbb{G}_m\wedge Q_{2n}\rightarrow Q_{2n+1}$: see Theorem \ref{eqfaibleqpgmqi}.

\subsection*{Acknowledgements}

I would like to thank my advisor Jean Fasel without whom this work would certainly not have seen the light of day. His generosity in sharing his ideas and his patience in answering my numerous naive (in the non-technical sense) questions can hardly be overstated. It is also a pleasure to thank the referee for a very careful review to which the significantly improved readability of the present paper owes much (thus any remaining opacity rests solely on me). Finally, a significant part of this article was written while I was an \emph{élève} of the École normale supérieure: I also wish to thank the École for its material support.

\section{Some notations and facts of $\A^1$-homotopy theory}

\paragraph*{Categorical considerations.} A \emph{cogroup} object in a category $\mathsf{C}$ is a group object in the opposite category $\mathsf{C}^{op}$; a \emph{cocommutative} cogroup object in $\mathsf{C}$ is a commutative group object in $\mathsf{C}^{op}$. By definition, if $X$ is a (cocommutative) cogroup object in $\mathsf{C}$, then $\Hom_{\mathsf{C}}(X,Y)$ has a natural (commutative) group structure for any object $Y$ of $\mathsf{C}$. If $\mathsf{C}$ has finite coproducts hence an initial object $0$, a (cocommutative) cogroup may be described as an object $X$ of $\mathsf{C}$ together with morphisms $c:X\rightarrow X\coprod X$ (comultiplication or sometimes coaddition in the case of a cocommutative cogroup), $i:X\rightarrow X$ (coinverse) and $e:X\rightarrow 0$ (counit) such that, seen in $\mathsf{C}^{op}$, $(X,c,i,e)$ is a (commutative) group object. In the context of homotopy theory, we will speak of (cocommutative) $h$-cogroups, that is, (cocommutative) cogroup objects in the homotopy category: these are discussed for general model categories in \cite{Hovey}.

We have the following entirely formal lemma about functors and cogroups: 

\begin{lem}\label{functorscommcoprodcogrp}
If $\mathsf{C}$ and $\mathsf{C}'$ have finite coproducts and if $F:\mathsf{C}\rightarrow\mathsf{C}'$ is a functor that commutes with finite coproducts, then $F$ preserves cogroup objects in an obvious sense and given a cogroup object $X$ in $\mathsf{C}$, the map $\Hom_{\mathsf{C}}(X,Y)\rightarrow\Hom_{\mathsf{C}'}(F(X),F(Y))$ is a group homomorphism for any object $Y$ of $\mathsf{C}$. 
\end{lem}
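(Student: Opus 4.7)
The plan is to unpack the formal content of the statement. The key observation is that a functor $F$ commuting with finite coproducts in particular sends the empty coproduct—the initial object $0$ of $\mathsf{C}$—to the initial object $0'$ of $\mathsf{C}'$, and for every object $X$ of $\mathsf{C}$ yields a canonical isomorphism $\iota_X : F(X\coprod X)\xrightarrow{\cong} F(X)\coprod F(X)$ compatible with the coproduct inclusions. Given a cogroup structure $(c,i,e)$ on $X$, I would then define the induced data on $F(X)$ as follows: take $c_F = \iota_X\circ F(c) : F(X)\rightarrow F(X)\coprod F(X)$ for the comultiplication, $i_F = F(i)$ for the coinverse, and $e_F = F(e)$ (composed with the canonical identification $F(0)\cong 0'$) for the counit.

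The next step is to check that $(F(X), c_F, i_F, e_F)$ satisfies the cogroup axioms. Each axiom---coassociativity, counit laws, coinverse laws---is a commutative diagram in $\mathsf{C}$ built out of $c$, $i$, $e$, coproduct inclusions, and identity morphisms (together with the associator isomorphism $(X\coprod X)\coprod X \cong X\coprod(X\coprod X)$). Applying $F$ to each such diagram produces a commutative diagram in $\mathsf{C}'$, and the naturality of $\iota$ together with the hypothesis that $F$ commutes with coproducts translates the $F$-images of coproduct inclusions and associator morphisms into the corresponding structure maps of $F(X)\coprod F(X)$, $F(X)\coprod F(X)\coprod F(X)$, and so on. This step is pure bookkeeping and uses nothing beyond the hypothesis on $F$.

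For the second assertion, I would use the explicit form of the group law on $\Hom_\mathsf{C}(X,Y)$: given $f,g:X\rightarrow Y$, their sum with respect to the cogroup structure $(c,i,e)$ is $\langle f,g\rangle\circ c$, where $\langle f,g\rangle : X\coprod X\rightarrow Y$ is the morphism determined by the universal property of the coproduct. Applying $F$ and using once more that $F$ commutes with coproducts gives the identity $F(\langle f,g\rangle) = \langle F(f), F(g)\rangle\circ\iota_X$, whence
\[F(f+g) = F(\langle f,g\rangle)\circ F(c) = \langle F(f), F(g)\rangle\circ\iota_X\circ F(c) = \langle F(f), F(g)\rangle\circ c_F = F(f)+F(g),\]
as desired. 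The entire argument is formal; there is no genuine obstacle beyond keeping careful track of the natural isomorphisms provided by the hypothesis that $F$ preserves finite coproducts, which is why the lemma is labelled as ``entirely formal''.
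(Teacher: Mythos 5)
Your proof is correct and is exactly the formal argument one expects: reduce everything to the universal property of coproducts, the preservation isomorphisms $\iota_X$, and the identity $F(\langle f,g\rangle)=\langle F(f),F(g)\rangle\circ\iota_X$. The paper in fact gives no proof at all — the lemma is introduced with the words ``the following entirely formal lemma'' and left unproved — so your write-up simply supplies the bookkeeping the author chose to omit, and it does so accurately. One small point worth noting for completeness: you verify that $F$ respects the binary operation, which already suffices for a homomorphism of groups, though you could also observe that $F$ sends the neutral element $X\xrightarrow{e}0\to Y$ of $\Hom_{\mathsf{C}}(X,Y)$ to the neutral element of $\Hom_{\mathsf{C}'}(F(X),F(Y))$ via the identification $F(0)\cong 0'$.
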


\begin{proof}
Let $(X,c,i,e)$ be a cogroup in $\mathsf{C}$. By assumption on $F$, the map $\varphi:F(X)\coprod F(X)\rightarrow F(X\coprod X)$ induced by application of $F$ to the structure maps $X\rightarrow X\coprod X$ is an isomorphism. Thus $c$ induces a map $F(X)\xrightarrow{F(c)}F\left(X\coprod X\right)\xrightarrow{\varphi^{-1}}F(X)\coprod F(X)$. Similarly, $i$ induces a map $F(i):F(X)\rightarrow F(X)$ and $e$ induces a map $F(X)\rightarrow F(0)$ where $F(0)$ is initial because $F$ commutes with finite (hence empty) coproducts. These data endow $F(X)$ with the structure of a cogroup by commutation of $F$ with finite coproducts. Now let $Y$ be an object of $\mathsf{C}$. We denote the group laws on $\Hom_{\mathsf{C}}(X,Y)$ and $\Hom_{\mathsf{C}'}(F(X),F(Y))$ by $*$. Then we have $F(f)*F(g)=F(f)\coprod F(g)\circ(\varphi^{-1}\circ F(c))$ since the comultiplication of $F(X)$ is $\varphi^{-1}\circ F(c)$. By definition, $F(f)\coprod F(g)\circ\varphi^{-1}:F(X\coprod X)\rightarrow F(X)\coprod F(X)\rightarrow F(Y)$ is equal to $F(f\coprod g)$. Thus $F(f)*F(g)=F(f\coprod g)\circ F(c)=F(f\coprod g\circ c)=F(f*g)$ as required.
\end{proof}

Now let $\mathsf{A}$ be an additive category with finite direct sums denoted using the symbol $\bigoplus$ (thus we make no notational distinction between finite coproducts and finite products). Any object $X$ of $\mathsf{A}$ then has a natural cocommutative cogroup structure whose underlying coaddition $c^{add}:X\rightarrow X\oplus X$ is the map induced by $(\Id_X,\Id_X)$.

\begin{lem}\label{lem:uniqueness_cogroup_additive_cat}
Let $(X,c,i,e)$ be a cogroup structure on the object $X$ of the additive category $\mathsf{A}$. Then $c=c^{add}$.
\end{lem}

In particular, for any object $Y$ of $\mathsf{A}$, the group structure on the set $\Hom_{\mathsf{A}}(X,Y)$ induced by $c$ is precisely the group structure provided by the fact that $\mathsf{A}$ is an additive category: there are no “exotic” cogroup objects in an additive category. 

\begin{proof}
We denote by $\Hom_{\mathsf{A}}^c(X,X)$ the hom-set of morphisms from $X$ to $X$ in the category $\mathsf{A}$ endowed with the group structure induced by $c$ and by $\Hom_{\mathsf{A}}(X,X)$ the group of morphisms in the additive category $\mathsf{A}$ (whose group structure is thus induced by $c^{add}$). By definition, the neutral element of $\Hom_{\mathsf{A}}^c(X,X)$ is the morphism $X\xrightarrow{e}0\rightarrow X$. Since $0$ is a zero object of $\mathsf{A}$, it is also the neutral element of the group $\Hom_{\mathsf{A}}(X,X)$. Now denote by $*$ the group law underlying $\Hom_{\mathsf{A}}^c(X,X)$. Consider the structure morphisms $i_1,i_2:X\rightarrow X\oplus X$ and $p_1,p_2:X\oplus X\rightarrow X$ of the direct sums. Denoting by $*$ the group law on $\Hom_{\mathsf{A}}(X,X)$. By definition of a cogroup, $*$ is bilinear for composition, hence $p_1\circ(i_1*i_2)=(p_1\circ i_1)*(p_1\circ i_2)=p_1\circ i_1=\Id_X$ because $p_1\circ i_2$ is the neutral element of $\Hom_{\mathsf{A}}(X,X)$ hence of $\Hom_{\mathsf{A}}^c(X,X)$ as already observed. Similarly, $p_2\circ(i_1*i_2)=\Id_X$ and as a result, the equality $i_1*i_2=c^{add}=(\Id_X,\Id_X):X\rightarrow X\oplus X$ holds. On the other hand, $i_1*i_2=i_1\oplus i_2\circ c$ where $i_1\oplus i_2:X\oplus X\rightarrow X\oplus X$ is equal to $\Id_{X\oplus X}$ by definition. Thus $c=i_1*i_2=c^{add}$.
\end{proof}

For instance, we may apply Lemma \ref{functorscommcoprodcogrp} with $\mathsf{C}'$ additive, in which case the target of the map of said lemma has an unambiguous group structure thanks to Lemma \ref{lem:uniqueness_cogroup_additive_cat}.

\paragraph*{General notations.} Let $A$ be a ring. We denote by $A^\times$ be the group of invertible elements of $A$. If $(x_1,\ldots,x_n)$ is a family of elements of an $A$-module $M$—for instance an ideal of $A$—, we denote by $\langle x_1,\ldots,x_n\rangle$ the submodule of $M$ generated by $(x_1,\ldots,x_n)$. We say that the homomorphism $f:A^n\rightarrow M$ which sends the $i$-th vector of the standard basis of $A^n$ to $x_i$ is \emph{induced} by $(x_1,\ldots,x_n)$.

We let $k$ be a field, fixed throughout the paper (specific assumptions on $k$ are introduced as required). Given $k$-schemes $X$ and $Y$, we denote by $X\times Y$ the fibre product $X\times_{\Spec k} Y$. The $k$-algebra of global sections of an affine $k$-scheme $X$ is denoted by $k[X]$; the residue field of a $k$-scheme $X$ at a point $x$ is denoted by $k(x)$. The codimension of a point $x\in X$ in $X$, which is by definition the codimension of the irreducible closed subset $\overline{\{x\}}$ of $X$, is denoted by $\codim_X x$.

Let $Z$ be an affine $k$-scheme. Then the datum of a quasi-coherent sheaf on $Z$ is equivalent to the datum of a $k[Z]$-module: we usually make no difference in notation between the two objects, denoting by $M$ the quasi-coherent sheaf induced by the $k[Z]$-module $M$ and vice versa, and we may loosely speak of $M$ as a vector bundle on $Z$ if $M$ is a projective $k[Z]$-module of finite rank in view of the Serre--Swan correspondence between finite projective $k[Z]$-modules and locally free $\mathscr{O}_Z$-modules of finite rank (\cite[\href{https://stacks.math.columbia.edu/tag/00NX}{Tag 00NX}]{stacks-project}, equivalence between (2) and (7)). We let $\A^n$ denote the $k$-scheme $\A_k^n$, and similarly with $\A^n\setminus 0$ and $\mathbb{P}^n$.

Given a morphism of rings $A\rightarrow B$, we denote the $B$-module of Kähler differentials of $B$ over $A$ by $\Omega_{B/A}$. If $A$ is a field and $B$ is smooth over $A$ (and connected), then $\Omega_{B/A}$ is projective of constant rank $d=\dim B$ by \cite[\href{https://stacks.math.columbia.edu/tag/02G1}{Tag 02G1}]{stacks-project} and the Serre--Swan correspondence cited above; we denote by $\omega_{B/A}$ its determinant, that is, the top exterior power $\omega_{B/A}=\bigwedge^d\Omega_{B/A}$ of $\Omega_{B/A}$.

\paragraph*{The $\A^1$-homotopy categories.} All the $k$-schemes that we consider are assumed to be of finite type. By \emph{smooth $k$-scheme}, we mean a $k$-scheme that is smooth and separated (and of finite type). We let $\mathsf{Sm}_k$ denote the category of smooth $k$-schemes with all $k$-scheme morphisms as morphisms; we endow this category with the structure of a site using the Nisnevich topology \cite[Subsection 1.1]{Nisnevich1989}. We denote by $\mathsf{Spc}_k$ the category of simplicial presheaves on $\mathsf{Sm}_k$; an object of $\mathsf{Spc}_k$ is a \emph{space} over $k$ or $k$-space. Thus both smooth $k$-schemes, by their functor of points (viewed as a presheaf of sets, hence of discrete simplicial sets) restricted to $\mathsf{Sm}_k$, and simplicial sets, by the induced constant simplicial presheaf on $\mathsf{Sm}_k$, are examples of $k$-spaces. We finally let $\mathcal{H}(k)$ be the \emph{$\A^1$-homotopy category} of spaces as constructed in, \emph{e.g.}, \cite{AHW1}. If $\mathscr{X}$ and $\mathscr{Y}$ are spaces over $k$, we denote by $[\mathscr{X},\mathscr{Y}]_{\A^1}$ the hom-set in $\mathcal{H}(k)$ between $\mathscr{X}$ and $\mathscr{Y}$. An $\A^1$-weak equivalence is an isomorphism in $\mathcal{H}(k)$; we extend this terminology to morphisms of spaces whose image in $\mathcal{H}(k)$ is an isomorphism.

A \emph{point} of a space $\mathscr{X}$ is a morphism $*\rightarrow\mathscr{X}$ of simplicial presheaves, where $*$ is the final object of $\mathcal{H}(k)$; a space equipped with a point is a pointed space whose point is called the base point; a map between spaces which commutes with points is a map of pointed spaces. We denote by $\mathsf{Spc}_{k,\bullet}$ the category of pointed spaces. There is a pointed version $\mathcal{H}_\bullet(k)$ of $\mathcal{H}(k)$: the hom-set between $(\mathscr{X},x)$ and $(\mathscr{Y},y)$ in $\mathcal{H}_\bullet(k)$ is denoted by $[(\mathscr{X},x),(\mathscr{Y},y)]_{\A^1,\bullet}$. Often, the base point underlying a pointed space is implicit in its notation, that is, we write $\mathscr{X}$ for $(\mathscr{X},x)$.

Any space $\mathscr{X}$ can be turned into a pointed space $\mathscr{X}_+$ by adjoining a disjoint base point. If $\mathscr{X}$ and $\mathscr{Y}$ are pointed spaces, the wedge-sum $\mathscr{X}\vee\mathscr{Y}$ and the smash-product and $\mathscr{X}\wedge\mathscr{Y}$ are defined as usual. We denote by $\mathrm{S}^1$ the simplicial circle, pointed as usual and viewed as a space using the associated constant presheaf, and $\mathrm{S}^n$ the wedge product $\mathrm{S}^1\wedge\cdots\wedge\mathrm{S}^1$ of $n$ copies of $\mathrm{S}^1$. We also set $\Sigma X=\Sr^1\wedge X$ for any pointed space $k$.

\begin{exe}\label{exe:homotopy_type_punctured_a^n}
Let $n$ be a positive integer. Then we turn $\A^n\setminus 0$ into a pointed space with the point $(0,\ldots,0,1)\in(\A^n\setminus 0)(k)$; there is a pointed $\A^1$-weak equivalence $\A^{n}\setminus 0\simeq\Sr^{n-1}\wedge\mathbb{G}_m^{\wedge n}$ by \cite[§3, Example 2.20]{MoVo}.
\end{exe}

For $\mathscr{X}$ a pointed space, we set $\Omega\mathscr{X}=\mathbf{R}\Map(\mathrm{S}^1,\mathscr{X})$ where $\Map$ denotes the derived pointed mapping space in the simplicial category of simplicial presheaves and the letter $\mathbf{R}$ indicates that we take the derived functor; this assignment extends to a \emph{loop space} functor $\Omega$. Looping and suspension are adjoint, hence there is a functorial adjunction unit morphism $\mathscr{X}\rightarrow\Omega\Sigma\mathscr{X}$ (in $\mathcal{H}_\bullet(k)$) for any pointed space $\mathscr{X}$.

Finally, we denote by $\mathcal{SH}^{\Sr^1}(k)$ the \emph{$\Sr^1$-stable $\A^1$-homotopy category} described for instance in \cite[p. 2787]{AF} and by $\Sigma^\infty:\mathcal{H}_\bullet(k)\rightarrow\mathcal{SH}^{\Sr^1}(k)$ the infinite $\Sr^1$-suspension functor. It commutes with finite coproducts, that is, with wedge sum: this follows from the corresponding fact about the suspension $\Sigma$ at the level of pointed spaces. The set of morphisms between objects $E$ and $E'$ of $\mathcal{SH}^{\Sr^1}(k)$ is denoted by $[E,E']_{\A^1,\bullet}$. We also denote by $\mathcal{SH}(k)$ the $\mathbb{P}^1$-stable $\A^1$-homotopy category as constructed in \emph{e.g.} \cite{Morel_trieste} and by $\Sigma_{\mathbb{P}^1}^\infty:\mathcal{H}_\bullet(k)\rightarrow\mathcal{SH}(k)$ the infinite $\mathbb{P}^1$-suspension functor. Both $\mathcal{SH}^{\Sr^1}(k)$ and $\mathcal{SH}(k)$ are additive categories. Moreover, there are localisation functors $\mathcal{H}(k)\rightarrow\mathcal{SH}^{\Sr^1}(k)\rightarrow\mathcal{SH}(k)$. An isomorphism of $\mathcal{SH}(k)$, or a morphism of spaces whose image in $\mathcal{SH}(k)$ is an isomorphism, is said to be a $\mathbb{P}^1$-stable $\A^1$-weak equivalence (thus if the word $\mathbb{P}^1$-stable is not present, then the morphism in question is required to be an isomorphism in the \emph{un}stable $\A^1$-homotopy category $\mathcal{H}(k)$).

\paragraph*{Jouanolou devices.} Let $X$ be a topological space. Recall the notion of torsor under a sheaf of groups \cite[\href{https://stacks.math.columbia.edu/tag/02FO}{Tag 02FO}]{stacks-project}. If $X$ is a scheme, by definition, the datum of the structure of a torsor under a sheaf of groups $\mathscr{G}$ on a morphism $p:Y\rightarrow X$ is the datum of the structure of a torsor under $\mathscr{G}$ on the sheaf $U\mapsto\{s:U\rightarrow Y,p\circ s=\Id_U\}$ of sections of $p$. We have the following lemma about the existence of affine models of smooth schemes.

\begin{lem}[Jouanolou--Thomason, \protect{\cite[Proposition 4.4]{Weibel_KH_theory}}]\label{jouanolou}
Let $X$ be a separated $k$-scheme of finite type. Then there exists a morphism $\widetilde{X}\rightarrow X$ of $k$-schemes possessing the following properties.
\begin{itemize}
	\item The $k$-scheme $\widetilde{X}$ is affine.
	\item The morphism $\widetilde{X}\rightarrow X$ is a Zariski-locally trivial torsor under a vector bundle on $X$.
\end{itemize}
We call such an $X$-scheme $\widetilde{X}$ a \emph{Jouanolou device} for $X$.
\end{lem}

Given a Jouanolou device $\widetilde{X}$ of $X$, $\widetilde{X}\rightarrow X$ is Zariski-locally on $X$ isomorphic to $\A_X^n\rightarrow X$; in particular, it is a smooth $X$-scheme. Hence if $X$ is a smooth $k$-scheme, then $\widetilde{X}$ is a smooth $k$-scheme and the structure map $\widetilde{X}\rightarrow X$ is an $\A^1$-homotopy equivalence. We make two additional observations about this lemma.
\begin{itemize}
	\item Base change does \emph{not} preserve Jouanolou devices. That is, given a Jouanolou device $p:\widetilde{X}\rightarrow X$ of an object $X$ of $\mathsf{Sm}_k$ and a $k$-scheme morphism $f:Y\rightarrow X$ where $Y$ is an object of $\mathsf{Sm}_k$, form the following fibre product:
	\begin{center}
	\begin{tikzcd}
	\widetilde{Y} \arrow[r,"g"] \arrow[d,"q"] & \widetilde{X} \arrow[d,"p"] \\
	Y \arrow[r,"f"]                           & X 
	\end{tikzcd}
	\end{center}
	Then $q:\widetilde{Y}\rightarrow Y$ is not a Jouanolou device in general. In fact, the only obstacle is that $\widetilde{Y}$ is not necessarily an affine $k$-scheme. Note however that if $f$ is an affine morphism, then so is $g$ by base change: since $\widetilde{X}$ is affine, this implies that $\widetilde{Y}$ is indeed affine in this case. We summarise this situation by saying that Jouanolou devices are preserved by base change by an affine morphism.
	\item Assume $X$ is a smooth affine $k$-scheme in Lemma \ref{jouanolou}. Let $p:\widetilde{X}\rightarrow X$ be a Jouanolou device of $X$. Then $p$ has a section: there exists a morphism $s:X\rightarrow\widetilde{X}$ of $k$-schemes such that $p\circ s=\Id_X$. Indeed, since $X$ is affine, given a quasi-coherent $\mathscr{O}_X$-module $\mathscr{E}$, any Zariski-locally torsor under $\mathscr{E}$ is trivial by \cite[Proposition 16.5.16]{EGA_IV_4} (the crucial input is the vanishing of the Zariski sheaf cohomology group $\Hr_{\mathrm{Zar}}^1(X,\mathscr{E})$ which follows from the fact that $X$ is affine). Thus the sheaf of sections of $p$ has a global section $s$ as indicated previously (see \cite[\href{https://stacks.math.columbia.edu/tag/02FP}{Tag 02FP}]{stacks-project}).
\end{itemize}

As a result of combining these items, we obtain the following corollary:

\begin{cor}\label{factbyjouan}
Let $Y$ and $X$ be smooth $k$-schemes such that $Y$ is an affine $k$-scheme and let $\widetilde{X}\rightarrow X$ be a Jouanolou device. Then any morphism $Y\rightarrow X$ of $k$-schemes factors through $\widetilde{X}\rightarrow X$ into a morphism $Y\rightarrow\widetilde{X}$. In other words, the map $\Hom_{\mathsf{Sm}_k}(Y,\widetilde{X})\rightarrow\Hom_{\mathsf{Sm}_k}(Y,X)$ induced by the morphism $\widetilde{X}\rightarrow X$ is surjective.
\end{cor}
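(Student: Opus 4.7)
The plan is to combine directly the two observations recorded just before the corollary, using a base-change/section argument. Given a morphism $f:Y\rightarrow X$, I form the fibre product
\[
\widetilde{Y}=Y\times_X\widetilde{X}
\]
with projections $q:\widetilde{Y}\rightarrow Y$ and $g:\widetilde{Y}\rightarrow\widetilde{X}$, and I look for a section of $q$. If such a section $s:Y\rightarrow\widetilde{Y}$ exists, then $g\circ s:Y\rightarrow\widetilde{X}$ lifts $f$, since $p\circ g\circ s=f\circ q\circ s=f$.

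The first step is to check that $q$ is again a Jouanolou device, to which the second bullet above can then be applied. By base change, $q$ is a torsor under the pullback to $Y$ of the vector bundle on $X$ under which $p$ is a torsor, so only affineness of $\widetilde{Y}$ is at issue. By the first bullet point, this reduces to verifying that $f:Y\rightarrow X$ is an affine morphism. This is where our convention that smooth $k$-schemes are separated enters: a morphism from an affine scheme to a separated scheme is automatically affine. Indeed, for any affine open $U\subseteq X$, the fibre product $f^{-1}(U)=Y\times_X U$ embeds as a closed subscheme of the affine scheme $Y\times U$ via the diagonal of $X$, which is a closed immersion by separatedness; hence $f^{-1}(U)$ is affine.

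Once $q$ is known to be a Jouanolou device of the affine $k$-scheme $Y$, the second observation applies: the underlying torsor is classified by a class in $\Hr_{\mathrm{Zar}}^1(Y,\mathscr{F})$ for a quasi-coherent sheaf $\mathscr{F}$ on the affine scheme $Y$, which vanishes by Serre's vanishing theorem, so $q$ admits a section $s$. Composing with $g$ yields the desired factorisation, proving surjectivity of $\Hom_{\mathsf{Sm}_k}(Y,\widetilde{X})\rightarrow\Hom_{\mathsf{Sm}_k}(Y,X)$.

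There is no real obstacle; the only point that requires a moment of care is the affineness of $f$, which hinges on the (easy to forget) separatedness assumption built into the definition of a smooth $k$-scheme. Everything else is mechanical from the two observations preceding the statement.
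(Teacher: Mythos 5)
Your proof is correct and follows the same route as the paper: form the base-changed Jouanolou device $\widetilde{Y}=Y\times_X\widetilde{X}$, use that $Y\to X$ is affine (since $Y$ is affine and $X$ separated) to get $\widetilde{Y}$ affine, take a section of $\widetilde{Y}\to Y$, and compose with $\widetilde{Y}\to\widetilde{X}$. The only minor difference is that you spell out the elementary argument for affineness of $f$ where the paper cites the Stacks Project.
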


\begin{proof}
Note that the morphism $Y\rightarrow X$ is automatically affine as $Y$ is an affine $k$-scheme and $X$ is a separated $k$-scheme (\cite[\href{https://stacks.math.columbia.edu/tag/01SG}{Tag 01SG}, (2)]{stacks-project}). Let $\widetilde{Y}$ be the fibre product $\widetilde{Y}=Y\times_X\widetilde{X}$; then the morphism $\widetilde{Y}\rightarrow Y$ is a Jouanolou device for $Y$ and since $Y$ is affine, it has a section $s:Y\rightarrow\widetilde{Y}$; composing with the structural morphism $\widetilde{Y}\rightarrow\widetilde{X}$ of the fibre product yields a factorisation of the map $Y\rightarrow X$. 
\end{proof}

\paragraph*{Naive $\A^1$-homotopy.} Of particular interest to us, among $\A^1$-homotopic morphisms, will be \emph{naively} $\A^1$-homotopic morphisms:

\begin{defi}[naive $\A^1$-homotopy]
Let $\mathscr{X}$ and $\mathscr{Y}$ be $k$-spaces and let $f,g:\mathscr{X}\rightarrow\mathscr{Y}$ be $k$-morphisms. We say that $f$ and $g$ are \emph{naively $\A^1$-homotopic} when there exists a map $H:\mathscr{X}\times\A^1\rightarrow\mathscr{Y}$ of $k$-spaces such that $H(\text{--},0)=f$ and $H(\text{--},1)=g$; such an $H$ is a naive $\A^1$-homotopy between $f$ and $g$. The relation ``being naively $\A^1$-homotopic'' generates an equivalence relation which we denote by $\sim_{\A^1}$ and the quotient of $\Hom_{\mathsf{Spc}_k}(\mathscr{X},\mathscr{Y})$ by $\sim_{\A^1}$ is denoted by $\Hom_{\A^1}(\mathscr{X},\mathscr{Y})$; we then have a quotient map $\Hom_{\mathsf{Spc}_k}(\mathscr{X},\mathscr{Y})\rightarrow\Hom_{\A^1}(\mathscr{X},\mathscr{Y})$. 
\end{defi}

The relation $\sim_{\A^1}$ is compatible with composition in the sense that if $f\sim_{\A^1} f'$, then $g\circ f\sim_{\A^1} g\circ f'$ and $f\circ h\sim_{\A^1}f'\circ h$ for any maps $g$ and $h$. It follows that we have a category $\mathcal{H}_\Nr(k)$ of $k$-spaces up to \emph{naive} $\A^1$-homotopy where for any $k$-spaces $\mathscr{X}$ and $\mathscr{Y}$, $\Hom_{\mathcal{H}_\Nr(k)}(\mathscr{X},\mathscr{Y})=\Hom_{\A^1}(\mathscr{X},\mathscr{Y})$. There is an obvious full functor $\mathsf{Spc}_k\rightarrow\mathcal{H}_\Nr(k)$ and any functor that sends naively $\A^1$-equivalent morphisms to equal morphisms factors uniquely through this functor.

Let $\mathscr{X}$ and $\mathscr{Y}$ be $k$-spaces. For $t\in k$, let $i_t:\mathscr{X}\rightarrow\mathscr{X}\times\A^1$ denote the inclusion of the $k$-rational point $t$ in $\A^1$. Then a morphism $H:\mathscr{X}\times\A^1\rightarrow\mathscr{Y}$ is a naive $\A^1$-homotopy between $f$ and $g$ if $f=i_0^*H=H\circ i_0$ and $g=i_1^*H=H\circ i_1$. Letting $p:\mathscr{X}\times\A^1\rightarrow\mathscr{X}$ denote the first projection, we have that $p\circ i_0=p\circ i_1$ hence $i_0^*\circ p^*=i_1^*\circ p^*$ as maps between hom-sets in $\mathcal{H}(k)$. On the other hand, $p$ is an $\A^1$-weak equivalence by definition of $\mathcal{H}(k)$, hence $p^*$ is a bijection. It follows that $i_0^*=i_1^*$, hence $f=g$ in $\mathcal{H}(k)$. As a result, the obvious functor $\mathsf{Spc}_k\rightarrow\mathcal{H}(k)$ factors (uniquely) through the functor $\mathsf{Spc}_k\rightarrow\mathcal{H}_\Nr(k)$. For general $\mathscr{X}$ and $\mathscr{Y}$, the map $\Hom_{\A^1}(\mathscr{X},\mathscr{Y})\rightarrow[\mathscr{X},\mathscr{Y}]_{\A^1}$ induced by the functor $\mathcal{H}_\Nr(k)\rightarrow\mathcal{H}(k)$ is not injective nor surjective. For example, the main result of \cite{ASENS_2012_4_45_4_511_0} shows that this map is not surjective if $\mathscr{X}=\mathscr{Y}=\mathbb{P}^1$. An crucial difference between $\mathcal{H}_\Nr(k)$ and $\mathcal{H}(k)$ is that in the first category, any morphism from $\mathscr{X}$ to $\mathscr{Y}$ comes from a morphism of \emph{spaces} from $\mathscr{X}$ to $\mathscr{Y}$.

\paragraph*{The quadrics $Q_n$.} We recall the definition of the quadrics alluded to above:

\begin{defi}\label{def:quadrics}
Let $n$ be a non-negative integer. We set
\[
\begin{array}{rcl}
Q_{2n+1} &=& \displaystyle\Spec k[x_1,\ldots,x_{n+1},y_1,\ldots,y_{n+1}]/\left\langle\sum_{i=1}^{n+1} x_iy_i-1\right\rangle\subseteq\A^{2n+2} \\
Q_{2n}   &=& \displaystyle\Spec k[x_1,\ldots,x_n,y_1,\ldots,y_n,z]/\left\langle\sum_{i=1}^nx_iy_i-z(1-z)\right\rangle\subseteq\A^{2n+1}
\end{array}
\]
One readily checks that $Q_n$ is a smooth affine integral $k$-scheme of Krull dimension $n$ for all $n>0$.
\end{defi} 

\begin{rema}
In \cite{ADF}, a different quadric bears the name $Q_{2n}$, namely the quadric in $\A^{2n+1}=\Spec k[x_1,\ldots,x_n,y_1,\ldots,y_n,z]$ defined by the equation $x_1y_1+\cdots+x_ny_n=z(1+z)$. As noted in \cite{AF}, it is isomorphic to the quadric that we denote by $Q_{2n}$ via the change of variables $y_i\mapsto -y_i$ and $z\mapsto -z$. We use this isomorphism when appealing to the results of \cite{ADF}.
\end{rema}

Let $R$ be a $k$-algebra. We identify $Q_{2n}(R)$ (respectively $Q_{2n+1}(R)$) with the set of ordered triples $(x,y,z)$ (respectively ordered pairs $(x,y)$), where $x=(x_1,\ldots,x_n)$ and $y=(y_1,\ldots,y_n)$ (respectively $x=(x_1,\ldots,x_{n+1})$ and $y=(y_1,\ldots,y_{n+1})$) are row vectors in $R^n$ (respectively in $R^{n+1}$) and $z$ is an element of $R$, satisfying $xy^T=z(1-z)$ (respectively $xy^T=1$); see the first paragraph of \cite[Subsection 2.1]{AF} (p. 2794). We also occasionally simply denote the elements of $Q_{2n}(R)$ (respectively $Q_{2n+1}(R)$) as $(x_1,\ldots,x_n,y_1,\ldots,y_n,z)\in R^{2n+1}$ (respectively $(x_1,\ldots,x_{n+1},y_1,\ldots,y_{n+1})\in R^{2n+2}$) with $\sum x_iy_i=z(1-z)$ (respectively $\sum x_iy_i=1$). We point the quadric $Q_{2n}$ (respectively $Q_{2n+1}$) by $(0,\ldots,0,0,\ldots,0,0)\in Q_{2n}(k)$ (respectively $(0,\ldots,0,1,0,\ldots,0,1)\in Q_{2n+1}(k)$).

We record a few facts about the quadrics $Q_n$.

\begin{lem}\label{quadricjouandev}
Let $n$ be a non-negative integer. Then the map $p:Q_{2n+1}\rightarrow\A^{n+1}\setminus 0$ given by projection on the first $n+1$ coordinates is a Jouanolou device for $\A^{n+1}\setminus 0$.
\end{lem}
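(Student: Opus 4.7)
The plan is to verify the two defining properties of a Jouanolou device for the map $p$: that its source $Q_{2n+1}$ is an affine $k$-scheme, and that $p$ exhibits $Q_{2n+1}$ as a torsor under a vector bundle on $\A^{n+1}\setminus 0$.

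The first point is immediate: by construction $Q_{2n+1}$ is a closed subscheme of $\A^{2n+2}$, hence affine. The next thing to check is that $p$ genuinely lands in $\A^{n+1}\setminus 0$: on $Q_{2n+1}$ one has the relation $\sum_{i=1}^{n+1}x_iy_i=1$, so the functions $x_1,\ldots,x_{n+1}$ generate the unit ideal of $k[Q_{2n+1}]$ and hence the map $Q_{2n+1}\to\A^{n+1}$ factors through the open subscheme $\A^{n+1}\setminus 0$ (equivalently, no $R$-point of $Q_{2n+1}$ can project to a point where all the $x_i$ vanish simultaneously).

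To exhibit the torsor structure, I would proceed as follows. On $\A^{n+1}\setminus 0$, consider the morphism of locally free sheaves $\alpha:\Oc^{n+1}\rightarrow\Oc$ given on sections by $(u_1,\ldots,u_{n+1})\mapsto\sum_i x_iu_i$, where $(x_1,\ldots,x_{n+1})$ are the tautological coordinates. Because the $x_i$ generate the unit ideal of the structure sheaf on $\A^{n+1}\setminus 0$, the morphism $\alpha$ is surjective; let $K=\ker\alpha$, a locally free sheaf of rank $n$. Passing to total spaces, $\alpha$ induces a morphism $(\A^{n+1}\setminus 0)\times\A^{n+1}\rightarrow(\A^{n+1}\setminus 0)\times\A^1$ of vector bundles, and the scheme-theoretic preimage of the unit section is precisely $Q_{2n+1}$, sitting in $(\A^{n+1}\setminus 0)\times\A^{n+1}$ via $p\times\id$. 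Consequently $p:Q_{2n+1}\rightarrow\A^{n+1}\setminus 0$ is a torsor under the vector bundle (total space of) $K$, the additive action being $(x,y)\cdot k=(x,y+k)$ for $k\in K_x$, which is well-defined because $x\cdot k=0$.

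The main obstacle is essentially notational: one must set up the cartesian diagram identifying $Q_{2n+1}$ with a fibre in a bundle map and check that the translation action of $K$ is free and transitive on fibres of $p$. Both verifications can alternatively be carried out by hand on the standard Zariski cover $\{D(x_i)\}_{1\leqslant i\leqslant n+1}$ of $\A^{n+1}\setminus 0$: over $D(x_i)$, the equation $\sum_j x_jy_j=1$ lets one solve $y_i=x_i^{-1}\bigl(1-\sum_{j\neq i}x_jy_j\bigr)$, giving an isomorphism $p^{-1}(D(x_i))\cong D(x_i)\times\A^n$ with coordinates $(y_j)_{j\neq i}$; the transition maps between these trivialisations are affine translations by sections of $K$, which both confirms the torsor structure and shows that $p$ is Zariski-locally of the form $\mathrm{pr}_1:U\times\A^n\rightarrow U$, completing the proof.
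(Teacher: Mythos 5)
Your argument is correct and follows exactly the same line as the paper: the paper's proof is a one-line statement that $p$ makes $Q_{2n+1}$ a torsor under the vector bundle associated with $\ker\bigl(\mathscr{O}^{n+1}\rightarrow\mathscr{O},\ (a_i)\mapsto\sum a_ix_i\bigr)$, which is precisely the sheaf $K$ you identify. You have simply supplied the routine verifications (affineness of $Q_{2n+1}$, surjectivity of $\alpha$, the free transitive translation action, and the local trivialisations over $D(x_i)$) that the paper leaves implicit.
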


\begin{proof}
Indeed, consider the epimorphism \[\mathscr{O}^{n+1}\rightarrow\mathscr{O},\;(a_1,\ldots,a_{n+1})\mapsto\sum a_ix_i\] of sheaves of abelian groups on $\A^{n+1}\setminus 0\subseteq\A^{n+1}=\Spec k[x_1,\ldots,x_{n+1}]$. Then its kernel is a locally free $\mathscr{O}_X$-module and $p$ makes $Q_{2n+1}$ into a torsor under the associated vector bundle on $\A^{n+1}\setminus 0$.
\end{proof}

\begin{theo}\label{homotypequad}
Let $n$ be a non-negative integer. Then there are pointed $\A^1$-weak equivalences $Q_{2n+1}\simeq\Sr^n\wedge\mathbb{G}_m^{\wedge(n+1)}$ and $Q_{2n}\simeq(\mathbb{P}^1)^{\wedge n}\simeq\Sr^n\wedge\mathbb{G}_m^{\wedge n}$.
\end{theo}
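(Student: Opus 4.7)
The odd case is essentially immediate from the machinery recalled above. By Lemma \ref{quadricjouandev}, the projection $p : Q_{2n+1} \to \A^{n+1}\setminus 0$ on the first $n+1$ coordinates is a Jouanolou device and therefore an $\A^1$-weak equivalence. The first $n+1$ coordinates of the chosen base point $(0,\ldots,0,1,0,\ldots,0,1)$ of $Q_{2n+1}$ are exactly the coordinates of the base point $(0,\ldots,0,1)$ of $\A^{n+1}\setminus 0$, so $p$ is pointed. Composing with the pointed $\A^1$-weak equivalence $\A^{n+1}\setminus 0 \simeq \Sr^n \wedge \mathbb{G}_m^{\wedge(n+1)}$ recalled in the example of the introduction gives the first assertion.

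For the even case, I would split the claim into two equivalences. The right-hand one, $(\mathbb{P}^1)^{\wedge n} \simeq \Sr^n \wedge \mathbb{G}_m^{\wedge n}$, follows by taking the $n$-fold smash product of the pointed $\A^1$-weak equivalence $\mathbb{P}^1 \simeq \Sr^1 \wedge \mathbb{G}_m$ of \cite[§3, Corollary 2.18]{MoVo} and rearranging factors using the commutativity of the smash product in $\mathcal{H}_\bullet(k)$.

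The remaining equivalence $Q_{2n} \simeq (\mathbb{P}^1)^{\wedge n}$ is the content of \cite[Theorem 2.2.5]{ADF} (or a close variant thereof, using the isomorphism of quadrics alluded to in the remark just before the statement). The strategy followed in \emph{loc.\ cit.} is induction on $n$, identifying $Q_{2n}$ with the unreduced suspension of $Q_{2n-1}$ in $\mathcal{H}_\bullet(k)$: one covers $Q_{2n}$ by two $\A^1$-contractible open subschemes $U_\pm$, essentially obtained as neighbourhoods of the two ``poles'' $z = 0$ and $z = 1$, whose intersection $U_+ \cap U_-$ is $\A^1$-weakly equivalent to $Q_{2n-1}$. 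The associated Zariski (hence Nisnevich) Mayer--Vietoris square is homotopy cocartesian, yielding $Q_{2n} \simeq \Sigma(U_+ \cap U_-) \simeq \Sigma Q_{2n-1}$ in $\mathcal{H}_\bullet(k)$. Combined with the odd case above, induction then gives $Q_{2n} \simeq \Sigma(\Sr^{n-1}\wedge\mathbb{G}_m^{\wedge n}) \simeq \Sr^n \wedge \mathbb{G}_m^{\wedge n}$, which in turn is $(\mathbb{P}^1)^{\wedge n}$ by the previous paragraph. The main obstacle I expect is the concrete construction of the $\A^1$-contractible pieces $U_\pm$ and the identification of their intersection with an $\A^1$-model of $Q_{2n-1}$; once this is achieved the rest of the argument is formal homotopy-theoretic bookkeeping.
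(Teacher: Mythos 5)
Your argument matches the paper's proof essentially verbatim: the odd case is handled via the Jouanolou device of Lemma \ref{quadricjouandev} together with the known equivalence $\A^{n+1}\setminus 0\simeq\Sr^n\wedge\mathbb{G}_m^{\wedge(n+1)}$, and the even case is referred to \cite[Theorem 2.2.5]{ADF} and \cite[§3]{MoVo}. Your additional sketch of the suspension/Mayer--Vietoris mechanism behind the ADF reference is accurate but not something the paper spells out; the two proofs are the same in substance.
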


\begin{proof}
There is a pointed $\A^1$-weak equivalence $\A^{n+1}\setminus 0\simeq\Sr^n\wedge\mathbb{G}_m^{\wedge(n+1)}$ as noted in Example \ref{exe:homotopy_type_punctured_a^n}, thus the statement about $Q_{2n+1}$ follows from the previous lemma and the fact that Jouanolou devices are $\A^1$-weak equivalences. The statement about $Q_{2n}$ may be deduced from \cite[Theorem 2.2.5]{ADF} and \cite[§3, Lemma 2.15, Corollary 2.18]{MoVo}.
\end{proof}

\begin{theo}\label{naivequadrics}
Let $n\geqslant 1$ be an integer and let $X$ be a smooth affine $k$-scheme. Then the map $\Hom_{\A^1}(X,Q_n)\rightarrow[X,Q_n]_{\A^1}$ is a bijection.
\end{theo}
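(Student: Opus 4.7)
The strategy is to invoke the affine representability results of Asok--Hoyois--Wendt, as alluded to in the introduction and already used for $\A^n\setminus 0$ in the first part of the paper. Those results give precisely such a bijection between naive and genuine $\A^1$-homotopy classes whenever the source is a smooth affine $k$-scheme and the target $Y$ is ``$\A^1$-naive''---concretely, whenever $Y$ is a homogeneous space $G/H$ with $G$ a reductive $k$-group scheme and $H$ a closed reductive subgroup, or more generally a smooth affine $k$-scheme for which the presheaf of naive $\A^1$-homotopy classes satisfies Nisnevich excision.

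The plan is then to split into the two parities and exhibit each quadric as a homogeneous space falling within the scope of that theorem. For $Q_{2n+1}$, I would use the transitive right action of $\SL_{n+1}$ on pairs $(x,y)\in Q_{2n+1}(R)$ defined by $(x,y)\cdot g = (x g^{-1}, y g^T)$; the orbit of the base point fills $Q_{2n+1}$ and the stabiliser is a copy of $\SL_n$, so $Q_{2n+1}\cong\SL_{n+1}/\SL_n$. For $Q_{2n}$, one can either produce an analogous quotient presentation by a classical group, or use the computation $Q_{2n}\simeq(\mathbb{P}^1)^{\wedge n}$ from Theorem \ref{homotypequad} together with the AHW result for smash products of motivic spheres; either way, the hypotheses of the representability theorem are met and the bijection follows directly.

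Alternatively for $Q_{2n+1}$ one can bypass the group-theoretic description and argue by transport along the Jouanolou device $p\colon Q_{2n+1}\to\A^{n+1}\setminus 0$ of Lemma \ref{quadricjouandev}. By Corollary \ref{factbyjouan} the map $p_*$ is surjective on $\Hom$ sets out of a smooth affine scheme, and a naive $\A^1$-homotopy between maps to $\A^{n+1}\setminus 0$ lifts, modulo an iterated application of the same corollary to $X\times\A^1$ (which is again smooth affine), to a naive $\A^1$-homotopy into $Q_{2n+1}$. Combined with the AHW result for $\A^{n+1}\setminus 0$ and the fact that $p$ is an $\A^1$-weak equivalence, this would give the statement for $Q_{2n+1}$.

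The main obstacle I anticipate is purely a matter of bookkeeping: pinning down the precise variant of the Asok--Hoyois--Wendt theorem (among AHW1, AHW2, AHW3) that is strong enough to cover both $Q_{2n}$ and $Q_{2n+1}$ uniformly, and verifying that the homogeneous space presentation chosen indeed satisfies its hypotheses (in particular that both $G$ and $H$ are reductive and that the quotient scheme is represented by the relevant quadric). Once the correct reference is fixed, no further content is needed beyond exhibiting these presentations; the naive-to-genuine bijection is then a direct application.
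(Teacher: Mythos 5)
Your overall strategy—appeal to the affine representability theorems of Asok--Hoyois--Wendt—is the one the paper uses, and for the odd quadrics your identification $Q_{2n+1}\cong\SL_{n+1}/\SL_n$ is exactly what underlies the reference the paper cites (\cite[Theorem 4.2.1]{AHW2}). However, there is a genuine gap in your treatment of the even quadrics $Q_{2n}$, which is precisely the case the paper handles with a \emph{separate} result, \cite[Corollary 3.1.1]{As}. Your first proposed route, via $Q_{2n}\simeq(\mathbb{P}^1)^{\wedge n}$, cannot work: being $\A^1$-naive is a property of the actual scheme (or space) and is not invariant under $\A^1$-weak equivalence—the set $\Hom_{\A^1}(X,Y)$ of naive classes is sensitive to the chosen model of $Y$, whereas $[X,Y]_{\A^1}$ is not, and that discrepancy is exactly what the theorem is about. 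Moreover $(\mathbb{P}^1)^{\wedge n}$ is not a smooth affine scheme, so it falls outside the AHW framework to begin with. Your second route (``an analogous quotient presentation by a classical group'') is where the real content lies and you do not supply it; $Q_{2n}$ is naturally a quotient of an orthogonal group, but verifying that this presentation satisfies the hypotheses of the AHW machinery (or finding another argument) is nontrivial and is the point of Asok's corollary.

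Your alternative route for $Q_{2n+1}$ via the Jouanolou device also does not go through: it invokes ``the AHW result for $\A^{n+1}\setminus 0$,'' but $\A^{n+1}\setminus 0$ is not affine, so the affine representability results do not apply to it directly. In fact the logical direction in the paper is the reverse of what you propose: Theorem~\ref{naivnotnaiv} \emph{derives} the bijection for maps to $\A^n\setminus 0$ from the already-established bijection for $Q_{2n-1}$, using the Jouanolou device and Corollary~\ref{factbyjouan}. Deducing the statement for $Q_{2n+1}$ from one for $\A^{n+1}\setminus 0$ would therefore be circular.
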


\begin{proof}
See \cite[Theorem 4.2.1]{AHW2} for odd $n$---we may apply this theorem because $k$ is ind-smooth over its prime subfield which is perfect---and \cite[Corollary 3.1.1]{As} for $n$ even.
\end{proof}

\begin{rema}
In fact, more can be said about the quadrics $Q_n$: they are \emph{naive} spaces in the sense of \cite{AHW2}—this property is what is established in \cite[Theorem 4.2.1]{AHW2} and \cite[Corollary 3.1.1]{As} and it implies the above result.
\end{rema}

\paragraph*{$\A^1$-homotopy sheaves and $\A^1$-connectedness.} Recall that, in our context, an $h$-cogroup is a cogroup in the pointed $\A^1$-homotopy category $\mathcal{H}_\bullet(k)$. The simplicial spheres $\Sr^i$ for $i\geqslant 1$ have an $h$-cogroup structure in simplicial sets which yields an $h$-cogroup structure on $\Sr^i\wedge(\mathscr{U},*)$ for any pointed space $(\mathscr{U},*)$; this $h$-cogroup is cocommutative if $i\geqslant 2$, owing to the corresponding fact for $\Sr^i$. Observe that by the loop-suspension adjunction, we deduce that $\Omega(\mathscr{X},x)$ is an $h$-group for any pointed space $(\mathscr{X},x)$ and $\Omega^i(\mathscr{X},x)$ is a commutative $h$-group if $i\geqslant 2$. For more details on $h$-cogroups in the context of general model categories, we refer to \cite{Hovey}.

In particular, given an integer $i\geqslant 1$ and a pointed space $(\mathscr{X},x)$, we obtain a presheaf of groups on $\mathsf{Sm}_k$ by the formula $U\mapsto[\Sr^i\wedge(U_+),(\mathscr{X},x)]_{\A^1,\bullet}$. By definition, the Nisnevich sheafification of this presheaf is the $i$-th $\A^1$-homotopy sheaf of $(\mathscr{X},x)$ denoted by $\pi_i^{\A^1}(\mathscr{X},x)$ or $\pi_i^{\A^1}(\mathscr{X})$ if no confusion arises from this notation: it is a sheaf of groups for any $i\geqslant 1$ and a sheaf of \emph{abelian} groups if $i\geqslant 2$. We also denote by $\pi_0^{\A^1}(\mathscr{X},x)$ the Nisnevich sheaf associated with the presheaf $U\mapsto[\Sr^0\wedge(U_+),(\mathscr{X},x)]_{\A^1,\bullet}$ of pointed sets and again, we simplify the notation to $\pi_0^{\A^1}(\mathscr{X})$ if no confusion arises from this notation.

Let $\mathscr{X}$ and $\mathscr{Y}$ be pointed spaces, let $f:\mathscr{X}\rightarrow\mathscr{Y}$ be a map of pointed spaces and let $n$ be a non-negative integer. Then $\mathscr{X}$ is said to be \emph{$\A^1$-$n$-connected} (respectively \emph{$\A^1$-connected}, respectively \emph{$\A^1$-simply connected}) if $\mathbf{\pi}_i^{\A^1}(\mathscr{X})$ is trivial for all $i\leqslant n$ (respectively for $i=0$, respectively for all $i\leqslant 1$). Following \cite[p. 627]{AFCompEulerClass} (second-to-last paragraph before Theorem 4.1), we say that $f$ is $\A^1$-$n$-connected (respectively $\A^1$-connected, respectively $\A^1$-simply connected) if $\pi_i^{\A^1}(f):\pi_i^{\A^1}(\mathscr{X})\rightarrow\pi_i^{\A^1}(\mathscr{Y})$ is an isomorphism of sheaves for all $i\leqslant n$ (respectively for $i=0$, respectively for all $i\leqslant 1$).

This last definition can also be phrased in terms of the homotopy fibre $\mathscr{F}$ of $f$. The key property of homotopy fibres is that the $\A^1$-homotopy sheaves of $\mathscr{F}$ fit into a long exact sequence \[\cdots\rightarrow\pi_{i+1}^{\A^1}(\mathscr{Y})\rightarrow\pi_i^{\A^1}(\mathscr{F})\rightarrow\pi_i^{\A^1}(\mathscr{X})\rightarrow\pi_i^{\A^1}(\mathscr{Y})\rightarrow\pi_{i-1}^{\A^1}(\mathscr{F})\rightarrow\cdots\] \[\cdots\rightarrow\pi_1^{\A^1}(\mathscr{Y})\rightarrow\pi_0^{\A^1}(\mathscr{F})\rightarrow\pi_0^{\A^1}(\mathscr{X})\rightarrow\pi_0^{\A^1}(\mathscr{Y})\rightarrow *\] where exactness for sequences of sheaves of pointed sets is defined as usual: a sequence $A\xrightarrow{f}B\xrightarrow{g}C$ of pointed maps between pointed sheaves is exact at $B$ if $\Im f=g^{-1}(*)$ where $*$ is the point of $C$, and groups are pointed by their neutral elements. The following lemma is now clear:

\begin{lem}\label{lem:equivalent_condition_connectedness_maps}
Let $f:\mathscr{X}\rightarrow\mathscr{Y}$ be a pointed map of pointed spaces with homotopy fibre $\mathscr{F}$. Let $n$ be a positive integer. Then the following assertions are equivalent.
\begin{itemize}
	\item The map $f$ is $\A^1$-$n$-connected.
	\item The pointed space $\mathscr{F}$ is $\A^1$-$(n-1)$-connected and the map $\pi_{n+1}^{\A^1}(\mathscr{Y})\rightarrow\pi_n^{\A^1}(\mathscr{F})$ of sheaves is an epimorphism.
\end{itemize}
In particular, if $f$ is $\A^1$-$n$-connected, then $\mathscr{F}$ is $\A^1$-$(n-1)$-connected and if $\mathscr{F}$ is $\A^1$-$n$-connected, then $f$ is $\A^1$-$n$-connected.
\end{lem}

\begin{rema}
Beware that the above necessary and sufficient condition for $f$ to be $\A^1$-$n$-connected in terms of $\mathscr{F}$ is not quite standard, because of our (also non-standard) definition of $\A^1$-$n$-connectedness of maps. Usually, $f$ is defined to be $\A^1$-$n$-connected precisely if $\mathscr{F}$ is $\A^1$-$(n-1)$-connected; our definition asks for a stronger condition. We also thank the referee for pointing out that the homotopy fibre $\mathscr{F}$ being $\A^1$-$n$-connected is a sufficient but not a necessary condition. We prefer to keep the definition of $\A^1$-connectedness of maps introduced in \cite{AFCompEulerClass} because we appeal to the results of this paper. One agreeable consequence of the definition of \cite{AFCompEulerClass} is that if $f:\mathscr{X}\rightarrow\mathscr{Y}$ is $\A^1$-$n$-connected and if $\mathscr{X}$ (respectively $\mathscr{Y}$) is $\A^1$-$m$-connected with $m\leqslant n$, then $\mathscr{Y}$ (respectively $\mathscr{X}$) is $\A^1$-$m$-connected.
\end{rema}

\begin{exe}\label{exe:application_blakers_massey}
Let $k_0$ be a perfect field and let $u:\mathscr{X}_0\rightarrow\mathscr{Y}_0$ be a pointed map of pointed $k_0$-spaces. Assume that $\mathscr{X}$ is $\A^1$-simply connected, $\mathscr{Y}$ is $\A^1$-$(n-1)$-connected with $n\geqslant 3$ and the homotopy cofibre $\mathscr{C}$ of $u$ is $\A^1$-$(d+1)$-connected with $d\geqslant n+1$. Then the homotopy fibre $\mathscr{F}$ of $u$ is $\A^1$-$\delta$-connected where $\delta=\min(d,2n+1)$. Note that $u$ is $\A^1$-$\delta$-connected as a result but the claim on the homotopy fibre is sharper as observed in the previous remark. 

Indeed, since $k_0$ is perfect, we may apply the results of \cite{AFCompEulerClass} and in particular the Blakers--Massey theorem \cite[Theorem 4.1]{AFCompEulerClass}. Denote by $f$ the comparison map \cite[(4.6)]{AFCompEulerClass} of this theorem. Since $\mathscr{X}$ and $\mathscr{Y}$ are $\A^1$-simply connected, $u$ is $\A^1$-simply connected. According to the second item of \cite[Theorem 4.1]{AFCompEulerClass} which we may apply since $n-1\geqslant 2$, $f$ is $\A^1$-$(1+(n-1)+1)$-connected. Note that $\mathscr{C}$ is $\A^1$-$(d+1)$-connected thus the target of the comparison map $f$ is $\A^1$-$d$-connected. Since $d\geqslant n+1$ by assumption, we now deduce that $\mathscr{F}$ is $\A^1$-$(n+1)$-connected by the previous remark. In particular, $u$ is $\A^1$-$(n+1)$-connected and another application of the second item of the Blakers--Massey theorem shows that the comparison map $f$ is $\A^1$-$((n+1)+(n-1)+1)$-connected and is thus $\A^1$-$(2n+1)$-connected. It follows that $\pi_i^{\A^1}(\mathscr{F})\simeq\pi_{i}^{\A^1}(\Omega\mathscr{C})\simeq\pi_{i+1}^{\A^1}(\mathscr{C})$ for all $i\leqslant 2n+1$: in particular, if $i\leqslant d$, then $\pi_i^{\A^1}(\mathscr{F})$ is trivial and thus $\mathscr{F}$ is $\A^1$-$\delta$-connected where $\delta=\min(d,2n+1)$.
\end{exe}

\begin{lem}\label{lem:a1_conn_base_change}
Let $k/k_0$ be a field extension inducing a scheme morphism $f:\Spec k\rightarrow\Spec k_0$. Assume that $k_0$ is perfect. Let $\mathscr{X}_0$ be a pointed $k_0$-space and set $\mathscr{X}=f^*\mathscr{X}_0$, endowed with the induced base point as a $k$-space. Let $n$ be a non-negative integer. If $\mathscr{X}_0$ is an $\A^1$-$n$-connected $k_0$-space, then $\mathscr{X}$ is an $\A^1$-$n$-connected $k$-space.
\end{lem}

\begin{proof}
Let $i\leqslant n$. The morphism $f$ is essentially smooth by \cite[Lemma A.2]{Hoyois+2015+173+226}. Recall that the local rings for the Nisnevich topology on $\mathsf{Sm}_k$ are the henselian local $k$-algebras that are essentially of finite type. Let $A$ be such a $k$-algebra. Then $A$ is local henselian and essentially of finite type as a $k_0$-algebra because $f$ is essentially of finite type. Applying $\pi_0$ to the map of \cite[Lemma A.4, (1)]{Hoyois+2015+173+226}, we see that $\pi_i^{\A^1}(\mathscr{X})(A)=\pi_i^{\A^1}(\mathscr{X}_0)(A)$. Thus assuming that $\pi_i^{\A^1}(\mathscr{X}_0)$ is the trivial Nisnevich sheaf on $\mathsf{Sm}_{k_0}$, $\pi_i^{\A^1}(\mathscr{X})$ is the trivial Nisnevich sheaf on $\mathsf{Sm}_k$ which implies the claim.
\end{proof}

\begin{lem}\label{lem:a1_conn_base_change_map}
Let $k/k_0$ be a field extension with $k_0$ perfect inducing a scheme morphism $f:\Spec k\rightarrow\Spec k_0$, and let $u:\mathscr{X}_0\rightarrow\mathscr{Y}_0$ be a pointed map of pointed $k_0$-spaces. Let $n\geqslant 0$ be an integer. If $u$ is $\A^1$-$n$-connected, then the induced map $f^*u:f^*\mathscr{X}_0\rightarrow f^*\mathscr{Y}_0$ of pointed $k$-spaces is $\A^1$-$n$-connected.
\end{lem}

\begin{proof}
Modulo the identification of stalks of $\A^1$-homotopy sheaves described in the above proof, given a henselian local $k$-algebra $A$ essentially of finite type, the morphism $\pi_i^{\A^1}(f^*u):\pi_i^{\A^1}(f^*\mathscr{X}_0)(A)\rightarrow\pi_i^{\A^1}(f^*\mathscr{Y}_0)(A)$ coincides with $\pi_i^{\A^1}(u)$ for any $i$. Thus if $\pi_i^{\A^1}(u)$ is an isomorphism for all $i\leqslant n$, then $\pi_i^{\A^1}(f^*u)$ is an isomorphism for all $i\leqslant n$: this is the claim of Lemma \ref{lem:a1_conn_base_change_map}.
\end{proof}

Additionally, we say that $\mathscr{X}$ is \emph{simplicially} $m$-connected if its stalks (at points for the Nisnevich topology) are $m$-connected simplicial sets. For instance, for any pointed space $\mathscr{X}$, $\Sigma^{m+1}\mathscr{X}$ is simplicially $m$-connected. Now the following theorem is due to Morel.

\begin{theo}[Morel]\label{simpliciallyconn}
Let $m\in\N$ and let $\mathscr{X}$ be a pointed $k$-space; assume that $\mathscr{X}$ is the pullback to $k$ of a $k_0$-space $\mathscr{X}_0$ where $k/k_0$ is a field extension with $k_0$ perfect. If $\mathscr{X}_0$ is simplicially $m$-connected, then $\mathscr{X}$ is $\A^1$-$m$-connected.
\end{theo}

\begin{proof}
By \cite[Theorem 6.38]{Morel}, $\mathscr{X}_0$ is $\A^1$-$m$-connected and we conclude by Lemma \ref{lem:a1_conn_base_change}.
\end{proof}

\begin{prop}\label{connectednessquadrics}
Let $n$ be a positive integer. Then the quadrics $Q_{2n}$ and $Q_{2n+1}$ are $\A^1$-$(n-1)$-connected.
\end{prop}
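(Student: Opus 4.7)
The plan is to reduce the claim to the simplicial connectivity of an explicit $\A^1$-homotopy model for each quadric and then invoke Morel's theorem stated as Theorem \ref{simpliciallyconn}. By Theorem \ref{homotypequad}, we have pointed $\A^1$-weak equivalences $Q_{2n+1}\simeq \Sr^n\wedge\G_m^{\wedge(n+1)}$ and $Q_{2n}\simeq \Sr^n\wedge\G_m^{\wedge n}$, so it suffices to show that each of these smash products is $\A^1$-$(n-1)$-connected, since $\A^1$-$n$-connectedness is an invariant of the $\A^1$-homotopy type.

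Next I would observe that both models are of the form $\Sigma^n Y=\Sr^n\wedge Y$ for a non-empty pointed space $Y$ (namely $Y=\G_m^{\wedge n}$ or $Y=\G_m^{\wedge(n+1)}$). An $n$-fold simplicial suspension is simplicially $(n-1)$-connected: this is a standard, purely simplicial fact (each application of $\Sr^1\wedge(-)$ increases the connectivity of a non-empty pointed simplicial set by one, starting from the fact that $\Sigma Y$ is path-connected for any non-empty $Y$), and it is checked stalkwise on the associated simplicial presheaves.

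To apply Theorem \ref{simpliciallyconn}, I need to know that these models are pulled back from a perfect subfield of $k$. The simplicial circle $\Sr^1$ is a constant simplicial presheaf, and $\G_m$ is defined over $\Spec\Z$, so both are pulled back from the prime subfield $k_0$ of $k$ (which is either $\Q$ or $\F_p$, hence perfect); their smash product is therefore pulled back from $k_0$ as well. Morel's theorem then gives the $\A^1$-$(n-1)$-connectedness of $\Sr^n\wedge\G_m^{\wedge n}$ and of $\Sr^n\wedge\G_m^{\wedge(n+1)}$, which transfers to $Q_{2n}$ and $Q_{2n+1}$ via the $\A^1$-weak equivalences of Theorem \ref{homotypequad}.

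There is no substantial obstacle here; the only care needed is to separate cleanly the notion of simplicial connectivity (invoked for the explicit suspension models) from that of $\A^1$-connectivity (the desired conclusion), and to make sure that the hypothesis of Theorem \ref{simpliciallyconn} is verified on the suspension model rather than directly on the quadric, since simplicial connectivity is not a priori invariant under $\A^1$-weak equivalence.
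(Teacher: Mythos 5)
Your proposal is correct and follows essentially the same route as the paper's proof: both invoke Theorem~\ref{homotypequad} to replace the quadrics by the $n$-fold suspensions $\Sigma^n\mathbb{G}_m^{\wedge n}$ and $\Sigma^n\mathbb{G}_m^{\wedge(n+1)}$, observe these are simplicially $(n-1)$-connected and pulled back from the (perfect) prime subfield, and then apply Morel's Theorem~\ref{simpliciallyconn}. Your explicit remark that simplicial connectivity should be checked on the suspension model rather than the quadric itself, since it is not invariant under $\A^1$-weak equivalence, is a useful clarification that the paper's phrasing leaves somewhat implicit.
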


\begin{proof}
The $k$-schemes $\A^{n+1}\setminus 0\simeq\Sigma^n\mathbb{G}_m^{\wedge(n+1)}$ and $(\mathbb{P}^1)^{\wedge n}\simeq\Sigma^n\mathbb{G}_m^{\wedge n}$ are defined over $\Z$ hence over the prime subfield $k_0$ of $k$ which is perfect and the $k_0$-spaces from which they are pulled back are simplicially $(n-1)$-connected since $\A^{n+1}\setminus 0\simeq\Sigma^n\mathbb{G}_m^{\wedge(n+1)}$ and $(\mathbb{P}^1)^{\wedge n}\simeq\Sigma^n\mathbb{G}_m^{\wedge n}$. Therefore they are $\A^1$-$(n-1)$-connected according to the previous theorem. Since $Q_{2n}$ and $Q_{2n+1}$ are defined over $\Z$, they are pulled back from the prime subfield of $k$, hence it suffices to show that $Q_{2n}$ and $Q_{2n+1}$ are $\A^1$-equivalent to simplicially $(n-1)$-connected pointed spaces. The proposition now follows from the equivalences from Theorem \ref{homotypequad}.
\end{proof}

\subparagraph*{$\A^1$-chain connectedness.} One convenient way to prove that a space is $\A^1$-connected in the sense described above is to show that it is $\A^1$-\emph{chain} connected according to the following definition.

\begin{defi}[\protect{\cite[p. 2000]{AsoMo}}]
Let $X$ be a smooth $k$-scheme. Let $L$ be a separable finitely generated extension of $k$; then an \emph{elementary $\A^1$-equivalence} between points $x_0$ and $x_1$ in $X(L)$ is a morphism $f:\A_L^1\rightarrow X$ such that $f(0)=x_0$ and $f(1)=x_1$; we denote by $\sim$ the equivalence relation on $X(L)$ generated by elementary $\A^1$-equivalence. We then say that $x$ and $x'$ in $X(L)$ are \emph{$\A^1$-equivalent} if $x\sim x'$.

We say that $X$ is \emph{$\A^1$-chain connected} if the quotient set $X(F)/{\sim}$ is a singleton for any separable finite type extension $F/k$.
\end{defi}

\begin{exe}\label{examplesaonechaincon}
It is easy to see that the product of $\A^1$-chain connected spaces is $\A^1$-chain connected: the point is that $\sim$ is compatible with products in an obvious sense. Furthermore, $\A^1$ is $\A^1$-chain connected, an elementary $\A^1$-equivalence between $a\in L$ and $b\in L$ being given by $t\mapsto(1-t)a+tb$ in coordinates for any separable finite type extension $L/k$; hence $\A^n$ is $\A^1$-chain connected for all $n$. 

Also note that if an object $V$ of $\mathsf{Sm}_k$ which is pointed as a space, that is, endowed with a $k$-point $x_0$, and if $V$ is the union of $\A^1$-chain connected open subschemes $U$ and $U'$ containing the base point of $V$, then $V$ is $\A^1$-chain connected. Indeed, if $L/k$ is a separable finite type extension and if $x\in V(L)$, then $x\in U(L)$ or $x\in U'(L)$: in each case, we see that $x\sim x_0$ in $V(L)$, so that $V(L)/{\sim}$ is equal to the class of $x_0$ for $\sim$ as required.
\end{exe}

The useful property of $\A^1$-chain connectedness as far as this paper is concerned is the following.

\begin{prop}[\protect{\cite[Proposition 2.2.7]{AsoMo}}]\label{aonechaincoimpaoneco}
Let $X$ be a smooth $k$-scheme that is pointed as a space. If $X$ is $\A^1$-chain connected, then $X$ is $\A^1$-connected, that is, $\pi_0^{\A^1}(X)=*$.
\end{prop}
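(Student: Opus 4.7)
The plan is to show that the Nisnevich sheaf $\pi_0^{\A^1}(X)$ is trivial by reducing the question to its sections over spectra of finitely generated separable extensions of $k$, and then matching those sections with the set of $\A^1$-chain connected components of $X$.

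The first step is to invoke Morel's structural results on $\pi_0^{\A^1}$ of a smooth scheme: this sheaf is strongly $\A^1$-invariant and \emph{unramified} in the sense that it is entirely determined by its values on spectra of finitely generated separable extensions $L/k$. In particular, to prove $\pi_0^{\A^1}(X) = *$, it suffices to check that $\pi_0^{\A^1}(X)(\Spec L) = *$ for every such $L$.

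The second step is to produce, for each such $L$, a surjection $X(L)/\sim \twoheadrightarrow \pi_0^{\A^1}(X)(\Spec L)$. Given an $L$-point $x \colon \Spec L \to X$, consider its class in $[\Spec L, X]_{\A^1}$ and then in the sheafification $\pi_0^{\A^1}(X)(\Spec L)$. The resulting map $X(L) \to \pi_0^{\A^1}(X)(\Spec L)$ is surjective by the rational-point description of $\pi_0^{\A^1}$ (again due to Morel). It remains to check that it factors through the quotient $X(L)/\sim$: if $x_0, x_1 \in X(L)$ are connected by an elementary $\A^1$-equivalence $f \colon \A_L^1 \to X_L$, then viewing $f$ as a $k$-morphism $\Spec L \times \A^1 \to X$, it is a naive $\A^1$-homotopy between the morphisms $\Spec L \to X$ induced by $x_0$ and $x_1$, in the sense of the excerpt. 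Since naive $\A^1$-homotopic morphisms have the same image in $[\Spec L, X]_{\A^1}$, they are a fortiori equal in $\pi_0^{\A^1}(X)(\Spec L)$; iterating this over the generating relation yields the claimed factorisation.

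Finally, by $\A^1$-chain connectedness of $X$, the set $X(L)/\sim$ is a singleton (necessarily the class of the base point of $X$), so the surjection above forces $\pi_0^{\A^1}(X)(\Spec L) = *$, completing the proof. The main obstacle is the appeal to Morel's structure theorems for strongly $\A^1$-invariant sheaves of sets, which simultaneously provide the reduction to field-valued points and the surjectivity of $X(L) \to \pi_0^{\A^1}(X)(\Spec L)$; everything else is a direct consequence of the definitions of naive $\A^1$-homotopy and of $\A^1$-chain connectedness.
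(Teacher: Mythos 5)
The paper does not give its own proof of this statement: it is quoted verbatim from Asok--Morel \cite[Proposition 2.2.7]{AsoMo}. Your argument reproduces the strategy of theirs in all its essentials: (i) the Nisnevich sheaf of sets $\pi_0^{\A^1}(X)$ is $\A^1$-invariant and hence, by Morel's unramifiedness theorem, its triviality can be tested on spectra of finitely generated separable extensions $L/k$; (ii) the canonical map $X(L)/\!\sim\;\to\pi_0^{\A^1}(X)(\Spec L)$ is surjective (in fact bijective, again by Morel), and an elementary $\A^1$-equivalence is in particular a naive $\A^1$-homotopy, so the map factors through the chain-connected quotient; (iii) $\A^1$-chain connectedness makes the source a singleton. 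Two small caveats that you should incorporate: for sheaves of \emph{sets} the relevant notion is plain $\A^1$-invariance rather than ``strong'' $\A^1$-invariance (the latter is a property of sheaves of groups, involving $\Hr^1_{\Nis}$); and Morel's unramifiedness and rational-points results are proved over a perfect base, so when $k$ is not perfect one must first pass to the prime subfield via essentially smooth base change, exactly as the paper does elsewhere when invoking Morel. Modulo these terminological and hypothesis-bookkeeping points, your proof is correct and is the intended one.
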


\paragraph*{Strictly $\A^1$-invariant sheaves, Eilenberg-MacLane spaces and $\A^1$-cohomological dimension.} Recall the following fundamental definition from \cite{Morel}: a Nisnevich sheaf $\mathbf{F}$ of abelian groups on $\mathsf{Sm}_k$ is called \emph{strictly $\A^1$-invariant} if the map $\Hr^i(X,\mathbf{F})\rightarrow\Hr^i(X\times\A^1,\mathbf{F})$ induced by the projection on the first factor is a bijection for any $i\geqslant 0$ and any object $X$ of $\mathsf{Sm}_k$, where $\Hr$ denotes cohomology of abelian sheaves on the site $(\mathsf{Sm}_k,\Nis)$.

\begin{exe}
If $n\geqslant 2$ and if $\mathscr{X}$ is a pointed space pulled back from a perfect subfield of $k$, then $\pi_n^{\A^1}(\mathscr{X})$ is strictly $\A^1$-invariant (\cite[Corollary 6.2]{Morel}) and so is its twist by any torsor under a sheaf of groups.
\end{exe}

If $n\geqslant 1$ is an integer and $\mathbf{F}$ is a strictly $\A^1$-invariant sheaf, then there exists a pointed space $\Kr(\mathbf{F},n)$ called the $n$-th \emph{Eilenberg--MacLane space} of $\mathbf{F}$ with the following property mirroring the case of algebraic topology: $\pi_n^{\A^1}(\Kr(\mathbf{F},n))=\mathbf{F}$ and $\pi_i^{\A^1}(\Kr(\mathbf{F},n))$ is trivial for any $i\neq n$. It follows as in topology that $\Kr(\mathbf{F},n)$ represents the functor $\Hr^n(\text{--},\mathbf{F}):\mathsf{Sm}_k\rightarrow\mathsf{Ab}$. We may then define Nisnevich cohomology with coefficients in $\mathbf{F}$ for spaces by setting $\Hr^n(\mathscr{X},\mathbf{F})=[\mathscr{X},\Kr(\mathbf{F},n)]_{\A^1}$: it extends Nisnevich cohomology for smooth $k$-schemes in an obvious sense.

In \cite[Definition 1.1.4]{AF}, the authors introduce a notion of dimension for the objects of $\mathsf{Sm}_k$ using strictly $\A^1$-invariant sheaves: let $d$ be an integer; a smooth $k$-scheme $X$ has $\A^1$-cohomological dimension $\leqslant d$ if $\Hr^i(X,\mathbf{F})=0$ for any $i>d$ and any strictly $\A^1$-invariant sheaf $\mathbf{F}$ on $\mathsf{Sm}_k$; the $\A^1$-cohomological dimension $\mathrm{cd}_{\A^1}(X)$ of $X$ is then the smallest element of the set of integers $d$ such that $X$ has $\A^1$-cohomological dimension $\leqslant d$. One deduces immediately from its definition that the $\A^1$-cohomological dimension of $X$ is a finite integer and is in fact bounded above by the Nisnevich cohomological dimension of $X$, in particular by the Krull dimension of $X$ (\cite[Theorem 1.32]{Nisnevich1989}).

\begin{exe}\label{aonecohodimquad}
It follows from Item (3) of \cite[Proposition 1.1.5]{AF} that if $n>0$, then $Q_{2n}$ and $Q_{2n+1}$ are of $\A^1$-cohomological dimension $n$. In particular, since $\A^{n+1}\setminus 0$ is $\A^1$-equivalent to $Q_{2n+1}$, $\A^{n+1}\setminus 0$ is of $\A^1$-cohomological dimension $n$.
\end{exe}

\paragraph*{An obstruction theory lemma.} We will need the following easy consequence of the formalism of Postnikov towers, for which we could not find an exact reference although the result is of course entirely well-known (\emph{e.g.} we essentially spell out the reasoning of the first paragraph of the proof of \cite[Proposition 1.2.1]{AF} in a more general setting).

\begin{lem}\label{lemobsth}
Let $f:\mathscr{E}\rightarrow\mathscr{B}$ be a pointed map of pointed $\A^1$-connected spaces. Assume $f$ is $\A^1$-$m$-connected with $m\geqslant 1$. Then for any smooth $k$-scheme $X$ of $\A^1$-cohomological dimension $d\leqslant m$, the map \[f_*:[X,\mathscr{E}]_{\A^1}\rightarrow[X,\mathscr{B}]_{\A^1}\] is a bijection. 
\end{lem}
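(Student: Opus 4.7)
The strategy is to run a standard obstruction-theoretic argument using the Moore--Postnikov tower of $f$ in $\mathcal{H}_\bullet(k)$. Let $\mathscr{F}$ denote the homotopy fibre of $f$. The long exact sequence of $\A^1$-homotopy sheaves attached to the fibration $\mathscr{F}\to\mathscr{E}\to\mathscr{B}$, combined with the assumption that $f$ is $\A^1$-$m$-connected (which by the definition recalled above is equivalent to $\mathscr{F}$ being $\A^1$-$m$-connected), gives $\pi_i^{\A^1}(\mathscr{F})=0$ for $i\leq m$. In particular, since $m\geq 1$, the fibre $\mathscr{F}$ is $\A^1$-simply connected and each sheaf $\pi_n^{\A^1}(\mathscr{F})$ for $n\geq 2$ is abelian and strictly $\A^1$-invariant, so the Eilenberg--MacLane spaces $\Kr(\pi_n^{\A^1}(\mathscr{F}),n)$ exist and represent Nisnevich cohomology.

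Next I would factor $f$ through its Moore--Postnikov tower
\[
\mathscr{E}\simeq\lim_n\mathscr{E}_n\to\cdots\to\mathscr{E}_{n}\to\mathscr{E}_{n-1}\to\cdots\to\mathscr{E}_m=\mathscr{B},
\]
where for each $n\geq m+1$, the map $\mathscr{E}_n\to\mathscr{E}_{n-1}$ is a principal fibration with fibre $\Kr(\pi_n^{\A^1}(\mathscr{F}),n)$ classified by a $k$-invariant $\kappa_n:\mathscr{E}_{n-1}\to\Kr(\pi_n^{\A^1}(\mathscr{F}),n+1)$. Applying $[X,-]_{\A^1}$ to the homotopy fibre sequence $\mathscr{E}_n\to\mathscr{E}_{n-1}\xrightarrow{\kappa_n}\Kr(\pi_n^{\A^1}(\mathscr{F}),n+1)$ produces a pointed exact sequence in which the map $[X,\mathscr{E}_n]_{\A^1}\to[X,\mathscr{E}_{n-1}]_{\A^1}$ is flanked by the cohomology groups $\Hr^n(X,\pi_n^{\A^1}(\mathscr{F}))$ (acting on the fibres of the map) and $\Hr^{n+1}(X,\pi_n^{\A^1}(\mathscr{F}))$ (housing the obstruction to lifting a class in $[X,\mathscr{E}_{n-1}]_{\A^1}$ to $\mathscr{E}_n$). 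Since $n\geq m+1>d=\cd_{\A^1}(X)$, both of these groups vanish by the very definition of $\A^1$-cohomological dimension; hence $[X,\mathscr{E}_n]_{\A^1}\to[X,\mathscr{E}_{n-1}]_{\A^1}$ is a bijection for every $n\geq m+1$.

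Composing these bijections and passing to the limit yields the desired bijection $f_*:[X,\mathscr{E}]_{\A^1}\to[X,\mathscr{B}]_{\A^1}$; the interchange of $[X,-]_{\A^1}$ with the limit is harmless because the vanishing above makes the tower eventually constant on $[X,-]_{\A^1}$, so no $\lim^1$ obstruction can materialise. The main technical obstacle is the construction and $\A^1$-local convergence of the Moore--Postnikov tower in $\mathcal{H}_\bullet(k)$, together with the identification of its building blocks as Eilenberg--MacLane spaces representing Nisnevich cohomology in $\mathcal{H}(k)$; this is a standard piece of machinery (used implicitly in the reference \cite[Proposition 1.2.1]{AF} that motivates this lemma), and once it is granted, the cohomological dimension hypothesis does all the remaining work.
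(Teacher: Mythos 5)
Your argument is essentially the paper's own: both run the obstruction theory of the Moore--Postnikov tower of $f$, kill the low stages ($i\leqslant m$) because $\pi_i^{\A^1}(\mathscr{F})$ vanishes there, and kill the high stages because the $\A^1$-cohomological dimension of $X$ forces the relevant obstruction and difference groups to vanish; the paper carries this out by invoking \cite[Theorem~6.1.1]{AFSplittingOutisde} and the discussion on p.~1055 of that reference.

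The one point you gloss over is the twist. The lemma only assumes $\mathscr{B}$ is $\A^1$-connected, not $\A^1$-simply connected, so $\pi_1^{\A^1}(\mathscr{B})$ may act nontrivially on the homotopy sheaves $\pi_i^{\A^1}(\mathscr{F})$; the stages of the Moore--Postnikov tower are then not principal fibrations classified by maps to $\Kr(\pi_i^{\A^1}(\mathscr{F}),i+1)$, but rather by maps to a twisted form of that Eilenberg--MacLane space, and the obstruction and difference groups live in cohomology with coefficients in a twisted sheaf $\pi_i^{\A^1}(\mathscr{F})(\mathscr{P})$, where $\mathscr{P}$ is a $\pi_1^{\A^1}(\mathscr{B})$-torsor. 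The paper records this explicitly and notes, as you should too, that twisted strictly $\A^1$-invariant sheaves are still strictly $\A^1$-invariant, so the cohomological-dimension vanishing (the heart of the argument) goes through unchanged. As you have written it, your proof is valid only under the stronger hypothesis that $\mathscr{B}$ is $\A^1$-simply connected; stating the tower in its twisted form repairs this with no change in substance.
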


\begin{proof}
We apply \cite[Theorem 6.1.1]{AFSplittingOutisde}\footnote{Among the hypotheses of this theorem is that $f$ be an $\A^1$-fibration and that $\mathscr{B}$ be $\A^1$-fibrant. However, in view of the result we wish to prove, it is clear that we may replace $f$ by an $\A^1$-fibration $f':\mathscr{E'}\rightarrow\mathscr{B}$ which is such that $f$ factors through $f'$ by an acyclic cofibration $\mathscr{E}\rightarrow\mathscr{E}'$; similarly, we may replace $\mathscr{B}$ by an $\A^1$-weakly equivalent $\A^1$-fibrant object.}, whose notations we borrow, and we use the discussion which follows (p. 1055). According to this discussion, for any $i$, given a map $g:X\rightarrow\mathscr{E}^{(i)}$ (in $\mathcal{H}(k)$), the obstruction to the existence of a map $X\rightarrow\mathscr{E}^{(i+1)}$ lifting $X\rightarrow\mathscr{E}^{(i)}$ in the following way (eventual lift indicated with a dotted arrow): 
\begin{center}
\begin{tikzcd}
                                   & \mathscr{E}^{(i+1)} \arrow[d,"p^{(i)}"] \\
X \arrow [ru,dotted] \arrow[r,"g"] & \mathscr{E}^{(i)}
\end{tikzcd}
\end{center}
is the triviality of an element of the Nisnevich cohomology group $\Hr^{i+1}(X,\mathbf{\pi}_i^{\A^1}(\mathscr{F})(\mathscr{P}))$ where $\mathscr{F}$ is the homotopy fibre of $f$ and $\mathbf{\pi}_i^{\A^1}(\mathscr{F})(\mathscr{P})$ is a certain twist of the $\A^1$-homotopy sheaf $\mathbf{\pi}_i^{\A^1}(\mathscr{F})$; moreover, given the triviality of this element, the set of lifts of $g$ can be identified with a quotient of the Nisnevich cohomology group $\Hr^{i}(X,\mathbf{\pi}_i^{\A^1}(\mathscr{F})(\mathscr{P}))$.  It follows that if the groups $\Hr^{i+1}(X,\mathbf{\pi}_i^{\A^1}(\mathscr{F})(\mathscr{P}))$ and $\Hr^{i}(X,\mathbf{\pi}_i^{\A^1}(\mathscr{F})(\mathscr{P}))$ vanish, then the map $p_*^{(i)}:[X,\mathscr{E}^{(i+1)}]_{\A^1}\rightarrow[X,\mathscr{E}^{(i)}]_{\A^1}$ is a bijection. 

Since $\mathscr{F}$ is $\A^1$-$m$-connected by assumption, the sheaf $\mathbf{\pi}_i^{\A^1}(\mathscr{F})(\mathscr{P})$ is trivial for $i\leqslant m$ so that $p_*^{(i)}$ is indeed a bijection for all $i\leqslant m$. 

On the other hand, as noted still later in the discussion \cite[p. 1055]{AFSplittingOutisde}, since $m+1>\mathrm{cd}_{\A^1}(X)$ by assumption, the map $p_*^{(i)}$ is a bijection for all $i\geqslant m+1$: indeed, just as above, it suffices to show that the cohomology groups $\Hr^{i+1}(X,\mathbf{\pi}_i^{\A^1}(\mathscr{F})(\mathscr{P}))$ and $\Hr^{i}(X,\mathbf{\pi}_i^{\A^1}(\mathscr{F})(\mathscr{P}))$ are zero and this is true by assumption on $X$ since $\mathbf{\pi}_i^{\A^1}(\mathscr{F})(\mathscr{P})$ is strictly $\A^1$-invariant for all $i$ (\cite[Corollary 6.2]{Morel}; this is still true if $i=1$ since $\mathbf{\pi}_1^{\A^1}(\mathscr{F})(\mathscr{P})$ is in fact trivial in this case as $m\geqslant 1$). 

Hence the map $p_*^{(i)}:[X,\mathscr{E}^{(i+1)}]_{\A^1}\rightarrow[X,\mathscr{E}^{(i)}]_{\A^1}$ is a bijection for all $i$. Since $\mathscr{E}^{(0)}=\mathscr{B}$ and by \cite[Theorem 6.1.1, (iv)]{AFSplittingOutisde}, this implies that $[X,\mathscr{E}]_{\A^1}\rightarrow[X,\mathscr{B}]_{\A^1}$ is a bijection as required.
\end{proof}

\begin{rema}
In particular, the above result is true if $X$ is of Krull dimension $d\leqslant m$. Also note that the sheaf $\mathscr{P}$ used to twist the homotopy sheaves of the fibre $\mathscr{F}$ is a $\pi_1^{\A^1}(\mathscr{B})$-torsor, hence it is trivial if $\mathscr{B}$ is $\A^1$-simply connected which is the case in the applications that we have in mind: this slightly simplifies the presentation.
\end{rema}

\paragraph*{Milnor--Witt $\Kr$-theory.} The Milnor--Witt $\Kr$-theory of a field $F$, which was defined by Morel—the definition that we use is \cite[Definition 3.1]{Morel}—, is denoted by $\Kr_*^\MW(F)$. It is the $\Z$-graded unital associative ring given by the following generators and relations. There is a generator $[a]$ in degree $+1$ for each $a\in F^\times$, and a generator $\eta$ in degree $-1$. The relations are the following.
\begin{itemize}
	\item For every $a\in F$ such that $a$ and $1-a$ belong to $F^\times$, we have $[a][1-a]=0$.
	\item For every $a$ and $b$ in $F^\times$, we have $[ab]=[a]+[b]+\eta[a][b]$.
	\item For every $a\in F^\times$, we have $\eta[a]=[a]\eta$.
	\item The relation $\eta(2+[-1])=0$ holds.
\end{itemize}
Given $a\in F^\times$, we set $\langle a\rangle=1+\eta[a]$.

The unramified sheaf (in the sense of \cite[Chapter 2]{Morel}) on $\mathsf{Sm}_k$ induced by $\Kr_n^\MW$ is denoted by $\mathbf{K}_n^\MW$. It is a strictly $\A^1$-invariant sheaf of $\Kr_0^\MW(k)$-modules. We use the notations of \cite{FaLCWG} for Milnor--Witt $\Kr$-theory twisted by a graded line bundle (see \cite[Subsection 1.3]{FaLCWG} for the definition of this last notion). If $(L,a)$ is a graded line bundle on $F$, there is a simply transitive action of $F^\times$ on the complement $L^0$ of $0$ in $L$, thus a morphism $\Z[F^\times]\rightarrow\Z[L^0]$ where $\Z[G]$ is the group algebra of the group $G$ and $\Z[E]$ is the free abelian group on the set $E$, which is a $\Z[G]$-module if $E$ is a $G$-set; on the other hand, there is a morphism $\Z[F^\times]\rightarrow\Kr_*^\MW(F)$ given by $\alpha\mapsto\langle\alpha\rangle$. We then set \[\Kr_*^\MW(F,L,a)=\Kr_*^\MW(F)\otimes_{\Z[F^\times]}\Z[L^0].\] This is the Milnor--Witt $\Kr$-theory of $F$ twisted by $(L,a)$. Here it should be noted that the integer $a$ is decorative in the \emph{definition} of twisted Milnor--Witt $\Kr$-theory, though it is essential to keep track when using canonical isomorphisms such as switching $(L,a)\otimes_F(L',a')\simeq(L',a')\otimes_F(L,a)$ in the category of \emph{graded} vector bundles (see the paragraph following \cite[Definition 1.18]{FaLCWG}). Observe also that the choice of $l\in L^0$ induces an isomorphism $\Kr_*^\MW(F)\rightarrow\Kr_*^\MW(F,L,a)$ given by $\alpha\mapsto\alpha\otimes l$. In particular, Milnor--Witt $\Kr$-theory twisted by the trivial graded line bundle is naturally isomorphic to untwisted Milnor--Witt $\Kr$-theory. We use this isomorphism to write generic elements of $\Kr_*^\MW(F,L,a)$ as $\alpha\otimes l$ where $\alpha\in\Kr_*^\MW(F)$ and $l\in L^0$.

There is also a global version of Milnor--Witt $\Kr$-theory: if $(L,a)$ is a graded line bundle on a smooth $k$-scheme $X$, then the authors construct a sheaf $\mathbf{K}_n^\MW(X,L,a)$ on the Nisnevich site $\Nis(X)$ of $X$ in \cite[Subsection 2.2]{AFCompEulerClass}: just as in the case of fields, as far as the definition of twisted Milnor--Witt $\Kr$-theory is concerned, the integer $a$ is decorative and $\mathbf{K}_n^\MW(X,L,a)=\mathbf{K}_n^\MW(X,L)$. The sheaf $\mathbf{K}_n^\MW(X,L,a)$ coincides with the restriction of $\mathbf{K}_n^\MW$ to $\Nis(X)$ if $(L,a)$ is trivial. Cohomology is functorial with respects to twists: if $L$ is a line bundle over a smooth $k$-scheme $X$ and if $f:Y\rightarrow X$ is a scheme over $X$ with $Y$ a smooth $k$-scheme, then there is a homomorphism $f^*:\Hr^i(X,\Kbf_n^\MW(L,a))\rightarrow\Hr^i(Y,\Kbf_n^\MW(f^*L,a))$ (\cite[Construction 2.7]{AFCompEulerClass}).

The graded ring structure on Milnor--Witt $\Kr$-theory induces an operation $\Kr_j^\MW(F)\times\Kr_n^\MW(F)\rightarrow\Kr_{j+n}^\MW(F)$ for any field $F$ (for $(n,j)\in\Z\times\Z$) which in turn induces a morphism of sheaves $\mathbf{K}_j^\MW\times\mathbf{K}_jn^\MW\rightarrow\mathbf{K}_{j+n}^\MW$ on $\mathsf{Sm}_k$ (see for instance the end of \cite[Chapter 2, 1.1. Residues]{MWM}). By functoriality of sheaf cohomology with respect to the coefficient sheaf, we obtain a map \[\Hr^i(X,\mathbf{K}_j^\MW)\times\Hr^m(X,\mathbf{K}_n^\MW)\rightarrow\Hr^{i+m}(X,\mathbf{K}_{j+n}^\MW).\] We make use of this map in the following lemma (with $m=0$ and $n=1$) which shows that one can compute the cohomology of $X\times\mathbb{G}_m$ with coefficients in Milnor--Witt $\Kr$-theory in terms of the cohomology of $X$. Let $t$ be the parameter of $\mathbb{G}_m$. For any $X$, let $p_X:X\times\mathbb{G}_m\rightarrow X$ denote the projection morphism and denote by $t_X$ the pullback of the parameter $t$ to $X\times\mathbb{G}_m$. Observe that since $[t]$ belongs to $\Hr^0(\mathbb{G}_m,\Kbf_1^\MW)$, $[t_X]$ belongs to $\Hr^0(X\times\mathbb{G}_m,\Kbf_1^\MW)$.

\begin{lem}\label{cohomologyprodwithgm}
Let $X$ be a smooth $k$-scheme. Let $i\in\N$ and let $j\in\Z$. Then \[\Hr^i(X\times\mathbb{G}_m,\mathbf{K}_j^\MW)=p_X^*\Hr^i(X,\mathbf{K}_j^\MW)\oplus p_X^*\Hr^i(X,\mathbf{K}_{j-1}^\MW)[t_X]\] as a $\Kr_0^\MW(k)$-module. Moreover, the morphism $\iota:X\rightarrow X\times\mathbb{G}_m$ given by the invertible global section $1\in\mathscr{O}_{X}(X)^\times$ induces the splitting above.
\end{lem}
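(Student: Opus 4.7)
The plan is to derive the decomposition from the Gysin long exact sequence attached to the codimension-one closed embedding of $X\times\{0\}$ into $X\times\A^1$ with open complement $X\times\mathbb{G}_m$. Purity for the strictly $\A^1$-invariant sheaf $\Kbf_j^\MW$ identifies $\Hr^{\bullet}_{X\times\{0\}}(X\times\A^1,\Kbf_j^\MW)$ with $\Hr^{\bullet-1}(X,\Kbf_{j-1}^\MW)$---the normal bundle is trivial, so no line bundle twist intervenes---while homotopy invariance identifies $\Hr^\bullet(X\times\A^1,\Kbf_j^\MW)$ with $\Hr^\bullet(X,\Kbf_j^\MW)$ via the projection $q:X\times\A^1\rightarrow X$; since $p_X$ factors as $q$ composed with the open inclusion, the restriction map becomes precisely $p_X^*$. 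The long exact sequence therefore takes the form
\[\cdots\to\Hr^{i-1}(X,\Kbf_{j-1}^\MW)\to\Hr^i(X,\Kbf_j^\MW)\xrightarrow{p_X^*}\Hr^i(X\times\mathbb{G}_m,\Kbf_j^\MW)\xrightarrow{\partial}\Hr^i(X,\Kbf_{j-1}^\MW)\to\cdots.\]

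Since $\iota$ is a section of $p_X$, the pullback $\iota^*$ retracts $p_X^*$, so $p_X^*$ is injective for every $i$; exactness then forces every connecting homomorphism in the sequence to vanish, yielding short exact sequences
\[0\to\Hr^i(X,\Kbf_j^\MW)\xrightarrow{p_X^*}\Hr^i(X\times\mathbb{G}_m,\Kbf_j^\MW)\xrightarrow{\partial}\Hr^i(X,\Kbf_{j-1}^\MW)\to 0\]
already split on the left by $\iota^*$ as $\Kr_0^\MW(k)$-modules.

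To exhibit a splitting of $\partial$ matching the statement, I would define $s:\Hr^i(X,\Kbf_{j-1}^\MW)\rightarrow\Hr^i(X\times\mathbb{G}_m,\Kbf_j^\MW)$ by $\alpha\mapsto p_X^*(\alpha)\cdot[t_X]$, using the module action recalled before the lemma. The key verification is $\partial\circ s=\mathrm{id}$, which follows from a Leibniz-type rule for the Rost--Schmid residue: the class $p_X^*(\alpha)$ extends to $X\times\A^1$, hence has vanishing residue along $X\times\{0\}$, while $t_X$ is a uniformiser along that divisor so that $\partial([t_X])=1$, giving $\partial(p_X^*(\alpha)\cdot[t_X])=\alpha$. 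Moreover the image of $s$ lands in $\ker(\iota^*)$ because $\iota^*[t_X]=[1]=0$ in $\Kr_1^\MW$, so the two summands are genuinely in direct sum, and the first projection of the decomposition is $p_X^*\iota^*$, as asserted.

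The main obstacle is pinning down the precise references for the purity isomorphism with coefficients in $\Kbf_j^\MW$ and for the Leibniz rule satisfied by the Rost--Schmid residue; both are standard in the Milnor--Witt K-theory machinery developed in \cite{Morel} and in its subsequent refinements (including those with graded line bundle twists already cited above), but citing them at the right level of generality is the most delicate point of the write-up.
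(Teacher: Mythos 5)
Your argument is correct, but note that the paper does not actually prove this lemma: it simply cites \cite[Lemma~3.1.8 and Remark~3.1.9]{MWM}. Your proposal therefore substitutes a self-contained derivation for a citation, and the derivation is the standard one. The chain of reasoning — the localization sequence for $X\times\{0\}\hookrightarrow X\times\A^1$ with open complement $X\times\mathbb G_m$, purity with trivial normal bundle to identify local cohomology with $\Hr^{\bullet-1}(X,\Kbf_{j-1}^\MW)$, $\A^1$-invariance to replace $X\times\A^1$ by $X$, the section $\iota$ to force the connecting maps to vanish, and the residue formula $\partial\bigl(p_X^*\alpha\cdot[t_X]\bigr)=\alpha$ together with $\iota^*[t_X]=[1]=0$ to identify the complementary summand — is exactly how such computations are carried out with coefficients in unramified Milnor--Witt sheaves, and it is very likely what the cited source does. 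The only gap you flag yourself, namely locating precise references for purity with $\Kbf_j^\MW$-coefficients and for the Leibniz rule of the Rost--Schmid residue, is real: you need the identification $(\Kbf_j^\MW)_{-1}=\Kbf_{j-1}^\MW$, the Thom/purity isomorphism for codimension-one embeddings of smooth schemes with coefficients in a strictly $\A^1$-invariant sheaf (Morel's framework), and the formula for the residue of a product by the class of a uniformiser when the other factor is unramified (also in Morel's book). With those citations supplied, your write-up would be complete; the paper's approach simply buys brevity at the cost of outsourcing the computation.
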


\begin{rema}
Observe that $p_X\circ\iota=\Id$ so that $p_X^*$ is injective. Moreover, the last statement means that the map $\iota$ satisfies $\iota^*(p_X^*\alpha+p_X^*\beta[t_X])=\alpha$.
\end{rema}

\begin{proof}
This is \cite[Chapter 2, Lemma 3.1.8]{MWM} for the first part of the statement and \cite[Chapter 2, Remark 3.1.9]{MWM} for the second.
\end{proof}

In particular, if $X=\Spec k$, one finds that \[\Hr^0(\mathbb{G}_m,\Kbf_1^\MW)\cong\Kr_1^\MW(k)\oplus\Kr_{0}^\MW(k)[t].\] 

Now for any pair $(i,j)$ with $i\in\N$ and $j\in\Z$, we denote by $\Kr_{i,j}$ the Eilenberg--Mac Lane space $\Kr(\mathbf{K}_j^\MW,i)$. Recall that given a pointed space $\mathscr{X}$, we set $\Hr^i(\mathscr{X},\Kbf_j^\MW)=[\mathscr{X},\Kr_{i,j}]_{\A^1,\bullet}$ which recovers the usual definition of the left-hand side in terms of sheaf cohomology if $\mathscr{X}$ is a pointed smooth $k$-scheme.

\begin{lem}\label{lem:cohomology_algebraic_suspension}
Let $i\geqslant 2$ and let $j\in\Z$. Let $X$ be a pointed smooth $k$-scheme. The map  $[X\wedge\mathbb{G}_m,\Kr_{i,j}]_{\A^1,\bullet}\rightarrow[X\times\mathbb{G}_m,\Kr_{i,j}]_{\A^1,\bullet}$ factors through the inclusion of $p_X^*\Hr^i(X,\mathbf{K}_{j-1}^\MW)[t_X]$ into an isomorphism \[\Hr^i(X\wedge\mathbb{G}_m,\Kbf_{j}^\MW)\xrightarrow{\cong}p_X^*\Hr^i(X,\Kbf_{j-1}^\MW)[t_X]\] of $\Kr_0^\MW(k)$-modules.
\end{lem}

\begin{proof} 
Let $Y$ be a pointed smooth $k$-scheme. The cofibre sequence $X\vee Y\rightarrow X\times Y\rightarrow X\wedge Y$ induces an exact sequence \[[\Sigma(X\times Y),\Kr_{i,j}]_{\A^1,\bullet}\rightarrow[\Sigma(X\vee Y),\Kr_{i,j}]_{\A^1,\bullet}\rightarrow[X\wedge Y,\Kr_{i,j}]_{\A^1,\bullet}\rightarrow[X\times Y,\Kr_{i,j}]_{\A^1,\bullet}\rightarrow[X\vee Y,\Kr_{i,j}]_{\A^1,\bullet}\] of $\Kr_0^\MW(k)$-modules. We have a pointed $\A^1$-weak equivalence $\Omega\Kr_{i+1,j}\simeq\Kr_{i,j}$, hence the last three terms in the above sequence may be read as \[[\Sigma(X\wedge Y),\Kr_{i+1,j}]_{\A^1,\bullet}\rightarrow[\Sigma(X\times Y),\Kr_{i+1,j}]_{\A^1,\bullet}\rightarrow[\Sigma(X\vee Y),\Kr_{i+1,j}]_{\A^1,\bullet}\] by adjunction. As noted previously when defining $\A^1$-homotopy sheaves, the suspension $\Sigma(X\times Y)$ is an $h$-cogroup in a natural way. The projection morphisms $p_Z:X\times Y\rightarrow Z$ and inclusions $i_Z:Z\rightarrow X\vee Y$ for $Z=X$ and $Z=Y$ yield by composition and after suspension morphisms \[\Sigma(X\times Y)\xrightarrow{\Sigma p_Z}\Sigma Z\xrightarrow{\Sigma i_Z}\Sigma(X\vee Y).\] We may now use the $h$-cogroup structure on $\Sigma(X\times Y)$ to multiply these morphisms, yielding a morphism $\Sigma(i_X\circ p_X)\cdot\Sigma(i_Y\circ p_Y):\Sigma(X\times Y)\rightarrow\Sigma(X\vee Y)$. By construction, this map splits the suspension $\Sigma(X\vee Y)\rightarrow\Sigma(X\times Y)$ of the inclusion $X\vee Y\rightarrow X\times Y$ (see also \cite[p. 72]{Morel}). Hence the morphism \[[X\times Y,\Kr_{i,j}]_{\A^1,\bullet}\cong[\Sigma(X\times Y),\Kr_{i+1,j}]_{\A^1,\bullet}\rightarrow[\Sigma(X\vee Y),\Kr_{i+1,j}]_{\A^1,\bullet}\cong[X\vee Y,\Kr_{i,j}]_{\A^1,\bullet}\] splits. Similarly and more directly, the morphism $[\Sigma(X\times Y),\Kr_{i,j}]_{\A^1,\bullet}\rightarrow[\Sigma(X\vee Y),\Kr_{i,j}]_{\A^1,\bullet}$ splits. In particular, it is surjective hence the morphism $[\Sigma(X\vee Y),\Kr_{i,j}]_{\A^1,\bullet}\rightarrow[X\wedge Y,\Kr_{i,j}]_{\A^1,\bullet}$ is the zero morphism so that the morphism $[X\wedge Y,\Kr_{i,j}]_{\A^1,\bullet}\rightarrow[X\times Y,\Kr_{i,j}]_{\A^1,\bullet}$ is injective. 

Assume now that $Y=\mathbb{G}_m$---in particular, $\cd_{\A^1}(\mathbb{G}_m)\leqslant\dim(\mathbb{G}_m)=1$ so that $\Hr^i(\mathbb{G}_m,\Kbf_{j+1}^\MW)=0$. Then \[[X\vee\mathbb{G}_m,\Kr_{i,j}]_{\A^1,\bullet}\xrightarrow{\cong}[X,\Kr_{i,j}]_{\A^1,\bullet}\oplus[\mathbb{G}_m,\Kr_{i,j}]_{\A^1,\bullet}\] with $[\mathbb{G}_m,\Kr_{i,j}]_{\A^1,\bullet}\cong\Hr^i(\mathbb{G}_m,\Kbf_j^\MW)=0$ by assumption and with the map $[X\vee\mathbb{G}_m,\Kr_{i,j}]_{\A^1,\bullet}\rightarrow[X,\Kr_{i,j}]_{\A^1,\bullet}$ induced by the structural map $X\rightarrow X\vee\mathbb{G}_m$ whose composition with the cofibration $X\vee\mathbb{G}_m\rightarrow X\times\mathbb{G}_m$ is the map $\iota$ of Lemma \ref{cohomologyprodwithgm}. Hence the sequence \[0\rightarrow[X\wedge\mathbb{G}_m,\Kr_{i,j}]_{\A^1,\bullet}\rightarrow[X\times\mathbb{G}_m,\Kr_{i,j}]_{\A^1,\bullet}\xrightarrow{\iota^*}[X,\Kr_{i,j}]_{\A^1,\bullet}\rightarrow 0\] is split-exact with $p_X^*:[X,\Kr_{i,j}]_{\A^1,\bullet}\rightarrow[X\times\mathbb{G}_m,\Kr_{i,j}]_{\A^1,\bullet}$ splitting $\iota_X^*$ again by Lemma \ref{cohomologyprodwithgm}. The lemma now follows directly from the cited lemma.
\end{proof}

\paragraph*{The $\A^1$-Brouwer degree and computations.}

Morel's \emph{$\A^1$-Brouwer degree} \cite[Corollary 6.43]{Morel} may be viewed an isomorphism \[\deg^{\A^1}:\Hom_{\mathcal{SH}(k)}(\Sigma_{\mathbb{P}^1}^\infty\Sr^0,\Sigma_{\mathbb{P}^1}^\infty\Sr^0)\xrightarrow{\cong}\Kr_0^\MW(k)\] of rings. Thus if a pointed space $\mathscr{X}$ is such that $\Sigma_{\mathbb{P}^1}^\infty\mathscr{X}\simeq\Sigma_{\mathbb{P}^1}^\infty\Sr^0$ in $\mathcal{SH}(k)$---we say that $\mathscr{X}$ has the $\mathbb{P}^1$-stable $\A^1$-homotopy type of the unit object---, then the $\A^1$-Brouwer degree of an element of $\Hom_{\mathcal{H}_\bullet(k)}(\mathscr{X},\mathscr{X})$ makes sense: by definition, it is the $\A^1$-Brouwer degree of the image of this element in $\Hom_{\mathcal{SH}(k)}(\Sigma_{\mathbb{P}^1}^\infty\mathscr{X},\Sigma_{\mathbb{P}^1}^\infty\mathscr{X})\simeq\Hom_{\mathcal{SH}(k)}(\Sigma_{\mathbb{P}^1}^\infty\Sr^0,\Sigma_{\mathbb{P}^1}^\infty\Sr^0)$ (the $\A^1$-Brouwer degree does not depend on the choice of the isomorphism $\Sigma_{\mathbb{P}^1}^\infty\mathscr{X}\simeq\Sigma_{\mathbb{P}^1}^\infty\Sr^0$).
 
Let $n\geqslant 1$ be an integer. The $\A^1$-homotopy type of $\A^{n+1}\setminus 0$ was described in Example \ref{exe:homotopy_type_punctured_a^n} shows that $\A^{n+1}\setminus 0$ has the $\mathbb{P}^1$-stable $\A^1$-homotopy type of the unit object so that the $\A^1$-Brouwer degree of an endomorphism of $\A^{n+1}\setminus 0$ makes sense. In fact, \cite[Corollary 6.43]{Morel} yields an isomorphism $[\A^{n+1}\setminus 0,\A^{n+1}\setminus 0]_{\A^1,\bullet}\cong\Kr_0^\MW(k)$ of rings: this isomorphism is induced by the functor $\Sigma_{\mathbb{P}^1}^\infty$, which provides a bijection $[\A^{n+1}\setminus 0,\A^{n+1}\setminus 0]_{\A^1,\bullet}\xrightarrow{\simeq}\Hom_{\mathcal{SH}(k)}(\Sigma_{\mathbb{P}^1}^\infty\Sr^0,\Sigma_{\mathbb{P}^1}^\infty\Sr^0)$, and by $\deg^{\A^1}$. The next lemma gives a concrete representative of elements of the type $\langle a\rangle\in\Kr_0^\MW(k)$ in $[\A^{n+1}\setminus 0,\A^{n+1}\setminus 0]_{\A^1,\bullet}$.

\begin{lem}\label{lem:explicit_rank_one_representative_2n+1}
Let $n\geqslant 1$. Then the map of schemes $f_a^n:\A^{n+1}\setminus 0\rightarrow\A^{n+1}\setminus 0$ defined by \[f_a^n(x_1,x_2,\ldots,x_{n+1})=(ax_1,x_2,\ldots,x_{n+1})\] satisfies $\deg^{\A^1}(f_a^n)=\langle a\rangle$ for any $a\in k^\times$.
\end{lem}
 
\begin{proof}
Note that $f_a^n$ clearly defines a pointed endomorphism of $\A^{n+1}\setminus 0$. Consider the morphism $g:\A^{n+1}\rightarrow\A^{n+1}$ sending $(y_1,\ldots,y_{n+1})$ to $(ay_1,y_2,\ldots,y_{n+1})$. It induces a finite morphism $u:\mathbb{P}^{n+1}\rightarrow\mathbb{P}^{n+1}$, where $\A^{n+1}$ is embedded into $\mathbb{P}^{n+1}=\Proj k[y_0,\ldots,y_{n+1}]$ by $D(y_0)$, such that $u([y_0:y_1:\cdots:y_{n+1}])=[y_0:ay_1:y_2:\cdots:y_{n+1}]$. Thus by \cite[Proposition 15]{KW}, the $\A^1$-Brouwer degree of the morphism $\overline{g}:\mathbb{P}^{n+1}/\mathbb{P}^n\rightarrow\mathbb{P}^{n+1}/\mathbb{P}^n$ is the $\A^1$-local Brouwer degree at $0$ of the morphism $g$. Using for instance the formula for this $\A^1$-local Brouwer degree derived from \cite[Main theorem, Lemma 9]{KW}, we see that $\deg^{\A^1}\overline{g}=\langle a\rangle$ (the matrix $(\frac{\partial g_i}{\partial x_j})_{i,j}$ is diagonal with the first coefficient equal to $a$ and the others to $1$).

On the other hand, there is a pushout square of spaces
\begin{center}
\begin{tikzcd}
\A^{n+1}\setminus 0 \arrow[r] \arrow[d] & \A^{n+1} \arrow[d] \\
\A^{n+1} \arrow[r]                      & \Sigma\A^{n+1}\setminus 0
\end{tikzcd}
\end{center}
since the left vertical and top horizontal arrows are cofibrations. There is an induced $\A^1$-equivalence $\Sigma\A^{n+1}\setminus 0\sim\A^{n+1}/\A^{n+1}\setminus 0$. Moreover, the inclusion $\A^{n+1}\rightarrow\mathbb{P}^{n+1}$ induces an $\A^1$-equivalence $\mathbb{A}^{n+1}/\A^{n+1}\setminus 0\rightarrow\mathbb{P}^{n+1}/(\mathbb{P}^{n+1}\setminus 0)$ by the purity theorem \cite[Theorem 2.23, §3, p. 115]{MoVo}. Moreover, the collapse map $\mathbb{P}^{n+1}/\mathbb{P}^n\rightarrow\mathbb{P}^{n+1}\setminus(\mathbb{P}^{n+1}\setminus 0)$ (collapsing $\mathbb{P}^n\subseteq\mathbb{P}^{n+1}\setminus 0$ to a point) is an isomorphism as is observed following the proof of \cite[Lemma 10]{KW}. Thus there is an $\A^1$-weak equivalence $\Sigma\A^{n+1}\setminus 0\simeq\mathbb{P}^{n+1}/\mathbb{P}^n$ and unwinding the definitions, we see that $\overline{g}=\Sigma f_a$. Since the $\A^1$-Brouwer degree is invariant under simplicial suspension by definition, this establishes the claim.
\end{proof}

Similarly, given an integer $n>1$, the $\A^1$-homotopy type of $Q_{n}$ that was described in Theorem \ref{homotypequad} shows that $Q_{n}$ has the $\mathbb{P}^1$-stable $\A^1$-homotopy type of the unit object so that the $\A^1$-Brouwer degree of an endomorphism of $Q_{n}$ makes sense. Here again, \cite[Corollary 6.43]{Morel} yields an isomorphism $[Q_{n},Q_{n}]_{\A^1,\bullet}\cong\Kr_0^\MW(k)$ of rings.

\begin{cor}\label{cor:explicit_rank_one_representative_2n+1_quadric}
Let $n\geqslant 1$. Then the endomorphism $g_a^n$ of $Q_{2n+1}$ given by \[g_a^n(x_1,\ldots,x_{n+1},y_1,\ldots,y_{n+1})=(ax_1,x_2,\ldots,x_{n+1},a^{-1}y_1,y_2,\ldots,y_{n+1})\] satisfies $\deg^{\A^1} g_a^n=\langle a\rangle$ for any $a\in k^\times$.
\end{cor}

\begin{proof}
Indeed, denoting by $p:Q_{2n+1}\rightarrow\A^{n+1}\setminus 0$ the standard Jouanolou device, we have $p\circ g_a^n=f_a^n\circ p$ where $f_a^n$ is the morphism of Lemma \ref{lem:explicit_rank_one_representative_2n+1}. Here $p$ is an $\A^1$-weak equivalence, thus a $\mathbb{P}^1$-stable weak equivalence, that is, an isomorphism in $\mathcal{SH}(k)$. Hence $\deg^{\A^1}g_a^n=\deg^{\A^1}f_a^n=\langle a\rangle$.
\end{proof}

\begin{lem}\label{lem:explicit_rank_one_representative_2n}
Let $n\geqslant 1$. Then the map of schemes $h_a^n:Q_{2n}\rightarrow Q_{2n}$ defined by \[h_a^n(x_1,\ldots,x_n,y_1,\ldots,y_n,z)=(ax_1,x_2,\ldots,x_n,a^{-1}y_1,y_2,\ldots,y_n,z)\] satisfies $\deg^{\A^1}(h_a^n)=\langle a\rangle$ for any $a\in k^\times$.
\end{lem}

\begin{proof}
If $n=1$, recall that the endomorphism $u_a:[x:y]\mapsto[ax:y]$ of $\mathbb{P}^1$ (pointed by $[1:0]$) has $\A^1$-Brouwer degree $\langle a\rangle$ by \cite[Lemma 6.3.4]{Morel_trieste}. The $\A^1$-weak equivalence $Q_2\sim\mathbb{P}^1$ described in \cite[Proposition 2.1.1]{ADF} is a morphism of schemes $\varphi:Q_2\rightarrow\mathbb{P}^1$ given by $(x,y,z)\mapsto[\frac{x}{z}:1]$ on $D(z)$ and by $(x,y,z)\mapsto[1:\frac{y}{1-z}]$ on $D(1-z)$; in particular, it is pointed if we point $Q_2$ by $(0,0,0)$. Consequently, we see that $u_a\circ\varphi=\varphi\circ h_a^1$ hence $\deg^{\A^1}(h_a^1)=\deg^{\A^1}(u_a)=\langle a\rangle$. For $n\geqslant 2$, then modulo the $\A^1$-weak equivalence $Q_{2n}\xrightarrow{\sim}\Sigma Q_{2n-1}$ described in \cite[Theorem 4.2.3]{ADF}, $h_a^n=\Sigma g_a^{n-1}$ where $g_a^{n-1}$ is the morphism of Corollary \ref{cor:explicit_rank_one_representative_2n+1_quadric}. Since the $\A^1$-Brouwer degree is invariant under simplicial suspension, this completes the proof.
\end{proof}

\begin{rema}
The careful reader will have noticed that $k$ actually has to be assumed to be infinite and perfect in the proof of Lemma \ref{lem:explicit_rank_one_representative_2n} in case $n>1$, notably because the intersection of the open subschemes $V_{2n}^i$ of the proof of \cite[Theorem 4.2.6]{ADF} has to have a rational point. There is a way to bypass these difficulties. Namely, there is an explicit $\A^1$-weak equivalence $Q_{2(n+1)}\sim Q_{2n}\wedge\mathbb{P}^1$ which is such that $h_a^{n+1}=h_a^n\wedge\Id_{\mathbb{P}^1}$ for all $n$ and over an arbitrary base field. The proof relies on the well-known suspension formula $\hocolim(X\leftarrow X\times Y\rightarrow Y)\sim\Sigma(X\wedge Y)$ (the morphisms of the homotopy pushout diagram are the projection maps) with $X=Q_{2n}$ and $Y=\mathbb{G}_m$. We do not have the space to discuss this further here but hope to come back to it in future work.
\end{rema}

\paragraph*{Recollections on the Rost--Schmid complex.} We still use the notions and notations of \cite{FaLCWG}, including the functor $D$ introduced in \cite[Subsection 1.3]{FaLCWG}: this functor sends a vector bundle $V$ over a scheme $X$ to the pair $D(V)=(\det V,\rk V)$ where $\det V$ is the top exterior power $\bigwedge^{\rk V} V$ of $V$.

Recall from \cite[Subsection 2.1]{FaLCWG} the Rost--Schmid complex which computes the Chow--Witt groups of a smooth $k$-scheme twisted by a (graded) line bundle. The fundamental building block is Milnor--Witt $\Kr$-theory twisted by a graded line bundle $\Kr_*^\MW(F,L,a)$. We then denote the cohomological Rost--Schmid complex associated with an object $X$ of $\mathsf{Sm}_k$, a graded line bundle $(L,a)$ over $X$ and an integer $i$ by $C_\RS(X,i,L,a)$: by definition, $\widetilde{\CH}^i(X,L,a)=\Hr^i(C_\RS(X,i,L,a))$. In degree $j\in\Z$, $C_\RS(X,i,L,a)^j$ is given by \[C_\RS(X,i,L,a)^j=\bigoplus_{x\in X^{(j)}}\Kr_{i-j}^\MW(k(x),\Lambda_x^X\otimes_{k(x)}(L(x),a)).\] Here $X^{(j)}$ denotes the set of points of codimension $j$ of $X$, $\Lambda_x^X=D(\m_x/\m_x^2)^{-1}$ for $x\in X^{(j)}$ ($\Lambda_x^X$ is thus graded by $j$) where $\m_x$ is the maximal ideal of the local ring $\mathscr{O}_{X,x}$, and $L(x)=L_x\otimes_{\mathscr{O}_{X,x}}k(x)$. We omit $(L,a)$ from the notation if $(L,a)$ is the trivial graded line bundle; we do the same for $X$ in $\Lambda_x^X$ unless there is possible confusion.

The complex $C_\RS(X,i,L,a)$ computes the cohomology of $X$ with values in the sheaf $\Kbf_i^\MW(X,L,a)$ on $\Nis(X)$ since it is a flasque resolution of this sheaf. Hence we may set \[\Hr^*(X,\Kbf_i^\MW(X,L,a))=\Hr^*(C_\RS(X,i,L,a)).\] For instance, $\Hr^n(X,\Kbf_n^\MW(X,L,a))=\widetilde{\CH}^n(X,L,a)$.

\section{Unimodular rows}\label{uni}

Recall the definition of unimodular rows:

\begin{defi}[unimodular row]
Let $A$ be a ring and let $n$ be a non-negative integer. A \emph{unimodular row of length $n$} with values in $A$ is an $n$-tuple $(a_1,\ldots,a_n)\in A^n$ such that the ideal generated by the $a_i$'s is $A$.
\end{defi}

The set of unimodular rows of length $n$ with values in $A$ is denoted by $\Um_n(A)$; it defines a functor from the category of rings to the category of sets in an obvious way. Unimodular rows may be thought of in (at least) two ways.
\begin{itemize}
	\item If $A$ is a ring, an $A$-morphism $f:A^n\rightarrow A$ corresponds to the choice of an $n$-tuple $(a_1,\ldots,a_n)$ of elements of $A$; the morphism $f$ is surjective if, and only if, the $n$-tuple $(a_1,\ldots,a_n)$ is a unimodular row.
	\item If $A$ is a $k$-algebra, an $n$-tuple of elements of $A$ corresponds to the choice of a $k$-morphism $X\rightarrow\A^n$ where $X=\Spec A$; the $n$-tuple is a unimodular row if, and only if, the morphism $X\rightarrow\A^n$ factors through the open subset $\A^n\setminus 0$.
\end{itemize}

Let $A$ be a ring and let $(a_1,\ldots,a_n)$ be an element of $\Um_n(A)$. Then the $A$-module homomorphism $f:A^n\rightarrow A$ naturally associated with $(a_1,\ldots,a_n)$ is surjective: since $A$ is a projective $A$-module, this implies that $f$ has a splitting, that is, there exists an $A$-module homomorphism $s:A\rightarrow A^n$ with $f\circ s=\Id_A$. The image $s(1)$ of $1$ by $s$ is a row $(\beta_1,\ldots,\beta_n)$ of elements of $A$ such that $a_1\beta_1+\cdots+a_n\beta_n=f\circ s(1)=1$. This explains the following definition:

\begin{defi}\label{splitting}
A \emph{splitting} of $(a_1,\ldots,a_n)$ is a row $(b_1,\ldots,b_n)$ of elements $A$ such that $\sum a_ib_i=1$.
\end{defi}

\begin{rema}
This notion of splitting allows one to produce a more explicit proof that the map $p_*$ from Lemma \ref{quadricjouandev} is surjective. Indeed, let $n$ be a positive integer and let $u:\Spec A\rightarrow\A^n\setminus 0$ be a morphism of $k$-schemes; we have to show that there exists a map $v:X\rightarrow Q_{2n-1}$ such that $p\circ v=u$. Then $u$ corresponds to a unimodular row $(a_1,\ldots,a_n)$: if $(b_1,\ldots,b_n)$ is a splitting of $a$, we obtain an element $(a,b)$ of $Q_{2n-1}(k[X])$, thus a morphism $v:X\rightarrow Q_{2n-1}$ which by construction satisfies $p\circ v=u$.
\end{rema}

Let $A$ be a ring. The general linear group $\GL_n(A)$ acts naturally on the right on $\Um_n(A)$, the action being induced by the natural action of $\GL_n(A)$ on $A^n$; thus, any subgroup of $\GL_n(A)$ acts on $\Um_n(A)$. It is in particular the case of the subgroup of \emph{elementary matrices}, denoted by $\Er_n(A)$ and defined as the subgroup of $\GL_n(A)$ generated by the set of matrices of the form $e_{ij}(\lambda)=\I_n+\lambda E_{ij}$ where $\I_n$ is the identity matrix, $\lambda$ is an element of $A$ and $E_{ij}$ ($i\neq j$) is the matrix whose entries are $0$ except the entry in the $i$-th row and $j$-th column which is equal to $1$. There exists a group structure on the orbit set $\Um_n(A)/\Er_n(A)$ under suitable dimension hypotheses of $A$ which van der Kallen described in terms of weak Mennicke symbols:

\begin{defi}[\protect{\cite[3.2 Notations]{vandK}}]
A \emph{weak Mennicke symbol} on $\Um_n(A)$ is a map $\phi:\Um_n(A)\rightarrow G$ from $\Um_n(A)$ to a group $G$ (denoted multiplicatively) which satisfies the following identities:
\begin{enumerate}
	\item if $v$ is a unimodular row of length $n$ and if $E$ is an elementary matrix, then $\phi(v)=\phi(vE)$;
	\item if $u=(q,a_2,\ldots,a_n)$ and $v=(1+q,a_2,\ldots,a_n)$ are unimodular rows of length $n$ with values in $A$ and if $r\in A$ is such that $r(1+q)=q$ mod $\langle a_2,\ldots,a_n\rangle$, then $\phi(u)=\phi(r,a_2,\ldots,a_n)\phi(v)$.
\end{enumerate}
\end{defi}

There exists a universal weak Mennicke symbol $\wms:\Um_n(A)\rightarrow\WMS_n(A)$ which induces a map \[\overline{\wms_n}:\Um_n(A)/\Er_n(A)\rightarrow\WMS_n(A)\] from the set of $\Er_n(A)$-orbits by virtue of the first item. The observation of van der Kallen is that $\overline{\wms_n}$ is a bijection if the stable dimension of $A$ is sufficiently small. We briefly mentioned the notion of stable dimension in the introduction. In the sequel, we denote the stable \emph{rank} by $\sr(A)$. Its definition is as follows: given a ring $A$, for any $r\geqslant 1$, denote by $(\mathrm{DS})_r$ the following assertion: 
\begin{quote}
$(\mathrm{DS})_r$: For any $(a_1,\ldots,a_{r+1})\in\Um_{r+1}(A)$, there exist $b_1,\ldots,b_r\in A$ such that the row $(a_1+b_1a_{r+1},\ldots,a_r+b_ra_{r+1})$ is unimodular.
\end{quote}
Then $\sr(A)$ is the smallest integer $r$ such that $A$ satisfies $(\mathrm{DS})_r$ and $\infty$ otherwise. It follows from \cite[Proposition 1]{BBproj} that if $\sr(A)<\infty$, then $A$ satisfies $(\mathrm{DS})_r$ for any $r\geqslant\sr(A)$. Finally, $\sdim(A)$ is defined by the equality $\sr(A)=\sdim(A)+1$ (\cite[Introduction, 1.2]{vdK}), hence $\sdim(A)=\infty$ if $\sr(A)=\infty$. The key property for us is the following:

\begin{exe}\label{exe:noetherian_stable_dim}
Let $A$ be a Noetherian ring of Krull dimension $d$. Then the inequality $\sdim(A)\leqslant d$ is satisfied according to \cite[Th\'eor\`eme 1]{BBproj}.
\end{exe}

\begin{theo}[van der Kallen, \protect{\cite[Theorem 3.6]{vandK}}]\label{thm:structure_vdK}
Let $A$ be a ring of stable dimension $d$ and let $n\geqslant 3$ be an integer such that $d\leqslant 2n-4$. Then the map $\overline{\wms_n}:\Um_n(A)/\Er_n(A)\rightarrow\WMS_n(A)$ is a bijection and the group $\WMS_n(A)$ is abelian.
\end{theo}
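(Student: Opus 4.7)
Surjectivity of $\wms_n$ is essentially tautological: the abelian group $\WMS_n(A)$ is generated by the symbols attached to unimodular rows, and axiom (1) in the definition of a weak Mennicke symbol ensures that the universal weak Mennicke symbol $\Um_n(A)\rightarrow\WMS_n(A)$ factors through the orbit set, so its image already hits every generator.

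For injectivity, my plan is to produce an inverse by endowing $\Um_n(A)/\Er_n(A)$ itself with the structure of an abelian group for which the natural projection $\Um_n(A)\rightarrow\Um_n(A)/\Er_n(A)$ is a weak Mennicke symbol. The universal property of $\WMS_n(A)$ would then yield a morphism $\WMS_n(A)\rightarrow\Um_n(A)/\Er_n(A)$ inverting $\wms_n$ on both sides.

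The group operation is prescribed by the formula motivated by axiom (2). Given two classes $[u],[v]\in\Um_n(A)/\Er_n(A)$, the first key step is to prove the \emph{Mennicke--Newman lemma}: under the assumption $\sr(A)\leqslant 2n-3$, one can find elementary matrices $E_1,E_2$, an element $x\in A$ and a common tail $(a_2,\ldots,a_n)$ such that $uE_1=(x,a_2,\ldots,a_n)$ and $vE_2=(1-x,a_2,\ldots,a_n)$. The proof is by successive coordinate-by-coordinate reductions: each application of the stable range condition replaces one coordinate by a suitable linear combination of the others, and one needs roughly $2(n-1)$ free manoeuvres to align the $n-1$ tails of two rows of length $n$, which is exactly what the bound $\sdim(A)\leqslant 2n-4$ affords. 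The sum is then set to be $[u]+[v]:=[(x(1-x),a_2,\ldots,a_n)]$.

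The main obstacle, and the technical heart of van der Kallen's argument, is to show that this operation is \emph{well-defined} modulo the $\Er_n(A)$-action: different choices of $E_1,E_2,x,(a_2,\ldots,a_n)$ must lead to the same orbit class. Once well-definedness is achieved, commutativity follows by swapping the roles of $x$ and $1-x$; the class of $(1,0,\ldots,0)$ is the identity element (take $x=1$); the existence of inverses and associativity are obtained by further iterated Mennicke--Newman reductions, in each case consuming part of the slack provided by the inequality $\sdim(A)\leqslant 2n-4$. Axioms (1) and (2) for a weak Mennicke symbol then hold for the projection $\Um_n(A)\rightarrow\Um_n(A)/\Er_n(A)$ by construction: axiom (1) is the definition of the quotient, and axiom (2) is the very formula for the sum. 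This yields the desired inverse of $\wms_n$ and concludes the proof.
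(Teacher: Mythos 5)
The paper does not prove this statement: it is quoted verbatim from \cite[Theorem 3.3]{vdK} and used as a black box, so there is no internal proof to compare against. The one ingredient the paper develops itself is the Mennicke--Newman lemma (Proposition \ref{mennnewlem}), which you correctly single out as the engine of the construction. Your overall plan — equip $\Um_n(A)/\Er_n(A)$ with a group law prescribed by the Mennicke--Newman reduction, verify that the quotient map $\Um_n(A)\to\Um_n(A)/\Er_n(A)$ is then a weak Mennicke symbol, and invert $\wms_n$ via the universal property — is precisely the strategy of van der Kallen's original argument, so the route is the right one.

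Two points are off, though. First, the opening claim that surjectivity ``is essentially tautological'' is not a valid argument as written: the image of $\wms_n$ is a priori only a \emph{subset} of $\WMS_n(A)$, and hitting a generating set of an abelian group does not make a map of sets onto that group. Surjectivity really comes out of the second half of your plan — once the group structure on the orbit set is in place, $\wms_n$ is a group homomorphism (this has to be checked, using axiom (2) for the universal symbol), so its image is a subgroup containing a generating set, or more simply one reads off both inclusions from the two-sided inverse $\bar q$ — so the stand-alone tautology should be dropped. Second, your picture of how Mennicke--Newman is proved (``$2(n-1)$ free manoeuvres,'' coordinate-by-coordinate alignment) misrepresents the mechanism: as the paper's proof of Proposition \ref{mennnewlem} shows, one forms a \emph{single} unimodular row of length $2n-1$, namely $(u_1v_1,u_2,\dots,u_n,v_2,\dots,v_n)$, applies the stable range condition \emph{once} to make the common tail $(u_2,\dots,u_n,v_2,\dots,v_n)$ unimodular of length $2n-2$, and then finishes with a short explicit computation using elementary operations. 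The bound on the stable dimension enters once there; the stronger hypothesis $\sdim(A)\leqslant 2n-4$ is what is needed for the genuinely hard part you flag but do not carry out — well-definedness of the sum and the group axioms — which is where the bulk of \cite[Section 3]{vdK} is actually spent.
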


Let $n\geqslant 3$. In \cite{vdK}, several relations that may be satisfied by a map $\phi$ from $\Um_n(A)$ to a group $G$ are introduced. The relevant ones for us are the following.
\begin{itemize}
	\item[(MS1)] If $v\in\Um_n(A)$ and $E\in\Er_n(A)$, then $\phi(vE)=\phi(v)$.
	\item[(MS3)] If $(x,a_2,\ldots,a_n)$ and $(1-x,a_2,\ldots,a_n)$ are unimodular, then $\phi(x,a_2,\ldots,x_n)\phi(1-x,a_2,\ldots,a_n)=\phi(x(1-x),a_2,\ldots,a_n)$.
	\item[(MS4)] If $(f^2,a_2,\ldots,a_n)$ and $(g,a_2,\ldots,a_n)$ are unimodular, then $\phi(f^2,a_2,\ldots,a_n)\phi(g,a_2,\ldots,a_n)=\phi(f^2g,a_2,\ldots,a_n)$.
	\item[(MS5)] If $r(1+q)=q$ mod $\langle a_2,\ldots,a_n\rangle$ and if $(q,a_2,\ldots,a_n)$ is unimodular, then $\phi(r,a_2,\ldots,a_n)\phi(1+q,a_2,\ldots,a_n)=\phi(q,a_2,\ldots,a_n)$.
\end{itemize}
Theorem \ref{thm:structure_vdK} states that if $\sdim(A)\leqslant 2n-4$, then the universal map $\wms_n:\Um_n(A)\rightarrow\WMS_n(A)$ verifying (MS1) and (MS5) induces an bijection from $\Um_n(A)/\Er_n(A)$ to $\WMS_n(A)$ and that this group is abelian. On the other hand, (MS3) is closely linked to the well-known \emph{Mennicke--Newman lemma} which we now state:

\begin{prop}[Mennicke--Newman lemma]\label{mennnewlem}
Let $A$ be a commutative ring of stable dimension $d$ and let $n\geqslant 3$ be an integer such that $d\leqslant 2n-3$. Let $u=(u_1,\ldots,u_n)$ and $v=(v_1,\ldots,v_n)$ be unimodular rows of length $n$ with values in $A$. There then exist elementary matrices $E$ and $E'$ and elements $x,a_2,\ldots,a_n$ of $A$ such that $uE=(x,a_2,\ldots,a_n)$ and $vE'=(1-x,a_2,\ldots,a_n)$. 
\end{prop}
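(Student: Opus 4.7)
The hypothesis $\sdim(A) = d \leqslant 2n-4$ translates, via the theorem of Bass recalled just above, into the stable range inequality $\sr(A) \leqslant 2n-3$: any unimodular row of length $m \geqslant 2n-2$ can be shortened, by adding multiples of one entry to the others, to a unimodular row of length $m-1$. My plan is to exploit this shortening operation in two stages, first to equip $u$ and $v$ with a common tail $(a_2, \ldots, a_n)$ after elementary operations, and then to adjust the remaining first entries so that they sum to $1$.

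\emph{Step 1: building a common tail.} I would produce elementary matrices $E_1, E_1' \in \Er_n(A)$ and elements $p, q, a_2, \ldots, a_n$ of $A$ such that $uE_1 = (p, a_2, \ldots, a_n)$ and $vE_1' = (q, a_2, \ldots, a_n)$. The idea is to start from the unimodular row $(v_1, \ldots, v_n, u_2, \ldots, u_n) \in \Um_{2n-1}(A)$, which is unimodular because it contains $v$. Since its length $2n-1$ exceeds $\sr(A)+1$, the stable range condition applies repeatedly; iterating the shortening step $n-2$ times yields a unimodular row of length $n+1$, and a careful bookkeeping of which multiples of which entries are added at each stage allows one to realise this entire process as a pair of separate right-multiplications of $u$ and $v$ by elementary matrices. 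A final shortening step identifies the last $n-1$ entries on both sides as the common tail $(a_2,\ldots,a_n)$.

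\emph{Step 2: matching first entries.} After Step 1, both $(p, a_2, \ldots, a_n)$ and $(q, a_2, \ldots, a_n)$ are unimodular, so $p$ and $q$ are units modulo the ideal $I = \langle a_2, \ldots, a_n\rangle$. Right-multiplication by further elementary matrices of the form $e_{1j}(\lambda)$ modifies only the first entry of each row, by an arbitrary element of $I$. It remains to produce an element $x \in A$ with $p \equiv x$ and $q \equiv 1-x$ modulo $I$, equivalently $p + q \equiv 1 \pmod I$. This congruence need not hold after Step 1 alone, and to correct it I would absorb the discrepancy $p + q - 1$ into a compatible modification of $(a_2,\ldots,a_n)$ using one more application of the stable range condition, exploiting that the slack $d \leqslant 2n-4$ (rather than merely $d \leqslant 2n-3$) leaves exactly enough room to do so without destroying unimodularity on either side.

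\emph{Main obstacle.} The hardest point is the coordinated bookkeeping in Step 1: the stable range condition is a statement about a \emph{single} long unimodular row, whereas we need its iterated application to factor through separate actions of $\Er_n(A)$ on $u$ and on $v$, so that $uE_1$ and $vE_1'$ literally share their last $n-1$ entries. Matching this with the compatibility demand of Step 2 (that the correction producing $p + q = 1$ can be carried out after a tail has already been fixed) is the technical heart of the lemma.
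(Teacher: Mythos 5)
Your proposal does not follow the paper's proof, and as written it would not go through. Let me point to the two concrete problems.

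First, the iterated shortening in your Step 1 overruns the hypothesis. The stable range bound $\sr(A)\leqslant 2n-3$ lets you shorten a unimodular row of length $m+1$ to length $m$ only while $m\geqslant\sr(A)$; starting from length $2n-1$ you can therefore guarantee at most two shortenings in general (down to length $2n-3$), whereas your plan requires $n-2$ shortenings down to length $n+1$, which would need $\sr(A)\leqslant n+1$, a strictly stronger assumption for $n\geqslant 5$. The hypothesis $d\leqslant 2n-4$ is exactly what is needed for a \emph{single} shortening of a length-$(2n-1)$ row, not for a cascade of them.

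Second, and more fundamentally, the issue you flag as the ``main obstacle''---realising a shortening of one long row as a pair of separate $\Er_n(A)$-actions on $u$ and on $v$---is solved in the paper by a device you have missed: the long row is taken to be $(u_1v_1,\,u_2,\ldots,u_n,\,v_2,\ldots,v_n)$, with the \emph{product} $u_1v_1$ in first position. Because $u_1v_1$ is simultaneously a multiple of $u_1$ and of $v_1$ (here commutativity is used), adding multiples of $u_1v_1$ to the $u_i$'s is an elementary operation on $u$, and adding multiples of $u_1v_1$ to the $v_i$'s is an elementary operation on $v$. A single shortening then gives that $(u_2,\ldots,u_n,v_2,\ldots,v_n)$ is unimodular of length $2n-2$---note this is not yet a ``common tail''; it is a different statement. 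Only afterwards does the paper arrange $u_1+v_1=1$ (by subtracting the appropriate combination of $u_2,\ldots,u_n$ from $u_1$ and of $v_2,\ldots,v_n$ from $v_1$), and only then does the common tail appear, via the algebraic identity $a_i := u_i - u_1(u_i-v_i) = v_i + v_1(u_i-v_i)$, which holds \emph{because} $u_1+v_1=1$ and which is visibly reached from $u_i$ by an elementary operation using $u_1$ and from $v_i$ by an elementary operation using $v_1$. Your proposal inverts this order (tail first, then first entries), and the ``compatible modification of $(a_2,\ldots,a_n)$'' you would need to repair $p+q\equiv 1$ after the tail has been fixed is not supplied by the stable range condition---the correct argument avoids the problem entirely by fixing the first entries before constructing the tail.
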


This is a generalised version of \cite[Lemma 3.2]{vdK}, although the version of the proof written below relies on the simple observation that the argument for this lemma holds in the greater generality of Proposition \ref{mennnewlem}. That $d\leqslant 2n-3$ (instead of the more usual condition $d\leqslant 2n-4$ as in \emph{e.g.} van der Kallen's theorem \cite[Theorem 3.6]{vandK}) suffices was also noticed in \cite[Lemma 3.2]{Sharma}.

\begin{proof}
The proof is divided in three steps.

(1) Observe that since $A$ is commutative, the row \[(u_2,\ldots,u_n,v_2,\ldots,v_n,u_1v_1)\] of length $2n-1$ is unimodular. Since $A$ is of stable dimension $d\leqslant 2n-3$, its stable rank $\sr(A)=\sdim(A)+1$ satisfies $\sr(A)\leqslant 2n-2$, hence by \cite[Proposition 1]{BBproj} $A$ satisfies $(\mathrm{DS})_{2n-2}$: there exist elements $\lambda_2,\ldots,\lambda_n$ and $\mu_2,\ldots,\mu_n$ of $A$ such that the row \[(u_2+\lambda_2u_1v_1,\ldots,u_n+\lambda_nu_1v_1,v_2+\mu_2u_1v_1,\ldots,v_n+\mu_nu_1v_1)\] of length $2n-2$ is unimodular. Since $(u_1,u_2+\lambda_1u_1v_1,\ldots,\lambda_nu_1v_1)$ (respectively $(v_1,v_2+\mu_2u_1v_1,\ldots,v_n+\mu_nu_1v_1)$) is in the orbit of $u$ (respectively of $v$) under the action of $\Er_n(A)$, we may assume that the row $(u_2,\ldots,u_n,v_2,\ldots,v_n)$ is unimodular of length $2n-2$. 

(2) We can then write \[u_1+v_1-1=\sum_{i=2}^n\alpha_iu_i+\beta_iv_i\] for some $\alpha_i\in A$ and $\beta_i\in A$. Replacing $u_1$ (respectively $v_1$) by $u_1-\sum\alpha_iu_i$ (respectively by $v_1-\sum\beta_iv_i$) which does not change the orbit of $u$ (respectively of $v$) under the action of $\Er_n(A)$, we may assume that $u_1+v_1=1$. 

(3) Finally assume that $(u_1,\ldots,u_n)$ and $(v_1,\ldots,v_n)$ are unimodular and that $u_1+v_1=1$. Let $x=u_1$. Since $u_1+v_1=1$, the equality $u_i-v_i-(u_i-v_i)(u_1+v_1)=0$ holds; therefore, \[u_i-u_1(u_i-v_i)=v_i+v_1(u_i-v_i)=a_i.\] By definition, the rows $(u_1,u_2,\ldots,u_n)$ and $(x=u_1,a_2=u_2-u_1(u_i-v_i),\ldots,a_n=u_n-u_1(u_n-v_n))$ (respectively $(v_1,v_2,\ldots,v_n)$ and $(1-x=v_1,a_2=v_2+v_1(u_2-v_2),\ldots,a_n=v_n+v_1(u_n-v_n))$) are equivalent. As a result, the elements $x$ and $a_i$ of $A$ thus defined satisfy the claim.
\end{proof}

For any commutative ring $A$, since the relation (MS4) is satisfied in $\WMS_n(A)$ \cite[Lemma 3.5 (v)]{vandK}, \cite[Lemma 3.1]{vdK} implies that (MS3) is satisfied in $\WMS_n(A)$. In particular, denoting by $\phi:\Um_n(A)\rightarrow G$ the universal map satisfying (MS1) and (MS3), there is an induced group homomorphism $\tau:G\rightarrow\WMS_n(A)$ such that $\tau\circ\phi=\wms_n$.

\begin{theo}[\protect{\cite[Theorem 3.3]{vdK}}]\label{thm:change_generators_unimodular_rows}
The map $\tau$ is an isomorphism if $A$ is Noetherian of dimension $d$ such that $2\leqslant d\leqslant 2n-4$.
\end{theo}

In fact, we argue that the proof of \cite[Theorem 3.3]{vdK} works for any ring $A$ of stable dimension $\sdim(A)=d\leqslant 2n-4$ (rings satisfying the hypotheses of Theorem \ref{thm:change_generators_unimodular_rows} above are of stable dimension $\leqslant 2n-4$ by Example \ref{exe:noetherian_stable_dim}). Indeed, assume that $A$ is of stable dimension $d\leqslant 2n-4$. In particular, $\wms_n=\tau\circ\phi$ is surjective by Theorem \ref{thm:structure_vdK} and thus $\tau$ is surjective. 

It remains to be proven that $\tau$ is injective. Observe that since $A$ is commutative, if $u=(x,a_2,\ldots,a_n)$ and $v=(1-x,a_2,\ldots,a_n)$ are elements of $\Um_n(A)$, then $\phi(u)\phi(v)=\phi(w)=\phi(v)\phi(u)$ in the universal group $G$ for (MS1) and (MS3), where $w=(x(1-x),a_2,\ldots,a_n)$. Note in particular that $\phi(u)\phi(v)$ is of the form $\phi(w)$ for some $w\in\Um_n(A)$. For \emph{any} ordered pair $(u,v)$ of elements of $\Um_n(A)$, by Proposition \ref{mennnewlem}, there exists $u'=(x,a_2,\ldots,a_n)$ and $v'=(1-x,a_2,\ldots,a_n)$ in $\Um_n(A)$ such that $u'=uE_1$ and $v'=vE_2$ for some $E_1$ and $E_2$ in $\Er_n(A)$. Thus $\phi(u)=\phi(u')$ and $\phi(v)=\phi(v')$ by (MS1) from which it follows according to the previous observation that $\phi(u)$ and $\phi(v)$ commute in $G$. Thus since $G$ is generated by $\phi(\Um_n(A))$, $G$ is abelian: from now on, we write $G$ additively. We also deduce from the previous observation that for any pair $(u,v)$ of elements of $\Um_n(A)$, $\phi(u)+\phi(v)$ is of the form $\phi(w)$ for some $w\in\Um_n(A)$: in other words, $\phi(\Um_n(A))$ is a submonoid of $G$. Now any element $g$ of $G$ may be written as $\sum_{i\in I}a_i\phi(u_i)$ for some finite set $I$, some $a_i\in\Z$ non-zero and some $u_i\in\Um_n(A)$. Grouping the elements of the form $a_i\phi(u_i)$ with $a_i>0$ on the one hand and with $a_i<0$ on the other hand and using the fact that $\phi(\Um_n(A))$ is a submonoid of $G$, we see that $g=\phi(w)-\phi(w')$ for some $w$ and $w'$ in $\Um_n(A)$. Finally assume that $\tau(g)=0$ with $g=\phi(w)-\phi(w')$ where $w$ and $w'$ lie in $\Um_n(A)$. Then $\wms_n(w)-\wms_n(w')=0$ hence $\wms_n(w)=\wms_n(w')$. It follows that $w$ and $w'$ lie in the same orbit under the action of $\Er_n(A)$, thus $\phi(w)=\phi(w')$ in view of (MS1): consequently, $g=\phi(w)-\phi(w')=0$ and $\tau$ is injective as claimed.

\begin{rema}
In particular, as far as we understand, the hypotheses that $A$ is Noetherian or of Krull dimension $d$ with $d$ satisfying $2\leqslant d$ in \cite[Theorem 3.3]{vdK} are not optimal and may be replaced with \emph{of stable dimension $\leqslant 2n-4$}. It is however the case that $A$ must be assumed to be Noetherian and its Krull dimension $d$ to be greater than or equal to $2$ in \cite{vdK} for other purposes of this paper, namely the study of the group homomorphism $\Um_{d+1}(A)/\Er_{d+1}(A)\rightarrow\Er^d(A)$ (with $\Er^d(A)$ the Euler class group of \cite{bhatwadekar_sridharan_2000} that we return to in the final section) in terms of weak Mennicke symbols for $A$ Noetherian of dimension $d$. Indeed we must then have $n=d+1\geqslant 3$ as this group homomorphism is understood with the left-hand side having the group structure of \cite{VANDERKALLEN1983363} which is constructed under these hypotheses.
\end{rema}

Instead of weak Mennicke symbols, we work with the universal map $\Um_n(A)/\Er_n(A)\rightarrow G$ induced by (MS1) and (MS3): since the rings of the main theorem \ref{groupiso} are Noetherian of dimension $d\leqslant 2n-4$, in the cases we consider, by Theorem \ref{thm:structure_vdK} and the discussion following Theorem \ref{thm:change_generators_unimodular_rows}, this map is a bijection onto an abelian group and thus endows $\Um_n(A)/\Er_n(A)$ with the structure of an abelian group that coincides with van der Kallen's. Concretely, the law on $\Um_n(A)/\Er_n(A)$ goes as follows. Given a unimodular row $u=(u_1,\ldots,u_n)$ in $\Um_n(A)$, we denote by $[u]$ or by $[u_1,\ldots,u_n]$ its orbit in $\Um_n(A)/\Er_n(A)$. If $u$ and $v$ are elements of $\Um_n(A)$, then there exist elements $x$ and $a_2,\ldots,a_n$ of $A$ such that $(x,a_2,\ldots,a_n)=u'$ and $(1-x,a_2,\ldots,a_n)=v'$ are unimodular and such that $[u]=[u']$ and $[v]=[v']$: we then have $[u]+[v]=[x(1-x),a_2,\ldots,a_n]$ in $\Um_n(A)/\Er_n(A)$. 

\section{Cohomotopy groups}\label{coh}

\subsection*{The motivic Borsuk group law}

We now turn to the cohomotopy group structure on $[U,\A^n\setminus 0]_{\A^1}$. Let $\mathscr{X}$ be an pointed space. Let $\nabla:\mathscr{X}\vee\mathscr{X}\rightarrow\mathscr{X}$ be the fold map and let $\Delta_{U}:U\rightarrow U\times U$ be the diagonal map. The idea of Borsuk is the following. Let $f$ and $g$ be morphisms from $U$ to $\mathscr{X}$. Then we are in the following situation.
\begin{center}
\begin{tikzcd}
                     &                                 & \mathscr{X}\vee\mathscr{X} \arrow[d] \arrow[r,"\nabla"] & \mathscr{X}\\ 
U \arrow[r,"\Delta_U"] & U\times U \arrow[r,"f\times g"] & \mathscr{X}\times\mathscr{X} 
\end{tikzcd}
\end{center}
If we wished to define a sum of $f$ and $g$, we could proceed as follows: assuming that $(f\times g)\circ\Delta_U$ factors uniquely (up to homotopy) through $\mathscr{X}\vee\mathscr{X}$, it induces a well-defined morphism $\overline{(f,g)}:U\rightarrow\mathscr{X}\vee\mathscr{X}$ (up to homotopy) and we may want to define the sum of $f$ and $g$ as $\nabla\circ\overline{(f,g)}:U\rightarrow\mathscr{X}$. Unfortunately, in general, there does not exist a lift $U\rightarrow\mathscr{X}\vee\mathscr{X}$ and, if it exists, it is not necessarily unique (even up to homotopy). However, under suitable $\A^1$-cohomological dimension and $\A^1$-connectedness assumptions on $U$ and $\mathscr{X}$ respectively (and a technical assumption on $\mathscr{X}$ which will be of no consequence to the material of this paper), there indeed exists a lift and it is unique.

\begin{prop}[\protect{\cite[Proposition 1.2.3]{AF}}]\label{bijgrpebor}
Let $\mathscr{X}$ be a pointed $k$-space pulled back from a pointed space $\mathscr{X}_0$ defined over a perfect subfield $k_0$ of $k$, let $n$ be an integer such that $n\geqslant 2$. If $\mathscr{X}$ is $\mathbb{A}^1$-$(n-1)$-connected, then composition with the natural morphism $\mathscr{X}\vee\mathscr{X}\rightarrow\mathscr{X}\times\mathscr{X}$ induces a bijection \[[U,\mathscr{X}\vee\mathscr{X}]_{\mathbb{A}^1}\rightarrow[U,\mathscr{X}\times\mathscr{X}]_{\A^1}\] for smooth $k$-scheme $U$ of $\A^1$-cohomological dimension $d\leqslant 2n-2$.
\end{prop}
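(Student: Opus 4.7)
The strategy is to invoke the obstruction theory lemma (Lemma \ref{lemobsth}) for the canonical map $\iota:\mathscr{X}\vee\mathscr{X}\to\mathscr{X}\times\mathscr{X}$. Both source and target are pointed $\A^1$-connected since $\mathscr{X}$ is $\A^1$-$(n-1)$-connected with $n\geqslant 2$; and the upper bound $d\leqslant 2n-2$ on the $\A^1$-cohomological dimension of $U$ matches perfectly with setting $m=2n-2$ in Lemma \ref{lemobsth}. Thus, if we can show that $\iota$ is itself $\A^1$-$(2n-2)$-connected, the lemma immediately yields the bijection $[U,\mathscr{X}\vee\mathscr{X}]_{\A^1}\to[U,\mathscr{X}\times\mathscr{X}]_{\A^1}$; functoriality in $U$ and $\mathscr{X}$ follows from the naturality of $\iota$ and of the Postnikov-theoretic construction underlying the lemma.

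All the content of the proposition is therefore concentrated in establishing the $\A^1$-$(2n-2)$-connectedness of $\iota$. This is a motivic analogue of the classical fact that for an $(n-1)$-connected pointed space $X$, the inclusion $X\vee X\hookrightarrow X\times X$ is $(2n-2)$-connected. The natural approach is to exploit the cofibre sequence
\[
\mathscr{X}\vee\mathscr{X}\xrightarrow{\iota}\mathscr{X}\times\mathscr{X}\longrightarrow\mathscr{X}\wedge\mathscr{X}
\]
and convert the $\A^1$-connectedness of the cofibre into $\A^1$-connectedness of $\iota$ via a motivic Blakers--Massey excision theorem (available over perfect base fields, which is the relevant setting here since $\mathscr{X}$ is pulled back from a perfect subfield). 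To bound the cofibre, note that smash products of $\A^1$-connected spaces pulled back from a perfect subfield can be analysed simplicially: up to $\A^1$-equivalence, $\mathscr{X}$ may be replaced by a simplicially $(n-1)$-connected model, so that $\mathscr{X}\wedge\mathscr{X}$ is simplicially $(2n-1)$-connected, and Theorem \ref{simpliciallyconn} upgrades this to $\A^1$-$(2n-1)$-connectedness. Blakers--Massey then delivers the expected $\A^1$-$(2n-2)$-connectedness of $\iota$.

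The principal obstacle is precisely this passage from ``the cofibre is $\A^1$-$(2n-1)$-connected'' to ``$\iota$ is $\A^1$-$(2n-2)$-connected'': unlike in the stable setting, this transfer is non-trivial and genuinely requires the motivic Blakers--Massey theorem. The hypothesis that $\mathscr{X}$ be pulled back from a perfect subfield enters at exactly this point, both to justify the use of Morel's simplicial connectivity theorem for bounding $\mathscr{X}\wedge\mathscr{X}$ and to place us in the regime where motivic Blakers--Massey applies. Once the connectivity estimate for $\iota$ is in hand, the conclusion is a formal consequence of Lemma \ref{lemobsth}, whose hypotheses we have arranged to satisfy.
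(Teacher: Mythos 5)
Your proof is correct in outline, but it takes a different route from the paper at the crucial connectivity step. The paper's proof is shorter and consists of three moves: it cites \cite[Corollary 3.3.11]{AWW} to assert directly that $\mathscr{X}\vee\mathscr{X}\rightarrow\mathscr{X}\times\mathscr{X}$ is $\A^1$-$(2n-2)$-connected, applies Lemma~\ref{lemobsth}, and notes functoriality. You instead reconstruct the content of that cited corollary from first principles: replace $\mathscr{X}$ by its $\A^1$-localization, which (over the perfect subfield, then pulled back using \cite[Lemma A.2, Lemma A.4]{Hoyois+2015+173+226}) is a simplicially $(n-1)$-connected model; deduce that $\mathscr{X}\wedge\mathscr{X}$ is simplicially, hence $\A^1$-, $(2n-1)$-connected by Theorem~\ref{simpliciallyconn}; and transfer to the fibre of $\iota$ by motivic Blakers--Massey. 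This is a legitimate and more self-contained argument. One point you gloss over is that a single pass of Blakers--Massey is enough, and why: since both $\mathscr{X}\vee\mathscr{X}$ and $\mathscr{X}\times\mathscr{X}$ are $\A^1$-$(n-1)$-connected, the map $\iota$ is automatically $\A^1$-$(n-1)$-connected, so with target $\A^1$-$(n-1)$-connected and cofibre $\A^1$-$(2n-1)$-connected, the comparison map $F\rightarrow\Omega C$ is $\A^1$-$((n-1)+(n-1)+1)=(2n-1)$-connected, whence $F$ is $\A^1$-$(2n-2)$-connected---no bootstrap is needed, unlike in Lemma~\ref{conninc} where the initial connectivity of the inclusion is much lower. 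The trade-off between the two routes: the paper outsources the key estimate to \cite{AWW} and so stays concise; your version makes the mechanism visible at the cost of having to justify the passage to a simplicially highly connected model (a nontrivial use of Morel's unstable connectivity theorem and essentially-smooth base change, which you should state explicitly rather than leave implicit in ``up to $\A^1$-equivalence'').
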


\begin{proof}
According to Lemma \ref{lemobsth}, it suffices to show that the inclusion map $u:\mathscr{X}\vee\mathscr{X}\rightarrow\mathscr{X}\times\mathscr{X}$ is $\A^1$-$(2n-2)$-connected. We note that if $k_0$ is a perfect subfield of $k$ and $\mathscr{X}$ is pulled back from a $k_0$-space $\mathscr{X}_0$, then $u$ is pulled back from the inclusion map $u_0:\mathscr{X}_0\vee\mathscr{X}_0\rightarrow\mathscr{X}_0\times\mathscr{X}_0$. By Lemma \ref{lem:a1_conn_base_change_map}, we can replace $k$ by $k_0$ and $\mathscr{X}$ by $\mathscr{X}_0$ in proving the assertion claimed in the first sentence. We can now appeal to \cite[Corollary 3.3.11]{AWW} whose proof relies on the Blakers--Massey theorem as follows. Since $\mathscr{X}$ is $\A^1$-simply connected, so are $\mathscr{X}\vee\mathscr{X}$ and $\mathscr{X}\times\mathscr{X}$ and thus the homotopy fibre $\mathscr{F}$ of $u$ is $\A^1$-connected. In particular, by the first item of \cite[Theorem 4.1]{AFCompEulerClass}, the comparison map of the cite theorem is $\A^1$-$2$-connected. Since the homotopy cofibre of $u$ is $\mathscr{X}\wedge\mathscr{X}$ which is $\A^1$-$(2n-2)$-connected, the target of the comparison map of the Blakers--Massey theorem is $\A^1$-$(2n-2)$-connected. If $n=2$, this shows that $\mathscr{F}$ is $\A^1$-$2$-connected and thus $u$ is $\A^1$-$(2n-2)$-connected which is the claim. Else, by Example \ref{exe:application_blakers_massey}, $u$ (in fact $\mathscr{F}$) is $\A^1$-$\delta$-connected where $\delta=\min(2n-2,2n+1)=2n-2$ which completes the proof.
\end{proof}

Under the assumptions of Proposition \ref{bijgrpebor}, we can define $f+g$ by the procedure described above using this proposition: we denote the resulting morphism $U\rightarrow\mathscr{X}$ by $\tau(f,g)$. In \cite{AF}, the authors show that $\tau$ coincides with another group law defined in terms of a group structure on $[U_+,\Omega^i\Sigma^i\mathscr{X}]_{\A^1,\bullet}$ (\cite[Proposition 1.2.1]{AF}) which makes $[U,\mathscr{X}]_{\A^1}$ into an abelian group, see \cite[Proposition 1.2.5]{AF}.

\begin{rema}
Although we have to fix \emph{a} base point in $\mathscr{X}$ for this construction, the \emph{choice} of said base point has no consequence on the law induced on $[U,\mathscr{X}]_{\A^1}$ because the assumptions of Proposition \ref{bijgrpebor} imply that $\mathscr{X}$ is $\A^1$-simply connected.
\end{rema}

If $d\geqslant 0$ is an integer, we let $\mathsf{Sm}_k^{\cd_{\A^1}\leqslant d}$ be the full subcategory of $\mathsf{Sm}_k$ whose objects are those with $\A^1$-cohomological less than or equal to $d$.

\begin{lem}\label{bifunctoriality}
Let $n$ be an integer such that $n\geqslant 2$.
\begin{enumerate}
	\item Let $\mathscr{X}$ be a pointed $\A^1$-$(n-1)$-connected $k$-space pulled back from a space defined over a perfect subfield of $k$. Let $f:V\rightarrow U$ be a morphism of the category $\mathsf{Sm}_k^{\cd_{\A^1}\leqslant 2n-2}$. Then the map $f^*:[U,\mathscr{X}]_{\A^1}\rightarrow[V,\mathscr{X}]_{\A^1}$ is a group homomorphism. Consequently, the functor $[\text{–},\mathscr{X}]_{\A^1}:U\mapsto[U,\mathscr{X}]_{\A^1}$ from $\mathsf{Sm}_k^{\cd_{\A^1}\leqslant 2n-2}$ factors through the forgetful functor into a functor to the category $\mathsf{Ab}$ of abelian groups.
	\item Let $\mathscr{X}$ and $\mathscr{Y}$ be pointed $\A^1$-$(n-1)$-connected $k$-spaces pulled back from spaces defined over a perfect subfield of $k$. Let $p:\mathscr{Y}\rightarrow\mathscr{X}$ be a map of pointed spaces. Then $p_*$ induces a natural transformation between the functors $\mathsf{Sm}_k^{\cd_{\A^1}\leqslant 2n-2}\xrightarrow{[\text{–},\mathscr{Y}]_{\A^1}}\mathsf{Ab}$ and $\mathsf{Sm}_k^{\cd_{\A^1}\leqslant 2n-2}\xrightarrow{[\text{–},\mathscr{X}]_{\A^1}}\mathsf{Ab}$.
\end{enumerate}
\end{lem}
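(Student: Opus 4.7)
The plan is to unfold Borsuk's definition of the sum in both items and exploit the functoriality assertion contained in Proposition \ref{bijgrpebor}.

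For item (1), given $g,h:U\rightarrow\mathscr{X}$, I would write $\overline{(g,h)}:U\rightarrow\mathscr{X}\vee\mathscr{X}$ for the preimage of $(g\times h)\circ\Delta_U$ under the bijection of Proposition \ref{bijgrpebor}, so that by construction $g+h=\tau(g,h)=\nabla_{\mathscr{X}}\circ\overline{(g,h)}$, where $\nabla_{\mathscr{X}}:\mathscr{X}\vee\mathscr{X}\rightarrow\mathscr{X}$ denotes the fold map. Naturality of the diagonal in its underlying smooth scheme yields the identity $((gf)\times(hf))\circ\Delta_V=(g\times h)\circ\Delta_U\circ f$, and functoriality of the bijection from Proposition \ref{bijgrpebor} in the variable $U$ then forces $\overline{(gf,hf)}=\overline{(g,h)}\circ f$. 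Composing with $\nabla_{\mathscr{X}}$ gives $f^*(g)+f^*(h)=f^*(g+h)$, so $f^*$ is a group homomorphism, and the factorisation of the functor $[\text{--},\mathscr{X}]_{\A^1}$ through $\mathsf{Ab}$ is then formal.

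For item (2), I will argue in the same style. Denote by $\overline{(-,-)}_{\mathscr{Y}}$ and $\overline{(-,-)}_{\mathscr{X}}$ the lifts relevant to the group laws on $[U,\mathscr{Y}]_{\A^1}$ and $[U,\mathscr{X}]_{\A^1}$ respectively. The aim is to check that $p_*(g+h)=p\circ\nabla_{\mathscr{Y}}\circ\overline{(g,h)}_{\mathscr{Y}}$ coincides with $\nabla_{\mathscr{X}}\circ\overline{(pg,ph)}_{\mathscr{X}}=p_*(g)+p_*(h)$. Two ingredients enter: first, the identity $(pg\times ph)\circ\Delta_U=(p\times p)\circ(g\times h)\circ\Delta_U$ combined with functoriality of the bijection in the variable $\mathscr{X}$ yields $\overline{(pg,ph)}_{\mathscr{X}}=(p\vee p)\circ\overline{(g,h)}_{\mathscr{Y}}$; second, the elementary naturality identity $\nabla_{\mathscr{X}}\circ(p\vee p)=p\circ\nabla_{\mathscr{Y}}$ for the fold map closes the equality. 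Combining this with item (1), which ensures that $p_*$ commutes with pullbacks along morphisms of $\mathsf{Sm}_k^{\cd_{\A^1}\leqslant 2n-2}$, one gets exactly that $p_*$ is a natural transformation of $\mathsf{Ab}$-valued functors.

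The whole argument is thus formal once Proposition \ref{bijgrpebor} is granted with its double functoriality; there is no serious obstacle. The one thing to keep straight is that the lift $\overline{(-,-)}$ depends functorially on both source and target, which is precisely what Proposition \ref{bijgrpebor} guarantees.
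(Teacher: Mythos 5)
Your argument is correct and is exactly the unfolding of the paper's one-line proof ("Both statements easily follow from the definition of Borsuk's law"): one uses the bifunctoriality of the bijection in Proposition \ref{bijgrpebor} together with naturality of the diagonal and fold maps. The only slight imprecision is the parenthetical remark that item (1) "ensures that $p_*$ commutes with pullbacks"—the set-theoretic commutativity $p_*\circ f^*=f^*\circ p_*$ is automatic from associativity of composition, independent of any group structure; what item (1) actually supplies is that the source and target of $p_*$ are $\mathsf{Ab}$-valued functors, which together with the fact that each $p_*$ is a homomorphism (your two-ingredient argument) gives the natural transformation in $\mathsf{Ab}$.
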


\begin{proof}
Both statements easily follow from the definition of Borsuk's law.
\end{proof}

\begin{defi}
Let $n$ be an integer such that $n\geqslant 2$. If $\mathscr{X}$ is a pointed $\A^1$-$(n-1)$-connected space that is pulled back from a perfect subfield of $k$, we call the functor $[\text{–},\mathscr{X}]_{\A^1}:\mathsf{Sm}_k^{\cd_{\A^1}\leqslant 2n-2}\xrightarrow{U\mapsto[U,\mathscr{X}]_{\A^1}}\mathsf{Ab}$ the \emph{motivic cohomotopy theory} or simply the cohomotopy theory defined by $\mathscr{X}$.
\end{defi}

We also refer to the statement of the second item of Lemma \ref{bifunctoriality} as \emph{functoriality of (motivic) cohomotopy theories}. Our comparison of cohomotopy theories in Section \ref{compcohothe} relies on this functoriality statement. Observe also that in case $\mathscr{X}$ is itself a smooth $k$-scheme of $\A^1$-cohomological dimension at most $2n-2$, it easily follows from the functoriality of cohomotopy groups and of cohomotopy theories that $[\mathscr{X},\mathscr{X}]_{\A^1}$ may naturally be endowed with the structure of a commutative ring with multiplication given by composition.

Let $n\geqslant 3$. The space $\mathscr{X}=\A^n\setminus 0$ pointed by $(0,\ldots,0,1)$ is defined over $\Z$, in particular over the prime subfield of $k$ which is perfect. Moreover, it is $\A^1$-$(n-2)$-connected since $\A^n\setminus 0\simeq\Sigma^{n-1}\mathbb{G}_m^{\wedge n}$ in $\mathcal{H}_\bullet(k)$. Thus if $U$ is a scheme of $\A^1$-cohomological dimension $d\leqslant 2n-4$, Proposition \ref{bijgrpebor} allows us to equip the set $[U,\A^n\setminus 0]_{\A^1}$ with the structure of an abelian group (\cite[Proposition 1.2.5]{AF}; we give more details in the following subsection).

\subsection*{Comparing homotopy and cohomotopy}

The results of this subsection and of the following subsection are not used until the last section.

Assume now that $n\geqslant 2$ and that $(Q,*)$ is a pointed smooth $k$-scheme which is $\A^1$-$(n-1)$-connected and of $\A^1$-cohomological dimension less than or equal to $2n-2$. We also assume that $(Q,*)$ is endowed with the structure of a cocommutative $h$-cogroup; we denote by $c:Q\rightarrow Q\vee Q$ the comultiplication (in $\mathcal{H}_\bullet(k)$). The main example that we have in mind is $Q_{2n}\simeq\Sigma^n\mathbb{G}_m^n$, which, as a second suspension, is a commutative $h$-cogroup, as we discussed when defining $\A^1$-homotopy sheaves and of $\A^1$-cohomological dimension at most $2n-2$ by Example \ref{aonecohodimquad}. Then there are \emph{a priori} two group structures on the set $[Q,Q]_{\A^1}$ of (unpointed) morphisms in $\mathcal{H}(k)$.
\begin{enumerate}
	\item On the one hand, since $Q$ is $\A^1$-simply connected, according to \cite[Lemma 2.1]{AFAlgVecBunOnSph}, the obvious map $[(Q,*),(Q,*)]_{\A^1,\bullet}\rightarrow[Q,Q]_{\A^1}$ is a bijection. Since the source is a group thanks to the $h$-cogroup structure on $(Q,*)$, the target inherits a group structure. This is the \emph{homotopical} group structure.
	\item The other is given by the motivic Borsuk group law described above. This is the \emph{cohomotopical} group structure. As we alluded to following the proof of Proposition \ref{bijgrpebor}, it may alternatively be described as follows. A small adaptation of the proof of \cite[Lemma 2.1]{AFAlgVecBunOnSph} shows that the natural map $[Q_+,(Q,*)]_{\A^1,\bullet}\rightarrow[Q_+,Q]_{\A^1}\rightarrow[Q,Q]_{\A^1}$ induced by the inclusion $Q\rightarrow Q_+$ is a bijection. The unit $(Q,*)\rightarrow\Omega^2\Sigma^2(Q,*)$ of the loop-suspension adjunction induces a map $[Q_+,(Q,*)]_{\A^1,\bullet}\rightarrow[Q_+,\Omega^2\Sigma^2(Q,*)]_{\A^1,\bullet}$ where $\Omega^2\Sigma^2(Q,*)$ is a commutative $h$-group, hence the target of this map is a group. Since $Q$ is of $\A^1$-cohomological dimension at most $2n-2$, this map is a bijection by \cite[Proposition 1.2.1]{AF} hence an abelian group structure on $[Q_+,(Q,*)]_{\A^1,\bullet}$. Thus we obtain an abelian group structure on $[Q,Q]_{\A^1}$: all in all, it is induced by the sequence of bijections \[[Q,Q]_{\A^1}\xrightarrow{\cong}[Q_+,(Q,*)]_{\A^1,\bullet}\xrightarrow{\cong}[Q_+,\Omega^2\Sigma^2(Q,*)]_{\A^1,\bullet}.\] This law coincides with the motivic Borsuk group law induced by Proposition \ref{bijgrpebor}, see \cite[Proposition 1.2.5]{AF}.
\end{enumerate}
But the following lemma holds.

\begin{lem}\label{cohomot=homot}
The homotopical and cohomotopical group structures on $[Q,Q]_{\A^1}$ coincide.
\end{lem}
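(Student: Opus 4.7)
The plan is to show that both group laws on $[Q, Q]_{\A^1}$ factor through the same fold construction, and that the two candidates for an auxiliary morphism $Q \to Q \vee Q$ — the comultiplication $c$ of the $h$-cogroup and the Borsuk lift of the diagonal — coincide in $\mathcal{H}(k)$.

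First I would unwind both definitions. For pointed representatives $f, g: Q \to Q$ (which exist because $Q$ is $\A^1$-simply connected, via the bijection $[(Q,*),(Q,*)]_{\A^1,\bullet} \simeq [Q,Q]_{\A^1}$), the homotopical sum transported to $[Q, Q]_{\A^1}$ is $\nabla \circ (f \vee g) \circ c$, where $\nabla: Q \vee Q \to Q$ is the fold map. The cohomotopical sum $\tau(f, g)$ is $\nabla \circ \overline{(f,g)}$, where $\overline{(f,g)}: Q \to Q \vee Q$ is the unique $\A^1$-homotopy class lifting $(f \times g) \circ \Delta_Q: Q \to Q \times Q$ along the natural morphism $j: Q \vee Q \to Q \times Q$; the existence and uniqueness of $\overline{(f,g)}$ are delivered by Proposition \ref{bijgrpebor} applied with $U = \mathscr{X} = Q$, whose hypotheses ($n \geqslant 2$, $\A^1$-$(n-1)$-connectedness and $\cd_{\A^1}(Q) \leqslant 2n-2$) are exactly the standing assumptions of the lemma.

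The crucial step is then to check that $j \circ c \simeq \Delta_Q$ in $\mathcal{H}_\bullet(k)$. This is purely formal: composing $j \circ c$ with either projection $p_\ell: Q \times Q \to Q$ yields the map $Q \to Q \vee Q \to Q$ that collapses the opposite wedge factor to the basepoint, which, under the identification $Q \vee * \simeq Q$, is precisely one of the counit axioms of the $h$-cogroup structure and therefore equals $\id_Q$. So both projections of $j \circ c$ are the identity, and the universal property of $Q \times Q$ forces $j \circ c \simeq \Delta_Q$. By naturality of $j$ (as a natural transformation from $\vee$ to $\times$), I then obtain
\[
j \circ (f \vee g) \circ c = (f \times g) \circ j \circ c \simeq (f \times g) \circ \Delta_Q,
\]
so $(f \vee g) \circ c$ is itself a lift of $(f \times g) \circ \Delta_Q$ along $j$.

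Applying the uniqueness clause of Proposition \ref{bijgrpebor} yields $(f \vee g) \circ c \simeq \overline{(f,g)}$, and post-composition with $\nabla$ gives the equality of the two sums in $[Q, Q]_{\A^1}$. There is no genuinely hard step: the only subtlety is the conceptual observation that the counit axiom for $c$ is exactly the input needed to identify $c$ with the Borsuk lift of the diagonal, after which the uniqueness of the lift provided by Proposition \ref{bijgrpebor} forces the two constructions to agree.
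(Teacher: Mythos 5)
Your proof is correct, and it takes a genuinely different route from the paper's. The paper proves the lemma by stabilizing: it shows that both group structures on $[Q,Q]_{\A^1}$ are transported isomorphically (and group-compatibly) to hom-sets in the additive category $\mathcal{SH}_{\Sr^1}(k)$, where there is no room for an exotic group law, and then deduces the coincidence from the resulting square of group isomorphisms. You instead compare the two sum constructions \emph{directly in $\mathcal{H}(k)$}: the pivotal observation is that the counit axioms of the $h$-cogroup structure force $j\circ c$ to have both projections equal to $\id_Q$, hence $j\circ c=\Delta_Q$, and by naturality of $j:\vee\Rightarrow\times$ the pointed composite $(f\vee g)\circ c$ is therefore a lift of $(f\times g)\circ\Delta_Q$ along $j$; the uniqueness clause of Proposition \ref{bijgrpebor} then identifies $(f\vee g)\circ c$ with $\overline{(f,g)}$, and post-composing with $\nabla$ finishes. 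Your argument is more elementary in that it avoids the $\Sr^1$-stable category entirely and isolates the precise point of contact between the two constructions---the cogroup comultiplication \emph{is} the Borsuk lift of the diagonal---whereas the paper's proof is more conceptual and extends without effort to any setting where one has a sufficiently well-behaved stabilization. Both are sound; neither subsumes the other.
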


\begin{proof}
The crux of the proof is that both structures can be compared with structures on hom-sets in the \emph{$\Sr^1$-stable} homotopy category which is additive, hence supports essentially only one (functorially well-behaved) group structure on hom-sets (Lemma \ref{lem:uniqueness_cogroup_additive_cat}).

Let us be more precise. First, all hom-sets in $\mathcal{SH}^{\Sr^1}(k)$ are understood to be endowed with the group structure coming with the additive structure on the category $\mathcal{SH}^{\Sr^1}(k)$. Next, denote by $[(Q,*),(Q,*)]_{\A^1,\bullet}^{hom}$ the set $[(Q,*),(Q,*)]_{\A^1,\bullet}$ endowed with the homotopical group structure induced by the $h$-cogroup structure given on $(Q,*)$. The infinite $\Sr^1$-suspension functor $\Sigma^\infty$ commutes with finite coproducts hence the map $\Sigma^\infty:[(Q,*),(Q,*)]_{\A^1,\bullet}^{hom}\rightarrow[\Sigma^\infty(Q,*),\Sigma^\infty(Q,*)]_{\A^1,\bullet}$ induced by the functor $\Sigma^\infty$ is a group homomorphism according to Lemma \ref{functorscommcoprodcogrp} (and ultimately Lemma \ref{lem:uniqueness_cogroup_additive_cat}). Moreover, the map $\Sigma^\infty$ is bijective by the same argument as in \cite[Proposition 1.2.2]{AF}\footnote{We thank the referee for pointing out to us that we have to assume that $Q$ is a smooth $k$-scheme or at least that $Q_+$ is $\omega$-compact as an object of $\mathcal{SH}^{\Sr^1}(k)$ for this argument to work.} hence it is a group isomorphism.

On the other hand, denote by $[Q_+,(Q,*)]_{\A^1,\bullet}^{coh}$ the set $[Q_+,(Q,*)]_{\A^1,\bullet}$ endowed with the cohomotopical group structure. Then given any $i\geqslant 2$, the unit of the loop-suspension adjunction at the object $\Sigma^i\mathscr(Q,*)$ yields a map $\Sigma^i(Q,*)\rightarrow\Omega\Sigma^{i+1}(Q,*)$ and thus by application of the functor $\Omega^i$ a map $\Omega^i\Sigma^i(Q,*)\rightarrow\Omega^{i+1}\Sigma^{i+1}(Q,*)$ in $\mathcal{H}_\bullet(k)$. The diagram
\begin{center}
\begin{tikzcd}
\protect{(Q,*)} \arrow[d] \arrow[rd]      & \\
\Omega^i\Sigma^i\protect{(Q,*)} \arrow[r] & \Omega^{i+1}\Sigma^{i+1}\protect{(Q,*)}
\end{tikzcd}
\end{center} 
whose vertical (respectively diagonal) arrow is the unit of the adjunction of $(\Sigma^i,\Omega^i)$ (respectively of the adjunction of $(\Sigma^{i+1},\Omega^{i+1})$) is then commutative. By the second item in the above discussion, the map $[Q_+,(Q,*)]_{\A^1,\bullet}^{coh}\rightarrow[Q_+,\Omega^2\Sigma^2(Q,*)]_{\A^1,*}^{h\text{-}gr}$ is a group isomorphism where the target is endowed with the group structure induced by the $h$-group structure of $\Omega^2\Sigma^2(Q,*)$. For any $i\geqslant 2$, the map $\Omega^i\Sigma^i(Q,*)\rightarrow\Omega^{i+1}\Sigma^{i+1}(Q,*)$ is a map of $h$-groups and is a bijection because of our assumption on the $\A^1$-cohomological dimension of $Q$ as noted in the proof of \cite[Proposition 1.2.1]{AF}. Hence the above commutative diagram induces a map \[[Q_+,(Q,*)]_{\A^1,\bullet}^{coh}\rightarrow\colim[Q_+,\Omega^i\Sigma^i(Q,*)]_{\A^1,\bullet}^{h\text{-}gr}\] where the arrows of the diagram defining the colimit on the right are group isomorphisms and the map $[Q_+,(Q,*)]_{\A^1,\bullet}^{coh}\rightarrow[Q_+,\Omega^2\Sigma^2(Q,*)]_{\A^1,\bullet}^{h\text{-}gr}$ is a group isomorphism. Hence the map $[Q_+,(Q,*)]_{\A^1,\bullet}^{coh}\rightarrow\colim[Q_+,\Omega^i\Sigma^i(Q,*)]_{\A^1,\bullet}^{h\text{-}gr}$ is a group isomorphism. The map of sets \[\Sigma^\infty:[Q_+,(Q,*)]_{\A^1,\bullet}\rightarrow[\Sigma^\infty Q_+,\Sigma^\infty(Q,*)]_{\A^1,\bullet}\] induced by the functor $\Sigma^\infty$ factors through the map $[Q_+,(Q,*)]_{\A^1,\bullet}\rightarrow\colim[Q_+,\Omega^i\Sigma^i(Q,*)]_{\A^1,\bullet}$ into the obvious map $\colim[Q_+,\Omega^i\Sigma^i(Q,*)]_{\A^1,\bullet}^{h\text{-}gr}\rightarrow[\Sigma^\infty Q_+,\Sigma^\infty(Q,*)]_{\A^1,\bullet}$ which is a group isomorphism as is noted in the proof of \cite[Proposition 1.2.2]{AF}. All in all, we see that the map $\Sigma^\infty:[Q_+,(Q,*)]_{\A^1,\bullet}^{coh}\rightarrow[\Sigma^\infty Q_+,\Sigma^\infty(Q,*)]_{\A^1,\bullet}$ is a group isomorphism.

Now let $f:Q_+\rightarrow(Q,*)$ be the pointed map given by the identity on $Q\subseteq Q_+$ and which sends the disjoint base point of $Q_+$ to $*$. Then there is a commutative diagram in the category of sets:
\begin{center}
\begin{tikzcd}
 & \protect{[(Q,*),(Q,*)]_{\A^1,\bullet}^{hom}} \arrow[r] \arrow[dd,"f^*"] \arrow[ld] & \protect{[\Sigma^\infty(Q,*),\Sigma^\infty(Q,*)]_{\A^1,\bullet}} \arrow[dd,"(\Sigma^\infty f)^*"] \\
\protect{[Q,Q]_{\A^1}} & & \\
 & \protect{[Q_+,(Q,*)]_{\A^1,\bullet}^{coh}} \arrow[r] \arrow[lu]                    & \protect{[\Sigma^\infty Q_+,\Sigma^\infty(Q,*)]_{\A^1,\bullet}}
\end{tikzcd}
\end{center}
The horizontal maps are group isomorphisms owing to the previous arguments while $(\Sigma^\infty f)^*$ is a group homomorphism because $\mathcal{SH}^{\Sr^1}(k)$ is additive. The map $f^*$ is a bijection since both diagonal arrows are, hence $(\Sigma^\infty f)^*$ is a group \emph{iso}morphism. Therefore the same is true of $f^*$ which implies that the cohomotopical and homotopical group structures, which are induced by transport of structure using the bijectivity of the diagonal maps, agree as required.
\end{proof}

\subsection*{A remark on the Hurewicz homomorphism}

\paragraph*{Generators of $\widetilde{\CH}^n(Q_{2n})$.} We describe generators of this $\Kr_0^\MW(k)$-module that will be useful to us. We set $Y=V(x_1,\ldots,x_n,z)\hookrightarrow Q_{2n}$ and $Y'=V(x_1,\ldots,x_n,1-z)\hookrightarrow Q_{2n}$. Then $Y$ and $Y'$ are integral, say $y$ and $y'$ are their respective generic point, and smooth as $k$-schemes. For each of these schemes, the morphism to $\A^n$ defined by the $n$-tuple $(y_1,\ldots,y_n)$ is an isomorphism of $k$-schemes: thus $\dim Y=\dim Y'=n$ and since $Q_{2n}$ is integral of dimension $2n$, it follows that $Y$ and $Y'$ are of codimension $n$. We deduced that denoting by $\overline{a}$ the reduction mod the maximal ideal $\m_v$ in $\mathscr{O}_{Q_{2n},v}$ (for $v\in\{y,y'\}$), $(\overline{x_1},\ldots,\overline{x_n})$ is a $k(v)$-basis of $\m_v/\m_v^2$. Indeed, it is a generating set in each case, since $z=\frac{1}{1-z}\sum x_iy_i$ in $\mathscr{O}_{Q_{2n},y}$ and $1-z=\frac{1}{z}\sum x_iy_i$ in $\mathscr{O}_{Q_{2n},y'}$, and $\m_v/\m_v^2$ is of dimension $n$ over $k(v)$ as $Q_{2n}$ is smooth over $k$ hence regular and $v$ is of codimension $n$ as noted previously. Thus we obtain elements \[\alpha_n=\langle 1\rangle\otimes(\overline{x_1}\wedge\cdots\wedge\overline{x}_n)^*\in\Kr_0^\MW(k(y),\Lambda_y)\subseteq C_\RS(Q_{2n},n)^n,\] \[\alpha_n'=\langle 1\rangle\otimes(\overline{x_1}\wedge\cdots\wedge\overline{x_n})^*\in\Kr_0^\MW(k(y'),\Lambda_{y'})\subseteq C_\RS(Q_{2n},n)^n.\] Under the isomorphism of quadrics between $Q_{2n}$ and the quadric bearing the same name and studied in \cite{ADF} that we fixed in the remark after Definition \ref{def:quadrics}, $\alpha_n'$ is the cycle of \cite[Lemma 4.2.6]{ADF}. In particular: 

\begin{lem}\label{lem:gen_explicite_1-z}
The class of $\alpha_n'$ in $\widetilde{\CH}^n(Q_{2n})$ generates the free $\Kr_0^\MW(k)$-module $\widetilde{\CH}^n(Q_{2n})$.
\end{lem}

Consider now the closed subscheme $T$ of $Q_{2n}$ defined by $T=V(x_1,\ldots,x_{n-1})$. It is integral, smooth over $k$ and $k[T]\cong k[y_1,\ldots,y_{n-1}][x_n,y_n,z]/\langle x_ny_n=z(1-z)\rangle$ hence $T$ is of dimension $n+1$. Thus its generic point $\tau$ is a point of codimension $n-1$ of $Q_{2n}$ and $y$ and $y'$ are codimension $1$ points of $T$. In particular, we get residue homomorphisms $\Kr_1^\MW(k(\tau),\Lambda_\tau)\rightarrow\Kr_0^\MW(k(v),\Lambda_v)$ for $v\in\{y,y'\}$. Moreover, $(\overline{x_1},\ldots,\overline{x_{n-1}})$ is a basis of $\m_\tau/\m_\tau^2$ over $k(\tau)$. In particular, we have a well-defined element $[x_n]\otimes(\overline{x_1}\wedge\cdots\wedge\overline{x_{n-1}})^*\in\Kr_1^\MW(k(\tau),\Lambda_\tau)\subseteq C_\RS(Q_{2n},n)^{n-1}$.

\begin{lem}\label{lem:gen_explicite_z}
The residue of $\omega=[x_n]\otimes(\overline{x_1}\wedge\cdots\wedge\overline{x_{n-1}})^*$ in $C_\RS(Q_{2n},n)^{n}$ is $\alpha_n+\alpha_n'$. In particular, $\alpha_n$ is a cycle of the complex $C_\RS(Q_{2n},n)$ and $\alpha_n=-\alpha_n'$ in $\widetilde{\CH}^n(Q_{2n})$; thus $\alpha_n$ generates the free $\Kr_0^\MW(k)$-module $\widetilde{\CH}^n(Q_{2n})$.
\end{lem}

\begin{proof}
We begin by introducing some notation. Set \[l=dx_n\wedge dy_1\wedge\cdots\wedge dy_n\in\omega_T,\;l'=dx_1\wedge\cdots\wedge dx_n\wedge dy_1\wedge\cdots\wedge dy_n\in\omega_Q.\] (recall that Kähler differential modules are implicitly over the base $k$). By definition of the equations defining $T$ and $Q$, for any $v$ in $T$ (respectively in $Q$), the image $l(v)$ (respectively $l'(v)$) of $l$ (respectively $l'$) in $\omega_T(v)$ (respectively in $\omega_Q(v)$) generates it as a $k(v)$-vector space. Also, in dealing with tensor products of graded line bundles over a field, we do not make the base field of the tensor product explicit. 

Let $v$ be a point of codimension $1$ in $T$. The map $d_\tau^v:\Kr_1^\MW(k(\tau),\Lambda_\tau)\rightarrow\Kr_0^\MW(k(v),\Lambda_v)$ is obtained as the following composition:
\begin{center}
\begin{tikzcd}
\Kr_1^\MW(k(\tau),\Lambda_\tau) \arrow[r,"\cong"] & \Kr_1^\MW(k(\tau),\omega_{k(\tau)}\otimes\omega_Q(\tau)^{-1}) \arrow[d,"\partial_\tau^v"] & \\
                                                  & \Kr_0^\MW(k(v),D(\p_v/\p_v^2)^{-1}\otimes\omega_T(v)\otimes\omega_Q(v)^{-1}) \arrow[r,"\cong"]   & \Kr_0^\MW(k(v),\Lambda_v)
\end{tikzcd}
\end{center}
where, as described in \cite[Subsection 2.1]{FaLCWG}, the isomorphisms are induced by the canonical isomorphism of \cite[Equation (1.4)]{FaLCWG} and $\partial_\tau^v$ is the twisted residue homomorphism of \cite[Equation (1.5)]{FaLCWG}: indeed, since $T$ is smooth over $k$ hence normal, following \cite[Subsection 2.1]{FaLCWG}, the transfer of \cite[Definition 1.24]{FaLCWG} plays no role in the definition of $d_\tau^v$. Here, $\p_v$ is the maximal ideal of the ring $\mathscr{O}_{T,v}$: since $T$ is normal and $v$ is of codimension $1$, this ring is a discrete valuation ring and there is in particular a uniformiser $\pi\in\p_v$. Now given $\alpha\in k(\tau)$, $\partial_\tau^v$ sends $\alpha\otimes (l(\tau)\otimes l'(\tau)^*)$ to $\partial_v^\pi(\alpha)\otimes(\overline{\pi}^*\otimes l(v)\otimes l'(v)^*)$ where $\partial_v^\pi$ is the untwisted residue homomorphism of \cite[Theorem 1.7]{FaLCWG} relative to $\pi$.

We take $\alpha=[x_n]$. We then see that $\partial_v^\pi(\alpha)=0$ if $x_n\in\mathscr{O}_{T,v}^\times$ (\cite[Theorem 1.7, 2.]{FaLCWG}). Thus if $d_\tau^v\omega$ is non-zero, then $x_n$ lies in the prime ideal $\rf$ of $k[Q_{2n}]$ corresponding to $v$. Since $v$ lies in $\overline{\{\tau\}}$, $\rf$ also contains $\langle x_1,\ldots,x_{n-1}\rangle$: thus $z(1-z)=\sum x_iy_i$ lies in $\rf$, so that $z\in\rf$ or $1-z\in\rf$ as $\rf$ is prime. If $z$ lies in $\rf$, we have an inclusion $\langle x_1,\ldots,x_n,z\rangle\subseteq\rf$ of ideals of height $n$ in $k[Q_{2n}]$: consequently, this inclusion is an equality and we have $v=y$. Similarly, $v=y'$ if $1-z$ lies in $\rf$. Thus it remains to show that $d_\tau^y\omega=\alpha_n$ and $d_\tau^{y'}\omega=\alpha_n'$.

Consider the case $v=y$. Then $(x_n,z)$ generates the maximal ideal of $\mathscr{O}_{T,y}$ and we even have $z=\frac{1}{1-z}x_ny_n$ in $\mathscr{O}_{T,y}$. Thus $x_n$ is a uniformiser of the discrete valuation ring $\mathscr{O}_{T,y}$. The isomorphisms in the above composition, which come from \cite[Equation (1.4)]{FaLCWG} since we consider smooth schemes over a field, now read as follows using this isomorphism.
\begin{itemize}
	\item We have $\Lambda_\tau\cong\omega_{k(\tau)}\otimes\omega_Q(\tau)^{-1}$. This isomorphism sends $(\overline{x_1}\wedge\cdots\wedge\overline{x_{n-1}})^*$ to $(-1)^{n-1}l(\tau)\otimes l'(\tau)^*$. Here the sign is due to the fact that $D(\m_\tau/m_\tau^2)$ is graded by $\codim_Q\tau=n-1$ and that we used the canonical isomorphism $D(\m_\tau/\m_\tau^2)^{-1}\otimes_\tau D(\m_\tau/\m_\tau^2)\cong(k(\tau),0)$ of graded line bundles over $k(\tau)$, see \cite[Subsection 1.3]{FaLCWG}.
	\item We have $D(\p_y/\p_y^2)^{-1}\otimes\omega_T(y)\otimes\omega_Q(y)^{-1}\cong\omega_{k(y)}\otimes\omega_Q(y)^{-1}$: this isomorphism sends $\overline{x_n}^*\otimes l(y)\otimes l'(y)^*$ to $-dx_1\wedge\cdots\wedge dx_n\otimes l(y)^*$: the sign that appears is due to the fact that $D(\p_y/\p_y^2)$ is graded by $1=\codim_T y$.
	\item We have $\omega_{k(y)}\otimes\omega_Q(y)^{-1}\cong\Lambda_y$: this isomorphism sends $dx_1\wedge\cdots\wedge dx_n\otimes l(y)^*$ to $(-1)^n(\overline{x_1}\wedge\cdots\wedge\overline{x_n})^*$ because $D(\m_y/\m_y^2)$ is graded by $n=\codim_Q y$.
\end{itemize}
On the other hand, according to \cite[Theorem 1.7, 1.]{FaLCWG}, $\partial_v^{x_n}[x_n]=\langle 1\rangle$. It is now clear that $d_\tau^y\omega=(-1)^{n-1}\cdot(-1)\cdot(-1)^n\alpha_n=\alpha_n$. An entirely similar argument shows that $d_\tau^{y'}=\alpha_n'$ which concludes the proof.
\end{proof}

\paragraph*{The $\Kr_0^\MW(k)$-linearity of the Hurewicz homomorphism.} Let $n\geqslant 2$. Let $U$ be a smooth affine $k$-scheme. In \cite[Theorem 1.3.4]{AF}, the authors introduce a \emph{Hurewicz homomorphism} $h:[U,Q_{2n}]_{\A^1}\rightarrow\widetilde{\CH}^n(U)$. Concretely, there exists a generator $\alpha_{Q_{2n}}$ of $\widetilde{\CH}^n(Q_{2n})$ as a $\Kr_0^\MW(k)$-module corresponding to the class of the morphism $Q_{2n}\rightarrow\mathrm{K}(\mathbf{K}_n^\MW,n)$ of the Postnikov tower of $Q_{2n}$ viewed as an element of $[Q_{2n},\mathrm{K}(\mathbf{K}_n^\MW,n)]_{\A^1,\bullet}=\Hr^n(Q_{2n},\mathbf{K}_n^\MW)=\widetilde{\CH}^n(Q_{2n})$; in particular, $\alpha_{Q_{2n}}$ is equal to $\alpha_n$ up to a unit of $\Kr_0^\MW(k)$. Since $U$ is affine, Theorem \ref{naivequadrics} ensures any morphism $U\rightarrow Q_{2n}$ in $\mathcal{H}(k)$ is induced by a scheme morphism $f:U\rightarrow Q_{2n}$ which induces a pullback $f^*:\widetilde{\CH}^n(Q_{2n})\rightarrow\widetilde{\CH}^n(U)$ at the level of Chow--Witt groups (\cite[Theorem 3.10]{FaLCWG}): by definition, we then have $h(f)=f^*\alpha_{Q_{2n}}$. This defines is a group homomorphism (where $[U,Q_{2n}]_{\A^1}$ has the cohomotopy group structure defined above).

Now both the source and the target have the structure of a $\Kr_0^\MW(k)$-module. Indeed, this is true of Chow–Witt groups because the Rost--Schmid complex is a complex of $\Kr_0^\MW(k)$-modules. On the other hand, the functoriality of cohomotopy theories implies that $[U,Q_{2n}]_{\A^1}$ is a module over the ring $[Q_{2n},Q_{2n}]_{\A^1}$ which is isomorphic to $\Kr_0^\MW(k)$ by \cite[Corollary 6.43]{Morel} (modulo the ring isomorphism $[Q_{2n},Q_{2n}]_{\A^1}\cong[(Q_{2n},*),(Q_{2n},*)]_{\A^1,\bullet}$—note that the composition laws induced on the unpointed set do not depend on the choice of base-point).

Now the natural question about $h$ has a positive answer:

\begin{prop}\label{hurhomlin}
The Hurewicz homomorphism $h:[U,Q_{2n}]_{\A^1}\rightarrow\widetilde{\CH}^n(U)$ is $\Kr_0^\MW(k)$-linear.
\end{prop}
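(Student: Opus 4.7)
The plan is to translate both the Hurewicz map $h$ and the two scalar actions into post-composition operations with explicit morphisms of spaces, and then to reduce linearity to a universal identity on $Q_{2n}$ that is forced by the characterising property of Eilenberg--MacLane spaces.

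By construction, $h$ is post-composition (in $\mathcal{H}(k)$) with the Postnikov-section morphism $\kappa:Q_{2n}\to\Kr(\mathbf{K}_n^\MW,n)$ representing $\alpha_{Q_{2n}}$ under the identification $\widetilde{\CH}^n(Q_{2n})=[Q_{2n},\Kr(\mathbf{K}_n^\MW,n)]_{\A^1,\bullet}$. For $\lambda\in\Kr_0^\MW(k)$, let $\ell\in[Q_{2n},Q_{2n}]_{\A^1}$ be its image under Morel's ring isomorphism; by the very definition of the scalar action on the source, $\lambda\cdot f=\ell\circ f$. The Rost--Schmid $\Kr_0^\MW(k)$-action on $\widetilde{\CH}^n(U)=[U,\Kr(\mathbf{K}_n^\MW,n)]_{\A^1,\bullet}$ can likewise be realised as post-composition with the endomorphism $m_\lambda$ of $\Kr(\mathbf{K}_n^\MW,n)$ obtained by applying the Eilenberg--MacLane space functor to the sheaf endomorphism ``multiplication by $\lambda$'' of $\mathbf{K}_n^\MW$. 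Granting these two descriptions, the linearity $h(\lambda\cdot f)=\lambda\cdot h(f)$ becomes $\kappa\circ\ell\circ f=m_\lambda\circ\kappa\circ f$, which reduces at once to the universal identity
\[\kappa\circ\ell=m_\lambda\circ\kappa\quad\text{in}\quad\widetilde{\CH}^n(Q_{2n}).\]

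To prove this universal identity I would invoke the uniqueness of maps into Eilenberg--MacLane spaces. Since $Q_{2n}$ is $\A^1$-$(n-1)$-connected (Proposition \ref{connectednessquadrics}) and $\Kr(\mathbf{K}_n^\MW,n)$ has non-trivial $\A^1$-homotopy sheaves concentrated in degree $n$, a pointed map $Q_{2n}\to\Kr(\mathbf{K}_n^\MW,n)$ is determined up to pointed $\A^1$-homotopy by the induced homomorphism on $\pi_n^{\A^1}$. Under Morel's computation $\pi_n^{\A^1}(Q_{2n})\cong\mathbf{K}_n^\MW$—through which both $\ell$ and $m_\lambda$ are attached to $\lambda$—we have $\kappa_*=\id_{\mathbf{K}_n^\MW}$ (definition of the Postnikov section), whereas $\ell_*$ and $(m_\lambda)_*$ both coincide with multiplication by $\lambda$ (by the defining property of, respectively, Morel's ring isomorphism and of $m_\lambda$). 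Consequently $(\kappa\circ\ell)_*$ and $(m_\lambda\circ\kappa)_*$ both equal $\lambda\cdot\id$, forcing the universal identity.

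The main obstacle is not this last uniqueness step but the reinterpretation of the Rost--Schmid $\Kr_0^\MW(k)$-action on $\widetilde{\CH}^n(U)$ as post-composition with $m_\lambda$. Concretely, this amounts to identifying the cup product by the pulled-back class $p_U^*\lambda\in\widetilde{\CH}^0(U)$ with the action, on $\Hr^n(U,\mathbf{K}_n^\MW)$, of the sheaf endomorphism of $\mathbf{K}_n^\MW$ given by multiplication by $\lambda$—that is, matching the module structure coming from the Milnor--Witt product on the Rost--Schmid complex with the one coming from naturality of Eilenberg--MacLane spaces in sheaf maps. This compatibility is a standard but somewhat delicate bookkeeping exercise on the Rost--Schmid complex; once it is in place, everything else is formal.
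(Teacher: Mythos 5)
Your argument is correct in outline but takes a genuinely different route from the paper's.

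The paper proves the proposition by a concrete Rost--Schmid computation: it reduces to additive generators $\langle a\rangle$, $a\in k^\times$, of $\Kr_0^\MW(k)$, writes down the explicit coordinate endomorphism $f_a$ of $Q_{2n}$ corresponding to $\langle a\rangle$, and then checks at the level of cycles (via the explicit generator $\alpha_n=\langle 1\rangle\otimes(\overline{x_1}\wedge\cdots\wedge\overline{x_n})^*$ of Lemma \ref{explicitgenchwit} and \cite[Example 2.11]{FaLCWG}) that $f_a^*\alpha_n=\langle a\rangle\cdot\alpha_n$. The remaining chain of equalities $h(\langle a\rangle\cdot f)=f^*(f_a^*\alpha_{Q_{2n}})=\langle a\rangle h(f)$ is then formal. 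Your proof instead works with an arbitrary $\lambda\in\Kr_0^\MW(k)$ and argues abstractly: reformulate everything as pre/post-compositions, reduce to the universal identity $\kappa\circ\ell=m_\lambda\circ\kappa$ in $\widetilde{\CH}^n(Q_{2n})$, and verify it by uniqueness of maps into Eilenberg--MacLane spaces, detected on $\pi_n^{\A^1}$. That universal identity is exactly the content of the paper's cycle-level computation $f_a^*\alpha_n=\langle a\rangle\cdot\alpha_n$, so the two proofs converge on the same key fact but establish it by very different means: the paper by brute force on the Rost--Schmid complex, you by the characterising property of $\Kr(\mathbf{K}_n^\MW,n)$.

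Two caveats, both of which you are at least partly aware of. First, you explicitly leave open the identification of the Rost--Schmid $\Kr_0^\MW(k)$-action on $\widetilde{\CH}^n(U)$ with post-composition by $m_\lambda$; this compatibility between the flasque-resolution description of cohomology and its Eilenberg--MacLane representability is indeed the main thing to nail down, and it is precisely what the paper's direct cycle computation renders unnecessary. Second, the assertion that ``$\ell_*$ coincides with multiplication by $\lambda$ by the defining property of Morel's ring isomorphism'' quietly identifies the ring isomorphism $[Q_{2n},Q_{2n}]_{\A^1}\cong\Kr_0^\MW(k)$ with the composite $[Q_{2n},Q_{2n}]_{\A^1}\to\End(\pi_n^{\A^1}(Q_{2n}))=\End(\mathbf{K}_n^\MW)\cong\Kr_0^\MW(k)$; this is correct but is a separate unwinding of Morel's theorem that deserves a sentence. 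Once both points are spelled out, your proof is complete and arguably more conceptual: it handles all of $\Kr_0^\MW(k)$ at once rather than generator by generator, and it isolates the universal identity on $Q_{2n}$ that is the real content of the statement.
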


\begin{proof}
Denote by $f_a$ the endomorphism $h_a^n$ from Lemma \ref{lem:explicit_rank_one_representative_2n} for $a\in k^\times$. The morphisms $f_a\in[Q_{2n},Q_{2n}]_{\A^1}$ for $a\in k^\times$ give a set of additive generators of $[Q_{2n},Q_{2n}]_{\A^1}$ by \cite[Lemma 3.6, 2)]{Morel} and by definition, for any element $f$ of $[U,Q_{2n}]_{\A^1}$, $\langle a\rangle\cdot f=f_a\circ f$. Hence it suffices to show that $h(f_a\circ f)=\langle a\rangle h(f)$ for any $a\in k^\times$.

Let then $a\in k^\times$. The morphism $f_a$ is smooth (it is an isomorphism), hence it induces a morphism $f_a^*:C_{\RS}(Q_{2n},n)\rightarrow C_{\RS}(Q_{2n},n)$ of chain complexes that, according to \cite[Example 2.11]{FaLCWG}, satisfies \[f_a^*\alpha_n=\langle 1\rangle\otimes(\overline{ax_1}\wedge\overline{x_2}\cdots\wedge\overline{x_n})^*=\langle a\rangle\otimes(\overline{x_1}\wedge\cdots\wedge\overline{x_n})^*=\langle a\rangle\cdot\alpha_n\] at the level of cycles, hence of Chow--Witt groups. Since $\alpha_n$ generates $\widetilde{\CH}^n(Q_{2n})$ as a $\Kr_0^\MW(k)$-module, the $\Kr_0^\MW(k)$-linearity of Chow--Witt groups implies that $f_a^*:\widetilde{\CH}^n(Q_{2n})\rightarrow\widetilde{\CH}^n(Q_{2n})$ is the multiplication by $\langle a\rangle$. Finally we note that \[h(f_a\circ f)=(f_a\circ f)^*\alpha_{Q_{2n}}=f^*\circ f_a^*\alpha_{Q_{2n}}=f^*(\langle a\rangle\cdot\alpha_{Q_{2n}})=\langle a\rangle\cdot(f^*\alpha_{Q_{2n}})=\langle a\rangle\cdot h(f)\] by $\Kr_0^\MW(k)$-linearity of $f^*$ as required.
\end{proof}

\section[First cohomotopical interpretation]{A cohomotopical interpretation of van der Kallen's law}\label{proof}

We now come to the interplay between the two group structures introduced in the previous sections. Recall that the letter $k$ denotes a field. We let $\mathscr{X}$ be $\A^n\setminus 0$ pointed by $(0,\ldots,0,1)$ in Proposition \ref{bijgrpebor}. The goal of this section is to prove the following.

\begin{theo}\label{groupiso}
Let $X$ be a smooth affine $k$-scheme of Krull dimension $d$ and let $n$ be an integer such that $d\leqslant 2n-4$. Then the natural map $\Psi:\Um_n(k[X])/\Er_n(k[X])\rightarrow[X,\A^n\setminus 0]_{\A^1}$ is a group isomorphism, where the left-hand side is endowed with the structure of van der Kallen and the right-hand side is endowed with the group structure defined by means of Proposition \ref{bijgrpebor}.
\end{theo}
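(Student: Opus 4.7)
The plan is to split the proof into two independent parts, following the two subsection structure already announced: first establish that $\Psi$ is a bijection, then establish that it is a group homomorphism.

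\textbf{Bijectivity.} Write $A=k[X]$. By Fasel's theorem (the reference \cite{JF} cited in the introduction), for any smooth affine $k$-algebra $A$ the orbit set $\Um_n(A)/\Er_n(A)$ is in bijection with the set of naive $\A^1$-homotopy classes $\Hom_{\A^1}(X,\A^n\setminus 0)$, the bijection being induced by the tautological map sending a unimodular row to the corresponding $k$-morphism $X\to\A^n\setminus 0$. On the other hand, since $X$ is smooth affine and $\A^n\setminus 0$ is $\A^1$-weakly equivalent to the quadric $Q_{2n-1}$ (by Lemma \ref{quadricjouandev}), I would combine this with Theorem \ref{naivequadrics} (applied to $Q_{2n-1}$) and Corollary \ref{factbyjouan} (to lift any morphism $X\to\A^n\setminus 0$ through the Jouanolou device $Q_{2n-1}\to\A^n\setminus 0$) to conclude that the natural map $\Hom_{\A^1}(X,\A^n\setminus 0)\to[X,\A^n\setminus 0]_{\A^1}$ is a bijection. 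Composing these two identifications yields the bijectivity of $\Psi$.

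\textbf{Homomorphism property, case $n=3$.} Here the $\A^1$-cohomological dimension constraint $d\leqslant 2$ is too weak for the direct Borsuk-style construction. Instead I would compare both sides to a common Chow--Witt-like cohomology group. Using Morel's computation $\pi_{n-1}^{\A^1}(\A^n\setminus 0)=\Kbf_n^\MW$ together with Proposition \ref{connectednessquadrics} and an obstruction theory argument in the spirit of Lemma \ref{lemobsth}, one obtains for $n=3$ and $X$ of dimension $\leqslant 2$ a natural isomorphism $[X,\A^3\setminus 0]_{\A^1}\simeq\Hr^2(X,\Kbf_3^\MW)$; on the unimodular rows side, the Mennicke--Newman description of van der Kallen's law (Proposition \ref{mennnewlem}) combined with the already-established bijectivity of $\Psi$ transports the group structure, and a direct Rost--Schmid computation identifies the resulting law with the one coming from $\Hr^2(X,\Kbf_3^\MW)$.

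\textbf{Homomorphism property, case $n\geqslant 4$.} Here I would follow the geometric blueprint sketched in the introduction. The key construction is the open subscheme $V_{2n}\subset(\A^n\setminus 0)\times(\A^n\setminus 0)$ carved out precisely so that a $k$-point of $V_{2n}$ records the data produced by one step of the Mennicke--Newman lemma, together with an explicit $k$-morphism $\nu:V_{2n}\to\A^n\setminus 0$ modelling the fold map $(\A^n\setminus 0)\vee(\A^n\setminus 0)\to\A^n\setminus 0$. Given $u,v:X\to\A^n\setminus 0$, the whole point is that if the pair $(u,v)$ lifts up to $\A^1$-homotopy to a morphism $w:X\to V_{2n}$, then $\nu\circ w$ realises the Borsuk sum $u+v$ on the one hand and, by virtue of the scheme-theoretic construction, realises the van der Kallen sum $[u]+[v]$ on the other hand. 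The compatibility of $\Psi$ with the two laws then follows formally.

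\textbf{Main obstacle.} The crucial and, I expect, hardest step is proving that every pair $(u,v)$ admits such a lift, that is, that the inclusion $j:V_{2n}\hookrightarrow(\A^n\setminus 0)\times(\A^n\setminus 0)$ is sufficiently $\A^1$-connected for Lemma \ref{lemobsth} to apply when $\cd_{\A^1}(X)\leqslant 2n-4$. The plan is to apply the motivic Blakers--Massey theorem, carried out over the prime subfield of $k$ so that Theorem \ref{simpliciallyconn} is available, to compare the $\A^1$-homotopy fibre of $j$ with its cofibre: the cofibre is a Thom space of the normal bundle of the closed complement of $V_{2n}$, so its $\A^1$-connectedness can be computed by purity in terms of the codimension of that closed complement, which the explicit construction of $V_{2n}$ makes transparent. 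From this Blakers--Massey estimate one reads off the $\A^1$-connectedness of $j$, which, combined with Lemma \ref{lemobsth} applied to $j$, produces the required lift $w$ uniquely up to $\A^1$-homotopy. The final paragraph is then a bookkeeping verification, using the explicit formulas defining $V_{2n}$ and $\nu$ together with Proposition \ref{mennnewlem}, that $\nu\circ w$ indeed represents the two sums simultaneously.
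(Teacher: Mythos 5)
Your decomposition matches the paper's exactly, and the strategy in each piece is the right one: bijectivity via Fasel's naive-homotopy identification plus the naive-vs-true comparison through the Jouanolou device $Q_{2n-1}$; the case $n=3$ by factoring through $\Hr^2(X,\Kbf_3^\MW)$; the case $n\geqslant 4$ by lifting $(u,v)$ across the open immersion $V_{2n}\hookrightarrow(\A^n\setminus 0)\times(\A^n\setminus 0)$ using purity, the motivic Blakers--Massey theorem over the prime subfield, and Lemma \ref{lemobsth}, then composing with a Mennicke--Newman model of the fold map.

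There is, however, one genuine imprecision in the $n\geqslant 4$ part that would derail a literal implementation of your plan: the ``explicit $k$-morphism $\nu:V_{2n}\to\A^n\setminus 0$ modelling the fold map'' does not exist as a morphism of schemes. The Mennicke--Newman procedure requires \emph{choosing} a splitting of the unimodular row of length $2n-2$, and this choice is not part of the data encoded by a point of $V_{2n}$; consequently, the formula defining the sum cannot be written down as a regular function on $V_{2n}$ itself. The paper resolves this by constructing a specific affine Jouanolou device $\widetilde{V_{2n}}\to V_{2n}$ whose coordinate ring carries the universal choice of splittings, so that Mennicke--Newman produces a genuine scheme morphism $\widetilde{q_n}:\widetilde{V_{2n}}\to\A^n\setminus 0$; one then defines $q_n:V_{2n}\to\A^n\setminus 0$ only in $\mathcal{H}(k)$, via the weak equivalence $\widetilde{V_{2n}}\to V_{2n}$. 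This same device is also what lets one lift the scheme morphism $\varphi:X\to V_{2n}$ (obtained from step (1) of Mennicke--Newman) to $\widetilde{V_{2n}}$ using Corollary \ref{factbyjouan}, which is needed to run the comparison of the two group laws. Separately, you should make the ``bookkeeping'' step precise: it is a non-vacuous lemma (Lemma \ref{modelfold} in the paper) that the composite $(\A^n\setminus 0)\vee(\A^n\setminus 0)\to V_{2n}\xrightarrow{q_n}\A^n\setminus 0$ is the identity on each wedge summand up to $\A^1$-homotopy; the proof uses elementary row operations and a lemma of van der Kallen and is what justifies calling $q_n$ a model of $\nabla$. Finally, for the Blakers--Massey input you also need to know that $V_{2n}$ is $\A^1$-connected and simply connected, which the paper establishes via $\A^1$-chain connectedness and the motivic van Kampen theorem before iterating Blakers--Massey twice to bootstrap the connectivity estimate up to $2n-4$; a single application only gives $\A^1$-$(n-1)$-connectedness of the fibre, which is not yet enough when $d$ is close to $2n-4$.
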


The theorem contains two statements: the first is that $\Psi$ is a group homomorphism; the second is that $\Psi$ is a bijection. We study these two statements in two separate subsections.

\subsection{The map $\Psi$ is a bijection}

We proceed in two steps. We first identify the set $\Um_n(k[X])/\Er_n(k[X])$ (for $X$ a smooth and affine $k$-scheme) to the set of morphisms $X\rightarrow\A^n\setminus 0$ up to \emph{naive} $\A^1$-homotopy, and then show that naive and ``true'' $\A^1$-homotopies coincide in the case of morphisms from affine $k$-schemes to $\A^n\setminus 0$. We remind the reader that $k$ is a field that is not required to be perfect and that $n\geqslant 1$ is an integer.

The following theorem is due to Fasel, see \cite[Theorem 2.1]{JF}.

\begin{theo}\label{naiv}
Let $X$ be a smooth affine $k$-scheme. Then the natural map from $\Um_n(k[X])$ to $\Hom_{\mathsf{Sm}_k}(X,\A^n\setminus 0)$ induces a well-defined map $\Um_n(k[X])/\Er_n(k[X])\rightarrow\Hom_{\A^1}(X,\A^n\setminus 0)$ and this map is a bijection.
\end{theo}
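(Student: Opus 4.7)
My plan is to observe that the map in question factors through the natural bijection $\Um_n(k[X]) \xrightarrow{\sim} \Hom_{\mathsf{Sm}_k}(X, \A^n \setminus 0)$ coming from the dictionary between unimodular rows over $k[X]$ and morphisms $X \to \A^n \setminus 0$ (second bullet after the definition of a unimodular row), and that under the same dictionary a naive $\A^1$-homotopy $X \times \A^1 \to \A^n \setminus 0$ is exactly a unimodular row over $k[X][t]$. Well-definedness then reduces to checking each elementary generator $e_{ij}(\lambda) \in \Er_n(k[X])$: the row $u \cdot e_{ij}(t\lambda)$, whose $i$-th entry is $u_i + t\lambda u_j$ and whose other entries agree with those of $u$, is unimodular over $k[X][t]$ (it generates the same ideal as $u$) and provides a naive $\A^1$-homotopy from $u$ to $u \cdot e_{ij}(\lambda)$; an arbitrary element of $\Er_n(k[X])$ is handled by concatenation. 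Surjectivity is automatic, since every morphism $X \to \A^n \setminus 0$ is literally a unimodular row.

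The content of the theorem is injectivity. Expanding $\sim_{\A^1}$ and chaining finitely many elementary naive homotopies, I would reduce to the claim that if $(h_1(T), \ldots, h_n(T)) \in \Um_n(k[X][T])$ is a unimodular row, then its specialisations $u := (h_i(0))_i$ and $v := (h_i(1))_i$ lie in the same $\Er_n(k[X])$-orbit. Equivalently, one needs to show that the inclusion $\iota : k[X] \hookrightarrow k[X][T]$ induces a bijection
\[
\iota_* : \Um_n(k[X])/\Er_n(k[X]) \xrightarrow{\cong} \Um_n(k[X][T])/\Er_n(k[X][T]).
\]
Indeed, the two evaluations $\mathrm{ev}_0, \mathrm{ev}_1 : k[X][T] \to k[X]$ satisfy $\mathrm{ev}_i \circ \iota = \mathrm{id}$, so they induce left inverses of $\iota_*$; if $\iota_*$ is bijective, then $\mathrm{ev}_{0,*} = \mathrm{ev}_{1,*} = \iota_*^{-1}$, and applying this common value to the class of $(h_1, \ldots, h_n)$ gives $[u] = [v]$.

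The bijectivity of $\iota_*$ is the crux and the main obstacle. My plan would be to establish a Quillen-style local-to-global patching principle for the functor $R \mapsto \Um_n(R)/\Er_n(R)$ on smooth affine $k$-algebras, reducing the problem to the case where $k[X]$ is replaced by each of its local rings. In that local setting, the theorem of Suslin (together with its refinements by Vorst and Rao--van der Kallen) asserts that over a smooth local $k$-algebra $R$, every unimodular row over $R[T]$ is $\Er_n(R[T])$-equivalent to one that is constant in $T$; patching then yields the global statement. Since the theorem as stated carries no dimension restriction on $n$, the delicate step is to run this reduction uniformly in $n$ and in $X$, which requires Lindel-type monicness arguments to stitch the local statements back together---this is precisely the content of Fasel's \cite{JF}.
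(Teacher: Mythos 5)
The paper gives no proof of this theorem: it is recorded as \cite[Theorem 2.1]{JF}, due to Fasel, so there is no in-paper argument to compare against. Your reduction is correct and is the standard one Fasel implements. Well-definedness via elementary naive homotopies and concatenation is fine (modulo a small index slip: right-multiplication by $e_{ij}(t\lambda)$ alters the $j$-th entry, replacing $u_j$ by $u_j+t\lambda u_i$, not the $i$-th); surjectivity is tautological from the dictionary between unimodular rows and maps to $\A^n\setminus 0$; and your observation that injectivity follows once $\iota_*\colon\Um_n(k[X])/\Er_n(k[X])\to\Um_n(k[X][T])/\Er_n(k[X][T])$ is invertible---with $\mathrm{ev}_{0,*}=\mathrm{ev}_{1,*}=\iota_*^{-1}$ and a chain argument for the generated equivalence relation---is exactly the right move. (Surjectivity of $\iota_*$ already suffices here, since injectivity is automatic from the retraction $\mathrm{ev}_0\circ\iota=\mathrm{id}$.) What your sketch does not supply, and flags explicitly, is the $\A^1$-invariance of the orbit-set functor for smooth affine $k$-algebras: that is the whole content of the result, and the Quillen-type patching plus the Suslin/Vorst/Rao--van der Kallen local theorems you name are indeed the ingredients, but they remain a black box in your write-up. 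Since the paper itself imports the statement from \cite{JF} without proof, deferring at precisely this point is consistent with what the paper does.
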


The following theorem is noted in \cite{AHW2} as a consequence of the odd $n$ case of Theorem \ref{naivequadrics}.

\begin{theo}[\protect{\cite[Corollary 4.2.6]{AHW2}}]\label{naivnotnaiv}
Let $X$ be a smooth affine $k$-scheme. Then the natural map $\Hom_{\A^1}(X,\A^n\setminus 0)\rightarrow[X,\A^n\setminus 0]_{\A^1}$ is a bijection.
\end{theo}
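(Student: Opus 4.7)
The plan is to use the Jouanolou device $p : Q_{2n-1} \to \A^n \setminus 0$ (Lemma \ref{quadricjouandev}) to transfer the bijection from Theorem \ref{naivequadrics} (which holds for $Q_{2n-1}$) to $\A^n \setminus 0$. Both sides of the statement receive natural maps from $\Hom_{\mathsf{Sm}_k}(X, Q_{2n-1})$ via composition with $p$: on the source we have the composition with $p$ at the level of naive homotopy classes, and on the target we have the $\A^1$-isomorphism induced by $p$. I would assemble these into a square
\begin{center}
\begin{tikzcd}
\Hom_{\A^1}(X, Q_{2n-1}) \arrow[r,"\cong"] \arrow[d,"p_*"] & \protect{[X, Q_{2n-1}]_{\A^1}} \arrow[d,"p_*","\cong"'] \\
\Hom_{\A^1}(X, \A^n \setminus 0) \arrow[r] & \protect{[X, \A^n \setminus 0]_{\A^1}}
\end{tikzcd}
\end{center}
where the top map is a bijection by Theorem \ref{naivequadrics} and the right vertical map is a bijection because $p$ is an $\A^1$-weak equivalence.

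For surjectivity of the bottom map, I would take a class $\alpha \in [X, \A^n \setminus 0]_{\A^1}$, lift it along the right vertical bijection to a class $\widetilde\alpha \in [X, Q_{2n-1}]_{\A^1}$, and then use the top horizontal bijection to represent $\widetilde\alpha$ by an honest scheme morphism $\widetilde f : X \to Q_{2n-1}$. The composite $p \circ \widetilde f$ is then a scheme morphism $X \to \A^n \setminus 0$ whose naive $\A^1$-homotopy class maps to $\alpha$. (One could also appeal directly to Corollary \ref{factbyjouan} applied to any representative, but the diagram chase makes the logic cleaner.)

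For injectivity, I would start with two scheme morphisms $f_0, f_1 : X \to \A^n \setminus 0$ that become equal in $[X, \A^n \setminus 0]_{\A^1}$. By Corollary \ref{factbyjouan}, each $f_i$ factors as $p \circ \widetilde{f_i}$ for some scheme morphism $\widetilde{f_i} : X \to Q_{2n-1}$. Since $p$ is an $\A^1$-weak equivalence, the equality $[f_0]_{\A^1} = [f_1]_{\A^1}$ forces $[\widetilde{f_0}]_{\A^1} = [\widetilde{f_1}]_{\A^1}$ in $[X, Q_{2n-1}]_{\A^1}$. By Theorem \ref{naivequadrics}, $\widetilde{f_0}$ and $\widetilde{f_1}$ are then naively $\A^1$-homotopic; concretely, there is a chain of scheme morphisms $H_j : X \times \A^1 \to Q_{2n-1}$ realising an equivalence $\widetilde{f_0} \sim_{\A^1} \widetilde{f_1}$. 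Post-composing each $H_j$ with $p$ yields a corresponding chain of naive $\A^1$-homotopies between $f_0$ and $f_1$ in $\Hom_{\mathsf{Sm}_k}(X, \A^n \setminus 0)$, giving $f_0 \sim_{\A^1} f_1$ and hence injectivity.

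The only non-formal ingredients are the two facts flagged by the authors: that $Q_{2n-1}$ is a naive space (Theorem \ref{naivequadrics}) and that $p$ is a Jouanolou device (Lemma \ref{quadricjouandev}); both have already been established, so no real obstacle remains. The minor point that requires a little care, which I would flag explicitly, is that post-composition with $p$ respects the equivalence relation $\sim_{\A^1}$ (it is generated by elementary naive homotopies, each of which is preserved by composition with a fixed morphism), so the lifting argument in the injectivity step really does produce a naive equivalence and not merely an equality of some weaker kind.
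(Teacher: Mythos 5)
Your proof is correct and takes essentially the same route as the paper: both set up the identical commutative square with the Jouanolou device $p:Q_{2n-1}\rightarrow\A^n\setminus 0$, cite Theorem \ref{naivequadrics} for the top map and the $\A^1$-weak equivalence for the right map, and use Corollary \ref{factbyjouan} (or the surjectivity of $p_*$ on scheme morphisms) to get the left map onto. The paper dispatches the remaining diagram chase as an \textquotedblleft easy exercise in set theory\textquotedblright, whereas you spell out the surjectivity and injectivity steps explicitly; the substance is the same.
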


\begin{proof}
Indeed, $k$ is ind-smooth over its prime subfield which is perfect so that we may apply \cite[Corollary 4.2.6]{AHW2}
\end{proof}

\begin{rema}
This theorem was already remarked in \cite[Remark 8.10]{Morel} for $k$ perfect and $n\neq 2$.
\end{rema}

Combining the two statements, we get:

\begin{theo}\label{bijection}
Let $n\geqslant 1$ be an integer and let $X$ be a smooth affine $k$-scheme. Then the natural map $\Psi:\Um_n(k[X])/\Er_n(k[X])\rightarrow[X,\A^n\setminus 0]_{\A^1}$ is a bijection.
\end{theo}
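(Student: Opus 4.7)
The plan is to realize $\Psi$ as the composite of the two bijections just established. Theorem \ref{naiv} provides a bijection $\Um_n(k[X])/\Er_n(k[X]) \to \Hom_{\A^1}(X,\A^n\setminus 0)$ induced by sending a unimodular row $(a_1,\ldots,a_n)$ to the corresponding $k$-morphism $X\to\A^n\setminus 0$ and then passing to naive $\A^1$-homotopy classes. Theorem \ref{naivnotnaiv} provides a bijection $\Hom_{\A^1}(X,\A^n\setminus 0)\to[X,\A^n\setminus 0]_{\A^1}$ given by the canonical functor $\mathcal{H}_\Nr(k)\to\mathcal{H}(k)$.

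First I would observe that these two maps are composable, and their composite is, tautologically, the map sending the orbit $[(a_1,\ldots,a_n)]$ to the $\A^1$-homotopy class of the induced morphism $X\to\A^n\setminus 0$. This is exactly the definition of $\Psi$, since $\Psi$ was constructed by noting that two unimodular rows in the same $\Er_n(k[X])$-orbit induce $\A^1$-homotopic morphisms to $\A^n\setminus 0$ (one can even take naive $\A^1$-homotopies coming from the fact that elementary matrices can be connected to the identity by explicit $\A^1$-paths).

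The composition of two bijections is a bijection, so $\Psi$ is a bijection, which is what we wanted. The only verification required is therefore the identification of the composite with $\Psi$, which is a matter of unravelling the definitions and poses no difficulty: there is no ``main obstacle'' here, as the substance of the statement is entirely contained in the two preceding theorems. In particular, the hypothesis that $X$ be affine is used through \emph{both} previous results (Fasel's identification of the orbit set with naive $\A^1$-homotopy classes requires affineness, and so does the comparison between naive and genuine $\A^1$-homotopy classes, via the Jouanolou device argument and Theorem \ref{naivequadrics}). Note also that no assumption on the Krull dimension of $X$ nor on the perfectness of $k$ is needed for this bijectivity statement.
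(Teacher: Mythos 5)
Your proof is correct and is essentially identical to the paper's: the paper introduces Theorems \ref{naiv} and \ref{naivnotnaiv} and then states Theorem \ref{bijection} with the one-line justification ``Combining the two statements, we get,'' which is precisely the composition of the two bijections you describe. Your additional remarks (affineness entering through both ingredients, no dimension or perfectness hypothesis needed) are all accurate and match the paper's surrounding discussion.
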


\subsection{The map $\Psi$ is a group homomorphism}\label{grouphom}

%
Let $n$ be an integer such that $n\geqslant 3$. Let $U_n$ be the open subset $\A^1\times\A^{n-1}\setminus 0$ of $\A^n\setminus 0$, which contains the base point of $\A^n\setminus 0$, and set \[V_{2n}=U_n\times(\A^n\setminus 0)\cup(\A^n\setminus 0)\times U_n\subseteq(\A^n\setminus 0)\times(\A^n\setminus 0).\] It is an open subscheme of $(\A^n\setminus 0)\times(\A^n\setminus 0)$ which contains its base point $(0,\ldots,0,1,0,\ldots,0,1)$; we turn $V_{2n}$ into a pointed space with this point. The map $V_{2n}\rightarrow(\A^n\setminus 0)\times(\A^n\setminus 0)$ is then the inclusion of an open subscheme and a map of pointed spaces. We start by studying the $\A^1$-connectedness of this inclusion. We first prove that $V_{2n}$ is $\A^1$-connected by showing that $V_{2n}$ is $\A^1$-chain connected.

\begin{lem}
Let $m$ be an integer such that $m\geqslant 2$. The $k$-scheme $\A^m\setminus 0$ is $\A^1$-chain connected. Hence $V_{2n}$ is $\A^1$-chain connected and is thus $\A^1$-connected.
\end{lem}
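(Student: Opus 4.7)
The plan is to first establish $\A^1$-chain connectedness of $\A^m\setminus 0$ for $m\geqslant 2$ by an elementary straight-line argument, then to deduce the corresponding statement for $V_{2n}$ using the product and union criteria already collected in Example~\ref{examplesaonechaincon}, and finally to conclude $\A^1$-connectedness by invoking Proposition~\ref{aonechaincoimpaoneco}.

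For $\A^m\setminus 0$ with $m\geqslant 2$, let $L/k$ be a separable finite type extension and let $x_0,x_1\in(\A^m\setminus 0)(L)=L^m\setminus\{0\}$. The key observation is that the straight-line morphism $\A_L^1\rightarrow\A_L^m$ given in coordinates by $t\mapsto(1-t)x_0+ty$ factors through $\A_L^m\setminus 0$ precisely when $x_0=y$ or when $x_0$ and $y$ are linearly independent over $L$—indeed, the only way the line through $x_0$ and $y$ meets the origin is when $y$ is a scalar multiple of $x_0$. It is therefore enough to find an intermediate point $y\in L^m\setminus\{0\}$ linearly independent from both $x_0$ and $x_1$: the two resulting straight-line morphisms will then provide elementary $\A^1$-equivalences $x_0\sim y\sim x_1$. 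The existence of such a $y$ is a counting exercise: the subset of $L^m\setminus\{0\}$ linearly dependent with any given nonzero vector has exactly $|L|-1$ elements, so the union of the two bad sets has at most $2(|L|-1)$ elements, which is strictly less than $|L|^m-1$ as soon as $m\geqslant 2$ and $|L|\geqslant 2$.

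For $V_{2n}$, the assumption $n\geqslant 4$ gives in particular $n-1\geqslant 2$, so that both $\A^{n-1}\setminus 0$ and $\A^n\setminus 0$ are $\A^1$-chain connected by the previous step. The product statement of Example~\ref{examplesaonechaincon} then implies that $U_n=\A^1\times(\A^{n-1}\setminus 0)$ is $\A^1$-chain connected, and applying it again shows that $U_n\times(\A^n\setminus 0)$ and $(\A^n\setminus 0)\times U_n$ are $\A^1$-chain connected as well. These are two open subschemes of $V_{2n}$ both containing its base point $(0,\ldots,0,1,0,\ldots,0,1)$ and their union is $V_{2n}$ by definition, so the union criterion from the last paragraph of Example~\ref{examplesaonechaincon} yields the $\A^1$-chain connectedness of $V_{2n}$. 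Proposition~\ref{aonechaincoimpaoneco} then delivers the desired $\A^1$-connectedness.

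The only mildly delicate step is the counting argument that produces the intermediate point $y$ uniformly over arbitrary (possibly small) separable extensions $L/k$; everything else is a formal consequence of the chain-connectedness criteria already recorded in Example~\ref{examplesaonechaincon}.
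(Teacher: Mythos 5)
Your route for $\A^m\setminus 0$ differs from the paper's: the paper connects an arbitrary $L$-point directly to the base point $(0,\ldots,0,1)$ by writing down an explicit affine homotopy (using a specific splitting of the unimodular row), whereas you pass through an intermediate point $y$ chosen to be linearly independent from both endpoints. Your geometric observation---that the straight-line chart $t\mapsto(1-t)x_0+ty$ factors through $\A_L^m\setminus 0$ precisely when $x_0=y$ or when $x_0$ and $y$ are linearly independent---is correct, and the overall strategy is sound. The gap is in the justification for the existence of $y$. Your cardinality estimate fails when $L$ is infinite: for an infinite cardinal and finite $m$, the quantities $2(|L|-1)$, $|L|$ and $|L|^m-1$ are all equal, so the claimed strict inequality $2(|L|-1)<|L|^m-1$ degenerates to the false statement $|L|<|L|$. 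Infinite separable finite type extensions $L/k$ always exist (e.g.\ $L=k(t)$), and if $k$ is itself infinite they are the \emph{only} ones, so this is not a marginal case. What you actually need is the elementary linear-algebra fact that for $m\geqslant 2$ the union of two lines through the origin is never all of $L^m\setminus\{0\}$: if $x_0,x_1$ are linearly independent take $y=x_0+x_1$, and if they are dependent one only needs $y\notin Lx_0$, which exists because $Lx_0$ is a proper subspace. (Your final remark even points at the wrong end of the scale: the counting is fine for small finite $L$; it is the large $L$ that break it.) The reduction of $V_{2n}$ to $\A^m\setminus 0$ via the product and union criteria of Example~\ref{examplesaonechaincon}, and the final appeal to Proposition~\ref{aonechaincoimpaoneco}, match the paper's proof exactly.
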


\begin{proof}
We first show that $\A^m\setminus 0$ is $\A^1$-chain connected. Let $L$ be a finitely generated separable extension of $k$ and let $x$ belong to $(\A^m\setminus 0)(L)$: we may think of $x$ as an $m$-tuple $(x_1,\ldots,x_m)$ of elements of $L$ which are not all equal to $0$. If one of the $x_i$'s for $i<m$ is non-zero, then we may find an $m$-tuple $(a_1,\ldots,a_m)$ of elements of $L$ such that $\sum_{i=1}^m a_ix_i=1$ and $a_m=1$. Then $((1-t)x_1,\ldots,(1-t)x_{m-1},(1-t)x_m+t)$ belongs to $\Um_m(L[t])$ since \[\sum_{i=1}^{m-1}a_i(1-t)x_i+a_m((1-t)x_m+t)=(1-t)\sum_{i=1}^m a_ix_i+a_mt=(1-t)+t=1.\] Hence we obtain a morphism $f:\A_L^1\rightarrow\A_L^m\setminus 0$, induced by $((1-t)x_1,\ldots,(1-t)x_m+t)$, such that $f(0)=x$ and $f(1)=(0,\ldots,0,1)$: by definition of the relation $\sim$, this implies that $x\sim(0,\ldots,0,1)$. If $x_i=0$ for all $i<m$, then $x_m\in L^\times$; $(t,0,\ldots,0,x_m(1-t))$ belongs to $\Um_m(L[t])$, since \[1\cdot t+x_m^{-1}\cdot x_m(1-t)=1.\] Thus $(0,\ldots,0,x_m)\sim(1,0,\ldots,0)$ and since $(1,0,\ldots,0)\sim(0,\ldots,0,1)$ by the previous argument, by transitivity, the relation $(0,\ldots,0,x_m)\sim(0,\ldots,0,1)$ holds. Thus $x\sim(0,\ldots,0,1)$ in any case. Therefore, $(\A^m\setminus 0)(L)/{\sim}$ consists of a point and $\A^m\setminus 0$ is $\A^1$-chain connected by definition.

Now $U_n=\A^1\times\A^{n-1}\setminus 0$ is $\A^1$-chain connected since it is the product of $\A^1$-chain connected spaces as $n\geqslant 4$, according to Example \ref{examplesaonechaincon}. Still owing to this example, $U_n\times(\A^n\setminus 0)$ and $(\A^n\setminus 0)\times U_n$ are $\A^1$-chain connected, hence $V_{2n}=(U_n\times(\A^n\setminus 0))\cup((\A^n\setminus 0)\times U_n)$ is $\A^1$-chain connected by Example \ref{examplesaonechaincon} again since the base point of $V_{2n}$ belongs to both $U_n\times(\A^n\setminus 0)$ and $(\A^n\setminus 0)\times U_n$. The fact that it is $\A^1$-connected then results from Proposition \ref{aonechaincoimpaoneco}.
\end{proof}

\begin{lem}\label{conninc}
The inclusion map $u:V_{2n}\rightarrow(\A^n\setminus 0)\times(\A^n\setminus 0)$ is $\A^1$-$(2n-4)$-connected.
\end{lem}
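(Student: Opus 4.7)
My plan is to identify the homotopy cofibre of the inclusion by means of the motivic purity theorem and then to transfer connectivity from the cofibre to the homotopy fibre using the motivic Blakers--Massey theorem.

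First, I compute the closed complement $Z$ of $V_{2n}$ in $(\A^n\setminus 0)\times(\A^n\setminus 0)$. By definition, $U_n$ is the open subscheme of $\A^n\setminus 0$ cut out by the non-vanishing of $(x_2,\ldots,x_n)$, so that $(\A^n\setminus 0)\setminus U_n=\{(x_1,0,\ldots,0):x_1\neq 0\}\simeq\mathbb{G}_m$. Passing to the product gives $Z\simeq\mathbb{G}_m\times\mathbb{G}_m$, a smooth closed subscheme of codimension $2n-2$. Because $(\A^n\setminus 0)\times(\A^n\setminus 0)$ is an open subscheme of $\A^{2n}$, the normal bundle $N$ of $Z$ in $(\A^n\setminus 0)^2$ coincides with its normal bundle in $\A^{2n}$, which is canonically trivial of rank $2n-2$.

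Next, by the Morel--Voevodsky purity theorem, the homotopy cofibre of the inclusion $V_{2n}\hookrightarrow(\A^n\setminus 0)^2$ is $\A^1$-weakly equivalent to the Thom space $\mathrm{Th}(N)\simeq(\mathbb{P}^1)^{\wedge(2n-2)}\wedge Z_+\simeq\Sigma^{2n-2}\bigl(\mathbb{G}_m^{\wedge(2n-2)}\wedge Z_+\bigr)$, a $(2n-2)$-fold simplicial suspension and hence simplicially $(2n-3)$-connected. Since the whole configuration is defined over $\Z$, it is pulled back from the prime subfield of $k$, so Theorem \ref{simpliciallyconn} implies that this cofibre is $\A^1$-$(2n-3)$-connected.

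Finally, I apply the motivic Blakers--Massey theorem over the prime subfield of $k$, then pulled back to $k$ via Theorem \ref{simpliciallyconn}. Both $V_{2n}$ and $(\A^n\setminus 0)^2$ are $\A^1$-simply connected: the target is $\A^1$-$(n-2)$-connected as a product of $\A^1$-$(n-2)$-connected spaces, while the source is covered by the two open subschemes $U_n\times(\A^n\setminus 0)$ and $(\A^n\setminus 0)\times U_n$, both $\A^1$-$(n-3)$-connected (since $U_n\simeq_{\A^1}\A^{n-1}\setminus 0$) and sharing the base point, yielding via Mayer--Vietoris that $V_{2n}$ is $\A^1$-$(n-3)$-connected, hence $\A^1$-simply connected as $n\geqslant 4$. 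Blakers--Massey then produces a comparison map $\mathrm{hofib}\bigl(V_{2n}\hookrightarrow(\A^n\setminus 0)^2\bigr)\to\Omega\,\mathrm{Th}(N)$ whose connectivity comfortably exceeds $2n-4$ in this range. Since $\Omega\,\mathrm{Th}(N)$ is $\A^1$-$(2n-4)$-connected, so is the homotopy fibre, and the long exact sequence of $\A^1$-homotopy sheaves then shows that the inclusion is $\A^1$-$(2n-4)$-connected.

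The hard part is the careful bookkeeping of the numerical ranges, both in Mayer--Vietoris for the source $V_{2n}$ and in the Blakers--Massey comparison: the hypothesis $n\geqslant 4$ is precisely what is needed to make everything simply connected and to place the Blakers--Massey estimate safely above the target connectivity $2n-4$.
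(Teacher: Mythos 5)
The overall plan --- purity to compute the cofibre, then the motivic Blakers--Massey theorem to transfer its connectivity to the fibre --- is the same as the paper's, and your identification of the closed complement $\mathbb{G}_m\times\mathbb{G}_m$, the trivial normal bundle of rank $2n-2$, and the resulting $(2n-3)$-connected Thom space are all correct. However, the crucial connectivity bookkeeping is where your argument breaks down.

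You claim that $V_{2n}$ is $\A^1$-$(n-3)$-connected ``via Mayer--Vietoris'' applied to the cover by $U_n\times(\A^n\setminus 0)$ and $(\A^n\setminus 0)\times U_n$. But Mayer--Vietoris is a statement about (co)homology, not homotopy sheaves: to convert homological connectivity into $\A^1$-connectivity you would need the motivic Hurewicz theorem, which in turn requires that $V_{2n}$ already be $\A^1$-simply connected. Yet you deduce simple connectivity \emph{from} the $(n-3)$-connectivity obtained by Mayer--Vietoris, making the argument circular as written. The paper instead establishes only $\A^1$-connectedness (via $\A^1$-chain connectedness) and $\A^1$-simple connectedness (via the motivic van Kampen theorem applied to the same two-piece cover), and does not attempt a direct estimate of the higher homotopy sheaves of $V_{2n}$.

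Even granting your connectivity estimate for $V_{2n}$, the phrase ``whose connectivity comfortably exceeds $2n-4$'' does not survive scrutiny if the input to Blakers--Massey is simple connectivity of the inclusion, as your phrasing suggests: the target being $\A^1$-$(n-2)$-connected and the inclusion being $\A^1$-$1$-connected yields a comparison map that is only $\A^1$-$n$-connected, which is strictly less than $2n-4$ for $n>4$. The paper resolves this by \emph{iterating} Blakers--Massey: the first application, with simple connectivity as input, shows the inclusion is $\A^1$-$(n-1)$-connected; feeding this improved estimate back into a second application upgrades the comparison map to $\A^1$-$(2n-2)$-connected, and now the $\A^1$-$(2n-4)$-connectivity of $\Omega C$ forces $F$ to be $\A^1$-$(2n-4)$-connected. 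This bootstrap is the essential idea missing from your proof, and it sidesteps any need to compute higher homotopy sheaves of $V_{2n}$ independently.
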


\begin{proof}
Clearly, the source and target of $u$, as well as $u$ itself, are pulled back from the prime subfield $k_0$ of $k$ which is perfect. Thus by Lemma \ref{lem:a1_conn_base_change_map}, we can reason over $k_0$ and thus assume that $k$ is perfect.

We will apply the Blakers--Massey theorem to $u$, hence we first show that $V_{2n}$ is $\A^1$-simply connected. The space $V_{2n}$ is $\A^1$-connected by the previous lemma. Thus to show that $V_{2n}$ is $\A^1$-simply connected, it suffices to prove that $\pi_1^{\A^1}(V_{2n})$ is trivial. Set \[V_{2n}^1=U_n\times(\A^n\setminus 0)\subseteq(\A^n\setminus 0)\times(\A^n\setminus 0),\;V_{2n}^2=(\A^n\setminus 0)\times U_n\subseteq(\A^n\setminus 0)\times(\A^n\setminus 0),\] so that $V_{2n}=V_{2n}^1\cup V_{2n}^2$ and $V_{2n}^1\cap V_{2n}^2=U_n\times U_n$. Since $n\geqslant 3$, the sheaf $\pi_1^{\A^1}(\A^n\setminus 0)$ is trivial. Using the commutation of the homotopy sheaf $\pi_1^{\A^1}$ with products, we see that the map $\pi_1^{\A^1}(V_{2n}^1\cap V_{2n}^2)\rightarrow\pi_1^{\A^1}(V_{2n}^1)$ is given by $(\Id,*):\pi_1^{\A^1}(U_{2n}\times U_{2n})\rightarrow\pi_1^{\A^1}(U_{2n}\times\A^n\setminus 0)$ while the map $\pi_1^{\A^1}(V_{2n}^1\cap V_{2n}^2)\rightarrow\pi_1(V_{2n}^2)$ is given by $(*,\Id):\pi_1^{\A^1}(U_{2n}\times U_{2n})\rightarrow\pi_1(\A^n\setminus 0\times U_{2n})$. It is then clear using the description of $\pi_1^{\A^1}(V_{2n})$ as an amalgamated sum deduced from the motivic van Kampen theorem \cite[Theorem 7.12]{Morel} that $\pi_1^{\A^1}(V_{2n})$ is trivial as required.

We also need to estimate the $\A^1$-connectedness of the cofibre of $u$. The (reduced) complement of $U_n$ is $\mathbb{G}_m\times 0\subseteq\A^n\setminus 0$: this implies that the (reduced) complement of $V_{2n}$ is $(\mathbb{G}_m\times 0)\times(\mathbb{G}_m\times 0)$, and is in particular smooth with trivial normal bundle. By the purity theorem \cite[§3, Theorem 2.23]{MoVo}, we then have a cofibre sequence: \[V_{2n}\rightarrow(\A^n\setminus 0)\times(\A^n\setminus 0)\rightarrow(\mathbb{G}_m\times\mathbb{G}_m)_+\wedge(\mathbb{P}^1)^{\wedge{2n-2}}.\] Therefore, since $(\mathbb{G}_m\times\mathbb{G}_m)_+\wedge(\mathbb{P}^1)^{\wedge{2n-2}}\simeq\Sigma^{2n-2}(\mathbb{G}_m\times\mathbb{G}_m)_+\wedge\mathbb{G}_m^{2n-2}$ is simplicially $(2n-3)$-connected hence $\A^1$-$(2n-3)$-connected by Theorem \ref{simpliciallyconn}, the homotopy cofibre $C$ is $\A^1$-$(2n-3)$-connected.

In particular, $u$ is $\A^1$-simply connected as both its source and target are $\A^1$-simply connected. Hence its homotopy fibre $F$ is $\A^1$-connected and we conclude by the first item of \cite[Theorem 4.1]{AFCompEulerClass} that the comparison map $f$ of this theorem is $\A^1$-$2$-connected. If $n=3$, then the target of this map is $\A^1$-$2$-connected by the above estimate of the $\A^1$-connectedness of $C$: this implies that $F$ is $\A^1$-$2$-connected and thus $u$ is $\A^1$-$2$-connected which is the claim of Lemma \ref{conninc} if $n=3$.

Now assume that $n\geqslant 4$. Observe that the target of $u$ is $\A^1$-$(m-1)$-connected with $m=n-1$ by commutation of $\A^1$-homotopy sheaves with products. Since $n\geqslant 4$, the inequality $2n-4\geqslant m+1=n$ holds. Thus by the argument of Example \ref{exe:application_blakers_massey}, $u$ is $\A^1$-$\delta$-connected where $\delta=\min(d,2m+1)=\min(2n-4,2n-1)=2n-4$ as required.
\end{proof}

\begin{lem}\label{bijincl}
Let $X$ be a smooth $k$-scheme of $\A^1$-cohomological dimension $d\leqslant 2n-4$. Then the natural map $[X,V_{2n}]_{\A^1}\rightarrow[X,(\A^n\setminus 0)\times(\A^n\setminus 0)]_{\A^1}$ is a bijection.
\end{lem}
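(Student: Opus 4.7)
The plan is to apply Lemma \ref{lemobsth} directly to the open immersion $j\colon V_{2n}\hookrightarrow(\A^n\setminus 0)\times(\A^n\setminus 0)$, using the connectedness estimate just established.

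First I would verify the hypotheses of Lemma \ref{lemobsth}. The map $j$ is a map of pointed spaces, since the base point $(0,\ldots,0,1,0,\ldots,0,1)$ of $(\A^n\setminus 0)\times(\A^n\setminus 0)$ lies in $V_{2n}$. Both target and source are $\A^1$-connected: the target as a product of $\A^1$-$(n-2)$-connected spaces (with $n\geqslant 4$, so in particular $\A^1$-connected), and the source by the first lemma of this paragraph, which established that $V_{2n}$ is $\A^1$-chain connected hence $\A^1$-connected. By Lemma \ref{conninc}, $j$ is $\A^1$-$(2n-4)$-connected; since $n\geqslant 4$ we have $m:=2n-4\geqslant 4\geqslant 1$, so the required hypothesis $m\geqslant 1$ is met.

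Given these inputs, Lemma \ref{lemobsth} applied with $\mathscr{E}=V_{2n}$, $\mathscr{B}=(\A^n\setminus 0)\times(\A^n\setminus 0)$, and $m=2n-4$ yields that for any smooth $k$-scheme $X$ of $\A^1$-cohomological dimension $d\leqslant 2n-4=m$, the map
\[
j_*\colon[X,V_{2n}]_{\A^1}\longrightarrow[X,(\A^n\setminus 0)\times(\A^n\setminus 0)]_{\A^1}
\]
is a bijection, which is exactly the statement of the lemma.

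There is essentially no obstacle here: the substantive work was carried out in Lemma \ref{conninc}, where the motivic Blakers--Massey theorem and the purity theorem were combined (after a first $\A^1$-van Kampen step to obtain $\A^1$-simple connectivity of $V_{2n}$) to bootstrap the connectivity of $j$ from $\A^1$-$1$-connected up to $\A^1$-$(2n-4)$-connected. The present lemma merely packages this connectivity statement into an obstruction-theoretic bijection, for which Lemma \ref{lemobsth} is tailor-made.
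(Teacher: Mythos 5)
Your proof is correct and follows exactly the same route as the paper, which simply cites Lemma \ref{lemobsth} applied to the inclusion of Lemma \ref{conninc}; you have additionally spelled out the verification of the hypotheses ($\A^1$-connectedness of source and target, base point, $m=2n-4\geqslant 1$), which is welcome but not a different argument.
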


\begin{proof}
This result follows from applying Lemma \ref{lemobsth} to the inclusion map of Lemma \ref{conninc}.
\end{proof}

The scheme $V_{2n}$ allows us to produce an explicit, scheme-theoretic model for the fold map $(\A^n\setminus 0)\vee(\A^n\setminus 0)\rightarrow(\A^n\setminus 0)$. First, we construct a Jouanolou device for $V_{2n}$. Consider the projection \[p:\A^n\times\A^n=(\A^1\times\A^{n-1})\times(\A^1\times\A^{n-1})\rightarrow\A^{n-1}\times\A^{n-1}=\A^{2n-2}.\] We also let $p$ denote its restriction to the open subset $(\A^n\setminus 0)\times(\A^n\setminus 0)$. Observe that $V_{2n}=p^{-1}(\A^{2n-2}\setminus 0)$\footnote{The author wishes to thank Sophie Morel who allowed him to correct a severe misunderstanding in the interpretation of this equality of subschemes of $(\A^n\setminus 0)\times(\A^n\setminus 0)$ in terms of unimodular rows.}. Recall that the projection $Q_{2n-1}\rightarrow\A^n\setminus 0$ on the first $n$ coordinates is a Jouanolou device. Consider now the (outer) pullback square

\begin{center}
\begin{tikzcd}
\widetilde{V_{2n}} \arrow[rr] \arrow[dd] &                                & Q_{2n-1}\times Q_{2n-1} \arrow[d] \\
                                         & V_{2n} \arrow[d] \arrow[r]     & (\A^n\setminus 0)\times(\A^n\setminus 0) \arrow[d,"p"] \\
Q_{4n-5} \arrow[r]                       & \A^{2n-2}\setminus 0 \arrow[r] & \A^{2n-2}
\end{tikzcd}
\end{center}

The $k$-scheme $\widetilde{V_{2n}}$ is affine as it is the fibre product of affine schemes over an affine base. Furthermore, the map $\widetilde{V_{2n}}\rightarrow\A^{2n-2}$ factors through $\A^{2n-2}\setminus 0$, thus the map $\widetilde{V_{2n}}\rightarrow Q_{2n-1}\times Q_{2n-1}\rightarrow(\A^n\setminus 0)\times(\A^n\setminus 0)$ factors through $p^{-1}(\A^{2n}\setminus 0)=V_{2n}$. Now we have a pullback diagram 
\begin{center}
\begin{tikzcd}
\widetilde{V_{2n}} \arrow[r] \arrow[d]                 & V_{2n}\times_{(\A^n\setminus 0)\times(\A^n\setminus 0)}(Q_{2n-1}\times Q_{2n-1}) \arrow[d] \\
Q_{4n-5}\times_{\A^{2n-2}\setminus 0} V_{2n} \arrow[r] & V_{2n}
\end{tikzcd}
\end{center}
The bottom horizontal and right vertical maps of this diagram are torsors under vector bundles since they are base changes of such morphisms. Thus all morphisms in the diagram are torsors under vector bundles again by base change and it follows that the morphism $\widetilde{V_{2n}}\rightarrow V_{2n}$ is a torsor under a vector bundle by composition. Hence since $\widetilde{V_{2n}}$ is affine, the map $\widetilde{V_{2n}}\rightarrow V_{2n}$ is a Jouanolou device.

The $k$-algebra $k[\widetilde{V_{2n}}]$ is given by the following presentation. Let $x'=(x_2,\ldots,x_n)$, $y'=(y_2,\ldots,y_n)$, $u'=(u_2,\ldots,u_n)$, $v'=(v_2,\ldots,v_n)$, $r'=(r_2,\ldots,r_n)$ and $s'=(s_2,\ldots,s_n)$ be tuples of variables. Then \[k\left[\widetilde{V_{2n}}\right]=k[x_1,x',y_1,y',u_1,u',v_1,v',r',s']/I\] where $I$ is described by the following relations: \[
\begin{array}{rcl}
x_1y_1+x'y'^{T}   &=& 1\\
u_1v_1+u'v'^{T}   &=& 1\\
x'r'^{T}+u's'^{T} &=& 1
\end{array}
\] where $w^T$ is the transpose column of the row $w$. Thus $(x_1,\ldots,x_n,u_1,\ldots,u_n)$ is in some sense the universal (among rows of elements of $k$-algebras) row of length $2n$ such that $(x_1,\ldots,x_n)$, $(u_1,\ldots,u_n)$ and $(x_2,\ldots,x_n,u_2,\ldots,u_n)$ are unimodular rows of length $n$, $n$ and $2n-2$ respectively, with given splitting in the sense of Definition \ref{splitting}.

We are therefore in the situation of the proof of the Mennicke--Newman lemma after the first step for the rows $(x_1,\ldots,x_n)$ and $(u_1,\ldots,u_n)$, so that we may follow its proof to produce an explicit (elementary) $\A^1$-weak equivalence on $\widetilde{V_{2n}}$ that turns $(x_1,\ldots,x_n)$ (respectively $(u_1,\ldots,u_n)$) into $(x,a_1,\ldots,a_n)$ (respectively $(1-x,a_2,\ldots,a_n)$); hence the row $(x(1-x),a_2,\ldots,a_n)$ is unimodular and thus defines a morphism $\widetilde{q_{n}}:\widetilde{V_{2n}}\rightarrow\A^n\setminus 0$. This yields a map $q_n:V_{2n}\xrightarrow{\simeq}\widetilde{V_{2n}}\xrightarrow{q_n}\A^n\setminus 0$ \emph{in} $\mathcal{H}(k)$.

\begin{center}
\begin{tikzcd}
\widetilde{V_{2n}} \arrow[d] \arrow[rd,"\widetilde{q_n}"] & \\
V_{2n} \arrow[r,swap,dotted,"q_n"]                                    & \A^n\setminus 0
\end{tikzcd}
\end{center}

Observe also that the inclusion $(\A^n\setminus 0)\vee(\A^n\setminus 0)\rightarrow(\A^n\setminus 0)\times(\A^n\setminus 0)$ factors through $V_{2n}$. Indeed, on the first factor, it corresponds to the morphism $(x_1,\ldots,x_n)\mapsto(x_1,\ldots,x_n,0,\ldots,0,1)$ which factors through $(\A^n\setminus 0)\times U_n$ thus through $V_{2n}$, while on the second factor, it corresponds to the morphism $(x_1,\ldots,x_n)\mapsto(0,\ldots,0,1,x_1,\ldots,x_n)$ which factors through $U_n\times(\A^n\setminus 0)$, thus through $V_{2n}$ as well. Therefore, we have a factorisation \[(\A^n\setminus 0)\vee(\A^n\setminus 0)\rightarrow V_{2n}\rightarrow(\A^n\setminus 0)\times(\A^n\setminus 0)\] of the inclusion $(\A^n\setminus 0)\vee(\A^n\setminus 0)\rightarrow(\A^n\setminus 0)\times(\A^n\setminus 0)$ and the map $q_n:V_{2n}\rightarrow\A^n\setminus 0$ thus defines a map $(\A^n\setminus 0)\vee(\A^n\setminus 0)\rightarrow\A^n\setminus 0$ by composition. The key lemma is the following.

\begin{lem}\label{modelfold}
The map $(\A^n\setminus 0)\vee(\A^n\setminus 0)\rightarrow\A^n\setminus 0$ is the identity on each factor up to homotopy.
\end{lem}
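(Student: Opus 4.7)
The strategy is to reduce the assertion to an explicit computation: we lift each wedge inclusion through the Jouanolou device $\widetilde{V_{2n}}\to V_{2n}$ by choosing tautological splittings, trace the Mennicke--Newman procedure on these lifts, and then exhibit a naive $\A^1$-homotopy from the resulting row to the identity projection. Since the Jouanolou map $Q_{2n-1}\to\A^n\setminus 0$ is an $\A^1$-weak equivalence and the wedge inclusions factor through it, it suffices to work factor by factor with maps out of $Q_{2n-1}$.

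For the first factor, define a lift $\sigma_1:Q_{2n-1}\to\widetilde{V_{2n}}$ sending a point $(x,y)$ (with $xy^T=1$) to the point with coordinates $(x,y,u,v,r',s')$ where $u=(0,\ldots,0,1)$, $v=(0,\ldots,0,1)$, $r'=0$ and $s'=(0,\ldots,0,1)$; the three unimodularity relations defining $\widetilde{V_{2n}}$ are then satisfied tautologically. This $\sigma_1$ visibly lifts the composition $Q_{2n-1}\to\A^n\setminus 0\xrightarrow{\iota_1}V_{2n}$, and an analogous $\sigma_2$ handles the second factor. Now plug these splittings into the Mennicke--Newman procedure used to define $\widetilde{q_n}$: in the notation of the proof of Proposition \ref{mennnewlem}, we have $x_1+u_1-1=x_1-1=(x_1-1)u_n$, so the choice of multipliers is $\alpha_i=0$ and $\beta_i=\delta_{i,n}(x_1-1)$, which leaves the first row unchanged and produces the second row $(1-x_1,0,\ldots,0,1)$. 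A direct computation of the $a_i=X_i-X_1(X_i-Y_i)$ then yields the explicit output
\[\widetilde{q_n}\circ\sigma_1(x,y)\;=\;\bigl(x_1(1-x_1),\,(1-x_1)x_2,\,\ldots,\,(1-x_1)x_{n-1},\,(1-x_1)x_n+x_1\bigr)\;=\;(1-x_1)\cdot x+x_1\cdot e_n,\]
with $e_n=(0,\ldots,0,1)$. The calculation for $\sigma_2$ is entirely symmetric and gives the same formula in the variable of the second factor.

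It remains to connect this to the identity. Define $H:Q_{2n-1}\times\A^1\to\A^n\setminus 0$ by $H(x,y,t)=(1-tx_1)\cdot x+tx_1\cdot e_n$, which specialises to $x$ at $t=0$ and to the formula above at $t=1$. The only delicate point is to check that $H$ actually takes values in $\A^n\setminus 0$, i.e.\ that the interpolating row is unimodular over $k[Q_{2n-1}][t]$; since this ring is Jacobson, it suffices to verify non-vanishing at every field-valued point. If the specialisation of $1-tx_1$ is non-zero, the first $n-1$ entries force $x_j=0$ for $j<n$, whence $x_ny_n=1$ and in particular $x_n\neq 0$; together with $x_1=0$ this makes the last entry equal to $x_n\neq 0$. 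If instead $1-tx_1$ specialises to $0$, the last entry specialises to $tx_1=1$. Either way the row is non-zero, so $H$ is a well-defined naive $\A^1$-homotopy between the projection $Q_{2n-1}\to\A^n\setminus 0$ and $\widetilde{q_n}\circ\sigma_1$, and a symmetric argument works for $\sigma_2$. Passing to $\mathcal{H}(k)$ via the $\A^1$-weak equivalence $Q_{2n-1}\to\A^n\setminus 0$ gives the claim on each factor.

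The main obstacle is Step 2, namely arranging the lift so that the Mennicke--Newman procedure yields a row tractable enough to admit an explicit $\A^1$-homotopy to the identity; the choice of splittings forcing $\alpha_i=0$ and $\beta$ concentrated in a single coordinate is what makes this possible. A secondary subtlety is unimodularity of the interpolated row, which is not obvious from any naive splitting but is easily settled by the specialisation argument above.
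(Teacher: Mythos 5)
Your proof is correct. You and the paper compute the same output of the Mennicke--Newman procedure applied to $(x_1,\ldots,x_n)$ and $(0,\ldots,0,1)$, namely $v=(x_1(1-x_1),x_2(1-x_1),\ldots,x_{n-1}(1-x_1),x_n(1-x_1)+x_1)$ (the paper writes the last entry equivalently as $x_n+(1-x_n)x_1$), but you diverge in the concluding step. The paper shows $v$ lies in the $\Er_n$-orbit of $(x_1,\ldots,x_n)$ by invoking \cite[Lemma 3.5 (iv)]{vandK} with $a=x_n+(1-x_n)x_1$, $r=1-x_1$, followed by further elementary column operations. You instead exhibit the one-line interpolation $H(x,y,t)=(1-tx_1)\cdot x+tx_1\cdot e_n$ and verify its unimodularity pointwise; note this is \emph{not} an elementary-operation homotopy (the first $n-1$ entries are scaled by $1-tx_1$), so it establishes naive $\A^1$-homotopy directly rather than the stronger $\Er_n$-equivalence the paper proves. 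Both conclusions suffice for the lemma; your route is more self-contained (no appeal to the van der Kallen lemma), while the paper's route keeps everything within the elementary-matrix calculus that the rest of Section~\ref{grouphom} manipulates. You are also more explicit than the paper about constructing the lift $\sigma_1:Q_{2n-1}\to\widetilde{V_{2n}}$ with tautological splittings before running Mennicke--Newman, which makes the specialization from the universal ring to the wedge summand transparent. One small remark: the appeal to $k[Q_{2n-1}][t]$ being Jacobson is unnecessary; an ideal in any commutative ring is proper if and only if it is contained in a maximal ideal, so non-vanishing at all residue fields already suffices.
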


In other words, the diagram
\begin{center}
\begin{tikzcd}
(\A^n\setminus 0)\vee(\A^n\setminus 0) \arrow[rd,"\nabla"] \arrow[r] & V_{2n} \arrow[d,"q_n"] \\
                                                                     & \A^n\setminus 0
\end{tikzcd}
\end{center}
commutes in $\mathcal{H}(k)$.

\begin{proof}
By definition of the map $q_n:V_{2n}\rightarrow\A^n\setminus 0$, we have to show that if we apply the procedure of the proof of the Mennicke--Newman lemma to the unimodular rows $(x_1,\ldots,x_n)\in\Um_n(k[\widetilde{V_{2n}}])$ and $(0,\ldots,0,1)$, we obtain a unimodular row in the orbit of $(x_1,\ldots,x_n)$ under the action of $\Er_n(k[\widetilde{V_{2n}}])$ (since this procedure is clearly symmetric in the two considered rows). Following the steps of the proof of Proposition \ref{mennnewlem}, with $\alpha_i=0$, $\beta_i=0$ for $i<n$ and $\beta_n=x_1-1$ for the second step (there is nothing to do in the first step), we obtain the unimodular row \[v=(x_1(1-x_1),\ldots,x_{n-1}(1-x_1),x_n+(1-x_n)x_1).\] The row $(x_n+(1-x_n)x_1,1-x_1)$ of length $2$ is unimodular, since \[(1-x_1)(1-x_n)+x_n+(1-x_n)x_1=1.\] It then follows from \cite[Lemma 3.5 (iv)]{vandK} (with $a=x_n+(1-x_n)x_1$ and $r=1-x_1$), whose proof relies entirely on elementary operations, that $v$ is in the orbit of the row \[(x_1,x_2,x_3(1-x_1),\ldots,x_n+(1-x_n)x_1).\] By elementary operations again, we may now replace $x_i-x_1x_i$ by $x_i$ when $3\leqslant i\leqslant n-1$ and $x_n+(1-x_n)x_1$ by $x_n$, thus proving the lemma.
\end{proof}

\begin{rema}
The above result essentially sums up to the observation that the orbit $[0,\ldots,0,1]$ is the neutral element for van der Kallen's law.
\end{rema}

We have thus constructed a scheme-theoretic model for the fold map $\nabla:(\A^n\setminus 0)\vee(\A^n\setminus 0)\rightarrow\A^n\setminus 0$. We can now prove the main result of this subsection.

\begin{prop}
Let $X$ be a smooth affine $k$-scheme of Krull dimension $d$ and let $n\geqslant 4$ be an integer such that $d\leqslant 2n-4$. Then the natural map $\Psi:\Um_n(k[X])/\Er_n(k[X])\rightarrow[X,\A^n\setminus 0]_{\A^1}$ is a group homomorphism.
\end{prop}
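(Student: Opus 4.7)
The plan is to realise both the van der Kallen sum $[u]+[v]$ and the Borsuk sum $\Psi([u])+\Psi([v])$ as $q_n\circ\ell$ for a common map $\ell:X\rightarrow V_{2n}$, where $q_n:V_{2n}\rightarrow\A^n\setminus 0$ is the scheme-theoretic model of the fold map constructed in Lemma \ref{modelfold}. The scheme $V_{2n}$ was built precisely to bridge the gap between the homotopy-theoretic definition of the Borsuk law (which uses an abstract wedge lift) and the algebraic definition of van der Kallen's law (which proceeds by the Mennicke--Newman recipe).

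First, starting from unimodular rows $u=(u_1,\ldots,u_n)$ and $v=(v_1,\ldots,v_n)$ in $\Um_n(k[X])$, I would apply step (1) of the proof of Proposition \ref{mennnewlem} to modify $u$ and $v$ by elementary operations---which does not change the classes $[u]$, $[v]$ in $\Um_n/\Er_n$---so that the row $(u_2,\ldots,u_n,v_2,\ldots,v_n)$ becomes unimodular of length $2n-2$; this uses that $\sdim(k[X])\leqslant d\leqslant 2n-4$. Since $X$ is affine, splittings for $u$, for $v$ and for this $(2n-2)$-row all exist (Definition \ref{splitting}), and assembling them with the data of $u$ and $v$ produces a scheme morphism $X\rightarrow\widetilde{V_{2n}}$; composing with the Jouanolou projection $\widetilde{V_{2n}}\rightarrow V_{2n}$ yields the desired lift $\ell:X\rightarrow V_{2n}$ of $(u,v):X\rightarrow(\A^n\setminus 0)\times(\A^n\setminus 0)$.

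Second, I would verify that $q_n\circ\ell$ represents the van der Kallen sum $[u]+[v]$. This is essentially by construction of $\widetilde{q_n}:\widetilde{V_{2n}}\rightarrow\A^n\setminus 0$, which was defined by carrying out on the universal rows over $k[\widetilde{V_{2n}}]$ precisely the full Mennicke--Newman procedure of Proposition \ref{mennnewlem}; pulling back along our explicit $X\rightarrow\widetilde{V_{2n}}$, the resulting unimodular row is the one Mennicke--Newman produces from the pair $(u,v)$ over $X$, and its class in $\Um_n(k[X])/\Er_n(k[X])$ is the van der Kallen sum $[u]+[v]$ by definition. Hence $q_n\circ\ell=\Psi([u]+[v])$ in $[X,\A^n\setminus 0]_{\A^1}$.

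Third, I would identify $q_n\circ\ell$ with the Borsuk sum. By Proposition \ref{bijgrpebor} applied to $\mathscr{X}=\A^n\setminus 0$ (which is $\A^1$-$(n-2)$-connected and defined over $\mathbb{Z}$, and with $\cd_{\A^1}(X)\leqslant d\leqslant 2n-4$), there is a unique lift $\overline{(u,v)}:X\rightarrow(\A^n\setminus 0)\vee(\A^n\setminus 0)$ of $(u,v)$ in $\mathcal{H}(k)$, and by definition of the motivic Borsuk law $\Psi([u])+\Psi([v])=\nabla\circ\overline{(u,v)}$. Composing $\overline{(u,v)}$ with the wedge inclusion $(\A^n\setminus 0)\vee(\A^n\setminus 0)\rightarrow V_{2n}$ yields a second lift of $(u,v)$ through $V_{2n}$; by Lemma \ref{bijincl}, lifts to $V_{2n}$ are unique up to $\A^1$-homotopy, so this second lift agrees with $\ell$ in $[X,V_{2n}]_{\A^1}$. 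Finally, Lemma \ref{modelfold} gives the homotopy $q_n\circ(\text{wedge incl.})\simeq\nabla$, whence $\Psi([u])+\Psi([v])=\nabla\circ\overline{(u,v)}=q_n\circ\ell=\Psi([u]+[v])$ in $[X,\A^n\setminus 0]_{\A^1}$, as required. The main obstacle is conceptual rather than technical: one must juggle three lifts of $(u,v)$---the explicit scheme-theoretic lift $\ell$ built from splittings, the abstract wedge lift $\overline{(u,v)}$, and its pushforward along the wedge inclusion into $V_{2n}$---and the argument rests entirely on the bijectivity in Lemma \ref{bijincl} (itself a consequence of the motivic Blakers--Massey theorem) forcing all three to coincide in $\mathcal{H}(k)$.
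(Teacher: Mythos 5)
Your proposal is correct and follows essentially the same route as the paper: use step (1) of Mennicke--Newman to lift $(u,v)$ to a scheme map into $V_{2n}$ (via the Jouanolou device $\widetilde{V_{2n}}$), identify $q_n$ post-composed with this lift with the van der Kallen sum by construction of $\widetilde{q_n}$, and then match it against the Borsuk sum by forcing the abstract wedge lift and the scheme-theoretic lift to agree via the bijection of Lemma \ref{bijincl} together with Lemma \ref{modelfold}. The only stylistic difference is that you build the lift to $\widetilde{V_{2n}}$ directly from chosen splittings, whereas the paper first lands in $V_{2n}$ and then invokes Corollary \ref{factbyjouan}; these are equivalent.
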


\begin{proof}
Recall that the group structure on the source (respectively target) of $\Psi$ is given by the Mennicke--Newman lemma (respectively by Proposition \ref{bijgrpebor}). Consider two unimodular rows $u$ and $v$ of length $n$ with values in $k[X]$; we also denote by $u$ and $v$ the morphisms of $k$-schemes from $X$ to $\A^n\setminus 0$ that they induce. They define a morphism of $k$-schemes \[(u\times v)\circ\Delta_X:X\xrightarrow{\Delta_X} X\times X\xrightarrow{u\times v}(\A^n\setminus 0)\times(\A^n\setminus 0).\] Since $X$ is of dimension less than or equal to $2n-4$, step (1) of the proof of the Mennicke--Newman lemma shows that, up to naive homotopy defined by elementary matrices, the morphism $(u\times v)\circ\Delta_X$ factors through $V_{2n}$ into a scheme-theoretic morphism $\varphi:X\rightarrow V_{2n}$. By Corollary \ref{factbyjouan}, $\varphi$ also factors through $\widetilde{V_{2n}}$ into a scheme-theoretic morphism $\widetilde{\varphi}:X\rightarrow\widetilde{V_{2n}}$. By definition of the law on the source of $\Psi$ and by definition of $\widetilde{q_n}$, the morphism $\widetilde{q_n}\circ\widetilde{\varphi}$ represents $\Psi(u+v)$; on the other hand, by definition of $q_n$, $\widetilde{q_n}\circ\widetilde{\varphi}$ is equal to $q_n\circ\varphi$ in $\mathcal{H}(k)$. The following diagram in $\mathcal{H}(k)$, whose solid arrows indicate morphisms coming from $k$-scheme morphisms, thus commutes.

\begin{center}
\begin{tikzcd}
X \arrow[r] \arrow[rd,"\protect{\varphi}"] \arrow[rdd,swap,"\protect{\widetilde{\varphi}}"]  & (\A^n\setminus 0)\times(\A^n\setminus 0)                        & \A^n\setminus 0 \\
                                                                                             & V_{2n} \arrow[u,hook] \arrow[ru,dotted,"q_n"]                   &                 \\
                                                                                             & \widetilde{V_{2n}} \arrow[u] \arrow[ruu,swap,"\widetilde{q_n}"] &
\end{tikzcd}
\end{center}

On the other hand, we have already observed the following sequence:
\begin{center}
\begin{tikzcd}
(\A^n\setminus 0)\vee(\A^n\setminus 0) \arrow[r] \arrow[rd,swap,"\nabla"] & V_{2n} \arrow[r] \arrow[d,"q_n"] & (\A^n\setminus 0)\times(\A^n\setminus 0)\\
                                                                          & \A^n\setminus 0            &
\end{tikzcd}
\end{center}
It induces bijections \[[X,(\A^n\setminus 0)\vee(\A^n\setminus 0)]_{\A^1}\rightarrow[X,V_{2n}]_{\A^1}\rightarrow[X,(\A^n\setminus 0)\times(\A^n\setminus 0)]_{\A^1}.\] Indeed:
\begin{itemize}
	\item the map $[X,V_{2n}]_{\A^1}\rightarrow[X,(\A^n\setminus 0)\times(\A^n\setminus 0)]_{\A^1}$ is a bijection according to Lemma \ref{bijincl} as $X$ is of Krull dimension $d\leqslant 2n-4$ and the $\A^1$-cohomological dimension is bounded above by the Krull dimension;
	\item the map $[X,(\A^n\setminus 0)\vee(\A^n\setminus 0)]_{\A^1}\rightarrow[X,(\A^n\setminus 0)\times(\A^n\setminus 0)]_{\A^1}$ is a bijection by Proposition \ref{bijgrpebor}, again because $\A^1$-cohomological dimension is bounded above by Krull dimension.
\end{itemize}
According to the second item above, there exists a unique morphism $\overline{(u,v)}:X\rightarrow(\A^n\setminus 0)\vee(\A^n\setminus 0)$ in $\mathcal{H}(k)$ corresponding to $(u\times v)\circ\Delta_X:X\rightarrow(\A^n\times 0)\times(\A^n\setminus 0)$. By definition of the group law on $[X,\A^n\setminus 0]_{\A^1}$, we thus have $\Psi(u)+\Psi(v)=\nabla\circ\overline{(u,v)}$. 

Now, let $\psi$ be the image of $\overline{(u,v)}$ in $[X,V_{2n}]_{\A^1}$. Then, the images of $\psi$ and $\varphi$ in $[X,(\A^n\setminus 0)\times(\A^n\setminus 0)]_{\A^1}$ are equal to $(u\times v)\circ\Delta_X\in[X,(\A^n\setminus 0)\times(\A^n\setminus 0)]_{\A^1}$; since the map $[X,V_{2n}]_{\A^1}\rightarrow[X,(\A^n\setminus 0)\times(\A^n\setminus 0)]_{\A^1}$ is a bijection, it follows that $\psi=\varphi$ in $[X,V_{2n}]_{\A^1}$. Notice also that by Lemma \ref{modelfold}, the equality $q_n\circ\psi=\nabla\circ\overline{(u,v)}$ holds in $[X,\A^n\setminus 0]_{\A^1}$. Thus \[\Psi(u)+\Psi(v)=\nabla\circ\overline{(u,v)}=q_n\circ\psi=q_n\circ\varphi=\Psi(u+v)\] in $[X,\A^n\setminus 0]_{\A^1}$ and $\Psi$ is indeed a group homomorphism.
\end{proof}

This finishes the proof of Theorem \ref{groupiso}.

%

\section{Comparing cohomotopy theories}\label{compcohothe}

The letter $k$ once again denotes an arbitrary field. In this section, we compare the cohomotopy theory $[\text{–},Q_{2n+1}]_{\A^1}$ represented by the quadric $Q_{2n+1}$ to the cohomotopy theory $[\text{–},Q_{2n}]_{\A^1}$ represented by the quadric $Q_{2n}$ and studied in \cite{AF}. We provide morphisms of schemes $Q_{2n+1}\rightarrow Q_{2n}$ and $Q_{2n}\times\mathbb{G}_m\rightarrow Q_{2n+1}$ to allow for these comparisons.

\subsection{The morphism $\eta_n:Q_{2n+1}\rightarrow Q_{2n}$}\label{mapeta}

We fix an integer $n$ with $n\geqslant 2$. There is an isomorphism $[Q_{2n+1},Q_{2n}]_{\A^1}\simeq\Kr_{-1}^\MW(k)$ of $\Kr_0^{\MW}(k)$-modules by \cite[Corollary 6.43]{Morel}: we will construct a morphism $\eta_n:Q_{2n+1}\rightarrow Q_{2n}$ which corresponds to the generator $\eta$ of Milnor--Witt $\Kr$-theory under this isomorphism.

Let us start with a lemma about unimodular rows that will streamline a few proofs in this subsection.

\begin{lem}\label{genidealgenbyunimod}
Let $A$ be a $k$-algebra, let $(a_1,\ldots,a_{n+1})$ be a unimodular row with coefficients in $A$ and let $(b_1,\ldots,b_{n+1})$ be a splitting of $(a_1,\ldots,a_{n+1})$. Then the ideal $I=\langle a_1,\ldots,a_n\rangle$ is generated by $(a_1,\ldots,a_{n-1},a_na_{n+1},s)$ where $s=a_1b_1+\cdots+a_nb_n$; moreover denoting by $\overline{a}$ the image of $a\in I$ in $I/I^2$, the map $\omega_I:(A/I)^n\rightarrow I/I^2$ of $A/I$-modules induced by $(\overline{a_1},\ldots,\overline{a_{n-1}},\overline{a_na_{n+1}})$ is surjective. Furthermore, the tuple $(a_1,\ldots,a_{n-1},a_na_{n+1},b_1a_{n+1}b_{n+1},\ldots,b_{n-1}a_{n+1}b_{n+1},b_nb_{n+1},s)$ belongs to $Q_{2n}(A)$.
\end{lem}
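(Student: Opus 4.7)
The plan is to exploit the single identity $a_{n+1}b_{n+1}=1-s$, which follows immediately from the definitions of $s$ and of a splitting. This identity says that modulo any ideal containing $s$, the elements $a_{n+1}$ and $b_{n+1}$ become mutually inverse, and all three claims will drop out of this observation.

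For the first claim, set $J=\langle a_1,\ldots,a_{n-1},a_na_{n+1},s\rangle$. The inclusion $J\subseteq I$ is immediate since $a_na_{n+1}$ and $s$ clearly lie in $I$. For the reverse inclusion, I would work modulo $J$: the relation $s\equiv 0$ combined with $a_{n+1}b_{n+1}=1-s$ yields $a_{n+1}b_{n+1}\equiv 1\pmod{J}$, so $a_n\equiv a_n\cdot a_{n+1}b_{n+1}=b_{n+1}(a_na_{n+1})\equiv 0\pmod{J}$, giving $a_n\in J$.

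For the surjectivity of $\omega_I$, since $I/I^2$ is generated as an $A/I$-module by $\overline{a_1},\ldots,\overline{a_n}$, it is enough to exhibit $\overline{a_n}$ in the image. I would verify that $\overline{b_{n+1}}\cdot\overline{a_na_{n+1}}=\overline{a_n}$ in $I/I^2$: the difference $b_{n+1}a_na_{n+1}-a_n=a_n(a_{n+1}b_{n+1}-1)=-a_n s$ lies in $I^2$ because both $a_n$ and $s$ belong to $I$; the claim follows since $\overline{a_na_{n+1}}$ is already in the image of $\omega_I$.

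The third claim is then a direct computation. With $x=(a_1,\ldots,a_{n-1},a_na_{n+1})$, $y=(b_1a_{n+1}b_{n+1},\ldots,b_{n-1}a_{n+1}b_{n+1},b_nb_{n+1})$ and $z=s$, factoring $a_{n+1}b_{n+1}$ out of $\sum x_iy_i$ gives
\[
\sum_{i=1}^n x_iy_i=a_{n+1}b_{n+1}\sum_{i=1}^n a_ib_i=(1-s)\cdot s=z(1-z),
\]
as needed. There is no substantive obstacle anywhere; the main thing to notice is simply that $s$ measures precisely the failure of $a_{n+1}b_{n+1}$ to be $1$, which is what ties all three parts of the statement together.
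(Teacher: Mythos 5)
Your proof is correct and follows essentially the same route as the paper's: you establish $J=I$ by observing that $a_{n+1}$ becomes invertible mod $J$, deduce surjectivity of $\omega_I$ from invertibility of $a_{n+1}$ mod $I$, and verify the $Q_{2n}$-relation by factoring $a_{n+1}b_{n+1}=1-s$ out of $\sum x_iy_i$. The only cosmetic difference is that you spell out the surjectivity argument via the explicit relation $b_{n+1}a_na_{n+1}-a_n=-a_ns\in I^2$ rather than appealing directly to multiplication by the unit $\overline{a_{n+1}}\in A/I$.
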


\begin{proof}
Let $J=\langle a_1,\ldots,a_{n-1},a_na_{n+1},s\rangle$; then clearly, $J\subseteq I$; the reverse inclusion is true since $a_{n+1}$ is invertible mod $\langle s\rangle$ (in fact, $a_{n+1}b_{n+1}=1$ mod $\langle s\rangle$), hence mod $J$, so that the fact that $a_na_{n+1}$ belongs to $J$ implies that $a_n$ belongs to $J$: thus, $I$ is equal to $J$. The element $a_{n+1}$ is invertible mod $I$: since $(\overline{a_1},\ldots,\overline{a_n})$ obviously generates $I/I^2$ as an $A/I$-module, it is then also the case of $(\overline{a_1},\ldots,\overline{a_{n-1}},\overline{a_na_{n+1}})$; in other words, $\omega_I$ is surjective as announced. The final claim follows from the following computation: \[(1-s)s=a_{n+1}b_{n+1}\sum_{i=1}^n a_ib_i=\sum_{i=1}^{n-1}a_i(b_ia_{n+1}b_{n+1})+(a_na_{n+1})(b_nb_{n+1})\] which completes the proof.
\end{proof}

By \cite[Theorem 3.1.2]{AF} (and the paragraph preceding \cite[Lemma 2.1.1]{AF}), up to (naive) $\A^1$-homotopy, giving a map from $Q_{2n+1}$ to $Q_{2n}$ amounts to giving an ideal $I$ of $k[Q_{2n+1}]$ and a \emph{local orientation}, that is, a surjective homomorphism $(k[Q_{2n+1}]/I)^n\rightarrow I/I^2$ of $k[Q_{2n+1}]$-modules. We choose $I=\langle x_1,\ldots,x_n\rangle$. Lemma \ref{genidealgenbyunimod} ensures that we may choose $\omega_I$ to be induced by $(\overline{x_1},\ldots,\overline{x_{n-1}},\overline{x_nx_{n+1}})$. It also implies that  we have the equality $I=\langle x_1,\ldots,x_{n-1},x_nx_{n+1},z\rangle$, where the element $z=x_1y_1+\cdots+x_ny_n$ of $I$ satisfies that the tuple 
\begin{equation}\label{tuple}
(x_1,\ldots,x_{n-1},x_nx_{n+1},y_1x_{n+1}y_{n+1},\ldots,y_{n-1}x_{n+1}y_{n+1},y_ny_{n+1},z)
\end{equation}
belongs to $Q_{2n}(k[Q_{2n+1}])$. The map $\eta_n:Q_{2n+1}\rightarrow Q_{2n}$ induced by these data according to \cite[Theorem 3.1.2]{AF} is then precisely the one induced by the tuple in Equation \ref{tuple}; in fact, the image of $\eta_n$ in $\Hom_{\A^1}(Q_{2n+1},Q_{2n})$, \emph{a fortiori} in $[Q_{2n+1},Q_{2n}]_{\A^1}$, is uniquely determined by $(I,\omega_I)$.

The main result of this subsection is then the following proposition:

\begin{theo}\label{genofkmin1}
The morphism $\eta_n:Q_{2n+1}\rightarrow Q_{2n}$ generates $[Q_{2n+1},Q_{2n}]_{\A^1}$ as a $\mathrm{K}_0^\MW(k)$-module.
\end{theo}

As in Section \ref{coh}, let $Y$ be the closed subscheme of $Q_{2n}$ defined by the ideal $I_Y=\langle x_1,\ldots,x_n,z\rangle$ of $k[Q_{2n}]$, so that $k[Y]=k[Q_{2n}]/I_Y$.

\begin{lem}\label{regularembedding}
The closed immersion $\kappa:Y\hookrightarrow Q_{2n}$ is a regular embedding.
\end{lem}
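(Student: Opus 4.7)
The plan is to reduce the verification of regularity of the immersion $\kappa: Y \hookrightarrow Q_{2n}$ to an explicit local computation that uses the defining equation of $Q_{2n}$ to eliminate the redundant generator $z$ of $I_Y$ near $Y$.

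First, I would observe that every point $p \in Y$ satisfies $z(p) = 0$, so $1 - z$ is a unit in the local ring $\mathscr{O}_{Q_{2n}, p}$. Consequently, the Zariski open subscheme $U \subseteq Q_{2n}$ defined by the invertibility of $1 - z$ is an open neighbourhood of $Y$ in $Q_{2n}$, and to prove that $\kappa$ is a regular embedding it suffices to prove that its restriction $Y \hookrightarrow U$ is. The advantage of working over $U$ is that on $U$, the defining relation $\sum_{i=1}^n x_iy_i = z(1-z)$ may be rewritten as
\[
z = (1-z)^{-1}\sum_{i=1}^n x_iy_i,
\]
which shows that $z$ belongs to the ideal $\langle x_1, \ldots, x_n\rangle$ of $k[U]$. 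Hence $I_Y \cdot k[U] = \langle x_1, \ldots, x_n\rangle$, so that over $U$ the conormal ideal is generated by exactly $n$ elements.

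Next, I would argue that $(x_1, \ldots, x_n)$ forms a regular sequence in $\mathscr{O}_{Q_{2n}, p}$ for every $p \in Y$. Since $Q_{2n}$ is smooth of dimension $2n$ over $k$, the local ring $\mathscr{O}_{Q_{2n}, p}$ is regular of dimension $2n$; on the other hand $Y \simeq \A^n$, so $\mathscr{O}_{Y, p} = \mathscr{O}_{Q_{2n}, p}/\langle x_1, \ldots, x_n\rangle$ is regular of dimension $n$. A quotient of a regular local ring of dimension $2n$ by $n$ elements which is itself regular of dimension $n$ is necessarily the quotient by a regular sequence — equivalently, the $n$ elements map to part of a regular system of parameters, which in a regular local ring is automatically a regular sequence. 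This can also be read off the computation in the proof of Lemma~\ref{explicitgenchwit}, where $(\overline{x_1}, \ldots, \overline{x_n})$ was shown to be a basis of $\m_y/\m_y^2$ for $y$ the generic point of $Y$, the same argument applying at every point of $Y$.

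Combining these two steps, the conormal sheaf of $\kappa$ is locally free of rank $n$ and $I_Y$ is locally generated by a regular sequence, which is precisely the condition for $\kappa$ to be a regular embedding. The main conceptual point — and the only place where the specific geometry of $Q_{2n}$ enters — is the elimination of $z$ from the generators of $I_Y$ via the quadric equation; everything else is a routine invocation of the general principle that a closed immersion of smooth $k$-schemes is a regular immersion.
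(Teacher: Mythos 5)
Your argument is correct, but it takes a more explicit and laborious route than the paper. The paper's proof is a one-liner: $Y$ and $Q_{2n}$ are smooth over $k$, hence regular Noetherian schemes, and any closed immersion between regular Noetherian schemes is automatically a regular closed immersion (Stacks Project Tag 0E9J). You instead manufacture the regular sequence by hand: you restrict to the open locus where $1-z$ is invertible, use the quadric equation to eliminate $z$ and reduce the generators of $I_Y$ to $(x_1,\ldots,x_n)$, and then check that this is a regular sequence at each point of $Y$ by comparing the dimensions of the (regular) local rings of $Q_{2n}$ and of $Y$ and appealing to the fact that a sequence extending to a regular system of parameters in a regular local ring is automatically regular. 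That dimension comparison is sound, and the explicit local presentation of $I_Y$ you extract is genuinely useful elsewhere in the paper --- it is precisely what underlies Lemma~\ref{orientation} and the Tor-independence argument of Lemma~\ref{torindependent} --- but for the present lemma it is more than is required. Your own closing sentence essentially concedes the point: once you have noted that both $Y$ and $Q_{2n}$ are smooth over $k$, the general principle you allude to finishes the proof immediately, and that is exactly the shortcut the paper takes.
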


\begin{proof}
Since $Q_{2n}$ and $Y$ are smooth $k$-schemes, hence regular schemes, this follows from \cite[\href{https://stacks.math.columbia.edu/tag/0E9J}{Tag 0E9J}]{stacks-project}.
\end{proof}

We spell out the proof of the two following easy results.

\begin{lem}\label{kernelofmorbetproj}
Let $R$ be a non-zero ring and let $0\rightarrow M\rightarrow P\rightarrow P'\rightarrow 0$ be an exact sequence of $R$-modules. Assume that $P$ is projective of rank $r$ and that $P'$ is projective of rank $s\leqslant r$. Then $M$ is projective of rank $r-s$.
\end{lem}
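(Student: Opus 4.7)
The statement is a standard piece of commutative algebra and the plan is correspondingly short. The key observation is that the hypothesis that $P'$ be projective makes the short exact sequence split.

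First I would argue that the exact sequence $0\rightarrow M\rightarrow P\rightarrow P'\rightarrow 0$ admits a splitting. Indeed, because $P'$ is projective, the identity $\Id_{P'}$ lifts through the surjection $P\rightarrow P'$ to a section $P'\rightarrow P$; this produces an isomorphism $P\cong M\oplus P'$ of $R$-modules. In particular, $M$ is a direct summand of the projective $R$-module $P$, hence $M$ is itself projective. This takes care of the projectivity part of the conclusion.

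For the rank, I would simply localize at every prime ideal $\mathfrak{p}$ of $R$ and use that localization is exact and preserves direct sums. We get a split exact sequence of free $R_{\mathfrak{p}}$-modules
\[
0\rightarrow M_{\mathfrak{p}}\rightarrow P_{\mathfrak{p}}\rightarrow P'_{\mathfrak{p}}\rightarrow 0
\]
with $P_{\mathfrak{p}}$ free of rank $r$ and $P'_{\mathfrak{p}}$ free of rank $s$, so that $M_{\mathfrak{p}}$ is free of rank $r-s$ by the additivity of the rank of free modules over the local ring $R_{\mathfrak{p}}$ in split short exact sequences. Since this holds at every prime, $M$ has constant rank $r-s$.

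There is no real obstacle in this proof; the only point to be careful about is the convention on what "rank $r$" means for a non-connected base, but reducing to the local case via the above direct-sum decomposition settles the matter uniformly. No machinery beyond the splitting lemma for projectives and the behaviour of rank under localization is required.
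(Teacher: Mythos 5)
Your proof is correct and matches the paper's argument step for step: both use the projectivity of $P'$ to split the sequence, conclude that $M$ is a direct summand of $P$ and hence projective, and then localize at each prime to compute the rank by additivity over the local ring. No meaningful difference in method or level of detail.
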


\begin{proof}
Since $P'$ is projective, the morphism $P\rightarrow P'$ splits, thus we have an isomorphism $P'\oplus M\simeq P$. In particular, $M$ is a summand of a projective module and hence is projective. Let then $\p$ be a prime ideal of $R$; $M_\p$, $P_\p$ and $P_\p'$ are then free $R_\p$-modules since $R_\p$ is local: denoting by $r(\p)$ the rank of $M_\p$ (which is well-defined since $R_\p$ is non-zero), the commutativity of $R_\p$ implies that $r(\p)+s=r$, so that $r(\p)=r-s$ as required.
\end{proof}

\begin{cor}\label{surjisoprojmod}
If $R$ is a non-zero ring and $\varphi:P\rightarrow P'$ is an epimorphism of $R$-modules with $P$ (respectively $P'$) projective of rank $r$, then $\varphi$ is an isomorphism.
\end{cor}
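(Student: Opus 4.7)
The plan is to deduce this immediately from the preceding Lemma~\ref{kernelofmorbetproj}. Since $P'$ is projective and $\varphi$ is surjective, the short exact sequence
\[0\rightarrow\ker\varphi\rightarrow P\xrightarrow{\varphi} P'\rightarrow 0\]
splits; in particular $\ker\varphi$ is a direct summand of $P$.

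Applying Lemma~\ref{kernelofmorbetproj} with $M=\ker\varphi$ (noting that the hypothesis $s\leqslant r$ is satisfied, since here both ranks equal $r$), we conclude that $\ker\varphi$ is projective of rank $r-r=0$. To finish, I would argue that a module of rank $0$ is necessarily zero: for any prime $\mathfrak{p}$ of $R$, the localisation $(\ker\varphi)_\mathfrak{p}$ is a free $R_\mathfrak{p}$-module of rank $0$, hence is the zero module. Since a module vanishes if and only if all its localisations at prime ideals vanish, $\ker\varphi=0$ and $\varphi$ is an isomorphism.

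There is no real obstacle here; the work has already been done in Lemma~\ref{kernelofmorbetproj}. The only mild subtlety is the degenerate case $R=0$ flagged in the remark following that lemma, but in that case every module is zero and the statement is vacuous, so it causes no problem.
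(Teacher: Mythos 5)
Your proof is correct and follows the same route as the paper: apply Lemma~\ref{kernelofmorbetproj} to conclude that $\ker\varphi$ is projective of rank $0$, hence zero. The paper states this in one line with a footnote about the degenerate case $R=0$; you have simply spelled out the localisation argument explicitly.
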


\begin{proof}
By the previous lemma, the kernel of $\varphi$ is projective of rank $0$ and is thus the zero module.
\end{proof}

Let $\overline{\alpha}$ denote the class mod $I_Y^2$ of $\alpha\in I_Y$.

\begin{lem}\label{orientation}
The family $(\overline{x_1},\ldots,\overline{x_n})$ of elements of $I_Y/I_Y^2$ induces an isomorphism $\omega:k[Y]^n\rightarrow I_Y/I_Y^2$ of $k[Y]$-modules.
\end{lem}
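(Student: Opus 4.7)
The plan is to establish surjectivity of $\omega$ by explicit computation using the defining equation of $Q_{2n}$, and then to upgrade surjectivity to bijectivity using the fact that both source and target are projective $k[Y]$-modules of rank $n$.

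For surjectivity, I would first observe that by definition the generators $(\overline{x_1},\ldots,\overline{x_n},\overline{z})$ span $I_Y/I_Y^2$ as a $k[Y]$-module, so it suffices to show that $\overline{z}$ is a $k[Y]$-linear combination of the $\overline{x_i}$'s. This follows directly from the defining relation of $Q_{2n}$: we have $z - z^2 = \sum_{i=1}^n x_i y_i$ in $k[Q_{2n}]$, hence modulo $I_Y^2$ (in which $z^2$ trivially lies, as $z \in I_Y$), we obtain
\[
\overline{z} = \sum_{i=1}^n \overline{y_i}\,\overline{x_i}
\]
in $I_Y/I_Y^2$, where $\overline{y_i} \in k[Y] = k[Q_{2n}]/I_Y$ denotes the residue class of $y_i$. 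This exhibits $\overline{z}$ as a $k[Y]$-linear combination of $\overline{x_1},\ldots,\overline{x_n}$, so $\omega$ is surjective.

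For injectivity, I would invoke Lemma \ref{regularembedding}, which asserts that the closed immersion $\kappa:Y \hookrightarrow Q_{2n}$ is regular. Since $Q_{2n}$ is integral of dimension $2n$ and $Y \cong \A^n$ (via $(y_1,\ldots,y_n)$) is of dimension $n$, this regular embedding has codimension $n$, and therefore its conormal module $I_Y/I_Y^2$ is a locally free $k[Y]$-module of rank $n$. On the other hand, $k[Y]^n$ is obviously projective of rank $n$.

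The conclusion then follows at once from Corollary \ref{surjisoprojmod}: a surjection between projective modules of the same finite rank over a ring is automatically an isomorphism. The main conceptual point of the proof is the surjectivity step — everything else is a formal consequence of the regularity of the embedding and the preceding linear-algebra corollary — and the computation is short once one notices that the quadric relation gives $\overline{z}$ as an explicit linear combination of the $\overline{x_i}$'s modulo $I_Y^2$.
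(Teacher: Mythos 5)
Your proof is correct and follows the same overall strategy as the paper: surjectivity by reducing $\overline{z}$ to a $k[Y]$-linear combination of the $\overline{x_i}$'s via the quadric relation, then upgrading to an isomorphism via Corollary \ref{surjisoprojmod} once both sides are known to be projective of rank $n$. The only variation is in how you establish that $I_Y/I_Y^2$ has rank $n$: the paper runs the conormal exact sequence $0\rightarrow I_Y/I_Y^2\rightarrow\kappa^*\Omega_{Q_{2n}/k}\rightarrow\Omega_{Y/k}\rightarrow 0$ and invokes Lemma \ref{kernelofmorbetproj} to extract the rank by subtraction, whereas you directly cite the standard fact that the conormal module of a regular embedding of codimension $c$ is locally free of rank $c$. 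Both are valid; yours is a hair more direct, the paper's keeps the argument self-contained by leaning on the explicitly stated auxiliary lemmas.
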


\begin{proof}
\emph{The morphism $\omega:k[Y]^n\rightarrow I_Y/I_Y^2$ is surjective.} Indeed, since $z\in I_Y$, $z=x_1y_1+\cdots+x_ny_n$ mod $I_Y^2$ so that $I_Y/I_Y^2$ is generated by $(\overline{x_1},\ldots,\overline{x_n})$ as an $k[Y]$-module.

\emph{The morphism $\omega:k[Y]^n\rightarrow I_Y/I_Y^2$ is an isomorphism.} Since $k[Y]^n$ is a projective $k[Y]$-module of rank $n$, according to Corollary \ref{surjisoprojmod}, it suffices to show that $I_Y/I_Y^2$ is a projective $k[Y]$-module of rank $n$. Since $\kappa:Y\hookrightarrow Q_{2n}$ is a regular embedding, we have an exact sequence \[0\rightarrow I_Y/I_Y^2\rightarrow k[Y]\otimes_{k[Q_{2n}]}\Omega_{k[Q_{2n}]/k}\rightarrow\Omega_{k[Y]/k}\rightarrow 0\] by \cite[\href{https://stacks.math.columbia.edu/tag/063N}{Tag 063N}]{stacks-project} (and \cite[\href{https://stacks.math.columbia.edu/tag/063K}{Tag 063K}]{stacks-project}). The $k$-scheme $Q_{2n}$ is smooth of dimension $2n$ so that $\Omega_{k[Q_{2n}]/k}$ is a projective $k[Q_{2n}]$-module of rank $2n$ as noted in the introduction. Thus $\Omega_{k[Q_{2n}]/k}\otimes_{k[Q_{2n}]} k[Y]$ is a projective $k[Y]$-module of rank $2n$ by base change. Similarly, $Y$ is a smooth $k$-scheme of dimension $n$, thus $\Omega_{k[Y]/k}$ is a projective $k[Y]$-module of rank $n$. By Lemma \ref{kernelofmorbetproj}, $I_Y/I_Y^2$ is a projective $k[Y]$-module of rank $2n-n=n$ as required.
\end{proof}

\begin{cor}
The top exterior product $L_Y=\bigwedge^n(I_Y/I_Y^2)$ is a free $k[Y]$-module of rank $1$ and $\overline{x_1}\wedge\cdots\wedge\overline{x_n}$ is a basis of this $k[Y]$-module.
\end{cor}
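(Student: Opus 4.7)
The proof is an immediate consequence of Lemma \ref{orientation} together with the functoriality of the top exterior power. The plan is as follows.

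First, I would recall that $\omega : k[Y]^n \to I_Y/I_Y^2$ is an isomorphism of $k[Y]$-modules by Lemma \ref{orientation}, and that by construction $\omega$ sends the $i$-th vector $e_i$ of the standard basis of $k[Y]^n$ to $\overline{x_i}$. Applying the top exterior power functor $\bigwedge^n$, which preserves isomorphisms, I obtain an induced isomorphism
\[
\bigwedge\nolimits^n \omega : \bigwedge\nolimits^n k[Y]^n \xrightarrow{\;\cong\;} \bigwedge\nolimits^n(I_Y/I_Y^2) = L_Y
\]
of $k[Y]$-modules.

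Next, I would use the standard fact that for any commutative ring $R$, the top exterior power $\bigwedge^n R^n$ is a free $R$-module of rank $1$ with basis $e_1 \wedge \cdots \wedge e_n$. Taking $R = k[Y]$, transporting this basis across the isomorphism $\bigwedge^n \omega$ yields that $L_Y$ is a free $k[Y]$-module of rank $1$ with basis
\[
\bigl(\bigwedge\nolimits^n \omega\bigr)(e_1 \wedge \cdots \wedge e_n) = \omega(e_1) \wedge \cdots \wedge \omega(e_n) = \overline{x_1} \wedge \cdots \wedge \overline{x_n},
\]
which is exactly the claim. There is no substantive obstacle here: once Lemma \ref{orientation} is in hand, the corollary reduces to the functoriality of $\bigwedge^n$ applied to an isomorphism of locally free modules of the same rank.
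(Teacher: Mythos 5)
Your proof is correct and is exactly the implicit argument behind the paper's corollary (which is stated without proof immediately after Lemma \ref{orientation}): apply $\bigwedge^n$ to the isomorphism $\omega$ and transport the standard basis of $\bigwedge^n k[Y]^n$.
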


We will now be interested in the twisted Chow--Witt groups $\widetilde{\CH}^n(Q_{2n})$ (which is twisted by the trivial bundle) and $\widetilde{\CH}^0(Y,L_Y,n)$.

We view $y$ as a (in fact, the) point of codimension $0$ of $Y$ and we denote by $\p_y$ the maximal ideal of $\mathscr{O}_{Y,y}$---that is, $\p_y$ is the zero ideal. By definition, $D(\p_y/\p_y^2)^{-1}$ is the trivial graded line bundle $(k(y),0)$ over $k(y)$. Consider the element $\langle 1\rangle\otimes((\overline{x_1}\wedge\cdots\wedge\overline{x_n})\otimes 1)$ of $\Kr_0^\MW(k(y),L_Y(y),n)$.

\begin{lem}
The element $\langle 1\rangle\otimes((\overline{x_1}\wedge\cdots\wedge\overline{x_n})\otimes 1)$ of is a cycle in $C(Y,0,L_Y,n)^0$ hence it defines an element $\alpha_Y$ of $\widetilde{\CH}^0(Y,L_Y,n)$. The $\Kr_0^\MW(k)$-module $\widetilde{\CH}^0(Y,L_Y,n)$ is free of rank $1$ with generator $\alpha_Y$.
\end{lem}

\begin{proof}
The isomorphism $k[Y]^n\rightarrow I_Y/I_Y^n$ of Lemma \ref{orientation} induces an isomorphism of complexes from $C_\RS(Y,0)$ to $C_\RS(Y,0,L_Y,n)$ which sends $\langle 1\rangle\in\Kr_0^\MW(k(y))=\Kr_0^\MW(k(y),D(\p_y/\p_y^2)^{-1})\subseteq C_\RS(Y,0)$ to $\langle 1\rangle\otimes((\overline{x_1}\wedge\cdots\wedge\overline{x_n})\otimes 1)$. Thus it suffices to show that $\langle 1\rangle\in\Kr_0^\MW(k(y))$ is a cycle, that its class generates $\widetilde{\CH}^0(Y)$ as a $\Kr_0^\MW(k)$-module and that this module is free. 

The $k$-scheme $Y$ is flat hence the structure morphism $Y\rightarrow\Spec k$ induces a pullback $C_\RS(\Spec k,0)\rightarrow C_\RS(Y,0)$ which sends $\langle 1\rangle\in\Kr_0^\MW(k)\subseteq C_\RS(\Spec k,0)^0$ to $\langle 1\rangle\in\Kr_0^\MW(k(y))\subseteq C_\RS(Y,0)^0$ by definition (\cite[Subsection 2.4]{FaLCWG}). The Rost--Schmid complex $C_\RS(\Spec k,0)$ is of the form \[\cdots\rightarrow 0\rightarrow\Kr_0^\MW(k)\rightarrow 0\rightarrow\cdots\] where $\Kr_0^\MW(k)$ is in degree $0$. In particular, it is clear that $\langle 1\rangle$ is a cycle that generates the free $\Kr_0^\MW(k)$-module $\widetilde{\CH}^0(\Spec k)=\Hr^0(C_\RS(\Spec k,0))$---thus $\langle 1\rangle\in\Kr_0^\MW(k(y))$ is a cycle of $C_\RS(Y,0)$. On the other hand, since $Y$ is isomorphic to $\A^n$, the $\A^1$-invariance of Chow--Witt groups (\cite[Theorem 2.15]{FaLCWG}) implies that the morphism $\widetilde{\CH}^0(\Spec k)\rightarrow\widetilde{\CH}^0(Y)$ induced by the structure morphism $Y\rightarrow\Spec k$ is an isomorphism and it sends the class of $\langle 1\rangle$ in $\widetilde{\CH}^0(\Spec k)$ to the class of $\langle 1\rangle$ in $\widetilde{\CH}^0(Y)$. The previous computation of $\widetilde{\CH}^0(\Spec k)$ allows one to conclude.
\end{proof}

Finally, since $\kappa$ is a regular embedding given by $(x_1,\ldots,x_n,z)$, in particular a local complete intersection morphism, with codimension $n$ image, we have a push-forward map $\kappa_*:\widetilde{\CH}^0(Y,L_Y,n)\rightarrow\widetilde{\CH}^n(Q_{2n})$ between Chow--Witt groups (\cite[Subsection 2.2]{FaLCWG}) and, unwinding the definitions, we see that $\kappa_*(\alpha_Y)=\alpha_n$ where $\alpha_n$ is the explicit generator of Lemma \ref{lem:gen_explicite_z}. 

Now let $X$ be given by the following fibre product:
\begin{center}
\begin{tikzcd}
Q_{2n+1} \arrow[d,"\eta_n"] & X \arrow[d,"\zeta_n"] \arrow[l,"\lambda"] \\
Q_{2n}                      & Y \arrow[l,"\kappa"] 
\end{tikzcd}
\end{center}
Explicitly, $X$ is the closed subscheme of $Q_{2n+1}$ given by the ideal $I_X=\langle x_1,\ldots,x_{n-1},x_nx_{n+1},x_1y_1+\cdots+x_ny_n\rangle=I_YB$ of $Q_{2n+1}$.

\begin{lem}\label{isox}
The ideal $I_X$ of $k[Q_{2n+1}]$ is generated by $(x_1,\ldots,x_{n-1},x_n)$. Thus the $k$-algebra homomorphism $k[t,u_1,\ldots,u_n]\rightarrow k[X]$ sending $t$ to $x_{n+1}$ and $u_i$ to $y_i$ for all $i\leqslant n$ induces an isomorphism $k[t^{\pm 1},u_1,\ldots,u_n]\cong k[X]$, hence an isomorphism $X\cong\A^n\times\mathbb{G}_m$ of $k$-schemes. In particular, $X$ is integral, and smooth as a $k$-scheme.
\end{lem}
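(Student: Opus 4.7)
The plan is first to establish the presentation of $I_X$ and then to read off the description of $k[X]$ directly from the defining relation of $Q_{2n+1}$. The key observation is that in $k[Q_{2n+1}]$ we have $\sum_{i=1}^{n+1}x_iy_i=1$, so that $x_{n+1}y_{n+1}=1-z$ where $z=x_1y_1+\cdots+x_ny_n$.

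First, I would show the inclusion $\langle x_1,\ldots,x_n\rangle\subseteq I_X$. The containment of $x_i$ for $i<n$ is immediate by definition of $I_X$. For $x_n$, we use the trick above: since $z\in I_X$, we have $x_{n+1}y_{n+1}\equiv 1\pmod{I_X}$, so that
\[
x_n \;=\; x_n(x_{n+1}y_{n+1}+z) \;=\; (x_nx_{n+1})y_{n+1}+x_nz \;\in\; I_X,
\]
as both $x_nx_{n+1}$ and $z$ are explicit generators of $I_X$. The reverse inclusion is trivial: each generator $x_i$ ($i<n$), $x_nx_{n+1}$ and $z=\sum_{i=1}^nx_iy_i$ visibly belongs to $\langle x_1,\ldots,x_n\rangle$. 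Hence $I_X=\langle x_1,\ldots,x_n\rangle$.

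Next, I compute the quotient. Killing $x_1,\ldots,x_n$ inside $k[Q_{2n+1}]$ forces the relation $\sum_{i=1}^{n+1}x_iy_i=1$ to become $x_{n+1}y_{n+1}=1$, so that
\[
k[X]\;=\;k[Q_{2n+1}]/I_X\;\cong\;k[x_{n+1},y_1,\ldots,y_{n+1}]/\langle x_{n+1}y_{n+1}-1\rangle\;\cong\;k[y_1,\ldots,y_n]\bigl[x_{n+1}^{\pm 1}\bigr].
\]
The map sending $t\mapsto x_{n+1}$ and $u_i\mapsto y_i$ (for $1\leqslant i\leqslant n$) therefore extends to an isomorphism $k[t^{\pm 1},u_1,\ldots,u_n]\xrightarrow{\cong}k[X]$, yielding an isomorphism of $k$-schemes $X\cong\A^n\times\mathbb{G}_m$. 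Integrality and smoothness of $X$ are then immediate from the corresponding properties of $\A^n\times\mathbb{G}_m$.

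No step here is a genuine obstacle; the only subtlety is the trick with $x_{n+1}y_{n+1}\equiv 1\pmod{I_X}$ used to show $x_n\in I_X$, which is the whole content of the lemma. Once this is in place, the rest is a direct reading of the quotient presentation.
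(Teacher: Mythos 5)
Your proof is correct and follows essentially the same route as the paper: both reduce to the observation that $x_{n+1}y_{n+1}\equiv 1\pmod{I_X}$ (so $x_{n+1}$ is a unit mod $I_X$), deduce $x_n\in I_X$, and then read off the quotient as $k[x_{n+1}^{\pm1},y_1,\ldots,y_n]$. The only stylistic difference is that you write out the identity $x_n=(x_nx_{n+1})y_{n+1}+x_nz$ explicitly, whereas the paper cancels $x_{n+1}$ using its invertibility mod $I_X$; these are the same argument.
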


\begin{proof}
Let $z=x_1y_1+\cdots+x_ny_n\in I_X\subseteq k[Q_{2n+1}]$. Then $x_{n+1}y_{n+1}=1$ mod $\langle z\rangle$, hence $x_{n+1}y_{n+1}=1$ mod $I_X$ so that $x_{n+1}$ belongs to $k[X]^\times$. Since $x_nx_{n+1}=0$ mod $I_X$, it follows that $x_n$ belongs to $I_X$. Since $z$ belongs to $\langle x_1,\ldots,x_n\rangle$, we then immediately see that $I_X=\langle x_1,\ldots,x_n\rangle$. 

Now we see that \[k[X]\cong k[x,y]/\langle x_1y_1+\cdots+x_{n+1}y_{n+1}-1,x_1,\ldots,x_{n}\rangle\cong k[x_{n+1},y]/\langle x_{n+1}y_{n+1}-1\rangle\cong k[x_{n+1}^{\pm 1},y_1,\ldots,y_{n}]\] where we denote by $k[x,y]$ (respectively $k[x_{n+1},y]$) the polynomial ring $k[x_1,\ldots,x_{n+1},y_1,\ldots,y_{n+1}]$ (respectively $k[x_{n+1},y_1,\ldots,y_{n+1}]$). This is the required isomorphism. 
\end{proof}

The isomorphism from Lemma \ref{isox} also implies that $\dim X=n+1$. We may then compute the codimension of $X$ as an irreducible closed subset of $Q_{2n+1}$: since $Q_{2n+1}$ is an integral $k$-scheme, we see that \[\codim X=\dim Q_{2n+1}-\dim X=2n+1-(n+1)=n.\]

\begin{rema}
Modulo the isomorphism $X\cong\mathbb{G}_m\times\A^n$ from Lemma \ref{isox} and the isomorphism $\alpha:Y\xrightarrow{\cong}\A^n$ from above, the morphism $\zeta=\zeta_n:X\rightarrow Y$ from the fibre product diagram defining $X$ corresponds to the morphism $\mathbb{G}_m\times\A^n\rightarrow\A^n$ given in coordinates by $(t,(u_1,\ldots,u_n))\mapsto(u_1,\ldots,u_{n-1},t^{-1}u_n)$. Indeed, $y_i=y_ix_{n+1}y_{n+1}$ in $k[X]$ for all $i<n$. In particular, $\zeta$ is a smooth morphism.

Note however that if we set $Y'\hookrightarrow k[Q_{2n}]$ to be defined by the ideal $\langle x_1,\ldots,x_n,z-1\rangle$ and if we let $\xi:X'\rightarrow Y'$ denote the corresponding morphism from the fibre product, then the morphism $\xi^*:k[Y']=k[y_1,\ldots,y_n]\rightarrow k[X']$ induces a $k[Y']$-module structure on $k[X']$ but the kernel of $\xi^*$ contains $y_1,\ldots,y_{n-1}$ so that $k[X']$ is not $k[Y']$-torsion free if $n>1$, thus not flat as a $k[Y']$-module. Hence since flatness is preserved by base change, $\eta_n$ is not flat and in particular is not smooth if $n>1$. We do not know whether there exist smooth or even flat models of $\eta_n$ for $n>1$, that is, smooth or flat morphisms of schemes which are $\A^1$-homotopic to $\eta_n$. 

Our constructions so far make sense if $n=1$. In that case, $k[X']$ is $k[Y']$-torsion free since $k[Y']=k[y]$ and $y$ is sent to $x^{-1}y\in k[x^{\pm 1},y]=k[X']$: since $k[Y']$ is then a principal ideal domain, this implies that the morphism $\xi:X'\rightarrow Y'$ is indeed flat (and in fact smooth). We do not know if $\eta_1$ is smooth or flat.
\end{rema}

Since regular immersion are not stable under base change, the following lemma merits a proof.

\begin{lem}\label{Ximmreg}
The closed immersion $\lambda:X\hookrightarrow Q_{2n+1}$ is a regular embedding.
\end{lem}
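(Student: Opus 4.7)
The plan is to argue in essentially the same way as in Lemma~\ref{regularembedding}. The scheme $Q_{2n+1}$ is a smooth affine integral $k$-scheme of dimension $2n+1$, hence in particular a regular scheme. On the other hand, Lemma~\ref{isox} identifies $X$ with $\A^n \times \mathbb{G}_m$ and in particular shows that $X$ is smooth over $k$, hence also a regular scheme. A closed immersion of (locally Noetherian) regular schemes is automatically a regular immersion, by \href{https://stacks.math.columbia.edu/tag/0E9J}{Tag 0E9J} of \cite{stacks-project}, exactly as invoked in the proof of Lemma~\ref{regularembedding}. Applying this to $\lambda : X \hookrightarrow Q_{2n+1}$ gives the result.

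The only subtle point I would flag is that regularity of $X$ is \emph{not} deduced from a base-change argument from the regular embedding $\kappa : Y \hookrightarrow Q_{2n}$ (since, as the paper emphasises, regular embeddings are not stable under arbitrary base change, and indeed the morphism $\eta_n$ is not flat for $n>1$): instead, one uses the explicit presentation of $k[X]$ computed in Lemma~\ref{isox}, which directly exhibits $X$ as a smooth $k$-scheme. Once regularity of $X$ is in hand, no further computation is needed and in particular no explicit regular sequence cutting out $X$ has to be written down. One can, as a sanity check, observe that the codimension $n$ of $X$ in $Q_{2n+1}$ (computed from the dimensions $n+1$ and $2n+1$) matches the number of generators of the ideal $I_X = \langle x_1,\ldots,x_n\rangle$ provided by Lemma~\ref{isox}, so that $(x_1,\ldots,x_n)$ is in fact a regular sequence in $k[Q_{2n+1}]$ cutting out $X$; but this stronger statement is not needed for the lemma as stated.
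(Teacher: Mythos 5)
Your proof is correct and uses exactly the same argument as the paper: both $Q_{2n+1}$ and $X$ are smooth over $k$ (the latter via Lemma~\ref{isox}), hence regular, so the closed immersion is a regular embedding by Stacks Tag 0E9J, just as in Lemma~\ref{regularembedding}. The additional remarks about why base change is not used and the dimension sanity check are sound but not part of the paper's (one-line) proof.
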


\begin{proof}
Again, since $Q_{2n+1}$ and $X$ are smooth $k$-schemes hence regular schemes, we may use \cite[\href{https://stacks.math.columbia.edu/tag/0E9J}{Tag 0E9J}]{stacks-project}.
\end{proof}

We denote by $\psi:k[Q_{2n}]\rightarrow k[Q_{2n+1}]$ the morphism of $k$-algebras that induces $\eta_n$.

\begin{lem}\label{identiconormalx}
The natural morphism $I_Y/I_Y^2\otimes_{k[Y]}k[X]\rightarrow I_X/I_X^2$ of $k[X]$-modules which sends $\overline{x}\otimes b$ to $\overline{\psi(x)b}$ is an isomorphism.
\end{lem}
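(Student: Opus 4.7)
The plan is to observe that both the source and the target of the natural map are finitely generated projective $k[X]$-modules of rank $n$, and that the map is surjective; Corollary \ref{surjisoprojmod} will then immediately yield the desired isomorphism.

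First I would verify the projectivity and rank assertions. The source $I_Y/I_Y^2\otimes_{k[Y]}k[X]$ is obtained by base change from $I_Y/I_Y^2$, which is free (hence projective) of rank $n$ over $k[Y]$ by Lemma \ref{orientation}; since base change preserves projectivity and rank, the source is projective of rank $n$ over $k[X]$. For the target, Lemma \ref{Ximmreg} tells us that $\lambda$ is a regular embedding, and the codimension computation carried out just before Lemma \ref{Ximmreg} (namely $\codim X=\dim Q_{2n+1}-\dim X=(2n+1)-(n+1)=n$, which uses the integrality of $Q_{2n+1}$ and the dimension of $X$ given by Lemma \ref{isox}) shows that $\lambda$ has codimension $n$; consequently $I_X/I_X^2$ is locally free of rank $n$ over $k[X]$.

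Next I would establish surjectivity. Unwinding the construction of $\eta_n$, the morphism $\psi:k[Q_{2n}]\to k[Q_{2n+1}]$ sends $x_i$ to $x_i$ for $i<n$, sends $x_n$ to $x_nx_{n+1}$, and sends $z$ (the variable of $k[Q_{2n}]$) to $x_1y_1+\cdots+x_ny_n\in k[Q_{2n+1}]$. Recalling that $I_Y=\langle x_1,\ldots,x_n,z\rangle$ and that $I_X=\langle x_1,\ldots,x_{n-1},x_nx_{n+1},x_1y_1+\cdots+x_ny_n\rangle$ by definition of $X$ as the pullback, one sees at once that $\psi(I_Y)\cdot k[Q_{2n+1}]=I_X$. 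Therefore the image of the natural map contains the classes $\overline{x_1},\ldots,\overline{x_{n-1}},\overline{x_nx_{n+1}},\overline{z}$ in $I_X/I_X^2$, which form a generating family for $I_X/I_X^2$ as a $k[X]$-module, and the natural map is surjective.

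With both points established, Corollary \ref{surjisoprojmod} applied to the surjection between projective modules of the same rank yields that the natural map is an isomorphism. I do not anticipate any serious obstacle: the main content is the rank computation for $I_X/I_X^2$, which has already been reduced to the codimension of $\lambda$ thanks to Lemma \ref{Ximmreg} and Lemma \ref{isox}; the surjectivity is a direct bookkeeping check using the explicit form of $\psi$.
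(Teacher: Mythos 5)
Your proof is correct and follows essentially the same route as the paper's: compare ranks of the two projective modules, note the map is surjective because $\psi(I_Y)$ generates $I_X$, and conclude with Corollary \ref{surjisoprojmod}. The only cosmetic difference is in establishing that $I_X/I_X^2$ is projective of rank $n$: the paper runs the conormal-sequence/Kähler-differential rank count of Lemma \ref{orientation} again, while you invoke directly the standard fact that the conormal sheaf of a regular embedding of codimension $n$ is locally free of rank $n$; these are the same observation in slightly different clothing.
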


As before, $\overline{b'}$ the image of $b'\in I_X$ in $I_X/I_X^2$.

\begin{proof}
The $k[X]$-module $I_Y/I_Y^2\otimes_{k[Y]}k[X]$ is free of rank $n$ as $I_Y/I_Y^2$ is free of rank $n$, generated by $(\overline{x_1},\ldots,\overline{x_n})$, as a $k[Y]$-module, while an argument entirely similar to that of the second paragraph of the proof of Lemma \ref{orientation} shows that $I_X/I_X^2$ a projective $k[X]$-module of (constant) rank $n$. On the other hand, this morphism is surjective by definition of the ideal $I_X$. Thus we have a surjective homomorphism between projective $k[X]$-modules of rank $n$: Corollary \ref{surjisoprojmod} shows that it is an isomorphism.
\end{proof}

\begin{cor}
The $k[X]$-module $I_X/I_X^2$ is free of rank $n$ and $(\overline{x_1},\ldots,\overline{x_{n-1}},\overline{x_nx_{n+1}})$ is a basis of this module. Thus $\overline{x_1}\wedge\cdots\wedge\overline{x_{n-1}}\wedge\overline{x_nx_{n+1}}$ generates the free $k[X]$-module $\bigwedge^n(I_X/I_X^2)$.
\end{cor}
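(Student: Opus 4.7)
My plan is to derive both claims directly from the two preceding lemmas, with no additional geometric input. First I would invoke Lemma \ref{orientation}, which asserts that $I_Y/I_Y^2$ is a free $k[Y]$-module of rank $n$ with basis $(\overline{x_1},\ldots,\overline{x_n})$. Since tensoring a free module with $k[X]$ over $k[Y]$ preserves both freeness and the image of a basis, the $k[X]$-module $I_Y/I_Y^2\otimes_{k[Y]}k[X]$ is free of rank $n$ with basis $(\overline{x_1}\otimes 1,\ldots,\overline{x_n}\otimes 1)$.

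Next I would transport this basis along the isomorphism $I_Y/I_Y^2\otimes_{k[Y]}k[X]\xrightarrow{\cong}I_X/I_X^2$ provided by Lemma \ref{identiconormalx}, which sends $\overline{x}\otimes b$ to $\overline{\psi(x)b}$. To compute the images, I need only read off the action of $\psi:k[Q_{2n}]\rightarrow k[Q_{2n+1}]$ on the generators $x_1,\ldots,x_n$ of $k[Q_{2n}]$: by construction of $\eta_n$ from the tuple in Equation \eqref{tuple}, $\psi(x_i)=x_i$ for $1\leqslant i\leqslant n-1$ and $\psi(x_n)=x_nx_{n+1}$. Consequently, the image basis in $I_X/I_X^2$ is $(\overline{x_1},\ldots,\overline{x_{n-1}},\overline{x_nx_{n+1}})$, which establishes the first claim.

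For the second claim, I would appeal to the standard fact that if a module $M$ over a commutative ring is free of rank $n$ with basis $(e_1,\ldots,e_n)$, then $\bigwedge^n M$ is free of rank $1$ with basis $e_1\wedge\cdots\wedge e_n$. Applied to the basis just produced, this immediately yields that $L_X=\bigwedge^n(I_X/I_X^2)$ is free of rank $1$ over $k[X]$ with generator $\overline{x_1}\wedge\cdots\wedge\overline{x_{n-1}}\wedge\overline{x_nx_{n+1}}$. There is no real obstacle: the corollary is essentially a transport-of-structure argument along the two preceding lemmas, combined with the explicit formula for $\psi$ on the relevant generators.
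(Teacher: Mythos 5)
Your argument is correct and follows exactly the paper's approach: transport the basis of $I_Y/I_Y^2$ given by Lemma \ref{orientation} through the isomorphism of Lemma \ref{identiconormalx}, compute the images using $\psi(x_i)=x_i$ for $i<n$ and $\psi(x_n)=x_nx_{n+1}$, and then pass to the top exterior power. The paper's proof is merely a terser statement of the same reasoning.
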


\begin{proof}
This follows from the preceding lemma, the fact that $(\overline{x_1},\ldots,\overline{x_n})$ is a basis of $I_Y/I_Y^2$ as a $k[Y]$-module and the fact that $\psi(x_i)=x_i$ for all $i<n$ and $\psi(x_n)=x_nx_{n+1}$ by definition of $\psi$.
\end{proof}

Set $L_X=\bigwedge^n(I_X/I_X^2)$. Let $x$ be the generic point of $X$. The trivialisation $L_X=k[X]\overline{x_1}\wedge\cdots\wedge\overline{x_{n-1}}\wedge\overline{x_nx_{n+1}}$ of $L_X$ induces an isomorphism $C_\RS(X,0)\rightarrow C_\RS(X,0,L_X,n)$ of Rost--Schmid complexes which sends $\langle 1\rangle\in\Kr_0^\MW(k(x))\subseteq C_\RS(X,0)^0$ to the element $\langle 1\rangle\otimes((\overline{x_1}\wedge\cdots\wedge\overline{x_{n-1}}\wedge\overline{x_nx_{n+1}})\otimes 1)\in\Kr_0^\MW(k(x),L_X(x),n)$. In particular, since $\langle 1\rangle$ is a cycle of $C_\RS(X,0)$ by definition (the residue homomorphism of \cite[Theorem 1.7]{FaLCWG} vanishes on $\langle 1\rangle$), $\langle 1\rangle\otimes((\overline{x_1}\wedge\cdots\wedge\overline{x_{n-1}}\wedge\overline{x_nx_{n+1}})\otimes 1)$ is a cycle of $C_\RS(X,0,L_X,n)$. Thus it induces an element of $\widetilde{\CH}^0(X,L_X,n)$. 

Since the closed immersion $\lambda:X\hookrightarrow Q_{2n+1}$ is a regular embedding, there is an induced homomorphism $\lambda_*:\widetilde{\CH}^0(X,L_X,n)\rightarrow\widetilde{\CH}^n(Q_{2n+1})$ and by definition, the image of the class of $\langle 1\rangle\otimes((\overline{x_1}\wedge\cdots\wedge\overline{x_{n-1}}\wedge\overline{x_nx_{n+1}})\otimes 1)$ in $\widetilde{\CH}^0(X,L_X,n)$ by $\lambda_*$ is the class in $\widetilde{\CH}^n(Q_{2n+1})$ of the cycle $\langle 1\rangle\otimes(\overline{x_1}\wedge\cdots\wedge\overline{x_{n-1}}\wedge\overline{x_nx_{n+1}})^*\in\Kr_0^\MW(k(x),D(\m_x/\m_x^2)^{-1})$ where we write $\overline{b}$ for the image of $b\in I_X$ in $\m_x/\m_x^2$. We have that $x_{n+1}\in k(x)^\times$; by definition of twisted Milnor--Witt $\Kr$-theory, we then have: \[\langle 1\rangle\otimes(\overline{x_1}\wedge\cdots\wedge\overline{x_{n-1}}\wedge\overline{x_nx_{n+1}})^*=\langle x_{n+1}\rangle\otimes(\overline{x_1}\wedge\cdots\wedge\overline{x_{n-1}}\wedge\overline{x_n})^*.\] We show that this is a cycle of the Rost--Schmid complex of $Q_{2n+1}$ which generates $\widetilde{\CH}^n(Q_{2n+1})$ as a $\Kr_0^\MW(k)$-module.

\begin{lem}
Let $Z$ be the closed subscheme of $\A^{n+1}$ defined by $Z=V(x_1,\ldots,x_n)$; denote by $z$ its generic point, $F$ its intersection with $\A^{n+1}\setminus 0$ and $\theta$ the generic point of $F$ (so that $\theta=z$). Then $\langle x_{n+1}\rangle\otimes(\overline{x_1}\wedge\cdots\wedge\overline{x_n})^*\in\Kr_0^\MW(k(\theta),\Lambda_\theta)$ is a cycle of $C_\RS(\A^{n+1}\setminus 0,n)$. Setting $\gamma=\gamma_{\A^{n+1}\setminus 0}$ as the image of in $\widetilde{\CH}^n(\A^{n+1}\setminus 0)$, there exists a morphism $\Kr_{-1}^\MW(k)\rightarrow\widetilde{\CH}^n(\A^{n+1}\setminus 0)$ of $\Kr_0^\MW(k)$-modules sending $\eta$ to $\gamma$ and this morphism is an isomorphism.
\end{lem}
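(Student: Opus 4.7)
The plan is to compute $\widetilde{\CH}^n(\A^{n+1}\setminus 0)$ via the Gysin--localisation long exact sequence attached to the closed immersion $\kappa\colon F\hookrightarrow\A^{n+1}\setminus 0$ (of codimension $n$, with normal bundle trivialised by $\overline{x_1},\ldots,\overline{x_n}$), and then to identify $\gamma$ with the generator $\eta$. The cycle check is straightforward: the Rost--Schmid differential of $\langle x_{n+1}\rangle\otimes(\overline{x_1}\wedge\cdots\wedge\overline{x_n})^*$ takes residues at closed points of $F\cong\mathbb{G}_m$, but $x_{n+1}$ is a unit in the local ring at every such point, so the residues of $\langle x_{n+1}\rangle$ vanish.

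First I would set up the localisation sequence with open complement $U'=(\A^{n+1}\setminus 0)\setminus F\cong\A^1\times(\A^n\setminus 0)$, using the trivialised normal bundle and purity to obtain
\[\Hr^{n-1}(U',\Kbf_n^\MW)\xrightarrow{\partial}\Hr^0(F,\Kbf_0^\MW)\xrightarrow{\kappa_*}\widetilde{\CH}^n(\A^{n+1}\setminus 0)\to 0,\]
where the vanishing of the last term comes from the $\A^1$-cohomological dimension of $U'$ being $n-1$. Lemma \ref{cohomologyprodwithgm} applied to $F\cong\mathbb{G}_m$ (via $x_{n+1}$) identifies $\Hr^0(F,\Kbf_0^\MW)\cong\Kr_0^\MW(k)\oplus\Kr_{-1}^\MW(k)[x_{n+1}]$, with $\gamma$ corresponding to $\langle x_{n+1}\rangle=1+\eta[x_{n+1}]$, i.e.\ to the pair $(1,\eta)$. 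The same lemma, applied iteratively in degree $0$ to the equivalence $U'\simeq_{\A^1}\Sigma^{n-1}\mathbb{G}_m^{\wedge n}$, yields the cyclic $\Kr_0^\MW(k)$-module $\Hr^{n-1}(U',\Kbf_n^\MW)\cong\Kr_0^\MW(k)$.

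The main step would be to compute $\mathrm{image}(\partial)$ and conclude. The element $(1,0)$ corresponds to $\langle 1\rangle\otimes(\overline{x_1}\wedge\cdots\wedge\overline{x_n})^*$ at the generic point of $F$; it extends to the analogous class at the generic point of $Z=V(x_1,\ldots,x_n)\subseteq\A^{n+1}$, and since $\widetilde{\CH}^n(\A^{n+1})=0$ by $\A^1$-invariance, the pushforward to $\widetilde{\CH}^n(\A^{n+1})$ vanishes; compatibility of pushforward with open restriction to $\A^{n+1}\setminus 0$ then yields $\kappa_*((1,0))=0$. Writing $\partial(1)=(a,b)$ and using cyclicity, $\mathrm{image}(\partial)=\Kr_0^\MW(k)\cdot(a,b)$; the inclusion $(1,0)\in\mathrm{image}(\partial)$ produces some $c_0$ with $c_0a=1$ and $c_0b=0$, so $a$ is a unit of $\Kr_0^\MW(k)$, whose image in $\Kr_{-1}^\MW(k)=W(k)$ is again a unit, forcing $b=0$. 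Hence $\mathrm{image}(\partial)=\Kr_0^\MW(k)\oplus 0$, and the induced isomorphism $\Kr_{-1}^\MW(k)\xrightarrow{\sim}\widetilde{\CH}^n(\A^{n+1}\setminus 0)$ sends $(0,\eta)\mapsto\kappa_*((1,\eta))-\kappa_*((1,0))=\gamma$. The hardest step will be the cyclicity identification $\Hr^{n-1}(U',\Kbf_n^\MW)\cong\Kr_0^\MW(k)$, which requires a reduced-cohomology analogue of Lemma \ref{cohomologyprodwithgm} in degree zero (the version stated in the excerpt is only for $i\geqslant 2$); this can be obtained by direct analysis of the cofibration $X\vee\mathbb{G}_m\to X\times\mathbb{G}_m\to X\wedge\mathbb{G}_m$.
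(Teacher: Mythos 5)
Your proposal is structurally sound but takes a genuinely longer route than the paper. The paper instead applies the localization sequence to the inclusion $\{0\}\hookrightarrow\A^{n+1}$: since $\Hr^i(\A^{n+1},\Kbf_n^\MW)=0$ for $i>0$, the boundary map is immediately an isomorphism $\partial\colon\widetilde{\CH}^n(\A^{n+1}\setminus 0)\xrightarrow{\sim}\Hr^{n+1}_{\{0\}}(\A^{n+1},\Kbf_n^\MW)$, and the local Rost--Schmid complex with supports in the single closed point of codimension $n+1$ is concentrated in one degree, giving $\Kr_{-1}^\MW(k)$ on the nose. One then computes $\partial(\gamma)$ as a single residue: $d_z^p(\langle x_{n+1}\rangle\otimes(\overline{x_1}\wedge\cdots\wedge\overline{x_n})^*)=\eta\otimes(\overline{x_1}\wedge\cdots\wedge\overline{x_{n+1}})^*$, which is exactly the generator. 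In contrast, excising $F\cong\mathbb{G}_m$ forces you to control three things at once: the cohomology of $U'\cong\A^1\times(\A^n\setminus 0)$, the cohomology of $F$, and the image of $\partial$; the last two are handled correctly (in particular your $\kappa_*((1,0))=0$ via restriction from $\A^{n+1}$ and the cyclicity argument forcing $b=0$ are both fine, and you don't actually need the observation about $W(k)$: $ca=1$ and $cb=0$ already give $b=a\cdot cb=0$), but you buy nothing in return. The point where you anticipate difficulty, $\Hr^{n-1}(U',\Kbf_n^\MW)\cong\Kr_0^\MW(k)$, also illustrates this: the cleanest way to obtain it is to note $U'\simeq_{\A^1}\A^n\setminus 0$ and apply the localization sequence for $\{0\}\hookrightarrow\A^n$, yielding $\Hr^{n-1}(\A^n\setminus 0,\Kbf_n^\MW)\cong\Hr^n_{\{0\}}(\A^n,\Kbf_n^\MW)\cong\Kr_0^\MW(k)$ — which is precisely the paper's method, applied one dimension down. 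Also note that Lemma \ref{cohomologyprodwithgm} itself has no restriction on $i$ (it holds for all $i\geqslant 0$); only the subsequent smash-product splitting discussion in the paper assumes $i\geqslant 2$, which is a minor point in your sketch.
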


\begin{proof}
First, we check that $\langle x_{n+1}\rangle\otimes(\overline{x_1}\wedge\cdots\wedge\overline{x_n})^*$ is a cycle of the Rost--Schmid complex $C_\RS(\A^{n+1}\setminus 0,n)$. The $k$-scheme $F$ is smooth of codimension $n$ in $\A^{n+1}\setminus 0$, thus as in the proof of Lemma \ref{lem:gen_explicite_z}, we only need to consider residue homomorphisms in computing the concerned differentials of the Rost--Schmid complex. Let $r$ be a codimension $1$ point of $F$; $r$ is of codimension $n+1$ in $\A^{n+1}\setminus 0$ and thus corresponds to a maximal ideal $\m$ of $k[x_1,\ldots,x_{n+1}]$ that is distinct from $\langle x_1,\ldots,x_n,x_{n+1}\rangle$ and that contains $\langle x_1,\ldots,x_n\rangle$. If $x_{n+1}$ does not lie in $\m$, then $x_{n+1}$ is invertible in $\mathscr{O}_{F,r}$. Thus by definition of the untwisted residue homomorphism in Milnor--Witt $\Kr$-theory \cite[Theorem 1.7, 2.]{FaLCWG}, $\langle x_{n+1}\rangle\in\Kr_0^\MW(k(\theta))$ has trivial residue at $r$ for any choice of valuation. Similarly to the proof of Lemma \ref{lem:gen_explicite_z}, taking twists into account, we deduce from the formula of the twisted residue homomorphism \cite[Equation (1.5)]{FaLCWG} that $d_\theta^r\langle x_{n+1}\rangle\otimes(\overline{x_1}\wedge\cdots\wedge\overline{x_n})^*=0$ in $\Kr_{-1}^\MW(k(r),\Lambda_r)$ if $x_{n+1}$ does not lie in $\m$. However, if $x_{n+1}$ were to lie in $\m$, then we would have an inclusion $\langle x_1,\ldots,x_n,x_{n+1}\rangle\subseteq\m$ and thus equality since $\langle x_1,\ldots,x_n,x_{n+1}\rangle$ and $\m$ are maximal ideals, so that $r$ would be equal to the point $0\in\A^{n+1}$ which is excluded by definition of $F$. Thus $\langle x_{n+1}\rangle\otimes(\overline{x_1}\wedge\cdots\wedge\overline{x_n})^*$ is indeed a cycle of the Rost--Schmid complex $C_\RS(\A^{n+1}\setminus 0,n)$. 

Now denote by $q$ the point $0$ of $\A^{n+1}$, so that $\m_q/\m_q^2$ is a $k(q)$-vector space of dimension $n+1$ generated by $(\overline{x_1},\ldots,\overline{x_{n+1}})$. Recall (\cite[p. 23]{JF}) that by definition, $\Hr_{\{0\}}^{n+1}(\A^{n+1},\mathbf{K}_n^\MW)$ is the $(n+1)$-th cohomology group of the complex concentrated in degree $n+1$ whose component in degree $n+1$ is $\Kr_{-1}^\MW(k(q),\Lambda_q)$, the assignment $\eta\mapsto\eta\otimes(\overline{x_1}\wedge\cdots\wedge\overline{x_{n+1}})^*$ inducing an isomorphism $\Kr_{-1}^\MW(k)\cong\Kr_{-1}^\MW(k(q),\Lambda_q)$ of $\Kr_0^\MW(k)$-modules ($q$ is a rational point hence the extension morphism $k\rightarrow k(q)$ is an isomorphism). On the other hand, there is a short exact sequence \[0\rightarrow C_\RS(\{0\},n,\omega_{\A^{n+1}}(q)^{-1})=C_{\RS,\{0\}}(\A^{n+1},2n)\rightarrow C_\RS(\A^{n+1},n)\rightarrow C_\RS(\A^{n+1}\setminus 0,n)\rightarrow 0\] of complexes explained in \cite[Subsection 2.2]{FaLCWG} ($C_{\RS,\{0\}}(\A^{n+1},2n)$ is the relative Rost--Schmid complex of the pair $(\A^{n+1},\{0\})$). It induces a so-called localisation long exact sequence in cohomology with coefficients in $\mathbf{K}_n^\MW$. This sheaf is strictly $\A^1$-invariant, thus the cohomology of $\A^{n+1}$ with coefficients in $\mathbf{K}_n^\MW$ vanishes in positive degree. Since $n\geqslant 2$, it follows that the connecting homomorphism $\partial:\Hr^n(\A^{n+1}\setminus 0,\mathbf{K}_n^\MW)\rightarrow\Hr_{\{0\}}^{n+1}(\A^{n+1},\mathbf{K}_n^\MW)$ of the localisation exact sequence is an isomorphism. By definition, $\partial\gamma$ is obtained by viewing $\langle x_{n+1}\rangle\otimes(\overline{x_1}\wedge\cdots\wedge\overline{x_n})^*$ as an element of $C_\RS(\A^{n+1},n)^{n}$ (where it is \emph{not} a cycle, since it has a non-trivial residue at $q$), computing its image by the component $d_z^q:\Kr_0^\MW(k(z),\Lambda_z)\rightarrow\Kr_{-1}^\MW(k(q),\Lambda_q)$ of the differential $C_\RS(\A^{n+1},n)^n\rightarrow C_\RS(\A^{n+1},n)^{n+1}$, viewing this image as a cycle of $C_\RS(\{0\},n,\omega_{\A^{n+1}}(q))^n$ and taking its class in $\Hr_{\{0\}}^{n+1}(\A^{n+1},\mathbf{K}_n^\MW)$. Finally, using the uniformiser $x_{n+1}\in\mathscr{O}_{Z,q}$, the formula for the untwisted residue $d:\Kr_0^\MW(k(z))\rightarrow\Kr_{-1}^\MW(k(q))$ (\cite[Theorem 1.7]{FaLCWG}) shows that \[d\langle x_{n+1}\rangle=d(\langle 1\rangle+\eta[x_{n+1}])=d\langle 1\rangle+\eta d[x_{n+1}]=\eta\] since $d$ vanishes on elements of $\mathscr{O}_{Z,z}^\times$ and satisfies $d[x_{n+1}]=\langle x_{n+1}\rangle$ by definition. One may then reason as in the proof of Lemma \ref{lem:gen_explicite_z} regarding twists to show that $\partial\gamma$ is the class of $\eta\otimes(\overline{x_1}\wedge\cdots\wedge\overline{x_{n+1}})^*$, establishing the claim.
\end{proof}

\begin{cor}\label{explicitgencwdenpo}
The element $\langle x_{n+1}\rangle\otimes(\overline{x_1}\wedge\cdots\wedge\overline{x_n})^*$ of $C_\RS(Q_{2n+1},n)^n$ described previously is a cycle. Moreover, there is a morphism $\Kr_{-1}^\MW(k)\rightarrow\widetilde{\CH}^n(Q_{2n+1})$ of $\Kr_0^\MW(k)$-modules sending $\eta$ to the class $\gamma_{n}$ of $\langle x_{n+1}\rangle\otimes(\overline{x_1}\wedge\cdots\wedge\overline{x_n})^*$ and this morphism is an isomorphism.
\end{cor}

\begin{proof}
Let $p:Q_{2n+1}\rightarrow\A^{n+1}\setminus 0$ be the projection on the first $n+1$ coordinates. Then $p$ is a smooth morphism with $p(x)=\theta$ since the ideal defining $X$ is generated by $(x_1,\ldots,x_n)$ as noted in Lemma \ref{isox}. The induced morphism $p^*:\m_\theta/\m_\theta^2\otimes_{k(\theta)}k(x)\rightarrow\m_x/\m_x^2$ sends $x_i$ to $x_i$ for all $i$ so that there is an induced morphism $p^*:C_\RS(\A^{n+1}\setminus 0,n)\rightarrow C_\RS(Q_{2n+1},n)$ between Rost--Schmid complexes which sends $\langle x_{n+1}\rangle\otimes(\overline{x_1}\wedge\cdots\wedge\overline{x_n})^*\in\Kr_0^\MW(k(\theta),\Lambda_\theta)$ to $\langle x_{n+1}\rangle\otimes(\overline{x_1}\wedge\cdots\wedge\overline{x_n})^*\in\Kr_0^\MW(k(x),\Lambda_x)$ by \cite[Example 2.11]{FaLCWG}—in particular, the latter is indeed a cycle of the Rost--Schmid complex computing $\widetilde{\CH}^n(Q_{2n+1})$. On the other hand, $p$ is a Jouanolou device, hence an $\A^1$-weak equivalence so that the morphism $p^*:\widetilde{\CH}^n(\A^{n+1}\setminus 0)\rightarrow\widetilde{\CH}^n(Q_{2n+1})$ induced at the level of Chow--Witt groups is a bijection. The corollary then follows from the previous lemma.
\end{proof}

The following lemma will allow us to use the base-change formula for Chow--Witt groups and conclude.

\begin{lem}\label{torindependent}
The fibre product square
\begin{center}
\begin{tikzcd}
X \arrow[r,"\lambda"] \arrow[d,"\zeta"] & Q_{2n+1} \arrow[d,"\eta_n"] \\
Y \arrow[r,"\kappa"]                   & Q_{2n} 
\end{tikzcd}
\end{center}
is \emph{Tor-independent}: for any $w\in Y$, any $u\in Q_{2n+1}$ with $\eta_n(u)=\kappa(w)=q$ and any $i>0$, the $\mathscr{O}_{Q_{2n},q}$-module $\Tor_i^{\mathscr{O}_{Q_{2n},q}}(\mathscr{O}_{Q_{2n+1},u},\mathscr{O}_{Y,w})$ is zero.
\end{lem}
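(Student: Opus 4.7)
The plan is to verify Tor-independence pointwise by building an explicit Koszul resolution of $\mathscr{O}_{Y,w}$ over $\mathscr{O}_{Q_{2n},q}$ coming from the regular embedding structure of $\kappa$ (Lemma \ref{regularembedding}), tensoring it with $\mathscr{O}_{Q_{2n+1},u}$, and then checking that the resulting complex is still a Koszul complex on a regular sequence, this time in $\mathscr{O}_{Q_{2n+1},u}$, thanks to the regular embedding structure of $\lambda$ (Lemma \ref{Ximmreg}). First observe that since $\eta_n(u)=q=\kappa(w)\in Y$, the image $\psi(I_Y)\cdot\mathscr{O}_{Q_{2n+1},u}$ lies in the maximal ideal of $\mathscr{O}_{Q_{2n+1},u}$, hence so does $I_X\cdot\mathscr{O}_{Q_{2n+1},u}$; in other words, $u$ lies set-theoretically in $X=\eta_n^{-1}(Y)$, which will allow us to apply Lemma \ref{Ximmreg} locally at $u$.

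Next, by Lemma \ref{orientation}, the family $(\overline{x_1},\ldots,\overline{x_n})$ generates $I_Y/I_Y^2$ as a $k[Y]$-module; Nakayama's lemma therefore implies that $I_Y\cdot\mathscr{O}_{Q_{2n},q}$ is generated by $(x_1,\ldots,x_n)$. Since $Y$ has codimension $n$ in the smooth $k$-scheme $Q_{2n}$ and $\mathscr{O}_{Q_{2n},q}$ is a regular, hence Cohen--Macaulay, local ring, these $n$ elements form a regular sequence generating an ideal of height $n$. Consequently, the Koszul complex $K_\bullet(x_1,\ldots,x_n;\mathscr{O}_{Q_{2n},q})$ is a free resolution of $\mathscr{O}_{Y,w}$. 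Tensoring with $\mathscr{O}_{Q_{2n+1},u}$ and remembering that $\psi(x_i)=x_i$ for $i<n$ and $\psi(x_n)=x_nx_{n+1}$, one finds that $\Tor_i^{\mathscr{O}_{Q_{2n},q}}(\mathscr{O}_{Q_{2n+1},u},\mathscr{O}_{Y,w})$ is the $i$-th homology of the Koszul complex $K_\bullet(x_1,\ldots,x_{n-1},x_nx_{n+1};\mathscr{O}_{Q_{2n+1},u})$.

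It therefore suffices to show that $(x_1,\ldots,x_{n-1},x_nx_{n+1})$ is a regular sequence in $\mathscr{O}_{Q_{2n+1},u}$, and this follows by the same argument applied to $\lambda$ instead of $\kappa$: by Lemma \ref{identiconormalx} combined with Nakayama, these $n$ elements generate $I_X\cdot\mathscr{O}_{Q_{2n+1},u}$; the codimension of $X$ in $Q_{2n+1}$ is $n$ as computed after Lemma \ref{isox}; and $\mathscr{O}_{Q_{2n+1},u}$ is a regular local ring, in particular Cohen--Macaulay, so that any system of $n$ generators of an ideal of height $n$ is a regular sequence. Hence the Koszul complex above is exact in positive degrees and the Tor groups vanish. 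The argument is essentially formal once the regular-immersion and local generation statements are in place, so there is no real obstacle; the only mild subtlety is the initial verification that $u$ belongs to $X$, which is needed to invoke Lemma \ref{Ximmreg} at $u$ and to guarantee that $I_X\cdot\mathscr{O}_{Q_{2n+1},u}$ is a proper ideal of height $n$.
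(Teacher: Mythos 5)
Your proof is correct and takes essentially the same route as the paper: both reduce the Tor-vanishing to the acyclicity of a Koszul complex (the paper packages this as Lemma \ref{conditiontorind}, you re-derive it inline), and both reduce further to checking that $(x_1,\ldots,x_{n-1},x_nx_{n+1})$ is a regular sequence in $\mathscr{O}_{Q_{2n+1},u}$. The only place you diverge is in that last verification: the paper argues by hand, first noting that $(x_1,\ldots,x_n)$ is regular in $\mathscr{O}_{Q_{2n+1},u}$ as the localisation of a global regular sequence and then observing that $x_{n+1}$ is a unit in $\mathscr{O}_{Q_{2n+1},u}/\langle x_1,\ldots,x_n\rangle$, so that replacing $x_n$ by $x_nx_{n+1}$ preserves regularity; you instead invoke Cohen--Macaulayness of the regular local ring $\mathscr{O}_{Q_{2n+1},u}$ together with the height computation (via Lemma \ref{identiconormalx}, Nakayama, and the codimension of $X$) to conclude that any $n$ generators of the height-$n$ ideal $(I_X)_u$ form a regular sequence. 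Both are standard and correct; the paper's variant is a bit more elementary and self-contained, yours leans on the depth--height equality in CM rings, but the underlying Koszul mechanism is identical.
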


To prove it, we rely on the following fact.

\begin{lem}\label{conditiontorind}
Let $R$ be a local ring, let $I$ be an ideal of $R$ and let $M$ be an $R$-module. Assume $I$ is generated by a regular sequence $(a_1,\ldots,a_n)$ such that $(a_1,\ldots,a_n)$ is also $M$-regular\footnote{In the usual sense of, \emph{e.g.}, \cite[p. 123]{Matsumura}.}. Then the group $\Tor_i^R(R/I,M)$ is zero for all $i>0$.
\end{lem}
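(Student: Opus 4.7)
The plan is to use the Koszul complex of the regular sequence $(a_1,\ldots,a_n)$, which is the standard tool to compute Tor groups in this setting; since the excerpt does not formally recall it, I would either invoke the relevant facts or equivalently argue by induction on the length $n$ of the sequence, which avoids introducing extra formalism.

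By induction on $n$. For $n=1$, the sequence $0\rightarrow R\xrightarrow{\cdot a_1}R\rightarrow R/(a_1)\rightarrow 0$ is a free resolution of $R/I=R/(a_1)$ by regularity of $a_1$ in $R$. Tensoring with $M$ yields $0\rightarrow M\xrightarrow{\cdot a_1}M\rightarrow M/a_1M\rightarrow 0$, which is still exact thanks to the assumption that $a_1$ is $M$-regular; hence $\Tor_i^R(R/I,M)=0$ for all $i>0$.

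For the inductive step, set $I'=(a_1,\ldots,a_{n-1})$. The sequence $(a_1,\ldots,a_{n-1})$ is $R$-regular and $M$-regular, so by induction $\Tor_i^R(R/I',M)=0$ for all $i>0$, and moreover the induction shows that $\Tor_0^R(R/I',M)=M/I'M$. Since $(a_1,\ldots,a_n)$ is $R$-regular, $a_n$ is a non-zerodivisor on $R/I'$, so we have a short exact sequence
\[
0\rightarrow R/I'\xrightarrow{\cdot a_n}R/I'\rightarrow R/I\rightarrow 0.
\]
Its long exact sequence for $\Tor^R_\bullet(-,M)$ reads, in degrees $i\geqslant 1$,
\[
\Tor_{i+1}^R(R/I,M)\rightarrow\Tor_i^R(R/I',M)\rightarrow\Tor_i^R(R/I',M)\rightarrow\Tor_i^R(R/I,M)\rightarrow\Tor_{i-1}^R(R/I',M),
\]
and for $i=1$ terminates with $\Tor_0^R(R/I',M)\xrightarrow{\cdot a_n}\Tor_0^R(R/I',M)$. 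For $i\geqslant 2$ the two outer groups vanish by the inductive hypothesis, forcing $\Tor_i^R(R/I,M)=0$. It remains to handle $i=1$: we obtain the exact sequence
\[
0\rightarrow\Tor_1^R(R/I,M)\rightarrow M/I'M\xrightarrow{\cdot a_n}M/I'M,
\]
and the assumption that $(a_1,\ldots,a_n)$ is $M$-regular means precisely that $a_n$ is a non-zerodivisor on $M/I'M$, so the right-hand map is injective and $\Tor_1^R(R/I,M)=0$ as well.

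The argument is essentially formal once the correct short exact sequence is in place; the only substantive point, which is not really an obstacle but the key content, is the observation that the hypothesis of $M$-regularity of $(a_1,\ldots,a_n)$ is exactly what is needed to make the inductive step work, both to apply the induction hypothesis to $(a_1,\ldots,a_{n-1})$ and to conclude that $a_n$ acts injectively on $M/I'M$. Note that the hypothesis that $R$ be local is not actually used in this argument; it appears in the statement because it is the setting in which the lemma will be applied (namely to the local rings occurring in Lemma~\ref{torindependent}).
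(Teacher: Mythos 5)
Your proof is correct. The paper's own argument is shorter: it observes that the Koszul complex $\Kos(a_1,\ldots,a_n)$ is a free resolution of $R/I$ because the sequence is $R$-regular (Matsumura, Theorem~16.5(i)), so $\Tor_i^R(R/I,M)$ is computed by $\Hr_i(\Kos(a_1,\ldots,a_n)\otimes_R M)$, and then invokes the same Theorem~16.5(i) for the module $M$ to conclude that this homology vanishes in positive degrees because the sequence is also $M$-regular. Your induction on $n$ using the short exact sequence $0\to R/I'\xrightarrow{\cdot a_n} R/I'\to R/I\to 0$ and the long exact sequence of $\Tor$ is essentially the recursive unwinding of that Koszul-complex argument, since $\Kos(a_1,\ldots,a_n)\cong\Kos(a_1,\ldots,a_{n-1})\otimes_R\Kos(a_n)$; it is more elementary (it avoids any black-boxed facts about Koszul homology) at the cost of some length, and it is arguably more illuminating about exactly where the $M$-regularity of each $a_i$ enters. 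Your remark that the local hypothesis is not actually used is also consistent with the paper's proof, which does not use it either.
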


\begin{proof}
Consider the Koszul complex $\Kos(a_1,\ldots,a_n)$ associated with $(a_1,\ldots,a_n)$ (\cite[p. 127]{Matsumura}). Then since $(a_1,\ldots,a_n)$ is regular, $\Kos(a_1,\ldots,a_n)$ is exact, see \cite[Theorem 16.5 (i)]{Matsumura}. It is therefore a free, in particular flat, resolution of $R/\langle a_1,\ldots,a_n\rangle=R/I$. Hence $\Tor_i^R(R/I,M)=\Hr_i(\Kos(a_1,\ldots,a_n)\otimes_R M)$ for all $i>0$. But since $(a_1,\ldots,a_n)$ is $M$-regular, $\Hr_i(\Kos(a_1,\ldots,a_n)\otimes_R M)=0$ for all $i>0$ still by \cite[Theorem 16.5 (i)]{Matsumura}, yielding the result.
\end{proof}

Let us now prove Lemma \ref{torindependent}.

\begin{proof}[Proof of Lemma \ref{torindependent}]
Let $w\in Y$ and $u\in Q_{2n+1}$ be such that $\kappa(w)=\eta_n(u)=q$. Then $\mathscr{O}_{Y,w}=\mathscr{O}_{Q_{2n},q}/(I_Y)_{q}$ where $(I_Y)_{q}$ is generated by $(x_1,\ldots,x_n)$ since $\kappa(w)=q\in D(z-1)\subseteq Q_{2n}$. The sequence $(x_1,\ldots,x_n)$ of elements of $\mathscr{O}_{Q_{2n},q}$ is regular, hence according to the previous lemma, it suffices to show that $(\psi(x_1),\ldots,\psi(x_n))$ is a regular sequence of elements of $\mathscr{O}_{Q_{2n+1},u}$—recall that $\psi:k[Q_{2n}]\rightarrow k[Q_{2n+1}]$ is the morphism of $k$-algebras which induces the scheme morphism $\eta_n$. 

By definition, $\psi(x_i)=x_i$ for $i<n$ and $\psi(x_n)=x_nx_{n+1}$. Note that since $X=Q_{2n+1}\times_{Q_{2n}}Y$, $u$ belongs to $\lambda(X)$. In particular, since $I_X=\langle x_1,\ldots,x_n\rangle$, $x_i$ belongs to $\m_u$ for all $i<n+1$—hence $x_nx_{n+1}\in\m_u$ so that $\mathscr{O}_{Q_{2n+1},u}/\langle x_1,\ldots,x_nx_{n+1}\rangle$ is non-zero. This implies also that $(x_1,\ldots,x_{n-1},x_n)$ is regular as a localisation of the regular sequence $(x_1,\ldots,x_{n-1},x_n)$ of elements of $Q_{2n+1}$. But $x_{n+1}$ is invertible in $\mathscr{O}_{Q_{2n+1},u}/\langle x_1,\ldots,x_n\rangle$: indeed, $x_{n+1}y_{n+1}=1$ in this ring since $u\in\lambda(X)$. The regularity of $(x_1,\ldots,x_{n-1},x_nx_{n+1})$ then results from that of $(x_1,\ldots,x_{n-1},x_n)$ which concludes the proof.
\end{proof}

\begin{proof}[Proof of Theorem \ref{genofkmin1}]
We denote by $x$ the generic point of $X$ and $y$ the generic point of $Y$. Put $\delta=(\overline{x_1}\wedge\cdots\wedge\overline{x_n})\otimes 1\in\zeta^*L_Y=L_Y\otimes_{k[Y]}k[X]$. Following Lemma \ref{identiconormalx}, we have an identification $(\zeta^*L_Y(x),n)\cong(L_X(x),n)$ of graded $k(x)$-vector spaces which sends $\delta\otimes 1\in\zeta^*L_Y(x)=\zeta^*L_Y\otimes_{k[X]}k(x)$ to $(\overline{x_1}\wedge\cdots\wedge\overline{x_{n-1}}\wedge\overline{x_nx_{n+1}})\otimes 1\in L_X(x)$. On the other hand, since $\zeta:X\rightarrow Y$ is a smooth morphism, it induces a morphism $C_\RS(Y,0,L_Y,n)\rightarrow C_\RS(X,0,\zeta^*L_Y,n)$ of complexes which sends $\langle 1\rangle\otimes((\overline{x_1}\wedge\cdots\wedge\overline{x_n})\otimes 1)\in\Kr_0^\MW(k(y),L_Y(y),n)$ to $\langle 1\rangle\otimes(\delta\otimes 1)\in\Kr_0^\MW(k(x),\zeta^*L_Y(x),n)$ by \cite[Example 2.11]{FaLCWG}. Hence we obtain a morphism \[\zeta^*:\widetilde{\CH}^0(Y,L_Y,n)\rightarrow\widetilde{\CH}^0(X,L_X,n)\] which sends the class $\alpha_Y$ of $\langle 1\rangle\otimes((\overline{x_1}\wedge\cdots\wedge\overline{x_n})\otimes 1)$ to the class $\alpha_X$ of $\langle 1\rangle\otimes((\overline{x_1}\wedge\cdots\wedge\overline{x_{n-1}}\wedge\overline{x_nx_{n+1}})\otimes 1)$.

Now because of Lemma \ref{torindependent}, we have a commutative diagram:
\begin{center}
\begin{tikzcd}
\widetilde{\CH}^n(Q_{2n+1}) & \widetilde{\CH}^0(X,L_X,n) \arrow[l,"\lambda_*"] \\
\widetilde{\CH}^n(Q_{2n}) \arrow[u,"\eta_n^*"] & \widetilde{\CH}^0(Y,L_Y,n) \arrow[l,"\kappa_*"] \arrow[u,"\zeta^*"] 
\end{tikzcd}
\end{center}
see \cite[Theorem 3.18]{FaLCWG}. Recall also that we introduced a generator $\gamma_n$ of $\widetilde{\CH}^n(Q_{2n+1})$ in Corollary \ref{explicitgencwdenpo}. We then have that $\gamma_{n}=\lambda_*\alpha_X$ so that with the notation $\alpha_n$ of Lemma \ref{lem:gen_explicite_z}, \[\eta_n^*\alpha_n=\eta_n^*\kappa_*\alpha_Y=\lambda_*\zeta^*\alpha_Y=\lambda_*\alpha_X=\gamma_n.\] In other words, $\eta_n^*$ sends the generator $\alpha_n$ of $\widetilde{\CH}^n(Q_{2n})$ to the generator $\gamma_n$ of $\widetilde{\CH}^n(Q_{2n+1})$.

On the other hand, since $Q_{2n+1}$ and $Q_{2n}$ are of $\A^1$-cohomological dimension $n\leqslant 2n-2$ (with $n\geqslant 2$) by Example \ref{aonecohodimquad},  the naturality statement in \cite[Theorem 1.3.4]{AF} implies that we have a commutative diagram:
\begin{center}
\begin{tikzcd}
\protect{[Q_{2n},Q_{2n}]_{\A^1}} \arrow[r,"\eta_n^*"] \arrow[d,"h"] & \protect{[Q_{2n+1},Q_{2n}]_{\A^1}} \arrow[d,"h"] \\
\widetilde{\CH}^n(Q_{2n}) \arrow[r,"\eta_n^*"]                      & \widetilde{\CH}^n(Q_{2n+1})
\end{tikzcd}
\end{center}
in the category of $\Kr_0^\MW(k)$-modules by Proposition \ref{hurhomlin} whose vertical maps are isomorphisms. It then follows from Corollary \ref{explicitgencwdenpo} that there exists a $\Kr_0^\MW(k)$-module isomorphism from $\Kr_{-1}^\MW(k)$ to $[Q_{2n+1},Q_{2n}]_{\A^1}$ that sends $\eta$ to $\eta_n$, thus concluding the proof.
\end{proof}

\subsection{The morphism $\mu_n:\mathbb{G}_m\times Q_{2n}\rightarrow Q_{2n+1}$}\label{mapmu}

Recall that $Q_{2n+1}\cong\A^{n+1}\setminus 0\simeq\mathrm{S}^n\wedge\mathbb{G}_m^{n+1}$ (\cite[§3, Example 2.20]{MoVo}) in $\mathcal{H}_\bullet(k)$, hence $Q_{2n+1}\simeq\mathbb{G}_m\wedge Q_{2n}$ in $\mathcal{H}_\bullet(k)$. We are going to construct a map $\mu_n:\mathbb{G}_m\times Q_{2n}\rightarrow Q_{2n+1}$ which, up to a slight modification, induces an $\A^1$-weak equivalence $\mathbb{G}_m\wedge Q_{2n}\rightarrow Q_{2n+1}$. It also gives a geometric interpretation for the set-theoretic map $\delta$ from Euler class groups to orbit sets of unimodular rows introduced in \cite[Subsection 2.5]{DTZ1}.

The $k$-schemes $\mathbb{G}_m\times Q_{2n}$ and $Q_{2n+1}$ are affine, hence it suffices to define $\mu_n$ at the level of $R$-points for any $k$-algebra $R$ (in a way that is functorial in $R$). Let then $R$ be a $k$-algebra, let $\lambda\in R^\times$ be a unit and let $(x,y,z)\in Q_{2n}(R)$: by definition, we then have that $\sum x_iy_i=z(1-z)$. Let $v=(x,1-z+\lambda z)\in R^{n}\times R=R^{n+1}$. We are going to show that $v$ is unimodular and specify a splitting of $v$: this will in turn define the image of $(\lambda,(x,y,z))$ by $\mu_n(R):\mathbb{G}_m(R)\times Q_{2n}(R)\rightarrow Q_{2n+1}(R)$.

Set $a(\mu,z)=(1-z)+\mu z\in R$ for all $\mu\in R^\times$. Then \[a(\lambda,z)a(\lambda^{-1},z)=(1-z+\lambda z)(1-z+\lambda^{-1}z)=(1-z)^2+(\lambda+\lambda^{-1})z(1-z)+z^2\] with \[(1-z)^2+z^2=1-2z+2z^2=1-2z(1-z).\] Thus \[a(\lambda,z)a(\lambda^{-1},z)=(\lambda+\lambda^{-1})z(1-z)+1-2z(1-z)=1+(\lambda+\lambda^{-1}-2)z(1-z)=1-b(\lambda)\sum_{i=1}^nx_iy_i\] where $b(\lambda)=2-\lambda-\lambda^{-1}$. Hence the row $w=(b(\lambda)y_1,\ldots,b(\lambda)y_n,a(\lambda^{-1},z))$ is a splitting of $v$: therefore, we obtain an element $(v,w)$ of $Q_{2n+1}(R)$. We set $\mu_n(R)(\lambda,(x,y,z))=(v,w)$.

The datum of the maps $\mu_n(R)$ is clearly functorial in $R$, hence we obtain a morphism $\mu_n:\mathbb{G}_m\times Q_{2n}\rightarrow Q_{2n+1}$ of $k$-schemes. It is induced by the $k$-algebra homomorphism $k[Q_{2n+1}]\rightarrow k[\mathbb{G}_m\times Q_{2n}]$ given by \[(v_1,\ldots,v_{n+1},w_1,\ldots,w_{n+1})\mapsto(x_1,\ldots,x_n,1-z+\lambda z,(2-\lambda-\lambda^{-1})y_1,\ldots,(2-\lambda-\lambda^{-1})y_n,1-z+\lambda^{-1}z)\] where $k[Q_{2n+1}]=k[v_1,\ldots,v_{n+1},w_1,\ldots,w_{n+1}]/\langle v_1w_1+\cdots+v_{n+1}w_{n+1}=1\rangle$.

\subsubsection*{The weak equivalence $\mathbb{G}_m\wedge Q_{2n}\xrightarrow{\sim}Q_{2n+1}$}

Recall that we point $Q_{2n}$ by $(0,\ldots,0)$ and $Q_{2n+1}$ by $(0,\ldots,0,1,0,\ldots,0,1)$ (and $\mathbb{G}_m$ by $1$ as usual). Then $\mu_n$ is a map of pointed spaces; however, $\mu_n$ \emph{does not} induce a map of spaces from $\mathbb{G}_m\wedge Q_{2n}$ to $Q_{2n+1}$. 

To accomplish this, we need to modify $\mu_n$. Let then $R$ be a $k$-algebra. Note that if $(x,y,z)\in Q_{2n}(R)$, then since $a(1,z)=1$ and $b(1)=0$, \[\mu_n(1,(x_1,\ldots,x_n,y_1,\ldots,y_n,z))=(x_1,\ldots,x_n,1,0,\ldots,0,1).\] We then consider the endomorphism $E$ of $Q_{2n+1}$ given in coordinates by \[(x_1,\ldots,x_n,z,y_1,\ldots,y_n,v)\mapsto(x_1-x_1z,\ldots,x_n-x_nz,z,y_1,\ldots,y_n,v+1-zv).\] It is well-defined since, granted that $\sum_{i\leqslant n} x_iy_i+zv=1$, \[\sum_{i=1}^nx_i(1-z)y_i+z(v+1-zv)=(1-z)(1-zv)+zv+z-z^2v=1.\] Furthermore, we see that this automorphism is given by elementary operations: hence, it is naively $\A^1$-homotopic to the identity so that composing with this automorphism does not change the $\A^1$-homotopy class of $\mu_n$. Explicitly, $E(R)\circ\mu_n(R)$ sends $(\lambda,(x_1,\ldots,x_n,y_1,\ldots,y_n,z))$ to \[(x_1z(1-\lambda),\ldots,x_nz(1-\lambda),a(\lambda,z),b(\lambda)y_1,\ldots,b(\lambda)y_n,a(\lambda^{-1},z)z(1-\lambda)+1)\] for any $k$-algebra $R$. We then observe that \[E(R)\circ\mu_n(R)(\lambda,(0,\ldots,0))=(0,\ldots,0,1,0,\ldots,0,1),\;E(R)\circ\mu_n(R)(1,(x,y,z))=(0,\ldots,0,1,0,\ldots,0,1)\] because $a(1,z)=b(1)=0$. We then deduce a scheme morphism $\mu_n'=E\circ\mu_n:\mathbb{G}_m\times Q_{2n}\rightarrow Q_{2n+1}$ which induces a map $m:\mathbb{G}_m\wedge Q_{2n}\rightarrow Q_{2n+1}$ of spaces.

\begin{theo}\label{eqfaibleqpgmqi}
The morphism $m$ of spaces is an $\A^1$-weak equivalence.
\end{theo}

First let us introduce a few notations. As in the last subsection, we set $X=V(x_1,\ldots,x_n)\subseteq Q_{2n+1}$. We consider the following fibre product: 
\begin{center}
\begin{tikzcd}
\mathbb{G}_m\times Q_{2n} \arrow[r,"\mu_n"] & Q_{2n+1} \\
Z \arrow[r,"f"] \arrow[u]                & X \arrow[u]
\end{tikzcd}
\end{center}
Since we aim to make use of the base-change formula for coefficients with values in $\mathbf{K}_{n+1}^\MW$ induced by this diagram, we have to show its $\Tor$-independence:

\begin{lem}\label{torindmu}
The fibre product diagram defining $Z$ is $\Tor$-independent.
\end{lem}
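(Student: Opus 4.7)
The plan is to follow the template of Lemma \ref{torindependent}, reducing the statement to Lemma \ref{conditiontorind}. Concretely, I would apply that lemma with $R=\mathscr{O}_{Q_{2n+1},q}$, with $I=I_X\mathscr{O}_{Q_{2n+1},q}$, and with $M=\mathscr{O}_{\mathbb{G}_m\times Q_{2n},u}$ viewed as an $R$-module via $\mu_n$, for an arbitrary $w\in Z$ with images $u$, $v$, $q$ as in the statement.

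The first step is to identify the ideal $\mu_n^*(I_X)\subseteq k[\mathbb{G}_m\times Q_{2n}]$. By Lemma \ref{isox}, $I_X=\langle x_1,\ldots,x_n\rangle$, and by construction of $\mu_n$ the first $n$ coordinates of $v=(x,1-z+\lambda z)$ are simply $x_1,\ldots,x_n$, so $\mu_n^*x_i=x_i$ for $i\leqslant n$. Thus $\mu_n^*(I_X)=\langle x_1,\ldots,x_n\rangle\subseteq k[\mathbb{G}_m\times Q_{2n}]$ and $Z=\mu_n^{-1}(X)$ is the closed subscheme of $\mathbb{G}_m\times Q_{2n}$ cut out by this ideal. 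Setting $x_1=\cdots=x_n=0$ in $k[Q_{2n}]$ kills the relation $\sum x_iy_i-z(1-z)$ up to $-z(1-z)$, leaving $k[y_1,\ldots,y_n,z]/(z(1-z))\cong k[y_1,\ldots,y_n]\times k[y_1,\ldots,y_n]$. Hence $Z\cong\mathbb{G}_m\times(\A^n\sqcup\A^n)$ is a smooth $k$-scheme of pure dimension $n+1$, of codimension $n$ in the smooth ambient $\mathbb{G}_m\times Q_{2n}$ of dimension $2n+1$.

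The next step is to check that $(x_1,\ldots,x_n)$ is a regular sequence in $M=\mathscr{O}_{\mathbb{G}_m\times Q_{2n},u}$. Because $\mathbb{G}_m\times Q_{2n}$ is smooth over $k$, $M$ is a regular local ring hence Cohen--Macaulay; the ideal $(x_1,\ldots,x_n)$ has height equal to $n$ by the codimension computation above and is generated by exactly $n$ elements. In a Cohen--Macaulay local ring, such a system of generators is automatically a regular sequence (\emph{e.g.} \cite[Theorem 17.4]{Matsumura}), so $(x_1,\ldots,x_n)$ is $M$-regular. Meanwhile $(x_1,\ldots,x_n)$ is an $R$-regular sequence because $\lambda:X\hookrightarrow Q_{2n+1}$ is a regular embedding of codimension $n$ with ideal $I_X=\langle x_1,\ldots,x_n\rangle$ (Lemmas \ref{isox} and \ref{Ximmreg}).

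Lemma \ref{conditiontorind} then gives $\Tor_i^R(R/I,M)=0$ for all $i>0$, which is exactly the Tor-independence to be established. The only point requiring care is the height computation that identifies $(x_1,\ldots,x_n)$ inside $\mathscr{O}_{\mathbb{G}_m\times Q_{2n},u}$ as an ideal of height $n$, since it is precisely this which allows Cohen--Macaulayness to upgrade the $n$-element generating system into a regular sequence; everything else is either formal or a direct reading off of the explicit formula for $\mu_n$.
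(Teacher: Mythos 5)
Your proof is correct, and it follows the paper's template up to the reduction (via Lemma \ref{conditiontorind}) to showing that $(\mu_n^*(x_1),\ldots,\mu_n^*(x_n))=(x_1,\ldots,x_n)$ is a regular sequence in $\mathscr{O}_{\mathbb{G}_m\times Q_{2n},w}$ for $w\in Z$. Where you genuinely diverge is in verifying regularity. The paper argues inductively and explicitly: it identifies each successive quotient $\mathscr{O}_{\mathbb{G}_m\times Q_{2n},w}/\langle x_1,\ldots,x_{i-1}\rangle$ as a localisation of $k[Q_{2(n-i+1)}][t^{\pm 1},y_1,\ldots,y_{i-1}]$, a domain, and then notes that $x_i$ is a nonzero (hence regular) element there. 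You instead do a single dimension count (computing $Z\cong\mathbb{G}_m\times(\A^n\sqcup\A^n)$, hence of pure codimension $n$, so the ideal has height $n$ in the regular local ring $\mathscr{O}_{\mathbb{G}_m\times Q_{2n},w}$) and then invoke the Cohen--Macaulay property: an $n$-element generating system of a height-$n$ ideal in a CM local ring is automatically a regular sequence. Both arguments are sound. Your route is shorter and more conceptual, relying on one general commutative algebra fact (and the same fact also disposes of the $R$-regularity of $(x_1,\ldots,x_n)$, so you can streamline the appeal to Lemma \ref{Ximmreg}); the paper's route is more elementary and entirely self-contained, keeping visible the identification of the successive quotients as smaller quadrics, which is in keeping with the rest of the paper's explicit Rost--Schmid computations. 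Two small remarks: the relevant citation for the CM implication is more precisely \cite[Theorem 17.4(i)]{Matsumura} combined with the fact that in a local ring a maximal regular sequence in $I$ has length $\operatorname{depth}(I,A)$ (or directly Bruns--Herzog, Theorem 2.1.2); and the height computation uses that $\mathbb{G}_m\times Q_{2n}$ is an integral variety so that local codimension equals global codimension --- worth making explicit, though unproblematic.
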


\begin{proof}
Let $w\in\mathbb{G}_m\times Q_{2n}$ be such that $\mu_n(w)$ belongs to $X$, viewed as a closed subset of $Q_{2n+1}$. We have to show that $\Tor_i^{\mathscr{O}_{Q_{2n+1},x}}(\mathscr{O}_{X,x},\mathscr{O}_{\mathbb{G}_m\times Q_{2n},w})=0$ for all $i>0$. According to Lemma \ref{conditiontorind}, it suffices to show that the sequence $(\mu_n^*(x_1),\ldots,\mu_n^*(x_n))$ of elements of $\mathscr{O}_{\mathbb{G}_m\times Q_{2n},w}$ is regular. Recall that $\mu_n^*(x_i)=x_i$ by definition. We then have that $k[\mathbb{G}_m\times Q_{2n}]/\langle x_1,\ldots,x_n\rangle=k[t^{\pm 1},y_1,\ldots,y_n,z]/\langle z(1-z)\rangle$: in particular, this ring is non-zero, hence it suffices to show that $x_i$ is a regular element of $\mathscr{O}_{\mathbb{G}_m\times Q_{2n},w}/\langle x_1,\ldots,x_{i-1}\rangle$ for all $i\leqslant n$. We see that $\mathscr{O}_{\mathbb{G}_m\times Q_{2n},w}/\langle x_1,\ldots,x_{i-1}\rangle$ is a localisation of the ring \[k[x_i,\ldots,x_n,y_i,\ldots,y_n]/\left\langle\sum_{i\leqslant j\leqslant n}x_jy_j=z(1-z)\right\rangle[t^{\pm 1},y_1,\ldots,y_{i-1}]=k[Q_{2(n-i+1)}][t^{\pm 1},y_1,\ldots,y_{i-1}]\] which is a domain so that its localisations are domains, hence it suffices to show that $x_i$ is non-zero in $\mathscr{O}_{\mathbb{G}_m\times Q_{2n},w}/\langle x_1,\ldots,x_{i-1}\rangle$ which is true.
\end{proof}

The scheme $Z$ is the coproduct of $Z_0=V(x_1,\ldots,x_n,z)$ and $Z_1=V(x_1,\ldots,x_n,1-z)$ and there are explicit $k$-scheme isomorphisms $Z_i\cong\mathbb{G}_m\times\A^n$. Note that $Z_i=\mathbb{G}_m\times Y_i$ where $Y_0=V(x_1,\ldots,x_n,z)\hookrightarrow Q_{2n}$ is the subscheme $Y$ defined above and $Y_1=V(x_1,\ldots,x_n,1-z)\hookrightarrow Q_{2n}$ is the scheme $Y'$ described above. Let $\upsilon_i\in Y_i$ denote the generic point. Observe that $(x_1,\ldots,x_n)$ generates $\m_{\upsilon_i}$.

\begin{proof}[Proof of Theorem \ref{eqfaibleqpgmqi}]
First, since $\mathbb{G}_m\wedge Q_{2n}$ and $Q_{2n+1}$ are $\A^1$-equivalent to $\A^{n+1}\setminus 0$, by \cite[Lemma 2.2]{DubFas}, it suffices to show that the map $m^*:\Hr^n(Q_{2n+1},\mathbf{K}_{n+1}^\MW)\rightarrow\Hr^n(\mathbb{G}_m\wedge Q_{2n},\mathbf{K}_{n+1}^\MW)$ induced by $m$ in cohomology is an isomorphism of $\Kr_0^\MW(k)$-modules.

Now we first elucidate the action of $\mu_n':\mathbb{G}_m\times Q_{2n}\rightarrow Q_{2n+1}$ at the level of cohomology with values in $\Kr_{n+1}^\MW$, namely the map \[\mu_n'^*:\Hr^n(Q_{2n+1},\mathbf{K}_{n+1}^\MW)\rightarrow\Hr^n(\mathbb{G}_m\times Q_{2n},\mathbf{K}_{n+1}^\MW).\] The map $\mu_n'$ has the same $\A^1$-homotopy class in $[\mathbb{G}_m\times Q_{2n},Q_{2n+1}]_{\A^1}$ as the map $\mu_n$ since the former is obtained by composing the latter with an elementary endomorphism of $Q_{2n+1}$ which is $\A^1$-homotopic to the identity: since $\mathbf{K}_{n+1}^\MW$ is strictly $\A^1$-invariant, it follows that $\mu_n$ and $\mu_n'$ induce the same map $\Hr^n(Q_{2n+1},\mathbf{K}_{n+1}^\MW)\rightarrow\Hr^n(\mathbb{G}_m\times Q_{2n},\mathbf{K}_{n+1}^\MW)$. We reason with $\mu_n$ which has a simpler expression.

The strategy is the same as in the last section: we use the base change theorem for cohomology with values in $\mathbf{K}_{n+1}^\MW$, that is, \cite[Theorem 3.18]{FaLCWG}, to simplify the computation. Recall the fibre product diagram:
\begin{center}
\begin{tikzcd}
\mathbb{G}_m\times Q_{2n} \arrow[r,"\mu_n"] & Q_{2n+1} \\
Z \arrow[r,"f"] \arrow[u]                & X \arrow[u]
\end{tikzcd}
\end{center}
The map $f$ is explicitly given by $f(t,y_1,\ldots,y_n)=(1,b(t)y_1,\ldots,b(t)y_n)$ on $Z_0$ and by $f(t,y_1,\ldots,y_n)=(t,b(t)y_1,\ldots,b(t)y_n)$ on $Z_1$. Therefore we have a commutative diagram:
\begin{center}
\begin{tikzcd}
Z \arrow[d] \arrow[r,"f"]                    & X \arrow[d] \\
\mathbb{G}_m\sqcup\mathbb{G}_m \arrow[r,"g"] & \mathbb{G}_m 
\end{tikzcd}
\end{center}
where the morphism $g:\mathbb{G}_m\sqcup\mathbb{G}_m\rightarrow\mathbb{G}_m$ is the constant morphism to $1$ on the first factor and is the identity on the second factor; the vertical maps are of the form $\A^n\times\mathbb{G}_m\rightarrow\mathbb{G}_m$ and hence are $\A^1$-weak equivalences. Hence we have a commutative diagram:
\begin{center}
\begin{tikzcd}
\Hr^0(Z,\Kbf_1^\MW) & \arrow[l,"f^*"] \Hr^0(X,\Kbf_1^\MW) \arrow[l] \\
\Hr^0(\mathbb{G}_m\sqcup\mathbb{G}_m,\Kbf_1^\MW) \arrow[u] & \Hr^0(\mathbb{G}_m,\Kbf_1^\MW) \arrow[u] \arrow[l,"g^*"]
\end{tikzcd}
\end{center}
where the vertical arrows, which are induced by pull-back from the scheme morphisms of previous diagram, are isomorphisms since $\Kbf_1^\MW$ is strictly $\A^1$-invariant. For convenience, set $M=\Kr_1^\MW(k)\oplus\Kr_0^\MW(k)[t]\cong\Hr^0(\mathbb{G}_m,\Kbf_1^\MW)$. Then $g^*$ can be seen as a map $M\rightarrow M\oplus M$ and the second statement of Lemma \ref{cohomologyprodwithgm} shows that in this way, $g^*$ takes $\alpha+\beta[t]$ to $(\alpha,\alpha+\beta[t])$. Since $t_X=x_{n+1}$, it follows that $f^*([x_{n+1}])=f^*(0+1[t_X])=(0,0+1[t_{Z_1}])=(0,[t_{Z_1}])$.

Recall that $I_X$ is the ideal defining $X$ as a closed subscheme of $Q_{2n+1}$ and that $L_X$ is the determinant of $I_X/I_X^2$. We adopt similar notations for $Z$, $Z_0$ and $Z_1$ (as closed subschemes of $\mathbb{G}_m\times Q_{2n}$); we note that $I_{Z_1}/I_{Z_1}^2$ is free of rank $n$ generated by $(\overline{x_1},\ldots,\overline{x_n})$. We then have a commutative diagram:
\begin{center}
\begin{tikzcd}
\Hr^0(Z,\Kbf_1^\MW(L_Z,n)) & \Hr^0(Z,\Kbf_1^\MW(f^*L_X,n)) \arrow[l] & \Hr^0(X,\Kbf_1^\MW(L_X,n)) \arrow[l] \\
                           & \Hr^0(Z,\Kbf_1^\MW) \arrow[u]           & \Hr^0(X,\Kbf_1^\MW) \arrow[l] \arrow[u]
\end{tikzcd}
\end{center}
In this diagram, the map $\Hr^0(Z,\Kbf_1^\MW(f^*L_X,n))\rightarrow\Hr^0(Z,\Kbf_1^\MW(L_Z,n))$ is induced by the obvious map $f^*L_X\rightarrow L_Z$ which is an isomorphism. The horizontal maps of the squares are induced by pull-back by $f$. Finally, the right vertical arrow is induced by the trivialisation $L_X\cong k[X]\overline{x_1}\wedge\cdots\wedge\overline{x_{n-1}}\wedge\overline{x_nx_{n+1}}$ and the left vertical arrow by the induced trivialisation of $f^*L_X$.

Now denoting by $x$ the generic point of $X$ as before and by $\zeta_i$ that of $Z_i$, $[x_{n+1}]\otimes((\overline{x_1}\wedge\cdots\wedge\overline{x_n})\otimes 1)\in\Kr_1^\MW(k(x),L_X(x),n)$ is sent to \[[t_{Z_1}]\otimes((\overline{x_1}\wedge\cdots\wedge\overline{x_n})\otimes 1)\in\Kr_1^\MW(k(\zeta_1),L_{Z_1}(\zeta_1),n)\hookrightarrow\Hr^0(Z,\Kbf_1^\MW(L_Z,n))\] by the map $\Hr^0(X,\mathbf{K}_1^\MW(L_X,n))\rightarrow\Hr^0(Z,\mathbf{K}_1^\MW(L_Z,n))$ since $[x_{n+1}]$ is sent to $0$ on the first factor of $\Hr^0(Z,\Kbf_1^\MW)$. Finally, thanks to Lemma \ref{torindmu}, we can apply the base change theorem for cohomology with coefficients in $\Kbf_{n+1}^\MW$ and for the fibre product
\begin{center}
\begin{tikzcd}
\mathbb{G}_m\times Q_{2n} \arrow[r,"\mu_n"] & Q_{2n+1} \\
Z \arrow[u] \arrow[r] \arrow[u]             & X \arrow[u]
\end{tikzcd}
\end{center}
According to the above discussion, \cite[Theorem 3.18]{FaLCWG} yields a commutative diagram:
\begin{center}
\begin{tikzcd}
\Hr^n(\mathbb{G}_m\times Q_{2n},\Kbf_{n+1}^\MW) & \Hr^n(Q_{2n+1},\Kbf_{n+1}^\MW) \arrow[l,"\mu_n^*"] \\
\Hr^0(Z_1,\Kbf_1^\MW(L_{Z_1},n)) \arrow[u] & \protect{\Kr_0^\MW(k)\cdot[x_{n+1}]\otimes((\overline{x_1}\wedge\cdots\wedge\overline{x_n})\otimes 1)} \arrow[l] \arrow[u]
\end{tikzcd}
\end{center}
The right-hand vertical morphism sends $[x_{n+1}]\otimes((\overline{x_1}\wedge\cdots\wedge\overline{x_n})\otimes 1)$ to the class $\beta_n=[x_{n+1}]\otimes(\overline{x_1}\wedge\cdots\wedge\overline{x_n})^*$ of \cite[Lemma 4.2.5]{ADF} which generates the free $\Kr_0^\MW(k)$-module $\Hr^n(Q_{2n+1},\Kbf_{n+1}^\MW)$ according to that lemma. The left-hand vertical morphism can be viewed as a map \[\Kr_1^\MW(k)\oplus\Kr_0^\MW(k)[t_{Z_1}]\rightarrow\Hr^n(Q_{2n},\Kbf_{n+1}^\MW)\oplus\widetilde{\CH}^n(Q_{2n})[t_{Q_{2n}}]\] and $[t_{Z_1}]$ is sent to $\alpha_n'[t_{Q_{2n}}]$. All in all, we see that $\mu_n^*\beta_n=\alpha_n'[t_{Q_{2n}}]$ where $\alpha_n'$ is the generator of the free $\Kr_0^\MW(k)$-module $\widetilde{\CH}^n(Q_{2n})$ described in Lemma \ref{lem:gen_explicite_1-z}.

Finally recall that by Lemma \ref{lem:cohomology_algebraic_suspension}, the map \[\pi^*:\Hr^n(\mathbb{G}_m\wedge Q_{2n},\mathbf{K}_{n+1}^\MW)\rightarrow\Hr^n(\mathbb{G}_m\times Q_{2n},\mathbf{K}_{n+1}^\MW)\cong\Hr^n(Q_{2n},\mathbf{K}_{n+1}^\MW)\oplus\widetilde{\CH}^n(Q_{2n})[t_{Q_{2n}}]\] induced by the quotient map $\pi:\mathbb{G}_m\times Q_{2n}\rightarrow\mathbb{G}_m\wedge Q_{2n}$ factors into an isomorphism $\Hr^n(\mathbb{G}_m\wedge Q_{2n},\mathbf{K}_{n+1}^\MW)\cong\widetilde{\CH}^n(Q_{2n})[t_{Q_{2n}}]$. Since $m\circ\pi=\mu_n'$, $\mu_n^*=\mu_n'^*=\pi^*\circ m^*$ hence the above equality $\mu_n^*\beta_n=\alpha_n'[t_{Q_{2n}}]$ shows that $m^*:\Hr^n(Q_{2n+1},\Kbf_{n+1}^\MW)\xrightarrow{\cong}\Hr^n(\mathbb{G}_m\wedge Q_{2n},\Kbf_{n+1}^\MW)$ is an isomorphism of $\Kr_0^\MW(k)$-modules as required.
\end{proof}

\subsection{Euler class groups and motivic stable cohomotopy theories}

We first recall the definition of Euler class groups and the Segre class—we follow \cite[Section 3]{AF}. If $n$ is a non-negative integer and $A$ is a ring, we denote by $\Ob_n(A)$ the set of ordered pairs $(I,\omega_I)$ where $I$ is a finitely generated ideal of $A$ and $\omega_I:(A/I)^n\rightarrow I/I^2$ is a surjective homomorphism of $A/I$-modules.

\begin{theo}[\protect{\cite[Theorem 3.1.2]{AF}}]\label{numgenidealpolring}
Let $A$ be a smooth $k$-algebra. Then given an element $(I,\omega_I)$ of $\Ob_n(A)$, there exists an element $(x,y,z)$ of $Q_{2n}(A)$ such that $I=\langle x_1,\ldots,x_n,z\rangle$; moreover, if $(x',y',z')\in Q_{2n}(A)$ possesses the same property, then the morphisms $\Spec A\rightarrow Q_{2n}$ induced by $(x',y',z')$ and $(x,y,z)$ are equal in $\Hom_{\A^1}(\Spec A,Q_{2n})$.
\end{theo}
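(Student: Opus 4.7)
The plan has two parts, matching the two assertions.

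For \textbf{existence}, I would lift the chosen generators $\omega_I(e_i)\in I/I^2$ to elements $x_1,\dots,x_n\in I$. Then $I=\langle x_1,\dots,x_n\rangle+I^2$. Setting $J=\langle x_1,\dots,x_n\rangle$, the $A/J$-module $M=I/J$ is finitely generated (the smooth $k$-algebra $A$ is Noetherian) and satisfies $M=(I/J)\cdot M$, that is, $M$ equals its own square as an ideal of $A/J$. Applying the standard Nakayama-type lemma ($M=\mathfrak{a}M$ with $M$ f.g.\ implies there exists $a\in\mathfrak{a}$ with $(1-a)M=0$) with $\mathfrak{a}=I/J$ produces $\overline{z}\in I/J$ with $\overline{z}(1-\overline{z})=0$ and $\overline{z}\cdot(I/J)=I/J$. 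Lifting $\overline{z}$ to $z\in I$ gives $I=\langle x_1,\dots,x_n,z\rangle$ and $z(1-z)\in J$, so we may write $z(1-z)=\sum_i x_i y_i$ for some $y_i\in A$. The tuple $(x,y,z)$ then lies in $Q_{2n}(A)$ and witnesses the statement.

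For \textbf{uniqueness up to naive $\A^1$-homotopy}, given two tuples $(x,y,z)$ and $(x',y',z')$ representing the same $(I,\omega_I)$ — so $\langle x_1,\dots,x_n,z\rangle=\langle x'_1,\dots,x'_n,z'\rangle=I$ and $x'_i-x_i\in I^2$ for all $i$ — I would build the naive homotopy as a composition of elementary homotopies, each easy to exhibit explicitly over $A[t]$:
\begin{itemize}
\item \emph{Homotopy in $y$.} With $x$ and $z$ fixed, any two choices $y,y'$ with $\sum x_iy_i=\sum x_iy'_i=z(1-z)$ are joined by the affine path $y_i(t)=(1-t)y_i+ty'_i$, which stays in $Q_{2n}(A[t])$ because the defining relation is affine in $y$.
\item \emph{Homotopy in $z$.} Two solutions of the Nakayama trick within the same ideal $I$ (with $J=\langle x_1,\dots,x_n\rangle$ fixed) differ by an element of $I/J$ annihilating $I/J$; this provides the algebraic input to interpolate $z$ affinely and adjust $y$ accordingly, again keeping the quadric relation satisfied.
\item \emph{Homotopy in $x$ modulo $I^2$.} Writing $x'_i-x_i=r_i\in I^2$, one uses the relations $r_i\in J^2+zJ+z^2 A$ (valid after possibly adjusting by the previous step) to produce explicit $y_i(t),z(t)\in A[t]$ interpolating so that $\sum(x_i+tr_i)y_i(t)=z(t)(1-z(t))$.
\end{itemize}
Chaining these steps reduces the general case to the trivial case where both tuples coincide.

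The \textbf{main obstacle} is the third step, the $x$-variation, which is where real content beyond formal manipulation enters: one must convert the abstract condition $x'_i-x_i\in I^2$ into concrete expressions involving $x_j$, $z$ and the splittings $y_j$, and then exhibit $\A^1$-families $(x(t),y(t),z(t))$ solving the quadric relation. Smoothness of $A$ enters here in guaranteeing the local lifts and splittings one needs, and the Noetherian hypothesis ensures everything is finitely generated so that the Nakayama idempotent argument applies both at the start and along the homotopy. Once these explicit homotopies are constructed, their concatenation gives the required equality in $\Hom_{\A^1}(\Spec A,Q_{2n})$.
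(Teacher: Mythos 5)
Your existence argument is correct and is exactly what the paper refers to (via the Nakayama trick of \cite[Lemma 2.1.1]{AF}): lift the $\omega_I(e_i)$ to $x_i\in I$, apply Nakayama to $M=I/\langle x_1,\dots,x_n\rangle$ to find $z\in I$ with $(1-z)M=0$, and split $z(1-z)\in\langle x_1,\dots,x_n\rangle$.

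For uniqueness, your three-step reduction is a sensible plan, and steps 1 and 2 are essentially right (one small correction to step 2: if $z,z'\in I$ both satisfy $(1-z)I\subseteq J$ and $(1-z')I\subseteq J$, then $z-z'=(1-z')z-(1-z)z'\in J$, so the difference lies in $J$, not merely in $I/J$; the affine path $z(t)=(1-t)z+tz'$ then satisfies $z(t)(1-z(t))\in J[t]$ and generates $I[t]$ over $J[t]$). But step 3 is a genuine gap, and you flag it as such yourself. Saying that $r_i\in J^2+zJ+z^2A$ and that one should then "produce explicit $y_i(t),z(t)$" does not actually explain why a compatible $z(t)$ exists along the linear path $x_i(t)=x_i+t r_i$; in particular, direct fiddling with the quadratic relation is unlikely to close the argument.

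The missing ingredient is to run the \emph{existence} argument over $A[t]$. Set $J(t)=\langle x_1(t),\dots,x_n(t)\rangle\subseteq A[t]$. Since $r_i\in I^2$, one has $x_i\equiv x_i(t)\bmod I^2[t]$, hence $J[t]\subseteq J(t)+I^2[t]$; since also $z\equiv z^2\bmod J$, one gets $z\in J(t)+I^2[t]$ and therefore $I[t]=J(t)+I[t]^2$. Because $A$ (hence $A[t]$) is Noetherian, Nakayama applied to the finitely generated $A[t]/J(t)$-module $I[t]/J(t)$ produces $w(t)\in I[t]$ with $(1-w(t))I[t]\subseteq J(t)$; choosing $y(t)$ splitting $w(t)(1-w(t))\in J(t)$ gives a point of $Q_{2n}(A[t])$. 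Its specializations at $t=0,1$ have $x$-coordinates $x$ and $x'$ but a priori different $z$-components $w(0),w(1)$; this is exactly what your steps 2 and 1 then handle. Note also that, contrary to your closing remarks, smoothness of $A$ plays no role here—only Noetherianity is needed for the Nakayama step, which is consistent with the paper's remark that the theorem holds for any Noetherian ring.
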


\begin{rema}
The proof of the above theorem (relying on \cite[Lemma 2.1.1]{AF}) implies that we may additionally require of $(x,y,z)$ that the $A/I$-module homomorphism $\omega_I:(A/I)^n\rightarrow I/I^2$ is induced by $(\overline{x_1},\ldots,\overline{x_n})$ where $\overline{a}$ is the class of $a\in I$ in $I/I^2$. 

Let us also mention that the set $\Hom_{\A^1}(X,Y)$ makes sense for essentially arbitrary schemes (over the base $\Z$). The theorem above then holds for any Noetherian ring $A$ as is in fact clear from the proof (see also \cite[Theorem 2.0.9]{FaselNumGenIdPol}). Since we work in the context of $\A^1$-homotopy theory, we prefer to restrict our attention to objects of $\mathsf{Sm}_k$.
\end{rema}

Let $A$ be a smooth $k$-algebra. If $(x,y,z)$ is an element of $Q_{2n}(A)$, we denote by $[(x,y,z)]$ the image of $(x,y,z)$ by the quotient map $Q_{2n}(A)\rightarrow\Hom_{\A^1}(\Spec A,Q_{2n})$.

\begin{defi}[\protect{\cite[Definition 3.1.3]{AF}}]
Let $(I,\omega_I)$ be an element of $\Ob_n(A)$. We set $s(I,\omega_I)=[(x,y,z)]$ where $(x,y,z)$ satisfies the conclusion of the previous theorem, which shows that this is a well-defined association $s:\Ob_n(A)\rightarrow\Hom_{\A^1}(\Spec A,Q_{2n})$: we call $s$ the \emph{universal Segre class} map.
\end{defi}

\begin{exe}\label{imasegunimod}
Assume $(I,\omega_I)$ is given by \[(I,\omega_I)=(\langle a_1,\ldots,a_n\rangle,(\overline{a_1},\ldots,\overline{a_{n-1}},\overline{a_na_{n+1}}))\] where $(a_1,\ldots,a_n,a_{n+1})$ is an element of $\Um_{n+1}(A)$ with splitting $(b_1,\ldots,b_n,b_{n+1})$ as in the statement of Lemma \ref{genidealgenbyunimod}. Then setting $x_i=a_i$ for $i<n$, $x_n=a_na_{n+1}$, $y_i=b_ia_{n+1}b_{n+1}$ for $i<n$, $y_n=b_nb_{n+1}$ and $z=a_1b_1+\cdots+a_nb_n$, by the same lemma, we have the equality $[(x,y,z)]=s(I,\omega_I)$ in $\Hom_{\A^1}(\Spec A,Q_{2n})$.
\end{exe}

The next step is to somehow turn $s$ into a group homomorphism. It is not clear how to accomplish this since $\Ob_n(A)$ is not a group. As usual, the free abelian group on $\Ob_n(A)$ is not manageable and a quotient must be considered—in fact, a subquotient. Indeed, let $\Ob_n'(A)$ be the subset of $\Ob_n(A)$ whose elements are ordered pairs $(I,\omega_I)$ where $I$ is of height $n$. Let $I$ be an ideal of $A$ (respectively of height $n$); then the group $\Er_n(A/I)$ acts on the subset of $\Ob_n(A)$ (respectively of $\Ob_n'(A)$) whose elements are those ordered pairs $(J,\omega_J)$ with $J=I$: the action of $\sigma\in\Er_n(A/I)$ is given by $\sigma\cdot(I,\omega_I)=(I,\omega_I\circ\sigma^{-1})$.

\begin{defi}\protect{\cite[Definition 3.1.5]{AF}}\label{def:euler_class_groups}
Let $A$ be a smooth $k$-algebra. The \emph{Euler class group} $\Er^n(A)$ is the quotient of the free abelian group $\Z[\Ob_n'(A)]$ on $\Ob_n'(A)$ by the subgroup $G$ generated by the following elements.
\begin{enumerate}
	\item \emph{Complete intersection.} If $I=\langle a_1,\ldots,a_n\rangle$ and $\omega_I:(A/I)^n\rightarrow I/I^2$ is induced by $(\overline{a_1},\ldots,\overline{a_n})$, then $(I,\omega_I)$ belongs to the subgroup $G$.
	\item \emph{Elementary action.} If $(I,\omega_I)\in\Ob_n'(A)$ and if $\sigma\in\Er_n(A/I)$, then $(I,\omega_I)-\sigma\cdot(I,\omega_I)$ belongs to the subgroup $G$.
	\item \emph{Disconnected sum.} If $I$ is an ideal of $A$ of height $n$ such that $I=JK$ where $J$ and $K$ are ideals of height $n$ with $K+J=A$, then a surjection $\omega_I:(A/I)^n\rightarrow I/I^2$ induces surjections $\omega_J:(A/J)^n\rightarrow J/J^2$ and $\omega_K:(A/K)^n\rightarrow K/K^2$ and $(I,\omega_I)-(J,\omega_J)-(K,\omega_K)$ belongs to the subgroup $G$.
\end{enumerate}
\end{defi}

\paragraph*{The Segre class homomorphism.} \emph{From now on, we assume that $n$ satisfies $n\geqslant 2$.} Let $A$ be a smooth $k$-algebra; assume that $X=\Spec A$ has Krull dimension $d\leqslant 2n-2$. Compose the universal Segre class $s:\Ob_n'(A)\rightarrow\Hom_{\A^1}(X,Q_{2n})$ (restricted to $\Ob_n'(A)$) with the natural map $\Hom_{\A^1}(X,Q_{2n})\rightarrow[X,Q_{2n}]_{\A^1}$. The latter map is a bijection as already noted and the right-hand side of this map has a natural Borsuk-type abelian group structure by \cite[Proposition 1.2.1]{AF}, hence a map $\tilde{s}:\Z[\Ob_n'(A)]\rightarrow[X,Q_{2n}]_{\A^1}$. The answer to the obvious question is positive:

\begin{prop}[\protect{\cite[Proposition 3.1.9]{AF}}]
Assume that $A$ is a smooth $k$-algebra of Krull dimension $d\leqslant 2n-2$. Then the map $\tilde{s}$ factors through the quotient map $\Z[\Ob_n'(A)]\rightarrow\Er^n(A)$ into a group homomorphism $s:\Er^n(A)\rightarrow[X,Q_{2n}]_{\A^1}$. We call $s$ the \emph{Segre class homomorphism}.
\end{prop}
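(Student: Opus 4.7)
Since $[X,Q_{2n}]_{\A^1}$ is an abelian group under the Borsuk law, it suffices to show that $\tilde{s}:\Z[\Ob_n'(A)]\rightarrow[X,Q_{2n}]_{\A^1}$ vanishes on each of the three types of generators of the subgroup $G$; universality then yields both the factorisation and the group homomorphism property in one stroke. I would organise the proof around the three relations in turn, treating the ``complete intersection'' and ``elementary action'' cases directly, and leaving the ``disconnected sum'' relation as the substantive step.

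\textbf{Complete intersection and elementary action.} If $I=\langle a_1,\ldots,a_n\rangle$ with $\omega_I$ induced by $(\overline{a_1},\ldots,\overline{a_n})$, the tuple $(x,y,z)=(a_1,\ldots,a_n,0,\ldots,0,0)$ lies in $Q_{2n}(A)$ since $\sum x_iy_i=0=z(1-z)$, and by the remark following Theorem \ref{numgenidealpolring} it presents $(I,\omega_I)$; the associated morphism $X\rightarrow Q_{2n}$ factors through the closed subscheme $V(y_1,\ldots,y_n,z)\cong\A^n$, which is $\A^1$-contractible and contains the base point, so $s(I,\omega_I)$ is the neutral class. For the elementary action, given $\sigma\in\Er_n(A/I)$ and a presentation $(x,y,z)$ of $(I,\omega_I)$ in which $\omega_I$ is induced by $(\overline{x_1},\ldots,\overline{x_n})$, lift $\sigma^{-1}$ to a product $\tilde{\sigma}\in\Er_n(A)$ of elementary generators; a short computation shows that $(x\tilde{\sigma},y(\tilde{\sigma}^T)^{-1},z)$ still lies in $Q_{2n}(A)$ and presents $(I,\omega_I\circ\sigma^{-1})$. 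Writing $\tilde{\sigma}$ as a product of elementary matrices $e_{ij}(\lambda)$ and using the naive $\A^1$-homotopies $t\mapsto e_{ij}(t\lambda)$ from $\I_n$ to $e_{ij}(\lambda)$, one concatenates an explicit naive $\A^1$-homotopy in $Q_{2n}(A[t])$ between $(x,y,z)$ and $(x\tilde{\sigma},y(\tilde{\sigma}^T)^{-1},z)$, so $s(I,\omega_I)=s(I,\omega_I\circ\sigma^{-1})$ in $\Hom_{\A^1}(X,Q_{2n})$, a fortiori in $[X,Q_{2n}]_{\A^1}$.

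\textbf{Disconnected sum.} This is the main obstacle and requires showing that $s(I,\omega_I)=s(J,\omega_J)+s(K,\omega_K)$ whenever $I=JK$ with $J+K=A$ and both $J,K$ of height $n$. The approach I would take mirrors Subsection \ref{grouphom}: construct a scheme-theoretic model of the fold map $Q_{2n}\vee Q_{2n}\rightarrow Q_{2n}$, namely an open subscheme $W\subseteq Q_{2n}\times Q_{2n}$ whose inclusion is sufficiently $\A^1$-connected (by a purity/Blakers--Massey argument as in Lemma \ref{conninc}) to ensure, via Lemma \ref{lemobsth}, that maps $X\rightarrow Q_{2n}\times Q_{2n}$ lift uniquely through $W$ under the Krull-dimension hypothesis $d\leqslant 2n-2$, together with a morphism $W\rightarrow Q_{2n}$ that realises the sum. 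The coprimality $J+K=A$, via the Chinese Remainder Theorem, lets one patch presentations $(x_J,y_J,z_J)$ and $(x_K,y_K,z_K)$ of $(J,\omega_J)$ and $(K,\omega_K)$ into a joint morphism $X\rightarrow W$, and then to produce, using the explicit formulas (with the idempotents-like decomposition $e\in J$, $f\in K$, $e+f=1$), a presentation $(x,y,z)$ of $(I,\omega_I)$ whose class equals $s(J,\omega_J)+s(K,\omega_K)$ by construction.

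\textbf{Expected difficulty.} The hard part will be to write down the ``fold-like'' scheme-theoretic model $W\rightarrow Q_{2n}$ and verify directly that the patched tuple $(x,y,z)$ actually presents $(I,\omega_I)$ with the correct local orientation, i.e.\ that reducing $(\overline{x_1},\ldots,\overline{x_n})$ modulo $I$ recovers $\omega_I=(\omega_J,\omega_K)$ under the CRT isomorphism $I/I^2\cong J/J^2\oplus K/K^2$. Should this direct construction prove intractable, an alternative route is to invoke the Hurewicz homomorphism $h:[X,Q_{2n}]_{\A^1}\rightarrow\widetilde{\CH}^n(X)$ of \cite[Theorem 1.3.4]{AF}: additivity of Euler classes on coprime ideals is straightforward on the Chow--Witt side (disjoint supports in the Rost--Schmid complex), so the disconnected sum relation on $s$ could be deduced by comparison, provided $h$ is injective in the regime of interest.
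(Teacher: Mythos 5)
The paper does not prove this proposition: it is imported directly from \cite[Proposition 3.1.9]{AF} with no argument given in the text, so there is no in-paper proof to compare your attempt against. Assessed on its own terms, your proposal is correct and complete on the first two relations but has a genuine gap on the third.

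Your treatment of the complete-intersection and elementary-action relations is sound. The tuple $(a_1,\ldots,a_n,0,\ldots,0,0)$ does lie in $Q_{2n}(A)$, presents the complete-intersection pair $(I,\omega_I)$, and factors through the pointed $\A^1$-contractible closed subscheme $V(y_1,\ldots,y_n,z)\cong\A^n$, so the class is neutral; and the straight-line paths $t\mapsto e_{ij}(t\lambda)$ do produce genuine naive $\A^1$-homotopies on $Q_{2n}$ since the pairing $xy^T$ is preserved at every $t$. The problem is the disconnected-sum relation, which is the real content of the proposition. Your fold-model route is a reasonable idea in the spirit of $V_{2n}$ and Lemma~\ref{modelfold}, but you leave it entirely as a sketch: no concrete open $W\subseteq Q_{2n}\times Q_{2n}$, no morphism $W\rightarrow Q_{2n}$, and no verification that the CRT-patched tuple presents $(I,\omega_I)$; as written this is a plan, not a proof. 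The fallback through the Hurewicz homomorphism is worse: for $n<\dim X\leqslant 2n-2$ the map $h:[X,Q_{2n}]_{\A^1}\rightarrow\widetilde{\CH}^n(X)$ is \emph{not} injective in general. Obstruction theory (Lemma~\ref{lemobsth}) gives injectivity only when $\cd_{\A^1}(X)\leqslant n$, strictly stronger than the hypothesis $\dim X\leqslant 2n-2$, and in the intermediate range the kernel of $h$ is governed by the nontrivial higher $\A^1$-homotopy sheaves of $Q_{2n}$. So additivity of Euler classes on the Chow--Witt side cannot be pulled back through $h$ to give additivity of $s$; the fold-type construction (or whatever argument \cite{AF} actually uses) must be carried through in full.
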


\begin{rema}
See \cite[Remark 3.1.14]{AF} regarding the assumption on the characteristic of $k$ (or lack thereof) in light of \cite{As}. We thank the referee for pointing out that we do not have to require that $k$ is infinite in this proposition.
\end{rema}

\begin{rema}
The hypotheses in the statement of \cite[Proposition 3.1.9]{AF} are not known to be sufficient: the proof requires one to assume that the Krull dimension of $A$, and not the $\A^1$-cohomological dimension of $X$, is bounded above by $2n-2$. This is because of the reliance on \cite[Theorem 2.2.10]{AF} in proving the disconnected sum relation from Definition \ref{def:euler_class_groups}: $A$ is assumed to be of Krull dimension at most $2n-2$ in the statement of \cite[Theorem 2.2.10]{AF}. Indeed, the proof of this theorem itself relies on \cite[Lemma 2.2.7]{AF} and in the proof of this lemma, the hypothesis that $A$ is of Krull dimension at most $2n-2$ allows one to conclude that given a height $n$ ideal $J$ in $A$, $A/J$ is of dimension at most $n-2$ and thus to use \cite[Lemma 2.1.5]{AF}.
\end{rema}

The fundamental theorem is the following.

\begin{theo}[\protect{\cite[Theorem 3.1.13]{AF}}]
Assume that the field $k$ is infinite. Let $d$ be an integer such that $d\leqslant 2n-2$ and let $A$ be a smooth $k$-algebra of Krull dimension $d$; set $X=\Spec A$. Then the Segre class homomorphism $s:\Er^n(X)\rightarrow[X,Q_{2n}]_{\A^1}$ is an isomorphism of abelian groups.
\end{theo}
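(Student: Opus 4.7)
The plan is to construct an explicit inverse $r:[X,Q_{2n}]_{\A^1}\to\Er^n(A)$ and check that $r\circ s$ and $s\circ r$ are the respective identities. By Theorem \ref{naivequadrics} (applied to $Q_{2n}$), any class $[f]\in[X,Q_{2n}]_{\A^1}$ lifts to a naive $\A^1$-homotopy class of scheme morphisms $X\to Q_{2n}$, that is, to a tuple $(x,y,z)\in Q_{2n}(A)$ well-defined up to the equivalence relation $\sim_{\A^1}$. To such a tuple, I would associate the pair $(I,\omega_I)$ with $I=\langle x_1,\ldots,x_n,z\rangle$ and $\omega_I:(A/I)^n\twoheadrightarrow I/I^2$ induced by $(\overline{x_1},\ldots,\overline{x_n})$. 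The relation $z(1-z)=\sum x_iy_i$ gives $z\equiv\sum x_iy_i\pmod{I^2}$ (since $z^2\in I^2$), so $\omega_I$ is indeed surjective; hence $(I,\omega_I)$ defines an element of $\Ob_n(A)$.

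To land in $\Er^n(A)$ we need $I$ of height $n$ (or $I=A$). Here I would invoke a general-position argument: since $k$ is infinite and $A$ is smooth, $(x,y,z)$ can be replaced within its naive $\A^1$-homotopy class by a tuple $(x',y',z')$ such that the resulting ideal $I'=\langle x'_1,\ldots,x'_n,z'\rangle$ either equals $A$ (giving $0\in\Er^n(A)$) or is of height $n$. The class of $(I',\omega_{I'})$ in $\Er^n(A)$ is then the candidate value of $r([f])$. Well-definedness has two components: independence of the choice of perturbation, which is handled by the disconnected sum and elementary action relations defining $\Er^n(A)$ (different perturbations decompose into common refinements whose ``differences'' are complete intersections or elementary twists); and invariance under $\sim_{\A^1}$, which is the $\A^1$-invariance of Euler class groups, i.e. the statement that the natural map $\Er^n(A[t])\to\Er^n(A)$ is bijective, so that a naive homotopy $(x(t),y(t),z(t))\in Q_{2n}(A[t])$ between $(x_0,y_0,z_0)$ and $(x_1,y_1,z_1)$ yields a single element of $\Er^n(A[t])$ whose two specialisations coincide.

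Once $r$ is constructed and well-defined, the identities $r\circ s=\Id$ and $s\circ r=\Id$ are both easy consequences of Theorem \ref{numgenidealpolring} combined with the remark that the representative $(x,y,z)$ in that theorem can be chosen so that $\omega_I$ is induced by $(\overline{x_1},\ldots,\overline{x_n})$. Indeed, starting from $(I,\omega_I)\in\Ob_n'(A)$, applying $s$ gives some $(x,y,z)\in Q_{2n}(A)$ with $I=\langle x_1,\ldots,x_n,z\rangle$, and $r$ returns precisely $(I,\omega_I)$ by construction; conversely, starting from $[f]$ represented by $(x,y,z)$ with $I=\langle x_1,\ldots,x_n,z\rangle$ of height $n$, the element $r([f])=(I,\omega_I)$ is sent by $s$ back to the class $[(x,y,z)]$, again by Theorem \ref{numgenidealpolring}. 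Finally, additivity has already been established in the statement of the Segre class homomorphism, so bijectivity upgrades to an isomorphism of abelian groups.

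The genuinely hard ingredient is the $\A^1$-invariance of Euler class groups underlying well-definedness of $r$; this is where the hypotheses that $k$ be infinite and that $\dim A\leqslant 2n-2$ are essential, as the moving argument must be applied uniformly along $\Spec A\times\A^1$ to produce a single $\A[t]$-ideal of the correct height interpolating between the two specialisations. The second subtle point, less severe but still delicate, is showing that two distinct perturbations produce the same class in $\Er^n(A)$, which relies on resolving both to a common factorisation of ideals and invoking the disconnected sum relation; this is essentially the classical comparison argument for Euler classes of Bhatwadekar--Sridharan, carried out here in the slightly larger setting of all $n\geqslant 2$.
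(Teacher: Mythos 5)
The paper does not prove this statement; it is cited from \cite[Theorem 3.1.13]{AF}, so there is no internal proof against which to compare yours.

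Assessed on its own terms, your sketch has the right architecture, but the final step contains a genuine gap. You verify $r\circ s=\Id$ only on individual generators $(I,\omega_I)\in\Ob_n'(A)$. However, $\Er^n(A)$ is a quotient of the free abelian group $\Z[\Ob_n'(A)]$, so a general element is a $\Z$-linear combination $\sum n_j(I_j,\omega_{I_j})$, not a single such class. Since your $r$ is constructed only as a \emph{set} map — pick a naive representative, perturb to height $n$, extract $(I,\omega_I)$ — the generator-level check says nothing about $r\circ s$ evaluated on such a sum. Concretely, $s\circ r=\Id$ gives surjectivity of $s$, but injectivity, equivalently the full identity $r\circ s=\Id$, requires one of two facts your argument omits: either that $r$ is additive (so a homomorphism agreeing with the identity on generators is the identity), or an \emph{addition principle} for Euler class groups, namely that under the hypothesis $d\leqslant 2n-2$ every element of $\Er^n(A)$ is the class of a \emph{single} pair $(I,\omega_I)$, which would make the generator-level check decisive. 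Both are substantial theorems in the Bhatwadekar--Sridharan circle of ideas, not corollaries of what you have written. The ingredients you do flag as hard — the $\A^1$-homotopy invariance of Euler class groups and the moving lemma producing an ideal of the correct height uniformly along $X\times\A^1$ — are indeed where much of the technical work lies, and deferring them is acceptable for a sketch; but the injectivity gap above is a hole in the logical skeleton rather than a postponed technicality, and the argument does not close without it.
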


\begin{rema}
The remark above on the lack of any assumption on the characteristic of $k$ still stands: in light of \cite{As}, $k$ may be allowed to be of characteristic $2$. However, the field $k$ is indeed required to be infinite, at least to prove that $s$ is injective, since as noted in \cite[Remark 3.1.14]{AF}, the proof of this fact relies on the homotopy invariance of Euler class groups (\cite[Theorem A.1.4]{AF}) which is only known for an infinite base field.
\end{rema}

\emph{From now on, the field $k$ is assumed to be infinite.}

\subsubsection*{The homomorphism $\Um_{d+1}(A)/\Er_{d+1}(A)\rightarrow\Er^d(A)$ and the map $\eta_d$}

In \cite[Appendix A, A.2]{AF}, the author constructs a group homomorphism $\phi$ from the orbit set $\Um_{d+1}(A)/\Er_{d+1}(A)$ to $\Er^d(A)$ after \cite[Section 3]{DAS2015185} when $A$ is a smooth $k$-algebra of Krull dimension $d\geqslant 2$. It is defined in the following way. Let $(a_1,\ldots,a_d,a_{d+1})$ be an element of $\Um_{d+1}(A)$. Up to adding multiples of $a_{d+1}$ to $a_i$ for $i\leqslant d$, which does not change the orbit under $\Er_{d+1}(A)$, we may assume that the ideal $J_0=\langle a_1,\ldots,a_d\rangle$ is of height $d$. Consider the surjection $\omega_0:A^d\rightarrow J_0$ induced by $(a_1,\ldots,a_d)$; it induces a surjection from $(A/J_0)^d$ to $J_0/J_0^2$ which we still denote by $\omega_0$. We see that $a_{d+1}$ is invertible mod $J_0$, hence there exists a diagonal matrix $g\in\GL_d(A/J_0)$ with determinant $\overline{a_{d+1}}=a_{d+1}$ mod $J_0$. The class of $(J_0,\omega_0\circ g)$ in $\Er^d(A)$ does not depend on the choice of $g$ diagonal with determinant $\overline{a_{d+1}}$ thanks to the following well-known lemma.

\begin{lem}
Let $R$ be a commutative ring, let $n\geqslant 1$ be an integer and let $D$ be a diagonal matrix with coefficients in $R$ such that $\det(D)=1$. Then $D$ is an elementary matrix.
\end{lem}

\begin{proof}
We denote by $\diag(a_1,\ldots,a_n)$ the diagonal matrix with coefficients $a_1,\ldots,a_n$. If $n=1$, then $D=1$ is elementary. If $n=2$, then $D=\diag(a,a^{-1})$ for some $a\in R$ and the result is then known as Whitehead's lemma \cite[Lemma (1.4)]{Srinivas}. It follows that $\diag(a,b)=\diag(1,ab)\diag(a^{-1},a)$ and $\diag(1,a)$ are elementarily equivalent if $a$ and $b$ lie in $R^\times$, namely $\diag(a,b)\diag(1,ab)^{-1}$ lies in $\Er_2(R)$. From there, it follows easily by induction on $n$ that $D=\diag(a_1,\ldots,a_n)$ is elementarily equivalent to $\diag(1,\ldots,1,\det(D))$ if $\det(D)=a_1\cdots a_n$ is invertible, hence to the identity matrix if $\det(D)=1$. Thus $D$ is elementary in this case which completes the proof.
\end{proof}

We denote the class of $(J_0,\omega\circ g)$ by $(J_0,\overline{a_{d+1}}\omega_0)$. We then set $\phi([a_1,\ldots,a_d,a_{d+1}])=(J_0,\overline{a_{d+1}}\omega_0)$. This formula induces a well-defined map $\phi$ from $\Um_{d+1}(A)/\Er_{d+1}(A)$ to $\Er^d(A)$ which is in fact a group homomorphism (\cite[Proposition A.2.1, Proposition A.2.2]{AF}).

Now let $X$ be the $k$-scheme $\Spec A$. Then the map $p:Q_{2d+1}\rightarrow\A^{d+1}\setminus 0$ is a pointed $\A^1$-weak equivalence, thus $p_*:[X,Q_{2d+1}]_{\A^1}\rightarrow[X,\A^{d+1}\setminus 0]_{\A^1}$ is a bijection, hence a group isomorphism by functoriality of cohomotopy theories. Therefore we have a natural sequence \[\Um_{d+1}(A)/\Er_{d+1}(A)\xrightarrow{\widetilde{\Psi}}[X,Q_{2d+1}]_{\A^1}\xrightarrow{p_*}[X,\A^{d+1}\setminus 0]_{\A^1}\] where the composite is the map $\Psi$ studied in Section \ref{proof}. Thus the map $\widetilde{\Psi}:\Um_{d+1}(A)/\Er_{d+1}(A)\rightarrow[X,Q_{2d+1}]_{\A^1}$ is a group homomorphism and in fact, since it is bijective, a group isomorphism.

Recall the $k$-scheme morphism $\eta_d:Q_{2d+1}\rightarrow Q_{2d}$ defined in Subsection \ref{mapeta}.

\begin{prop}\label{linkbetweenetamor}
Let $A$ be a smooth $k$-algebra of Krull dimension $d\geqslant 2$; denote by $X$ its spectrum. Then the diagram
\begin{center}
\begin{tikzcd}
\Um_{d+1}(A)/\Er_{d+1}(A) \arrow[d,"\widetilde{\Psi}"] \arrow[r,"\phi"] & \Er^d(A) \arrow[d,"s"]\\
\protect{[X,Q_{2d+1}]_{\A^1}} \arrow[r,"(\eta_d)_*"] & \protect{[X,Q_{2d}]_{\A^1}}
\end{tikzcd}
\end{center}
is a commutative diagram in the category of abelian groups.
\end{prop}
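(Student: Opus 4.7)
The plan is to unravel both composites in the diagram on an explicit representative of a class $[a_1, \ldots, a_{d+1}] \in \Um_{d+1}(A)/\Er_{d+1}(A)$, and observe that, after a suitable choice of splitting and of diagonal representative, they coincide verbatim with the same tuple in $Q_{2d}(A)$.

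Concretely, I would fix a unimodular row $a = (a_1,\ldots,a_d,a_{d+1})$ with splitting $(b_1,\ldots,b_d,b_{d+1})$. For the top-right composite $s \circ \phi$, after changing $a$ to an equivalent row (adding multiples of $a_{d+1}$ to $a_i$, $i \leq d$, which does not affect any of the four objects in the diagram), one may assume that $J_0 = \langle a_1,\ldots,a_d\rangle$ has height $d$. Then $\phi([a]) = (J_0, \overline{a_{d+1}}\omega_0)$; choosing the diagonal matrix $g = \operatorname{diag}(1,\ldots,1,\overline{a_{d+1}}) \in \GL_d(A/J_0)$, the local orientation $\overline{a_{d+1}}\omega_0$ is induced by $(\overline{a_1},\ldots,\overline{a_{d-1}},\overline{a_da_{d+1}})$. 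Applying Example \ref{imasegunimod} (with $n=d$), the class $s \circ \phi([a])$ is represented by the explicit tuple
\[
T = (a_1,\ldots,a_{d-1},\,a_da_{d+1},\,b_1a_{d+1}b_{d+1},\ldots,b_{d-1}a_{d+1}b_{d+1},\,b_db_{d+1},\,\textstyle\sum_{i=1}^d a_ib_i) \in Q_{2d}(A).
\]

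For the bottom-left composite $\eta_* \circ \widetilde{\Psi}$, the chosen splitting $b$ provides a lift of $a: X \to \A^{d+1}\setminus 0$ to a scheme morphism $\widetilde{a} : X \to Q_{2d+1}$ given by $(a, b)$, and this represents $\widetilde{\Psi}([a])$ via the Jouanolou device $Q_{2d+1} \to \A^{d+1}\setminus 0$. Now the crucial point is the explicit formula for $\eta_d$ recorded in Equation (\ref{tuple}) of Subsection \ref{mapeta}: $\eta_d$ is induced by the tuple $(x_1, \ldots, x_{d-1}, x_d x_{d+1}, y_1 x_{d+1}y_{d+1}, \ldots, y_{d-1}x_{d+1}y_{d+1}, y_d y_{d+1}, z)$ in $Q_{2d}(k[Q_{2d+1}])$. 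Pulling back along $\widetilde{a}$ yields exactly the same tuple $T$ above. Both composites thus send $[a]$ to the class in $[X, Q_{2d}]_{\A^1}$ of the same scheme morphism $X \to Q_{2d}$, and commutativity follows.

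I do not anticipate any serious obstacle: the identity is essentially tautological once one sees that Lemma \ref{genidealgenbyunimod} (used in defining $\eta_d$) is precisely the same computation as Example \ref{imasegunimod} (used to compute $s$ on a generator coming from a unimodular row). The only points requiring mild care are the independence of the answer from the chosen splitting $b$ (automatic since $s$ and $\widetilde{\Psi}$ are already well-defined on the orbit set and on the Euler class group respectively) and the fact that adjusting $a$ to make $J_0$ of height $d$ does not affect either composite (since the adjustment is by an elementary operation on $a$, hence does not change $\widetilde{\Psi}([a])$ nor, by construction of $\phi$, the value of $\phi([a])$).
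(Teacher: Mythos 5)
Your proposal is correct and follows essentially the same route as the paper's proof: both reduce to checking commutativity on the level of underlying sets by unraveling the two composites on a unimodular row with a chosen splitting, and both observe that $s\circ\phi$ (via Example \ref{imasegunimod}) and $\eta_*\circ\widetilde{\Psi}$ (via the explicit formula \eqref{tuple} defining $\eta_d$ and the lift to $Q_{2d+1}$ furnished by the splitting) land on literally the same tuple in $Q_{2d}(A)$. The paper additionally records, before launching into this computation, that all four arrows are group homomorphisms—citing functoriality of cohomotopy theories (Lemma \ref{bifunctoriality}) for $\eta_*$—so that checking set-level commutativity suffices; you take this as given, which is a negligible difference.
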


Recall that $s$ and $\widetilde{\Psi}$ are group isomorphisms. Hence modulo these explicit isomorphisms, $\phi$ and $(\eta_d)_*$ coincide for any smooth $k$-algebra $A$ of Krull dimension $d\geqslant 2$: in this sense, $\eta_d$ is a geometric model for the homomorphism $\phi$.

\begin{proof}[Proof of Proposition \ref{linkbetweenetamor}]
Since all the morphisms in this diagram are group homomorphisms (in the case of $(\eta_d)_*$, by functoriality of cohomotopy theories), it suffices to show that this diagram commutes in the category of sets to conclude.

Let $a=(a_1,\ldots,a_d,a_{d+1})\in\Um_{d+1}(A)$; since we are only concerned with the class of the unimodular row $(a_1,\ldots,a_d,a_{d+1})$ under elementary equivalence, we may assume that the ideal $J=\langle a_1,\ldots,a_d\rangle$ is of height $d$. We choose a splitting $b=(b_1,\ldots,b_d,b_{d+1})$ of $(a_1,\ldots,a_d,a_{d+1})$. We let $\omega$ be the surjective homomorphism $\omega:A^d\rightarrow J$ given by $(a_1,\ldots,a_d)$ and we consider the diagonal endomorphism $g=\diag(1,\ldots,1,\overline{a_{d+1}})$ of $(A/J)^d$, whose determinant is $\overline{a_{d+1}}$: by definition, $\phi([a_1,\ldots,a_d,a_{d+1}])$ is the class of $(J,\omega\circ g)$ in $\Er^d(A)$. Then $\omega\circ g:(A/J)^d\rightarrow J/J^2$ is induced by $(\overline{a_1},\ldots,\overline{a_{d-1}},\overline{a_da_{d+1}})$. Set $(x_1=a_1,\ldots,x_{d-1}=a_{d-1},x_d=a_da_{d+1},z)$ with $z=a_1b_1+\cdots+a_db_d$ and the elements $y_1,\ldots,y_d$ of $A$ given by $y_i=b_ia_{d+1}b_{d+1}$ for $i<d$ and $y_d=b_db_{d+1}$. Then by Example \ref{imasegunimod}, $s(J,\omega_0\circ g)$ is equal to the class $[(x,y,z)]\in[X,Q_{2d}]_{\A^1}$. 

On the other hand, let $f:X\rightarrow Q_{2d+1}$ by the scheme morphism given by $(a,b)\in Q_{2d+1}(A)$: by definition of $\widetilde{\Psi}$, we then have $f=\widetilde{\Psi}([a_1,\ldots,a_d,a_{d+1}])$ in $[X,Q_{2d+1}]_{\A^1}$ while by definition of the $k$-scheme morphism $\eta_d$ and of the elements $(x,y,z)$ introduced above, the $k$-scheme morphism $\eta_d\circ f:X\rightarrow Q_{2d}$ is induced by $(x,y,z)\in Q_{2d}(A)$. Hence the equality $(\eta_d)_*\circ\widetilde{\Psi}([a_1,\ldots,a_d,a_{d+1}])=[(x,y,z)]$ holds in $[X,Q_{2d}]_{\A^1}$. This equality precisely expresses the commutativity of the diagram of the proposition which completes the proof.
\end{proof}

\subsubsection*{The map $\Er^d(A)\rightarrow\Um_{d+1}(A)/\Er_{d+1}(A)$ and the morphism $\mu_d$}

We still assume $k$ to be infinite. \emph{From now on, we also require $k$ to be perfect and the characteristic of $k$ to be different from $2$.}

Recall from \cite[Subsection 2.5]{DTZ1} the map $\delta_A:\Er^d(A)\rightarrow\Um_{d+1}(A)/\Er_{d+1}(A)$ for suitable $A$. It is constructed in the following way. Let $A$ be a smooth $k$-algebra of dimension $d\geqslant 2$. Let $(I,\omega_I)$ be an element of $\Ob_d'(A)$. There exist elements $a_1,\ldots,a_d$ of $I$ such that $\omega_I$ is induced by $(\overline{a_1},\ldots,\overline{a_d})$ where $\overline{a_i}$ is the class of $a_i$ mod $I$. Then there exist $s\in I$ and $(b_1,\ldots,b_d)\in A^d$ such that $(a,b,s)\in Q_{2d}(A)$, that is, such that $s-s^2=\sum_{i=1}^d a_ib_i$ (see \cite[Lemma 2.1.1]{AF}: the proof consists in applying Nakayama's lemma to $M=I/\langle a_1,\ldots,a_d\rangle$; the lemma yields $s\in I$ such that $(1-s)M=0$ and it then suffices to write out the relation obtained by considering $\overline{s}\in M$). Now we observe that \[(1-2s)^2=1-4(s-s^2)=1-\sum_{i=1}^d(2a_i)(2b_i)\] so that $(1-2s)^2=1$ mod $\langle 2a_1,\ldots,2a_s\rangle$. Hence we obtain a row $(2a_1,\ldots,2a_d,1-2s)\in\Um_{d+1}(A)$: by definition, we then have \[\delta_ A(I,\omega_I)=[2a_1,\ldots,2a_d,1-2s].\] By \cite[Subsection 2.5]{DTZ1}, this assignment is well-defined.

The morphism $\mu_d:\mathbb{G}_m\times Q_{2d}\rightarrow Q_{2d+1}$ constructed in Subsection \ref{mapmu} induces by evaluation at $\lambda=-1\in\mathbb{G}_m(k)$ a morphism $\mu_d(-1):Q_{2d}\rightarrow Q_{2d+1}$. Recall also that the isomorphism $[Q_{2d+1},Q_{2d+1}]_{\A^1}\cong\Kr_0^\MW(k)$ given by Morel's $\A^1$-Brouwer degree (\cite[Corollary 6.43]{Morel}) induces a $\Kr_0^\MW(k)$-module structure on $[X,Q_{2d+1}]_{\A^1}$. Explicitly, if $\alpha\in k^*$, then $\langle\alpha\rangle\in\Kr_0^\MW(k)$ corresponds to the endomorphism of $Q_{2d+1}$ given in coordinates by $g_\alpha^d:(x_1,\ldots,x_{d+1},y_1,\ldots,y_{d+1})\mapsto(\alpha x_1,x_2,\ldots,x_{d+1},\alpha^{-1}y_1,y_2,\ldots,y_{d+1})$ according to Corollary \ref{cor:explicit_rank_one_representative_2n+1_quadric}. In particular, $\langle\alpha^{d}\rangle=\langle\alpha\rangle^d$ corresponds to the endomorphism of $Q_{2d+1}$ given by \[(x_1,\ldots,x_d,x_{d+1},y_1,\ldots,y_d,y_{d+1})\mapsto(\alpha x_1,\ldots,\alpha x_d,x_{d+1},\alpha^{-1}y_1,\ldots,\alpha^{-1}y_d,y_{d+1})\] (note that one may permute coordinates since the matrix $\begin{pmatrix}
0 & 1 \\
-1 & 0
\end{pmatrix}$ is elementary).

The following statement makes sense since $2\in k^\times$ as $k$ is of characteristic different from $2$.

\begin{prop}
Let $A$ be a smooth $k$-algebra of Krull dimension $d\geqslant 2$ and let $X$ be the spectrum of $A$. Then the following set-theoretic diagram:
\begin{center}
\begin{tikzcd}
\Er^d(A) \arrow[d,"s"] \arrow[r,"\delta_A"] & \Um_{d+1}(A)/\Er_{d+1}(A) \arrow[d,"\widetilde{\Psi}"] \\
\protect{[X,Q_{2d}]_{\A^1}} \arrow[r,"\langle 2^d\rangle\mu_d(-1)_*"] & \protect{[X,Q_{2d+1}]_{\A^1}}
\end{tikzcd}
\end{center}
commutes.
\end{prop}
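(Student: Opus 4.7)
The plan is to verify commutativity on each representative $(I, \omega_I) \in \Ob_d'(A)$, which is enough since both $s$ and $\delta_A$ factor through $\Z[\Ob_d'(A)]$. By the remark following Theorem \ref{numgenidealpolring} and the construction of $\delta_A$ recalled in the excerpt, one can choose $a_1,\ldots,a_d \in I$, $b_1,\ldots,b_d \in A$ and $s \in I$ such that $\omega_I$ is induced by $(\overline{a_1},\ldots,\overline{a_d})$, the equality $I = \langle a_1,\ldots,a_d,s\rangle$ holds, and $s(1-s) = \sum_{i=1}^d a_i b_i$. Then $(a,b,s) \in Q_{2d}(A)$ defines a morphism $f : X \to Q_{2d}$ whose class in $[X,Q_{2d}]_{\A^1}$ represents $s(I,\omega_I)$.

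With these conventions fixed, I would compute both paths around the diagram at the level of $Q_{2d+1}(A)$. For the right-then-down path: $\delta_A(I,\omega_I) = [2a_1,\ldots,2a_d,1-2s]$, and the identity $(1-2s)^2 = 1 - 4s(1-s) = 1 - \sum_{i=1}^d (2a_i)(2b_i)$ shows that $(2b_1,\ldots,2b_d,1-2s)$ is a splitting of this row in the sense of Definition \ref{splitting}. Hence $\widetilde{\Psi}(\delta_A(I,\omega_I))$ is represented by the scheme morphism $X \to Q_{2d+1}$ with coordinates $((2a_1,\ldots,2a_d,1-2s),(2b_1,\ldots,2b_d,1-2s))$. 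For the down-then-right path: specialising the formulas of Subsection \ref{mapmu} at $\lambda = -1$ gives $a(-1,z) = 1-2z$ and $b(-1) = 4$, so that $\mu_d(-1) \circ f : X \to Q_{2d+1}$ is the scheme morphism with coordinates $((a_1,\ldots,a_d,1-2s),(4b_1,\ldots,4b_d,1-2s))$. Applying the explicit endomorphism representing $\langle 2^d \rangle$ recalled immediately before the statement of the proposition---which multiplies the first $d$ $x$-coordinates by $2$ and the first $d$ $y$-coordinates by $2^{-1}$, leaving the last pair fixed---produces $((2a_1,\ldots,2a_d,1-2s),(2b_1,\ldots,2b_d,1-2s))$. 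The two scheme morphisms $X \to Q_{2d+1}$ are literally identical, hence induce the same class in $[X,Q_{2d+1}]_{\A^1}$, establishing commutativity.

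The argument is essentially mechanical once the correct presentation is chosen. The only delicate point is to arrange that a single triple $(a,b,s)$ simultaneously serves as the input to both constructions: the elements used in the definition of $\delta_A$ (via Nakayama's lemma) must be the same as those realising the Segre class of $(I,\omega_I)$ through Theorem \ref{numgenidealpolring}. This compatibility is built into the two constructions, and once it is exploited, the factor $\langle 2^d \rangle$ appears naturally as the correction reconciling the $4 = 2\cdot 2$ produced by $b(-1) = 4$ in the definition of $\mu_d(-1)$ with the twos appearing in $\delta_A$; absent this factor, the diagram would not commute.
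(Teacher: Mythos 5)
Your proof is correct and follows essentially the same computation as the paper's: one fixes a triple $(a,b,s)\in Q_{2d}(A)$ compatible with both the Segre class of $(I,\omega_I)$ and the construction of $\delta_A$, then tracks both sides of the diagram to $((2a_1,\ldots,2a_d,1-2s),(2b_1,\ldots,2b_d,1-2s))\in Q_{2d+1}(A)$. You spell out the consistency of the choice of $(a,b,s)$ and the role of $\langle 2^d\rangle$ more explicitly than the paper does, which is a reasonable clarification rather than a departure.
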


\begin{proof}
A direct computation shows that $\mu_d(-1)$ sends $(a,b,s)\in Q_{2d}(A)$ to \[(a_1,\ldots,a_d,1-2s,4b_1,\ldots,4b_d,1-2s)\in Q_{2d+1}(A).\] It now follows from the above discussion of the action of $\Kr_0^\MW(k)$ on $[X,Q_{2d+1}]_{\A^1}$ that \[\langle 2^d\rangle\mu_d(-1)_*([(a,b,s])=[(2a_1,\ldots,2a_d,1-2s,2b_1,\ldots,2b_d,1-s)]=\widetilde{\Psi}([2a_1,\ldots,2a_d,1-2s])\] so that we indeed have $\langle 2^d\rangle\mu_d(-1)_*\circ s(I,\omega_I)=\widetilde{\Psi}\circ\delta_A(I,\omega_I)$ which completes the proof. 
\end{proof}

If $2^d$ is a square in $k$, for instance if $d$ is even or if $k$ is the field of real numbers in which $2$ is a square, then $\langle 2^d\rangle=1$ in $\Kr_0^\MW(k)$. Thus we then have $\mu_d(-1)_*=\delta_A$ modulo the isomorphisms $s$ and $\widetilde{\Psi}$: consequently, the morphism of schemes $\mu_d(-1)$ provides a geometric interpretation of $\delta_A$ in the sense already explained previously for $\phi$ and $\eta_d$ in this case. Moreover, in any case, the proposition above allows us to produce a new proof of \cite[Theorem 6.3]{das2018euler}.

\begin{cor}
Let $A$ be as above. Then the map $\delta_A$ is a group homomorphism.
\end{cor}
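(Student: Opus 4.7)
The plan is to deduce this corollary directly from the commutative square established in the preceding proposition. Indeed, in that square
\[
\begin{array}{ccc}
\Er^d(A) & \xrightarrow{\delta_A} & \Um_{d+1}(A)/\Er_{d+1}(A) \\
\downarrow s & & \downarrow \widetilde{\Psi} \\
\protect{[X,Q_{2d}]_{\A^1}} & \xrightarrow{\langle 2^d\rangle\mu_d(-1)_*} & \protect{[X,Q_{2d+1}]_{\A^1}}
\end{array}
\]
both vertical maps are already known to be group isomorphisms: $s$ by \cite[Theorem 3.1.13]{AF} applied to the smooth $k$-algebra $A$ of Krull dimension $d\leqslant 2d-2$ (which holds since $d\geqslant 2$), and $\widetilde{\Psi}$ as noted in the paragraph preceding Proposition \ref{linkbetweenetamor}, by decomposing $\Psi$ from Theorem \ref{groupiso} through the $\A^1$-weak equivalence $Q_{2d+1}\simeq\A^{d+1}\setminus 0$. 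It therefore suffices to check that the bottom horizontal map $\langle 2^d\rangle\mu_d(-1)_*$ is a group homomorphism: indeed, once this is done, $\widetilde{\Psi}\circ\delta_A=\langle 2^d\rangle\mu_d(-1)_*\circ s$ will be a group homomorphism, and then so will $\delta_A=\widetilde{\Psi}^{-1}\circ\langle 2^d\rangle\mu_d(-1)_*\circ s$.

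The map $\langle 2^d\rangle\mu_d(-1)_*$ is the composition of $\mu_d(-1)_*:[X,Q_{2d}]_{\A^1}\rightarrow[X,Q_{2d+1}]_{\A^1}$ with multiplication by $\langle 2^d\rangle\in\Kr_0^\MW(k)$ on the target. Both factors are group homomorphisms: the first by functoriality of motivic cohomotopy theories, that is, Item 2 of Lemma \ref{bifunctoriality} applied to the pointed map $\mu_d(-1):Q_{2d}\rightarrow Q_{2d+1}$---the hypotheses are satisfied because $Q_{2d}$ and $Q_{2d+1}$ are pointed $\A^1$-$(d-1)$-connected spaces defined over $\Z$, hence over the perfect prime subfield of $k$ (Proposition \ref{connectednessquadrics}), and $\cd_{\A^1}(X)\leqslant\dim X=d\leqslant 2d-2$ since $d\geqslant 2$; the second because it is an endomorphism of the $\Kr_0^\MW(k)$-module $[X,Q_{2d+1}]_{\A^1}$, whose $\Kr_0^\MW(k)$-module structure comes from the identification $[Q_{2d+1},Q_{2d+1}]_{\A^1}\simeq\Kr_0^\MW(k)$ recalled above the previous proposition. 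Combining these two observations yields the desired conclusion.

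No serious obstacle is expected: the entire argument is a formal consequence of the compatibility square, of the identification of $s$ and $\widetilde{\Psi}$ as group isomorphisms, and of the functoriality of the Borsuk group law established in Lemma \ref{bifunctoriality}. The only point deserving attention is a careful verification that the $\A^1$-cohomological dimension and $\A^1$-connectedness hypotheses required to invoke that lemma are all met in the range $d\geqslant 2$ considered here, which as shown above is automatic.
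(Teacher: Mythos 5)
Your proof is correct and follows essentially the same route as the paper: read off $\delta_A = \widetilde{\Psi}^{-1}\circ\langle 2^d\rangle\mu_d(-1)_*\circ s$ from the commutative square of the preceding proposition, and use that $s$ and $\widetilde{\Psi}$ are group isomorphisms and $\langle 2^d\rangle\mu_d(-1)_*$ is a group homomorphism. You merely spell out the justification for the last point (functoriality of cohomotopy theories together with the $\Kr_0^\MW(k)$-module structure), which the paper leaves implicit.
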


\begin{proof}
Indeed, $s$, $\widetilde{\Psi}$ and $\langle 2^d\rangle\mu_d(-1)_*$ are group homomorphisms and the first two are isomorphisms: the result now follows from the previous proposition.
\end{proof}

\pagestyle{plain}

\printbibliography

\end{document}